\documentclass[11pt,reqno]{amsart} 
\usepackage[left=2.5cm,right=2.5cm,bottom=3cm]{geometry}
\usepackage{amsfonts}
\usepackage{amsmath}
\usepackage{amssymb}
\usepackage{mathtools}
\usepackage{amsthm}
\usepackage{pgfplots}
\pgfplotsset{compat=newest}
\usepackage[shortlabels]{enumitem}
\usepackage{tikz}   
\usepackage{tikz-cd}  
\usepackage{changepage}
\usetikzlibrary{arrows}
\usepackage{bm}
\usepackage{graphicx}
\usepackage{cases}
\usepackage{appendix}
\usepackage{esint}
\usepackage{hyperref}
\usepackage{mathrsfs}
\usepackage{mathrsfs}
\numberwithin{equation}{section}

\theoremstyle{definition}
\newtheorem{thm}{Theorem}
\newtheorem*{claim}{Claim}
\newtheorem{defn}[thm]{Definition}
\newtheorem{lem}[thm]{Lemma}
\newtheorem{prop}[thm]{Proposition}
\newtheorem{cor}[thm]{Corollary}
\newtheorem*{rmk}{Remark}

\numberwithin{thm}{section}

\newcommand{\R}{\mathbb{R}}  
\newcommand{\N}{\mathbb{N}}  
\newcommand{\CC}{\mathbb{C}}   
\newcommand{\Sp}{\mathbb{S}}
\newcommand{\p}{\partial}  

\newcommand{\dif}{\,\textup{d}} 

\newcommand{\Hau}{\mathcal{H}} 
\newcommand{\dist}{\textup{dist}}

\newcommand{\sing}{\operatorname{sing}}
\newcommand{\reg}{\operatorname{reg}}

\newcommand{\loc}{\textup{loc}}
\newcommand{\ind}{\operatorname{index}}

\usepackage{xcolor}
\usepackage{listings}
\begin{document}

\title[Generic regularity of harmonic maps]{Isolated singularities of harmonic maps with generic boundary data}
\author{Xuanyu Li}
\address{Department of Mathematics, Cornell University, Ithaca, NY 14853, USA}
\email{xl896@cornell.edu}
\begin{abstract}
    We study the behavior of isolated singularities of stationary harmonic maps with generic boundary data. For round sphere targets, we prove that, for $4\leqslant n\leqslant7$, every stable stationary harmonic map from a bounded smooth $n$-dimensional domain to round $(n-1)$-sphere with generic smooth boundary data has only radial projections composed with orthogonal transformations as tangent maps at its singularities; for 7-dimensional domains and round $k$-sphere targets with $k\geqslant7$, every stable stationary harmonic map with generic smooth boundary data is smooth.

    These results follow from a general minimum-index principle for closed real-analytic target manifolds. Under suitable target hypotheses ensuring strong compactness and excluding lower-dimensional singularity models, we show that, in the first domain dimension in which singularities can occur, generic boundary data force the link of every singular tangent map to attain the smallest possible Morse index. This principle applies both to stable stationary harmonic maps and, under the corresponding stronger target hypotheses, to all stationary harmonic maps. If no non-constant tangent map in the relevant class has a link attaining this minimum, generic smoothness follows.

    As an application, we establish a rigidity at infinity theorem. For $3\leqslant n\leqslant7$, we show that any energy-minimizing map from $n$-Euclidean space to round $(n-1)$-sphere admitting the radial projection as a blow-down limit must itself be a translate of the radial projection.
\end{abstract}
\maketitle

\section{Introduction}
In this paper, we are interested in the generic regularity theory of harmonic maps. Let $M^m,m\geqslant 2$ be a closed analytic Riemannian manifold and let $\Omega\subset \R^n$ be a bounded smooth domain. For simplicity we assume $M$ is isometrically embedded in a sufficiently high dimensional Euclidean space $\R^N$. \textbf{Stationary harmonic maps} are critical points of the energy functional
$$E(u)=\frac{1}{2}\int_{\Omega}\vert\nabla u\vert^2\dif x,u\in H^1(\Omega,M).$$
We refer readers to Section \ref{s: prelim} for the precise definition of stationary harmonic maps.

As solutions of a non-linear variational problem, the stationary harmonic maps may have singularities, at which they fail to be continuous. This is still the case even if stronger assumptions such as stability or energy minimality are assumed. A model example is the radial projection map $$x\mapsto\frac{x}{\vert x\vert},\R^n\rightarrow\Sp^{n-1},$$
where $\Sp^{n-1}$ is the unit sphere in $\R^n$. This map minimizes the energy functional among maps with the same boundary trace for all $n\geqslant3$ as shown by F. Lin \cite{LinMinimizing}; see also \cite{JagerKaulMinimizing,BrezisCoronLieb,CoronGulliver}. Every sufficiently small $C^0$ perturbation of its boundary value in $B_1$ has topological degree one and therefore admits no continuous extension to the closed ball. This singularity is indeed stable under boundary perturbation as studied by R. Hardt and F. Lin \cite{HardtLinStability}; see also \cite{Mazowieckaetc}. Upon finishing this manuscript, the author learnt the work of D. Gutwein and T. Langlais \cite{gutwein2026deformations}, who found a criterion of the tangent maps for the isolated singularities of a stationary harmonic map to be persistent under metric perturbation.

By contrast, the equator map 
$$x\mapsto\left(\frac{x}{\vert x\vert},0\right),\R^n\rightarrow\Sp^n,$$
is energy minimizing for $n\geqslant7$ by R. Schoen and K. Uhlenbeck \cite{SchoenUhlenbeck} but its singularity can be removed by suitable perturbations of the boundary data into an open hemisphere. We refer readers to R. McIntosh and L. Simon \cite{McIntoshSimon} and C. Wang \cite{Wangp-harmonicmaps} for further analysis of this phenomenon. Thus, energy minimality alone does not determine whether a singularity survives perturbation.

Motivated by these examples and results, there is a longstanding open question: rather than following a particular singular solution, can one restrict the singularities of all solutions with a generic boundary value?

Our principal conclusion is that, under the hypotheses below, generic boundary data force every singular tangent link to have the smallest possible Morse index.

\subsection{Main results for sphere targets}

 For sphere targets, our general principle yields a classification of the remaining tangent maps in the first singular dimensions and generic smoothness in dimension seven. In the sequel, a subset of $C^\infty(\partial\Omega,M)$ is called generic if it contains a countable intersection of open dense subsets in the $C^\infty$ topology.

\begin{thm}\label{thm: generic regularity for sphere valued maps}
    Let $\Omega\subset\R^n$ be a bounded smooth domain. 
    \begin{enumerate}
        \item Suppose $4\leqslant n\leqslant7$. There exists a generic subset $\Phi\subset C^{\infty}(\p\Omega,\Sp^{n-1})$ with the following property. At each singularity $x_0$ of a stable stationary harmonic maps $u\in H^1(\Omega,\Sp^{n-1})$ with boundary $u|_{\p\Omega}\in\Phi$, the tangent map can only be 
        $$h_{x_0}(x)=\mathscr{O}_{x_0}\left(\frac{x}{\vert x\vert}\right),\text{ for some }\mathscr{O}_{x_0}\in O(n);$$
        \item Suppose $n=7$ and $k\geqslant7$. There exists a generic subset $\Phi\subset C^{\infty}(\p\Omega,\Sp^{k})$ such that every stable stationary harmonic map $u\in H^1(\Omega,\Sp^{k})$ with $u|_{\p\Omega}\in\Phi$ is smooth.
    \end{enumerate}
\end{thm}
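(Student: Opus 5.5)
The plan is to derive Theorem \ref{thm: generic regularity for sphere valued maps} from the general minimum-index principle announced in the abstract, applied to the closed real-analytic targets $\Sp^{n-1}$ and $\Sp^k$. That principle says: if the target guarantees strong compactness of stable stationary harmonic maps and excludes singular models in lower dimensions, then in the first singular domain dimension, generic boundary data force the link $\phi:\Sp^{n-1}\to M$ of every tangent map to attain the minimal Morse index among links of non-constant stable tangent maps. Two tasks remain. First, check the target hypotheses for spheres. Second, identify which links attain the minimal index, or show that none does.

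\textbf{Verifying the hypotheses.} For stable stationary maps into $\Sp^{k}$, the Schoen--Uhlenbeck stability inequality (with the Xin/Lin refinements for stable, not merely minimizing, maps) gives:
\begin{itemize}
\item no non-constant stable homogeneous tangent maps $\R^d\to\Sp^k$ when $d\leqslant k$ is small relative to $k$, specifically $3\leqslant d<d(k)$ with $d(k)$ as in Schoen--Uhlenbeck;
\item $\varepsilon$-regularity and Hausdorff dimension of the singular set at most $n-d(k)$;
\item strong $H^1$ compactness of stable stationary maps, via Hong--Wang / Lin type arguments for sphere targets, since sphere targets admit no stable harmonic $2$-spheres into which energy could concentrate.
\end{itemize}
For target $\Sp^{n-1}$ with $4\leqslant n\leqslant7$, the radial projection shows $n$ is the first singular dimension. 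Lower-dimensional singular models $\R^d\to\Sp^{n-1}$, $d<n$, are ruled out because a stable tangent map $\R^d\to\Sp^{d'}$ with $d'\geqslant d$ and $d\leqslant7$ must be constant. This is the Schoen--Uhlenbeck Liouville range, which is exactly where the restriction $n\leqslant 7$ enters. For $\Sp^k$ with $k\geqslant7$ and $n=7$, the same result excludes all singularities for $d\leqslant 6$. At $d=7$ the only possible candidates are equator-type maps into totally geodesic $\Sp^6\subset\Sp^k$.

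\textbf{Identifying minimal-index links.} For part (1), I would compute the Morse index of the identity $\Sp^{n-1}\to\Sp^{n-1}$ as a harmonic map. The Jacobi operator on sections of $T\Sp^{n-1}$ has negative directions exactly from the conformal fields $\nabla x_i$, which gives index $n$. I would then show that every other link of a stable tangent map $\R^n\to\Sp^{n-1}$ has index strictly larger than $n$. The tools are:
\begin{itemize}
\item test sections $\nabla\phi^{i}$-type fields, or the standard variations $v_a=a-\langle a,\phi\rangle\phi$ for $a\in\R^n$;
\item the trace identity $\sum_a Q(v_a)= -(n-3)\int|\nabla\phi|^2+\ldots<0$, which yields index at least $n$, with equality forcing $\phi$ to be conformal of energy density $n-1$;
\item the Lin/Brezis--Coron--Lieb rigidity, which shows such $\phi$ is an isometry, so $\phi\in O(n)$.
\end{itemize}
For part (2), I would argue that every link $\Sp^6\to\Sp^k$ of a stable tangent map has index at least some value $I_0$ that is never attained. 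The mechanism is that the minimizer of the index would have to be a non-constant map whose index equals the dimension count from the $k+1$ variations $v_a$. The equality case forces $\phi$ to be an isometric (equator) embedding into $\Sp^6$. The equator map is not minimal-index, because it admits extra negative directions normal to the equatorial $\Sp^6$. Its singularity is therefore removable generically, consistent with McIntosh--Simon. The generic-smoothness clause of the principle then gives smoothness.

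\textbf{Main obstacle.} The main difficulty is the rigidity in the equality case of the index bound: showing that index $=n$ characterizes $\mathscr{O}\circ\phi_{\mathrm{id}}$, and that in part (2) no link attains the minimum. Because this is a global rigidity statement over all harmonic maps of spheres, I would first try the averaged second variation with the $v_a$. I would track the equality case carefully, using that vanishing of $Q$ on the span forces $\nabla\phi$ to be conformal with constant energy density, and then invoke classification of such maps.
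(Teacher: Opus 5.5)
Your architecture is the paper's: verify strong compactness and the absence of lower-dimensional stable homogeneous maps, apply the general principle (Theorem~\ref{thm: generic regularity of stable harmonic maps}), then decide which links survive. But you misstate what the principle delivers. It forces every tangent-map link to have Morse index exactly $n$, the \emph{universal} Xin--El Soufi lower bound for non-constant harmonic maps $\Sp^{n-1}\to M$; this is the condition $I(h_{x_0})=0$ in Definition~\ref{defn: effective Morse index}. It does not force ``the minimal index among links in the class''. A minimum over a nonempty class is always attained, so under your reading part (2) could never follow.

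Under the correct reading, exactly two facts are needed:
\begin{enumerate}
\item a harmonic map $\Sp^{n-1}\to\Sp^{n-1}$ of index $n$ is $\mathscr{O}(x/|x|)$;
\item no non-constant harmonic map $\Sp^{6}\to\Sp^{k}$ with $k\geq7$ has index $7$.
\end{enumerate}
The paper does not prove these. It imports them as Proposition~\ref{prop: index of sphere targets}: Nakajima's rigidity in equal dimensions, and the bound $\ind\geq m+2$ for non-constant harmonic $\Sp^m\to\Sp^k$, $k>m$, from the author's earlier work. After that the theorem is a two-line combination.

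The argument you sketch for deriving these facts yourself has a genuine gap. The trace identity $\sum_a Q(v_a)=-(n-3)\int|\nabla\phi|^2$ only says that a quadratic form on an $n$-dimensional space has negative trace. That gives one negative direction, not index $\geq n$. The bound $\geq n$ comes instead from conformal fields of the \emph{domain} sphere, i.e.\ the variations $d\phi(\nabla^{\Sp^{n-1}}x_v)$, $v\in\R^n$.

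Your equality mechanism also fails. Having exactly $n$ negative directions does not make $Q$ vanish on the $v_a$-span. For $\phi=\mathrm{id}$, the model case, $Q$ is actually negative definite on that span, not zero. So you get no route to ``conformal with constant energy density''. Lin and Brezis--Coron--Lieb do not supply such a rigidity either; BCL classifies minimizing tangent maps $\R^3\to\Sp^2$. The equality characterization is precisely Nakajima's theorem and needs a genuinely different argument.

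Part (2) has the same problem. ``Equality forces an equator embedding'' is an unproved rigidity claim, and it is exactly the content of the cited bound (index $\geq 8$ here). Your assertion that equator-type maps are the only stable tangent maps $\R^7\to\Sp^k$ is unsupported, and the index route does not need it.

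Finally, the Liouville statement you quote is false at $d=7$, since the equator map $\R^7\to\Sp^7$ is minimizing and non-constant. You only need $3\leq d\leq n-1\leq 6$, which is exactly why part (1) stops at $n=7$.
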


The regularity results of the author \cite{li2026optimalregularitystableharmonic} building on the regularity theory of R. Schoen and K. Uhlenbeck \cite{SchoenUhlenbeckRegularity} place these cases in the first dimensions in which singular stable stationary maps can occur; see also \cite{LinWangSphere,SchoenUhlenbeck}. The theorem therefore strengthens the corresponding partial regularity results in two different ways: when domain dimension is greater than target dimension where topologically nontrivial boundaries are allowed, it classifies every persistent singularity; in the remaining case, it eliminates singularities generically in dimension seven.

\subsection{A minimum index principle for tangent links}
We now state the general result underlying Theorem \ref{thm: generic regularity for sphere valued maps}.

A \textbf{regular $0$-homogeneous map} is a map $h:\R^n\to M$ that is scaling invariant, namely $h(\lambda x)=h(x)$ for all $\lambda>0$, and smooth away from the origin. It is determined by its \textbf{link}
\[
 u=h|_{\p B_1},h(x)=u\left(\frac{x}{|x|}\right).
\]
$h$ is harmonic if and only if $u$ is. The \textbf{Morse index} of a harmonic link is the maximal dimension of a linear space on which the second variation of the energy is negative definite; see Section \ref{s: prelim} for the precise definition. For $n\geq4$, every nonconstant harmonic map $u\in C^{\infty}(\Sp^{n-1},M)$ has Morse index at least $n$ by Y. L. Xin \cite{Xin} and A. El Soufi \cite{ElSoufi}. The following theorem shows that generic boundary data force equality.

\begin{thm}\label{thm: generic regularity of stable harmonic maps}
    Let $M$ be a closed analytic manifold and $\Omega\subset\R^n$ be a bounded smooth domain, where $n\geqslant4$. Assume that $M$ admits no non-constant stable harmonic map from $\Sp^2$, and that, for $3\leqslant k\leqslant n-1$, there is no non-constant stable stationary harmonic regular 0-homogeneous map from $\R^k$ to $M$. 
    
    Then there exists a generic subset $\Phi$ of $C^{\infty}(\p\Omega,M)$ with the following property. At each singularity $x_0$ of a stable stationary harmonic map $u\in H^1(\Omega,M)$ with boundary $u|_{\p\Omega}\in\Phi$, the link of the tangent map $h_{x_0}$ has Morse index $n$.
\end{thm}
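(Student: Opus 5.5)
The plan is a Sard–Smale / Baire category argument in the spirit of White's bumpy-metric theorem and the generic regularity results for minimal hypersurfaces. There are three ingredients.

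\textbf{Step 1: structure of the singular set.} The target hypotheses (no stable harmonic $\Sp^2$, no stable regular $0$-homogeneous maps from $\R^k$ for $3\leqslant k\leqslant n-1$) give strong $H^1$ compactness of stable stationary harmonic maps. Combined with Federer dimension reduction, this shows that every singularity is isolated. Every tangent map is then a regular $0$-homogeneous map whose link $\phi\in C^\infty(\Sp^{n-1},M)$ is a nonconstant harmonic map of index at most its stability allows. By Xin and El Soufi, $\ind(\phi)\geqslant n$.

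By analyticity of $M$ and Simon's Łojasiewicz theorem, the tangent map at each singularity is unique. Moreover $u$ converges to it at a rate in the sense of Simon's asymptotic expansion. By compactness, with bounds on energy and boundary norms, there are only finitely many singularities.

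\textbf{Step 2: a local parametrization near singular solutions.} Fix a stable stationary $u$ with boundary $\varphi$ and a singular point $x_0$ whose tangent link $\phi$ has $\ind(\phi)=I>n$.

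The Jacobi operator of the cone $h$ on $B_1\setminus\{0\}$ separates variables. Its negative eigenmodes on the link correspond to homogeneous Jacobi fields $r^{\gamma^\pm_j}\psi_j$ with $\gamma$ in the Hardy-type range. Exactly $n$ of the modes $\psi_j$ are accounted for by the symmetries that preserve the cone structure: translations of the singular point, which produce the $n$-dimensional family related to conformal/Möbius directions (these are the modes showing equality in Xin/El Soufi), together with rotations (Killing fields of $M$, which have index zero).

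Following the Caffarelli–Hardt–Simon / White approach, I would construct the moduli space of stationary harmonic maps near $u$ as the zero set of a Fredholm map
\[
\mathcal F:\ \Omega\text{-maps}\times C^{2,\alpha}(\p\Omega,M)\to \text{target}.
\]
This is set up in weighted spaces adapted to the isolated singularity, with the singular point position and the cone as extra parameters. The projection to the boundary data is Fredholm. Its index is computed as
\[
\text{(kernel dimension)} - (I-n)\leqslant -(I-n)<0 .
\]
The $I-n$ extra negative (slow-decaying) modes impose independent obstructions that cannot be absorbed by translations of $x_0$. Then Sard–Smale shows that the image of this local moduli space in boundary data is meagre. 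Its complement is therefore open and dense, openness coming from the compactness in Step 1.

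\textbf{Step 3: globalize by Baire.} Cover the space of stable stationary maps with singular links of index $>n$ by countably many such charts. This uses separability together with the compactness of Step 1, after stratifying by energy bound, boundary $C^k$-norm bound, and the finitely many cone types up to a countable dense choice. The set $\Phi$ is the intersection of the complements, which is generic.

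\textbf{Main obstacle.} The hardest step is Step 2. There I need to (i) set up Fredholm theory across a singular point whose cone link may be non-integrable or vary in a family, using Łojasiewicz and Simon's analysis to reduce to finitely many parameters, and (ii) show that the negative modes beyond the $n$ translation/conformal ones genuinely cut down the dimension, i.e.\ that they are not killed by geometric deformations. I would first try to handle this by expressing $u$ near $x_0$ as a graph over the translated cone, then applying the Lyapunov–Schmidt reduction onto the finite set of indicial roots.
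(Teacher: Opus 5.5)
Your index count agrees with the paper's. Augmenting the domain by translation-like fields removes the $n$ conformal/translation modes from the cokernel, which gives $\widehat{\ind}_{\tau}L_u=-\sum_{x_0\in\sing u}I(h_{x_0})$, where $I(h)$ is the link index minus $n$.

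\textbf{The gap is Step 2.} Sard--Smale needs the nearby stable stationary maps to form the zero set of a $C^1$ Fredholm map, i.e.\ a Banach manifold. No such map is available here, so point (i) of your main obstacle is the heart of the proof, not a technicality.

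\begin{itemize}
\item A nearby solution $u'$ has a moved singular point and, in general, a \emph{different} tangent map $h'$ of the same density. When the link is non-integrable, which analyticity of $M$ does not exclude, $u'$ approaches $h'$ only at a logarithmic rate.
\item To contain the degree-$0$ cone deformations you must use a weight $\tau\in(\gamma_-(h),0)$; this is also the weight for which the index equals $-(I-n)$. But then the quadratic term of the harmonic map operator, $\vert\nabla w\vert^2\sim\vert x\vert^{2\tau-2}$, does not lie in the target space of weight $\tau-2$. So your $\mathcal F$ is not even defined.
\item At $\tau=0$ the nonlinearity is harmless, but the linearization is not Fredholm: $0$ is always an indicial root of a non-constant $h$, because domain rotations give link Jacobi fields with $\lambda=0$.
\item With $\tau>0$, maps with a different cone, or with only a logarithmic convergence rate, are not in the domain at all.
\end{itemize}

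Your proposed fix, a graph over a translated cone plus Lyapunov--Schmidt onto the indicial roots, hits the same wall. The remainder need not decay, and the set of nearby cones is in general an analytic variety, not a manifold. In the integrable case your route would essentially work and would even give a local manifold structure, but it does not prove the theorem as stated.

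\textbf{How the paper avoids this.} It never builds a moduli manifold.
\begin{itemize}
\item It works on canonical pseudo-neighborhoods $\mathcal{L}^{k,\alpha}(u,\varphi;\Lambda,\delta)$, in which singular points are matched with \emph{equal} densities. By \L ojasiewicz--Simon the possible densities in $\mathcal{T}_n(\Lambda)$ form a finite set, which gives compactness of these sets.
\item Quantitative uniqueness of tangent maps yields Lemmas \ref{Lem: hsi with same singularity} and \ref{lem: HSI with different singularity}. After aligning singular points, the difference of two such maps is bounded in $C^{0;\gamma}$, $\gamma<0$, by their $L^2$ distance, and the singular points move by $O(\Vert u_1-u_2\Vert_{L^2})$.
\item Hence normalized differences converge to \emph{stationary} tame Jacobi fields in $\hat{H}^{1;\tau}$ (Lemma \ref{lem: induced Jacobi fields for pairs}). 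This step uses only linear analysis.
\item Consequently, the stratum where $\dim\widehat{\ker}_{\tau}$ is maximal, restricted to the $J$-dimensional boundary slice $\exp_{\bar\varphi}\Psi$, embeds bi-Lipschitzly into the $I$-dimensional kernel (Lemma \ref{lem: Lipshitz of projection}). The slice $\Psi$ is built from normal derivatives of cokernel elements.
\item Its Lipschitz image in the slice therefore has dense complement when $I<J$. An induction on kernel dimension then replaces Sard--Smale.
\end{itemize}

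Because these pseudo-neighborhoods are not open sets, your globalization step also needs the paper's countable covering theorem rather than plain separability plus compactness. It further needs a separate lemma to pass from $C^{k,\alpha}$ to $C^\infty$ boundary data.
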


The two target hypotheses have distinct purposes. The absence of non-constant stable harmonic two-spheres gives strong $H^1_{\loc}$ compactness for stable stationary harmonic maps with uniform energy bounds; see F. Lin \cite{LinGradientEstimate} and D. Hsu \cite{Hsu}. Excluding lower-dimensional stable homogeneous maps then allows dimension reduction to restrict the analysis to strongly isolated singularities; see R. Schoen and K. Uhlenbeck \cite{SchoenUhlenbeckRegularity}. Analyticity enters the analysis of tangent maps and the local finiteness of the possible densities in compact families of links. 

Under the hypotheses of Theorem \ref{thm: generic regularity of stable harmonic maps}, if every nonconstant stable stationary regular 0-homogeneous map from \(\mathbb R^n\) to \(M\) has a link of Morse index at least \(n+1\), then every stable stationary harmonic map with generic boundary data is smooth. The sphere-target conclusions of Theorem \ref{thm: generic regularity for sphere valued maps} follow by combining this principle with the link-index results recalled in Proposition \ref{prop: index of sphere targets}. Besides applying to the sphere targets, Theorem \ref{thm: generic regularity of stable harmonic maps} is broad enough to study the generic behavior of stable stationary harmonic maps to other targets. For example, recently J. Krantz \cite{Krantz} studied the regularity of stable harmonic maps to homogeneous spaces. One could calculate the index of the link of tangent maps in that case, to get the corresponding generic regularity result.

In dimension three, the corresponding minimum index condition is stability of the harmonic two-sphere forming the link.

\begin{thm}\label{thm: generic regularity in dimension 3}
    Let $M$ be a closed analytic manifold and $\Omega\subset\R^3$ be a bounded smooth domain. There exists a generic subset $\Phi$ of $C^{\infty}(\p\Omega,M)$ with the following property. At each singularity $x_0$ of an energy minimizing map $u\in H^1(\Omega,M)$ with boundary $u|_{\p\Omega}=\varphi\in\Phi$, the link of the tangent map is stable.

    Moreover, assume that there is no non-constant stable harmonic map from $\Sp^2$ to $M$. Then, any stable stationary harmonic map $u\in H^1(\Omega,M)$ with boundary $\varphi\in\Phi$ is smooth.
\end{thm}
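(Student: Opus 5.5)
In dimension three the minimum index principle reduces to stability of the link, and I would prove Theorem \ref{thm: generic regularity in dimension 3} with the same scheme as Theorem \ref{thm: generic regularity of stable harmonic maps}. The scheme is a Sard--Smale/White-type bumpy argument on the space of boundary data.

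\textbf{Setup.} For energy minimizers in $\Omega\subset\R^3$, Schoen--Uhlenbeck regularity gives isolated singular points. Tangent maps are regular $0$-homogeneous minimizers, so their links are harmonic maps $\Sp^2\to M$. Since $M$ is analytic, the link is real-analytic, and by Simon's theorem the tangent map at each singularity is unique.

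\textbf{Compactness and a stratification.}
\begin{enumerate}
\item Fix an energy bound $\Lambda$, a compact set $K$ of $C^{k,\alpha}$ boundary data, and the set $\mathcal{N}_\Lambda$ of minimizers with $E\leq\Lambda$ and boundary data in $K$. Minimizers are strongly $H^1$ compact (Luckhaus), so $\mathcal{N}_\Lambda$ is compact.
\item The set $\mathcal{L}$ of possible links is the set of harmonic spheres of energy at most $C\Lambda$. By Łojasiewicz--Simon theory and analyticity, the energy values of harmonic spheres in a compact family are locally finite. Hence the possible densities at singular points form a finite set $\{\theta_1,\dots,\theta_J\}$.
\item It therefore suffices to show the following: for each $\theta_j$ whose links include unstable ones, the set of boundary data $\varphi$ admitting a minimizer with a singular point whose link is unstable is meagre. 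Equivalently, its complement in $K$ is open and dense, and intersecting over countably many $\Lambda$, $K$ and $j$ gives the generic set $\Phi$.
\end{enumerate}
Openness comes from compactness: limits of minimizers with an unstable-link singularity of density $\theta_j$ converge strongly. By upper semicontinuity of density and finiteness of densities, the singular point persists with the same density. The link converges smoothly by uniqueness and Simon's rate estimates, and instability is an open condition, while stability of the limit link is handled through analyticity and the finiteness above.

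\textbf{Density (main obstacle).} Given a minimizer $u$ with boundary $\varphi$ and a singular point $x_0$ whose link $\omega$ has a negative direction $\xi$ for the Jacobi operator on $\Sp^2$, I must produce arbitrarily small boundary perturbations that destroy every such singularity. My first attempt would be as follows.
\begin{enumerate}
\item Use the Jacobi field of the cone $h(x)=\omega(x/|x|)$ generated by $\xi$. Its growth rates $r^{\gamma_\pm}$ satisfy
\[
\gamma_\pm=-\tfrac12\pm\sqrt{\tfrac14+\lambda},
\]
where $\lambda<0$ is the eigenvalue of $\xi$. If $\lambda<-1/4$ these exponents are complex, so the Jacobi field oscillates.
\item Compare with the minimal-surface case: a cone whose link is unstable is not minimizing there, by the Hardt--Simon-type argument. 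The analogue here is that the mode $\xi$ forces an energy decrease near $x_0$. This shows a minimizer cannot carry such a tangent cone at all, which is the first statement.
\item If instead $-1/4\leq\lambda<0$, I would argue via a foliation: perturb the boundary along the direction of the Jacobi field, and use a family of minimizers $u_t$. Each $u_t$ is forced to one side of the cone, and by a Hardt--Simon-style smoothing such one-sided solutions near an unstable-link cone are regular. I expect this step, constructing the one-sided regular perturbations and making them uniform, to be the hardest part.
\end{enumerate}

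\textbf{Second statement.} If $M$ carries no nonconstant stable harmonic sphere, then strong compactness holds for stable stationary maps (Lin, Hsu), so the same scheme applies to stable stationary maps. Tangent maps at singularities have harmonic-sphere links, which generically must be stable and hence constant, contradicting singularity. So $u$ is smooth.
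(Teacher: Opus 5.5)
There is a genuine gap, and it sits in the step you defer as ``the hardest part''. Your step (2) is correct but covers too little. Tangent maps of minimizers, and of stable stationary maps under strong compactness, are stable cones, and Hardy's inequality then forces $\lambda_1\geqslant-1/4$. So you only exclude links with an eigenvalue below $-1/4$, and this is not ``the first statement''. Links with $-1/4\leqslant\lambda_1<0$ are compatible with stability of the cone, and excluding them for generic data is the whole content of the theorem.

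For that case your step (3) has no mechanism behind it. ``One side of the cone'', one-sided families and Hardt--Simon-type smoothing rely on codimension one and the maximum principle for hypersurfaces. A harmonic map into a general $M$ solves an elliptic system, and the unstable direction $\xi$ is a section of $\omega^*TM$ with no sign. Nothing orders the minimizers $u_t$ or pushes them off the cone. Moreover, even a successful desingularization of one minimizer would not give genericity: you must control every, possibly non-unique, minimizer for every nearby datum.

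The missing idea is a dimension count rather than a construction. The paper computes a Fredholm index of $L_u$ on the weighted spaces $\hat H^{1;\tau}_0$ augmented by translation-like sections, with the kernel restricted to \emph{stationary} Jacobi fields. It localizes this index as $\widehat{\ind}_\tau L_u=-\sum_{x_0\in\sing u}I(h_{x_0})$. In dimension three $I(h)$ is the full Morse index of the link, for two reasons:
\begin{itemize}
\item the translations $\partial_vh$ (rates $-1$ and $0$) correspond to the eigenvalue $0$, not to negative eigenvalues;
\item the zero-trace field $(1-r)\partial_vh$ is excluded from the kernel, because $r\partial_vh$ is not stationary (Proposition \ref{prop: stationarity when n=3}).
\end{itemize}
Hence an unstable link forces $J=\dim\widehat{\operatorname{coker}}_{\tau}L_u>I=\dim\widehat{\ker}_{\tau}L_u$. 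The argument then runs as follows.
\begin{itemize}
\item Normal derivatives of cokernel elements give a $J$-dimensional space $\Psi$ of boundary directions that are not traces of nonzero stationary tame Jacobi fields (Lemma \ref{lem: equiv of dimension of coker}).
\item Normalized differences of nearby HSIs with matched singular points and densities converge to such fields (Lemma \ref{lem: induced Jacobi fields for pairs}).
\item Therefore nearby HSIs of maximal kernel dimension with data in the slice $\exp_{\bar\varphi}\Psi$ embed bi-Lipschitzly into the $I$-dimensional kernel, so their boundary values are nowhere dense in that $J$-dimensional slice.
\item Induction on $\dim\widehat{\ker}_{\tau}L_u$ and a countable covering by canonical neighborhoods finish the proof.
\end{itemize}

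This framework also replaces your openness step, which is unjustified as written. Instability is an open condition, so it does not pass to limits, and analyticity does not stop the Morse index from dropping along a convergent sequence of harmonic links. Matching the densities of limiting singular points also needs the density control built into canonical neighborhoods. The paper never needs the good set to be open. It only covers the bad set by countably many compact sets whose projections have open dense complements.
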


Theorem \ref{thm: generic regularity in dimension 3} identifies a geometric restriction on singularities occurring for generic boundary data. This should be distinguished from a purely topological obstruction to extending a boundary map. Namely, though the persistence of the singularity of the map $x\mapsto x/\vert x\vert$ can be seen from the topology of its boundary map, persistence itself comes from the geometry fact that the target manifold supports a stable harmonic 2-sphere. For the $\Sp^2$ target, it is possible to construct a degree zero boundary map all of whose sufficiently small perturbations admit only singular energy minimizing maps. We refer readers to R. Hardt and F. Lin \cite{HardtLinRemark} for the construction of such a boundary map.

There is also a version for stationary harmonic maps without a stability assumption. In this case, the target hypotheses exclude harmonic spheres rather than only the stable objects appearing above.

\begin{thm}\label{thm: generic regularity of stationary harmonic maps}
    Let $M$ be a closed analytic manifold and $\Omega\subset\R^n$ be a bounded smooth domain where $n\geqslant4$. Assume that $M$ does not admit any non-constant harmonic map from $\Sp^k$ for $2\leqslant k\leqslant n-2$. 
    
    Then there exists a generic subset $\Phi$ of $C^{\infty}(\p\Omega,M)$ with the following property. At each singularity $x_0$ of a stationary harmonic map $u\in H^1(\Omega,M)$ with boundary $u|_{\p\Omega}=\varphi\in\Phi$, the link of the tangent map $h_{x_0}$ has Morse index $n$.
\end{thm}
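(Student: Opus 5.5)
The plan is to run the same generic-perturbation scheme used for Theorem \ref{thm: generic regularity of stable harmonic maps}, with the target hypotheses changed so that the stability assumption on the maps is not needed.

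\textbf{Compactness and reduction to isolated singularities.} Assume $M$ admits no non-constant harmonic $\Sp^k$ for $2\leqslant k\leqslant n-2$. Then stationary harmonic maps with uniformly bounded energy are strongly compact in $H^1_{\loc}$, since the only bubbles are harmonic two-spheres (Lin's compactness theory for stationary maps). Tangent maps of stationary maps are stationary $0$-homogeneous maps. By dimension reduction, a $0$-homogeneous stationary harmonic map from $\R^k$, $3\leqslant k\leqslant n-1$, that is regular away from the origin has a harmonic link $\Sp^{k-1}\to M$, and $2\leqslant k-1\leqslant n-2$. So its link is constant. Inductively, every tangent map at a singular point is regular away from $0$, and $\mathrm{sing}\,u$ is discrete in $\Omega$. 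Since $n\geqslant4$, a nonconstant link has Morse index at least $n$ by the Xin--El Soufi bound.

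\textbf{Genericity via countable covering and transversality.} First stratify the possible links. Fix $N\in\N$ and $\Lambda>0$, and consider boundary data with $\|\varphi\|_{C^{N}}\leqslant\Lambda$ and energy bounds. By compactness and analyticity of $M$, the set of harmonic links $\Sp^{n-1}\to M$ with energy at most $\Lambda$ is a compact real-analytic variety, and the densities $\Theta$ form a finite set. For each index $I\geqslant n+1$, define $\mathcal{B}_{I,\Lambda}$ as the set of $\varphi$ for which some stationary $u$ with energy at most $\Lambda$ has a singular point whose tangent link has index at least $I$. The target generic set is
\[
\Phi=\bigcap_{\Lambda}\Big(C^\infty\setminus\bigcup_{I\geqslant n+1}\overline{\mathcal B_{I,\Lambda}}\Big).
\]
Closedness of $\mathcal B_{\Lambda}$ (index at least $n+1$) follows from strong compactness together with upper semicontinuity of the tangent-link index under convergence of singular points. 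For this I would use local finiteness of densities and Allard--Almgren/Simon uniqueness of tangent maps, which is available because the links are real-analytic. What remains is to show that $\mathcal B_{\Lambda}$ has empty interior.

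\textbf{The main obstacle: empty interior.} Near a singular point whose link $\phi$ has index $I\geqslant n+1$, I would use Simon's Łojasiewicz theory to write $u$ as an asymptotically conical perturbation of $h_\phi$. The Jacobi fields at the singularity have the form $r^{\gamma_j}\psi_j(\omega)$, with $\gamma_j<0$ for the negative directions of the link operator. Of these, $n$ negative directions come from translations, namely the conformal/translation fields accounted for in the Xin--El Soufi count. At least one further direction is \emph{not} generated by symmetries.

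I would then construct a one-parameter family of boundary perturbations $\varphi+t\eta$, with $\eta$ chosen so that the induced Jacobi field on $\Omega$ has a nonzero component along this extra negative mode. A Hardt--Simon style foliation/barrier argument, or a comparison of the density-drop (monotonicity) quantity, should then show that for small $t\neq0$ the stationary solutions near $u$ cannot have a singularity with the same tangent link. Either the singularity disappears, or it is replaced by singularities with strictly smaller density, and therefore, by finiteness of densities, by singularities of index $n$.

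The difficulty here is the lack of minimality and stability: nearby stationary solutions are not unique, so one cannot simply follow a branch. I would handle this by an energy/density counting argument applied to all solutions at once, using compactness. Densities are locally constant on the compact solution set, and a Sard–Smale argument on the analytic moduli space of solutions then shows that boundary data having solutions with a high-index link form a set of positive codimension, namely at least $I-n\geqslant1$. This gives empty interior, and so $\Phi$ is generic.
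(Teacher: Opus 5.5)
Your first step agrees with the paper. The absence of harmonic $\Sp^2$ gives the strong compactness \eqref{eq: strong convergence}. Dimension reduction with the hypothesis on $\Sp^k$, $2\leqslant k\leqslant n-2$, makes every stationary map with smooth boundary data an \textbf{HSI}. Xin--El Soufi gives $I(h)\geqslant0$.

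The gap is the step you flag as the main obstacle: it is asserted rather than proved, and the mechanisms you suggest do not deliver it.
\begin{itemize}
\item The Hardt--Simon foliation and barrier arguments rely on minimality and a scalar maximum principle. Nothing comparable is available for stationary harmonic maps into a general $M$.
\item A density-drop argument would at most show that the perturbed singularities have smaller density. Smaller density does not force the link to have index $n$; finiteness of densities only rules out infinite descent.
\item Exciting one extra negative mode is not enough. The set of nearby solutions may itself be positive-dimensional (nontrivial kernel), so you must show that the obstructions outnumber the deformations.
\item ``A Sard--Smale argument on the analytic moduli space'' with codimension $I-n$ presupposes exactly the missing content. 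The solution set is not a Banach manifold here, because the linearization has a cokernel. The count $I-n$ is an index theorem that you never set up.
\item Your closedness claim is backwards. The Morse index of links is \emph{lower} semicontinuous under smooth convergence. Also, tangent maps at converging singular points need not converge to the tangent map at the limit point, since singular points may merge. So $\mathcal B_{I,\Lambda}$ need not be closed, and with the closures in your definition of $\Phi$ you would need nowhere-density, not just empty interior.
\end{itemize}

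The paper supplies the missing ingredients in three steps.

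(i) It works with the Jacobi operator on weighted spaces $\hat H^{1;\tau}_0$, $\tau\in(\sup_{x_0}\gamma_-(h_{x_0}),0)$, augmented by translation-like fields. It proves $\widehat{\ind}_{\tau}L_u=-\sum_{x_0\in\sing u}I(h_{x_0})$ (Theorem~\ref{thm: count of index}). The proof splits off index-zero and compact pieces away from $\sing u$ and compares with the cone operator, whose cokernel consists of the non-translational negative modes (Proposition~\ref{prop: index of tangent map}).

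(ii) It restricts to canonical neighborhoods. There the regularity-scale bound and the finiteness of densities (Lemma~\ref{lem: discreteness of density}) pin the singular points and tangent maps, which gives compactness (Proposition~\ref{prop: compactness of canonical neighborhood}). It then shows that normalized differences of nearby \textbf{HSI}s converge to tame stationary Jacobi fields (Lemma~\ref{lem: induced Jacobi fields for pairs}). This needs the quantitative estimates of Appendix~\ref{a: quantitaive estimates}, in particular that the displacement of singular points is bounded by the $L^2$ distance.

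(iii) It chooses a $J=\dim\widehat{\operatorname{coker}}_{\tau}L_u$-dimensional space $\Psi$ of boundary directions not realized by tame Jacobi fields (Lemma~\ref{lem: equiv of dimension of coker}). It embeds the top-kernel stratum of solutions over the slice $\exp_{\bar\varphi}\Psi$ bi-Lipschitzly into $\widehat{\ker}_{\tau}L_{\bar u}$, whose dimension is strictly less than $J$ because the index is negative. Hence the Lipschitz projection onto the $J$-dimensional slice has dense complement, and induction on the kernel dimension handles the lower strata (Theorem~\ref{thm: local sard smale theorem}). Theorem~\ref{thm: covering of space of HSI} and Baire category then conclude.

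Without some version of (i)--(iii), your proposal does not yet prove the statement.
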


\begin{rmk}
    Although we only state our theorems for maps from smooth bounded Euclidean domains, the results listed above indeed hold for maps from smooth compact manifolds with non-empty boundary. This is because our analysis mainly depends on the blow up analysis of tangent maps, whose domains are always Euclidean ones. The curvature of the domain manifold also becomes negligible in the blow-up procedure. However, in order to make the idea behind the presentation of the paper more transparent, we focus on Euclidean domains.
\end{rmk}

\subsection{Application: rigidity at infinity}
As a byproduct of our generic regularity result, we also obtain a global rigidity theorem from the existence of a single radial blow-down. The conclusion requires neither a rate of convergence nor an a priori assumption on the number of singular points.
\begin{cor}\label{cor: rigidity at infinity}
    Let $3\leqslant n\leqslant7$ and $u\in H^1_{\loc}(\R^n,\Sp^{n-1})$ be an energy minimizing map. Assume that for some sequence $R_j\rightarrow\infty$, $u(R_j\cdot)$ converges in $H_{\loc}^1$ to the radial projection map $x\mapsto x/\vert x\vert$ as $j\rightarrow\infty$. 
    
    Then there exists $x_0\in\R^n$, such that $$u(x)=\frac{x-x_0}{\vert x-x_0\vert},\text{ for all }x\ne x_0.$$
\end{cor}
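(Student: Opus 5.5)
The plan is a contradiction argument built on the monotonicity formula, with Theorem~\ref{thm: generic regularity for sphere valued maps} (and, for $n=3$, Brezis--Coron--Lieb) used to produce a singular point of the largest admissible density. Write $\Theta_\infty$ for the density of $x\mapsto x/\vert x\vert$, and $\Theta(u,x,r)=r^{2-n}\int_{B_r(x)}\vert\nabla u\vert^2$. For every $x$, $\Theta(u,x,R)\to\Theta_\infty$ as $R=R_j\to\infty$; this follows from the strong $H^1_{\loc}$ convergence of the blow-downs and the fact that $x$ becomes negligible after rescaling. Monotonicity then gives $\Theta(u,x)\le\Theta_\infty$ for every $x\in\R^n$. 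If equality holds at some $x_0$, the monotonicity identity forces $u$ to be $0$-homogeneous about $x_0$. A homogeneous map about $x_0$ has the same blow-down as $u$, so it coincides with the radial projection centered at $x_0$, which is the desired conclusion. It therefore suffices to find one point $x_0$ with $\Theta(u,x_0)\ge\Theta_\infty$.

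\emph{Step 1: degree-one boundaries.} Set $u_j=u(R_j\cdot)$. By strong $H^1$ convergence to $x/\vert x\vert$, which is smooth on $\overline{B_2}\setminus B_{1/2}$, $\varepsilon$-regularity gives $u_j\to x/\vert x\vert$ smoothly on annuli for large $j$. Fix such a $j$ and a radius $r\in(3/4,1)$. Then $\psi:=u_j|_{\p B_r}$ is smooth and $C^1$-close to the radial map, and so has degree one. I will use interior uniqueness: $u_j$ is the unique minimizer on $B_r$ with trace $\psi$. If $v$ were another minimizer, gluing $v$ into $u_j$ would produce a second minimizer on $B_1$ agreeing with $u_j$ on $B_1\setminus B_r$. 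The singular set of a minimizer into $\Sp^{n-1}$ with $n\le7$ is discrete, so the regular set is connected, and unique continuation for smooth harmonic maps then forces $v=u_j$.

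\emph{Step 2: generic approximation.} Take $\varphi_k\in\Phi$ (the generic set of Theorem~\ref{thm: generic regularity for sphere valued maps}, applied with $\Omega=B_r$) with $\varphi_k\to\psi$ in $C^\infty$, and let $v_k$ be minimizers with trace $\varphi_k$. Each $\varphi_k$ still has degree one, so $v_k$ cannot be continuous on $\overline{B_r}$ and has an interior singular point $y_k$. By Theorem~\ref{thm: generic regularity for sphere valued maps}(1) for $4\le n\le7$, or by Brezis--Coron--Lieb for $n=3$, the tangent map at $y_k$ is $\mathscr O(x/\vert x\vert)$. Hence $\Theta(v_k,y_k)=\Theta_\infty$.

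\emph{Step 3: passing to the limit.} Minimizers with converging smooth boundary data are compact in $H^1$. By the uniqueness in Step 1, $v_k\to u_j$ strongly in $H^1(B_r)$. Uniform boundary regularity, coming from the smooth convergence of $\varphi_k$ and the Schoen--Uhlenbeck boundary regularity theory, keeps $y_k$ a fixed distance from $\p B_r$. Passing to a subsequence $y_k\to y$, upper semicontinuity of density under strong convergence gives $\Theta(u_j,y)\ge\Theta_\infty$. Scaling back, $\Theta(u,R_jy)\ge\Theta_\infty$, and the first paragraph concludes the proof.

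The main obstacle is Step 1's uniqueness together with the boundary control in Step 3. These are exactly what let the generic conclusion of Theorem~\ref{thm: generic regularity for sphere valued maps} transfer to the specific, possibly non-generic, map $u$. I would address them first, relying on the discreteness of the singular set and boundary $\varepsilon$-regularity.
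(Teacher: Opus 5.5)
Your proposal is correct and follows essentially the same route as the paper. The shared steps are: the density bound from the blow-down plus monotonicity, a degree-one slice, generic boundary approximation via Theorem~\ref{thm: generic regularity for sphere valued maps} (Brezis--Coron--Lieb for $n=3$), uniqueness of the minimizer to identify the limit, boundary regularity, and upper semicontinuity of density. The differences are minor: you prove the uniqueness step directly by gluing and unique continuation rather than citing Almgren--Lieb (Lemma~\ref{lem: generic uniqueness}), and for $n=3$ your approximation step is unnecessary, since Brezis--Coron--Lieb already applies to the singular points of $u_j$ itself.
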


Thus, a subsequential asymptotic model determines the entire map up to translation. Analogous rigidity questions for minimal hypersurfaces asymptotic to quadratic cones have been studied by L. Simon and B. Solomon \cite{SimonSolomon}, where the hypersurfaces are shown to be a translate of the corresponding cone or a leaf of Hardt-Simon foliation; see also N. Edelen and L. Spolaor \cite{EdelenSpolaorQuadraticcone} and L. Mazet \cite{MazetSimonsCones}. Related results for other variational problems appear in \cite{MillotPisante,EngelsteinRestrepoZhao}. We conjecture that Corollary \ref{cor: rigidity at infinity} holds for every $n\geqslant3$ and energy minimizing assumption is not necessary. 

\subsection{Proof strategy}

The deformation argument follows the framework developed by Y. Li and Z. Wang \cite{LiWangGenericRegularity} and A. Carlotto, Y. Li and Z. Wang \cite{CLWNon-persistence}, with the following major novelties:
\begin{itemize}
    \item To study deformations of stationary harmonic maps, we introduce a notion of stationarity for Jacobi fields and show that this condition is satisfied by Jacobi fields arising as limits of suitably normalized differences of stationary harmonic maps. To the best of our knowledge, the compatibility of Jacobi fields with the stationarity condition has not been explicitly addressed in previous work on singular harmonic maps.

    \item We compute the Fredholm index of the Jacobi operator through a direct decomposition that separates compact contributions and index-zero operators from the components that determine the index. Our argument relies only on standard tools from elliptic PDE theory and functional analysis, without invoking the general index formula of R. Lockhart and R. McOwen \cite{LockhartMcOwen} used in \cite{CLWNon-persistence}. This approach may also be useful in other geometric problems.

    \item We develop a refined local analysis of stationary harmonic maps and their Jacobi fields near isolated singularities, providing tools for the study of singular harmonic maps and their deformations.
\end{itemize}

We study harmonic maps with \emph{strongly isolated singularities}: at each singular point, at least one tangent map is $0$-homogeneous and smooth away from the origin. We also require regularity in a neighborhood of the boundary and refer to maps satisfying these conditions as \textbf{HSI}s. When the target is analytic, the tangent map at each strongly isolated singularity is unique. Our local analysis compares an \textbf{HSI} with its tangent maps and controls the differences between nearby \textbf{HSI}s after aligning their singular points.

Jacobi fields describe the infinitesimal behavior of nearby harmonic maps. In the presence of singularities, this relationship requires suitable control of the growth of the fields near the singular set. Lemma \ref{lem: induced Jacobi fields for pairs} makes this relationship precise by showing how Jacobi fields arise from suitably normalized differences between nearby \textbf{HSI}s with corresponding singular sets.

We denote by $\widehat{\ker}_{\tau}L_u$ and $\widehat{\operatorname{coker}}_{\tau}L_u$, respectively, the kernel and cokernel of the Jacobi operator $L_u$ on the relevant weighted spaces. The parameter $\tau$ specifies a lower bound on the growth rate near each singular point of $u$. The hat indicates that the domain of $L_u$ is augmented by translation-type fields, which account for the motion of singular points and may arise in the normalized convergence of \textbf{HSI}s. The kernel describes infinitesimal deformations, whereas the cokernel records obstructions to solving the linearized equation. We use these obstructions to select boundary perturbations that prevent singularities from persisting. More precisely, Theorem \ref{thm: local sard smale theorem} shows that, if
\[
\dim \widehat{\operatorname{coker}}_{\tau}L_u
>
\dim \widehat{\ker}_{\tau}L_u,
\]
then the boundary data can be perturbed in a direction determined by a suitable cokernel element so as to reduce this dimensional excess.

To apply this perturbation argument, we compute the augmented Fredholm index
\[
\widehat{\ind}_{\tau}L_u
:=
\dim \widehat{\ker}_{\tau}L_u
-
\dim \widehat{\operatorname{coker}}_{\tau}L_u.
\]
Theorem \ref{thm: count of index} localizes this index at the singular points, expressing it as a sum of contributions determined by the effective Morse indices of the tangent maps $h_{x_0}$, $x_0\in\sing u$. The corresponding index computation for smooth harmonic maps was carried out by R.~Hardt and L.~Mou \cite{HardMou}.

The key observation is that the regular part of the domain contributes no index. Indeed, on a smooth subdomain whose closure avoids $\sing u$, the Jacobi operator is elliptic with smooth coefficients. Standard elliptic theory shows that its Dirichlet realization is a compact perturbation of an isomorphism and therefore has index zero. Guided by this observation, we decompose the domain and range of $L_u$ as
\[
\bigoplus_{i=1}^{3}\mathscr{D}_i
\qquad\text{and}\qquad
\bigoplus_{i=1}^{3}\mathscr{R}_i.
\]
The first two pairs of summands are supported away from the singular set, and the corresponding diagonal blocks have index zero. We also show that all off-diagonal blocks are compact. Consequently, the entire index is carried by the second diagonal block, from $\mathscr{D}_2$ to $\mathscr{R}_2$, which is localized to small balls centered at the singular points of $u$.

On each such ball, quantitative convergence to the unique tangent map $h_{x_0}$ allows us to compare $L_u$ with the model operator $L_{h_{x_0}}$. After identifying the corresponding spaces, their difference is compact, so the local index agrees with that of the model operator. The latter can be computed explicitly from the Jacobi fields of $h_{x_0}$, yielding the contribution determined by its effective Morse index.

Combining the boundary perturbation argument with this index formula, we show that, for generic boundary data $\p\Omega\to M$, every \textbf{HSI} has vanishing total effective Morse index of its tangent maps. By the definition of the effective Morse index, the link of every tangent map at a singular point must therefore have Morse index $n$ when $n\geqslant4$, and have Morse index 0 when $n=3$.
\subsection{History and related results}
The study of regularity theory of harmonic maps begins with the seminal work of J. Eells and J. H. Sampson \cite{EellsSampson}, who prove that the homotopy classes between compact manifolds $M$ and manifold with nonpositive sectional curvature $N$ can be represented by smooth harmonic maps using harmonic map flow. S. Hildebrandt, H. Kaul and K. Widman \cite{HildebrandtKaulWidman} further show that weak harmonic maps are all smooth when the target manifold is negatively curved. This is a byproduct of the smoothness of harmonic maps whose images lie in a small geodesic ball. On the other hand, the work of C. B. Morrey \cite{Morrey}, R. Schoen \cite{SchoenAnalyticAspects} and F. H\'elein \cite{Helein} shows that weak harmonic maps from 2-dimensional domains are always smooth.

R. Schoen and K. Uhlenbeck \cite{SchoenUhlenbeckRegularity} initiate the study of partial regularity theory for harmonic maps from higher dimensional domains to general targets. They establish the monotonicity formula, $\epsilon$-regularity theorem as well as the strong compactness of energy minimizing maps. Combining these with the dimension reduction principle of H. Federer \cite{Federer}, they prove that energy minimizing maps are smooth away from a set of codimension at least 3.

However, the regularity for stationary harmonic maps is considerably more delicate. There are two parts of the difficulty. The first one is to derive the $\epsilon$-regularity theorem, which is obtained by L. C. Evans \cite{EvansSphere}, F. Bethuel \cite{Bethuel} generalized by T. Rivi\`ere and M. Struwe \cite{RiviereStruwe}. A further difficulty is that convergence of stationary harmonic maps need not be strong in $H^1$, even subsequentially. The resulting loss of compactness gives rise to the defect measures and complicates the dimension reduction argument. Consequently, the optimal estimate is that the singular set of a general stationary harmonic map has zero codimension 2 Hausdorff measure; see \cite{JiangLiYu} for recent development. See J. Jost \cite{Jost}, T. H. Parker \cite{ParkerBubbleTree}, F. Lin \cite{LinGradientEstimate}, F. Lin and T. Rivi\`ere \cite{LinRiviere}, A. Naber and D. Valtorta \cite{naber2024energyidentitystationaryharmonic} and the author's work \cite{li2025energy} on fine characterization of the loss of energy in this convergence.

In some cases the strong convergence of stationary harmonic maps can be guaranteed. The strong compactness of all stationary harmonic maps is ensured by the no harmonic two spheres assumption of the target which is shown by F. Lin \cite{LinGradientEstimate}, while D. Hsu \cite{Hsu} observes that the weaker assumption that the target admits no stable harmonic two sphere is enough for the strong compactness of stable stationary harmonic maps; see also D. Hsu and J. Li \cite{HsuLiregularity} and M. Karpukhin and D. Stern \cite{MikhailStern}. In either case, the singular sets of corresponding objects have codimension at least 3. The bound can be improved when we know the target has no higher dimensional corresponding harmonic tangent maps. For improvement of dimension bounds of singular sets of stable stationary harmonic maps, see for \cite{SchoenUhlenbeck,LinWangSphere,li2026optimalregularitystableharmonic} for improvement for sphere targets. 

Finally, our work fits into the broader program of generic regularity in geometric variational problems, which has seen substantial progress in recent years. A foundational contribution is the Hardt--Simon foliation \cite{HardtSimon}, which provides smoothings of regular area-minimizing hypercones by families of minimal hypersurfaces. For regular mean-convex cones that are not area-minimizing on the relevant side, Q. Ding \cite{DingExpander} shows that the corresponding self-expanding solutions to mean curvature flow are smooth. Z. Wang \cite{WangSmoothing} subsequently extends these smoothing constructions to cones that need not be regular, treating both the one-sided minimizing case and the non-minimizing, viscosity mean-convex case. A complementary result is B. White's bumpy metrics theorem \cite{WhiteBumpyMetric}, which establishes the generic nondegeneracy of closed smooth minimal submanifolds.

Beyond the conical setting, N. Smale \cite{SmaleGenericHomological} establishes generic regularity for homologically area-minimizing hypersurfaces in ambient dimension eight. This result is extended to dimension nine and ten by O. Chodosh, C. Mantoulidis, and F. Schulze \cite{ChodoshMantoulidisSchulze}. O. Chodosh, Y. Liokumovich, and L. Spolaor \cite{ChodoshLiokumovichSpolaor} develop a perturbation theory for one-parameter min-max hypersurfaces, obtaining generic smoothness in dimension eight under a positive Ricci curvature assumption. Y. Li and Z. Wang \cite{LiWangGenericRegularity} subsequently prove that, for generic metrics on closed eight-dimensional manifolds, every embedded locally stable minimal hypersurface is smooth and nondegenerate.

For the Plateau problem, O. Chodosh, C. Mantoulidis, and F. Schulze \cite{ChodoshMantoulidisSchulze} establish generic regularity in ambient dimensions nine and ten, and their joint work with Z. Wang \cite{chodosh2025generic} extends this result to ambient dimension eleven. In higher dimensions, sharper Hausdorff-dimension bounds for the singular sets of generic area-minimizing hypersurfaces are obtained in \cite{ChodoshMantoulidisSchulzeImprovedBound,chodosh2025generic}, while the author \cite{LiMinkowskiBound} establishes Minkowski-dimension bounds. Related generic regularity results for other variational problems can be found in \cite{FigalliRosotonSerra,XavierTorres,fernandez2023generic,fernandez2025linearized}.

\subsection{Organization of the paper}
In Section \ref{s: prelim} we introduce the necessary definitions and conventions of the paper. We also establish preliminary criteria regarding stationary harmonic maps and stationary Jacobi fields. In Section \ref{s: fredholm index} we calculate the index of Jacobi operator on weighted Sobolev spaces. We also construct a finite-dimensional subspace of boundary sections, of dimension equal to that of the cokernel, such that no nonzero element is the boundary trace of an admissible stationary Jacobi field. In Section \ref{s: meagerness} we establish the perturbation results. Finally, in Section \ref{s: proof of main results} we summarize how the perturbation results in Section \ref{s: meagerness} imply our main results.

Appendix \ref{a: asym rate} studies the control of asymptotic rates of Jacobi fields while Appendix \ref{a: quantitaive estimates} studies the quantitative uniqueness of tangent maps and quantitative estimates of differences of harmonic maps when they have the same density at a singular point.

\subsection*{Acknowledgment}
The author is very grateful to Prof. Xin Zhou and Prof. Daniel Stern for their continuous support and valuable guidance. The author also wants to thank Zhihan Wang for suggesting and interest in this problem, numerous fruitful discussions and pointing out the possibility to derive Corollary \ref{cor: rigidity at infinity}. The author is partly supported by NSF grant DMS-2404992 and Simons Dissertation Fellowship in Mathematics.

\subsection*{AI usage statement}
LLMs are only used for the reference search and to improve the exposition of the manuscript. 

\section{Preliminaries}\label{s: prelim}
In this section we recall several basic facts about harmonic maps and the analysis of their singular sets. We refer readers to F. Lin and C. Wang \cite{LinWangbook} for a comprehensive understanding of the subject, to L. C. Evans \cite{EvansPDE} and Q. Han and F. Lin \cite{LinHanBook} for the standard elliptic PDE estimates and to T. B\"uhler and D. A. Salamon \cite{TheoSalamon} for the basics of functional analysis. 

Given a map $u\in W^{1,2}(\Omega,M)$, recall that the energy of $u$ is defined as
$$E(u)=\frac{1}{2}\int_{\Omega}\vert\nabla u\vert^2\dif x.$$
Here and in the sequel, we denote by $\nabla$ the standard Euclidean connection. We say $u$ is a \textbf{stationary harmonic map}, if $u$ is a critical point of the energy functional. When $u$ is smooth, $u$ satisfies the following equation
$$\Delta u+A_u(\nabla u,\nabla u)=0,$$
where $A_u$ is the second fundamental form of $M$ at $u$. When $u$ is merely in $H^1(\Omega,M)$, it is stationary harmonic if and only if it weakly solves the equation above and satisfies the following stationarity identity
$$\int_{\Omega}\vert\nabla u\vert^2\operatorname{div}\xi\dif x=2\sum_{\alpha,\beta=1}^n\int_{\Omega}\langle\partial_{\alpha}u,\partial_{\beta}u\rangle\partial_{\alpha}\xi^{\beta}\dif x,\text{ for each }\xi\in C_c^{\infty}(\Omega,\R^n).$$

\subsection{Strong convergence assumption}
We make the following assumption for the convenience of the paper.
\begin{equation}
    \begin{aligned}
    &\text{Within the class of harmonic maps under consideration, every sequence with}\\
    &\text{uniformly bounded energy has a subsequence converging strongly in }H^1_{\loc}
    \end{aligned}
    \tag{S}\label{eq: strong convergence}
\end{equation}
As a result, given the compactness result of R. Schoen and K. Uhlenbeck and blow up analysis of F. Lin \cite{LinGradientEstimate} and D. Hsu \cite{Hsu}, we assume that, when we are studying
\begin{itemize}
    \item energy minimizing harmonic maps, there is no extra assumption on $M$;
    \item stable stationary harmonic maps, $M$ does not admit non-constant stable harmonic 2-sphere;
    \item stationary harmonic maps, $M$ does not admit non-constant harmonic 2-sphere.
\end{itemize}
Note that in the second and third settings, we automatically have that smooth maps are dense in $H^1(\Omega,M)$. This is because the existence result of J. Sacks and K. Uhlenbeck \cite{SacksUhlenbeck} implies that $\pi_2(M)=0$ under such assumptions, then the approximation result of F. Bethuel \cite{BethuelApproximation} applies.

\subsection{Singular sets of stationary harmonic maps}
Let us recall the definition of regular and singular sets.
\begin{defn}
    Let $u\in H^1(\Omega,M)$. We say $x_0\in\Omega$ is a regular point of $u$, if there exists an $r>0$ such that $u$ is smooth on $B_r(x_0)$. The \textbf{regular set} of $u$, denoted by $\reg u$, is the set of regular points of $u$. We set its complement $\sing u=\Omega\setminus \reg u$ to be the \textbf{singular set} of $u$.
\end{defn}
Under the assumption that all the weak $H^1$ convergence in the chosen class is always strong, by the regularity theory of R. Schoen and K. Uhlenbeck \cite{SchoenUhlenbeckRegularity}, we know that the singularity behavior of harmonic map is modeled on tangent maps.
\begin{defn}
    Let $u:\Omega\rightarrow M$ be a stationary harmonic map.
    \begin{enumerate}
        \item We say $h:\R^n\rightarrow M$ is a \textbf{0-homogeneous map}, if $h(\lambda x)=h(x)$ for each $\lambda>0$. We say $h$ is \textbf{regular} 0-homogeneous, if $h$ is also smooth away from $0$; In either case, we call $u=h|_{\p B_1}$ the \textbf{link} of $h$.
        \item Let $x_0\in\sing u$. We say $h:\R^n\rightarrow M$ is a tangent map of $u$ at $x_0$, if for some $r_j\rightarrow0$, $u(r_j\cdot+x_0)\rightarrow h$ strongly in $H^1_{\loc}(\R^n)$ as $j\rightarrow\infty$. Tangent maps are automatically 0-homogeneous by monotonicity formula.
        \item We say $x_0\in\sing u$ is \textbf{strongly isolated}, if some tangent map of $u$ at $x_0$ is regular 0-homogeneous. If $\sing u$ is empty or consists only of strongly isolated singular points and $u$ is smooth near $\p\Omega$, we call $u$ an \textbf{HSI} (\textbf{H}armonic map with only \textbf{S}trongly \textbf{I}solated singular points).
    \end{enumerate}
\end{defn}
Note that by a celebrated result of L. Simon \cite{SimonAsymptotics}, the tangent map of a stationary harmonic map $u$ at a strongly isolated singular point $p$ is unique, which allows us to denote this map by $h_p$. Also, \cite{SchoenUhlenbeckRegularity} shows strongly isolated singular points are isolated.

The following quantity is useful in the study of singular sets.
\begin{defn}
    The regularity scale $r_u:\Omega\rightarrow[0,1]$ of an \textbf{HSI} is defined as the largest number $0\leqslant r\leqslant1$ such that 
    $$r\vert\nabla u\vert+r^2\vert\nabla^2u\vert\leqslant1\text{ on }B_r(x)\cap\Omega.$$
    And set $r_u(x)=0$ for $x\in\sing u$.
\end{defn}

\subsection{Sobolev spaces with controlled growth rate near singular points}

When considering the variational theory of the energy functional at a map $u$, the variation vectors to be used are those tangent to $u$, i.e. lie in the tangent space of $M$ at $u$, as specified below. 

Given a map $X\in L^2_{\text{loc}}(\reg u,\R^N)$, we say $X$ is an $L^2_{\text{loc}}$ section of $u^*(TM)$, provided for a.e. $x\in\reg u$, $X(x)$ belongs to $T_{u(x)}M$, and we denote $X\in L^2_{\text{loc}}(\reg u,T_uM)$. Similarly, for $k\in\N$, we define the spaces $C^{k,\alpha}_c(\reg u,T_uM)$,  $C^{k,\alpha}_{\text{loc}}(\reg u,T_uM)$, $H^k_{\text{loc}}(\reg u,T_uM)$ and  $H^k(\Omega,T_uM)$ etc. Moreover, the following weighted Sobolev spaces will be frequently used in this paper.

Assume the singular set of an \textbf{HSI} $u$ is $\sing u=\lbrace x_i\rbrace_{i=1}^Q$. Let $r_0=r_0(u)>0$ be a number less than $\dist(\sing u,\p\Omega)/10$ and $\min_{i\ne i'}\dist (x_i,x_{i'})/10$. Given an index $\tau\in\R$, we denote by $\rho^{\tau}_u$ a positive continuous function on $\reg u$ which agrees with $\vert\cdot-x_i\vert^{\tau}$ on $B_{r_0}(x_i)$ for $1\leqslant i\leqslant Q$.
\begin{defn}
    Let $k\in\N$ and $\tau\in\R$. The weighted Sobolev space $H^{k;\tau}(\reg u,T_uM)$ consists of sections $X\in H^k_{\text{loc}}(\reg u,T_uM)$ satisfying
    $$\Vert X\Vert_{H^{k;\tau}(\reg u)}=\left(\sum_{j=0}^k\int_{\Omega}\vert\rho^{-\tau+j}_u\nabla^jX\vert^2\rho_u^{-n}\dif x\right)^{\frac{1}{2}}<\infty.$$
    Moreover, we set $H_0^{k;\tau}(\reg u,T_uM)$ to be the sections in $H^{k;\tau}(\reg u,T_uM)$ that vanish on $\p\Omega$ to order $k-1$. More specifically, given any cut-off function $\zeta$ with $\zeta=0$ near each $x_i$ and $\zeta=1$ near $\partial\Omega$, we require that $\zeta X\in H^k_0(\Omega,\R^N)$.

    Finally, we define $H^{-k;\tau}(\reg u,T_uM)$ to be the dual space of $H^{1;-\tau-n}_0\cap H^{k;-\tau-n}(\reg u,T_uM)$. 
\end{defn}
We need to augment these weighted Sobolev spaces by the effect of translations.
\begin{defn}
    Let $u:\Omega\rightarrow M$ be an \textbf{HSI}. We say that a section $\phi\in C^{\infty}_{\loc}(\bar\Omega\setminus\sing u,T_uM)$ is \textbf{translation-like}, if for every $x_0\in\sing u$, there exists a neighborhood $U$ of $x_0$, and a vector $v\in\R^n$ such that $$\phi|_U=\p_vu.$$
\end{defn}
We then set the space
$$\hat{H}^{k;\tau}(\reg u, T_uM)=\lbrace X\in H^k_{\text{loc}}(\reg u,T_uM):\Vert X-\phi\Vert_{H^{k;\tau}(\reg u)}<\infty\text{ for a translation-like }\phi\rbrace.$$
As in the previous paragraph, we set $\hat{H}^{k;\tau}_0(\reg u, T_uM)$ to be the sections in $\hat{H}^{k;\tau}(\reg u, T_uM)$ that vanish on $\p\Omega$ to order $k-1$.

By definition, when $\tau>-1$ we can write 
$$\hat{H}^{k;\tau}(\reg u, T_uM)=H^{k;\tau}(\reg u, T_uM)\oplus H_{\text{Tr}},$$ where $H_{\text{Tr}}$ is a finite dimensional linear space with basis
$$\lbrace\zeta_j\p_{\alpha}u:1\leqslant\alpha\leqslant n,1\leqslant j\leqslant Q\rbrace,$$
where $\zeta_j=1$ in $B_{r_0}(x_j)$ and $\zeta_j=0$ outside $B_{2r_0}(x_j)$. Hence $H_{\text{Tr}}$ has dimension $nQ$.

\subsection{Second variation of energy and the index of harmonic spheres}

The second variation of a stationary harmonic map $u$ is given by (see e.g. \cite[P. 37]{LinWangbook})
$$\delta^2E(u)(X)=-\int_{\Omega}\langle X,L_uX\rangle\dif x,$$
where $X\in C_c^{\infty}(\Omega,T_uM)$ and the Jacobi operator associated to $u$ is given by
$$L_u=\Delta+DA_u(\cdot,\nabla u,\nabla u)+2A_u(\nabla \cdot,\nabla u).$$
If $\delta^2E(u)(X)\geqslant0$ for all $X\in C_c^{\infty}(\Omega,T_uM)$, we say $u$ is a \textbf{stable} (stationary) harmonic map.

In the following we discuss a special case, where $h:\R^n\rightarrow M$ is a regular 0-homogeneous map. In this case, we denote the link of $h$ by $u=h|_{\p B_1}$, the Jacobi operator of $u$ takes the following form
$$L_u^{\Sp^{n-1}}=\Delta_{\Sp^{n-1}}+DA_u(\cdot,\nabla^{\Sp^{n-1}}u,\nabla^{\Sp^{n-1}}u)+2A_u(\nabla^{\Sp^{n-1}}\cdot,\nabla^{\Sp^{n-1}}u).$$
The spectrum of $L^{\Sp^{n-1}}_u$ is a discrete sequence $$\lambda_1\leqslant\lambda_2\leqslant\cdots\leqslant\lambda_k\leqslant\cdots\rightarrow\infty.$$
\begin{defn}\label{defn: effective Morse index}
    Let $h:\R^n\rightarrow M$ be a regular 0-homogeneous map and $u: \Sp^{n-1}\rightarrow M$ its link. We define the \textbf{effective Morse index} of $h$ as
    \begin{align*}
        I(h)=\begin{cases}
            (\text{Morse index of } L^{\Sp^{n-1}}_u)-n,&\text{ if }n-1\geqslant3;\\
            \text{Morse index of } L^{\Sp^{n-1}}_u,&\text{ if }n-1=2.
        \end{cases}
    \end{align*}
    Here the Morse index of $L^{\Sp^{n-1}}_u$ refers to the number of its negative eigenvalues, counted with multiplicity.
\end{defn}
By the calculation of Y. L. Xin \cite{Xin} and A. El Soufi \cite{ElSoufi}, we always have that $I(h)\geqslant0$ for non-constant $h$. Moreover, when the target is a sphere, we have the following improvement on this index estimate.

\begin{prop}[T. Nakajima\cite{NakajimaIndex} for $n=k$ and X. Li \cite{li2026optimalregularitystableharmonic} for $k>n$]\label{prop: index of sphere targets}
    Let $k\geqslant n\geqslant3$ and $u\in C^{\infty}(\Sp^n,\Sp^k)$ be a non-trivial harmonic map.
    \begin{enumerate}
        \item If $k=n$, and if $\ind u=n+1$, then $u(x)=\mathscr{O}(x/\vert x\vert)$ for some orthogonal matrix $\mathscr{O}\in O(k+1)$;
        \item If $k>n$, then $\ind u\geqslant n+2$.
    \end{enumerate}
\end{prop}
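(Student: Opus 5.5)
Since this proposition is quoted from T. Nakajima \cite{NakajimaIndex} and from \cite{li2026optimalregularitystableharmonic}, the paper itself may simply cite it. If I were to prove it directly, I would use the second-variation test-field method of Y. L. Xin \cite{Xin} and A. El Soufi \cite{ElSoufi}. I regard $u\colon\Sp^n\to\Sp^k\subset\R^{k+1}$ as a harmonic map and write $Q(X)=-\int\langle X,L_uX\rangle$ for its index form. There are two natural families of test sections along $u$.
\begin{enumerate}
\item \emph{Target family.} For each $a\in\R^{k+1}$, let $V_a=a-\langle a,u\rangle u$ be the projection of the constant vector $a$ onto $T_u\Sp^k$.
\item \emph{Domain family.} For each $b\in\R^{n+1}$, let $f_b(x)=\langle b,x\rangle$ be a first eigenfunction of $\Sp^n$, and set $W_b=du(\nabla f_b)$. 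These are the infinitesimal conformal deformations of the domain.
\end{enumerate}

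The first step is to recall the trace identities. For the target family, a direct computation using the Gauss equation of $\Sp^k$ gives
\[
\sum_{a} Q(V_a)=-(k-2)\int|du|^2 ,
\]
where the sum runs over an orthonormal basis of $\R^{k+1}$. For the domain family, the Bochner formula together with $\nabla^2 f_b=-f_b\,g$ gives
\[
\sum_b Q(W_b)=(2-n)\int|du|^2 .
\]
El Soufi's refinement shows that $Q$ is in fact negative definite on $\{W_b\}$ once $u$ is nonconstant and $n\geqslant3$. The second step is to show that $b\mapsto W_b$ is injective. Otherwise $u$ would be invariant under a conformal flow of $\Sp^n$, which forces $u$ to be constant. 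This gives $\ind u\geqslant n+1$.

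For part (2), where $k>n$, I would work on the combined space $\mathcal S=\operatorname{span}\{V_a,W_b\}$. Let $P$ be the $Q$-orthogonal projection onto the $Q$-complement of $\operatorname{span}\{W_b\}$. Suppose $\ind u=n+1$. Then $Q\geqslant0$ on $P(V_a)$ for every $a$. I would then compute $\sum_a Q(PV_a)=\sum_a Q(V_a)-(\text{correction})$, where the correction is a Gram-type expression built from the cross terms $Q(V_a,W_b)$. These cross terms can be evaluated by integrating by parts against $f_b$. The aim is the bound
\[
\sum_a Q(PV_a)\leqslant\bigl(-(k-2)+(n-2)\bigr)\int|du|^2 ,
\]
with a strictly negative coefficient when $k>n$. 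This would contradict $Q\geqslant0$ on the vectors $PV_a$, so $\ind u\geqslant n+2$.

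For part (1), where $k=n$ and $\ind u=n+1$, the same inequality must hold with equality. Tracing back the equality cases in the Cauchy--Schwarz steps should force the trace-free part of $u^*g_{\Sp^k}$ to vanish, so $u$ is weakly conformal. A harmonic conformal map between $n$-spheres with $n\geqslant3$ is homothetic by Liouville-type rigidity, and hence it is an isometry $x\mapsto\mathscr{O}x$. I expect the main obstacle to be the precise control of the cross terms $Q(V_a,W_b)$: the correction has to be bounded by exactly $(n-2)\int|du|^2$, with equality characterizing conformality. I would first try expressing these cross terms through the stress-energy tensor of $u$, which is divergence free by stationarity, and then apply a trace inequality to that tensor.
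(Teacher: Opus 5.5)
The paper does not prove this proposition. It imports part (1) from \cite{NakajimaIndex} and part (2) from \cite{li2026optimalregularitystableharmonic}, so your argument has to stand on its own, and it does not: the one estimate on which both parts rest is stated as ``the aim'' and never proved.

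Your set-up is correct. Both trace identities hold. The negative definiteness on $\operatorname{span}\{W_b\}$ is in fact exact: $W_b=\partial_b h|_{\Sp^n}$ for the cone $h(x)=u(x/|x|)$, i.e.\ $W_b$ is a translation Jacobi field of indicial rate $-1$. Hence $Q(W)=(2-n)\|W\|_{L^2}^2$ on that span, and $Q(V_a,W_b)=(2-n)\langle V_a,W_b\rangle_{L^2}$. It follows that the $Q$-complement of $\operatorname{span}\{W_b\}$ is its $L^2$-complement, so $P=\mathrm{id}-\pi_W$ with $\pi_W$ the $L^2$-projection onto $\operatorname{span}\{W_b\}$, and
\[
\sum_a Q(PV_a)=-(k-2)\int|du|^2+(n-2)\sum_a\|\pi_WV_a\|_{L^2}^2 .
\]
Everything therefore hinges on $\sum_a\|\pi_WV_a\|_{L^2}^2\leqslant\int|du|^2$, with equality only for conformal $u$.

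That inequality is not bookkeeping. Integrating by parts gives $\langle V_a,W_b\rangle_{L^2}=n\langle a,\int_{\Sp^n}u\,f_b\rangle$. So what you need is $n^2\operatorname{tr}(AG^{-1}A^{T})\leqslant\int|du|^2$, where $A=\int_{\Sp^n}u\otimes x$ and $G_{bb'}=\langle W_b,W_{b'}\rangle_{L^2}$.

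Testing the stress-energy tensor against $f_b\nabla f_{b'}$ determines $G$ exactly: $2G=\int|du|^2\,(I-(n-1)\,x\otimes x)$. It controls the first moments $A$ of $u$ only through Cauchy--Schwarz, $n^2A^{T}A\leqslant|\Sp^n|\,G$. The bounds available for free are therefore $\sum_a\|\pi_WV_a\|^2\leqslant(n+1)|\Sp^n|$ from Cauchy--Schwarz, or $\leqslant\int_{\Sp^n}\operatorname{rank}(du)\leqslant n|\Sp^n|$ from enlarging $\operatorname{span}\{W_b\}$ to all sections with values in $\operatorname{Im}du$. These are bounds in terms of $|\Sp^n|$, not $\int|du|^2$.

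Turning them into your inequality would need a sharp energy lower bound for every nonconstant harmonic map $\Sp^n\to\Sp^k$: $\int|du|^2\geqslant n|\Sp^n|$, or for (2) at least $\int|du|^2>n(n-2)|\Sp^n|/(k-2)$. That is a separate global statement you do not address.

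For (1), no Cauchy--Schwarz step has actually been written down, so ``tracing back the equality cases'' has no content and the passage to weak conformality is unsupported. The Liouville step after it is fine. A more concrete handle on (1): if $\operatorname{ind}u=n+1$ and all $Q(PV_a)=0$, then each $PV_a$ is a genuine Jacobi field. This gives the pointwise identity $(|du|^2-2\,du\,du^{T})V_a=(n-2)\,\pi_WV_a$ for every $a$, which is a better starting point for rigidity than the trace alone.
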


\subsection{Stationarity condition for Jacobi fields}
Given a stationary harmonic map $u$, we want to describe the neighborhood of $u$ in the space of stationary harmonic maps. The major tool used for this description is the \textbf{Jacobi field} of $u$, which models the normalized convergence of a sequence of harmonic maps to $u$; see Lemma \ref{lem: induced Jacobi fields for pairs} for more details. 

In order to depict the normalized convergence of \emph{stationary} harmonic maps, we introduce the following condition.
\begin{defn}\label{defn: stationary jaocbi fields}
    Let $u:\Omega\rightarrow M$ be an \textbf{HSI} and $X\in H^1_{\loc}(\reg u,T_uM)$.
    \begin{enumerate}
        \item We say $X$ is a \textbf{Jacobi field} of $u$, if $X$ solves $L_uX=0$ in the distributional sense on $\reg u$;
        \item We say $X$ is a \textbf{stationary Jacobi field} of $u$ if for all $\xi\in C_c^{\infty}(\Omega,\R^n)$ that is a constant near each singularity of $u$,
        $$\int_{\Omega}\langle\nabla u,\nabla X\rangle\operatorname{div}\xi\dif x=\sum_{\alpha,\beta=1}^n\int_{\Omega}(\langle\p_{\alpha}u,\p_{\beta}X\rangle+\langle\p_{\beta} u,\p_{\alpha}X\rangle)\p_{\alpha}\xi^{\beta}\dif x.$$
    \end{enumerate}
\end{defn}

We note that if the singular set of a harmonic map is small in the sense of dimension, then the stationary condition is automatic. For simplicity we only state the following property for harmonic maps with isolated singularities, but they can be easily generalized to harmonic maps with $(n-4)$-rectifiable singular sets.

\begin{prop}\label{prop: stationarity for n greater than 3}
    Let $u:\Omega\rightarrow M$ be a weak harmonic map with only isolated singularities and $X\in H^{1;\tau}(\reg u,T_uM)$ be a Jacobi field of $u$, where $\Omega$ is a smooth bounded domain in $\R^n$ and $\tau\geqslant-1$.
    
    Assume $n\geqslant4$, then both $u$ and $X$ are stationary.
\end{prop}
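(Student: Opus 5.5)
The plan is to reduce both stationarity identities to flux computations for divergence-free tensors on small spheres around the singular points, and to show these fluxes vanish because $n\geqslant4$. Write $T_{\alpha\beta}=|\nabla u|^2\delta_{\alpha\beta}-2\langle\p_\alpha u,\p_\beta u\rangle$ for the energy–momentum tensor of $u$. For $X$ use its linearization
\[
T'_{\alpha\beta}=2\langle\nabla u,\nabla X\rangle\delta_{\alpha\beta}-2\bigl(\langle\p_\alpha u,\p_\beta X\rangle+\langle\p_\beta u,\p_\alpha X\rangle\bigr).
\]
Stationarity of $u$ means $\int T:\nabla\xi=0$. Stationarity of $X$ means $\int T':\nabla\xi=0$ for $\xi$ constant near each singular point.

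\emph{Step 1: pointwise divergence-free on $\reg u$.} On $\reg u$ the map $u$ is smooth and harmonic. The standard computation gives $\p_\alpha T_{\alpha\beta}=-2\langle\p_\beta u,\Delta u\rangle=2\langle \p_\beta u,A_u(\nabla u,\nabla u)\rangle=0$, since $A_u$ is normal and $\p_\beta u$ is tangent. Differentiating this identity along the variation defined by $X$ gives $\p_\alpha T'_{\alpha\beta}=0$ on $\reg u$. Concretely, one expands and uses $L_uX=0$, the harmonic map equation, and the tangency relations $\langle X,A_u(\cdot,\cdot)\rangle=0$ and $\langle\p u,A_u\rangle=0$ together with their derivatives. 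Interior elliptic regularity makes $X$ smooth on $\reg u$, so this is a classical identity there. I expect this bookkeeping to be routine but the most calculation-heavy part.

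\emph{Step 2: reduction to fluxes.} Fix $\xi$ and a singular point $x_i$, and take $\varepsilon<r_0$.
\begin{itemize}
\item[(a)] If $\xi\equiv v_i$ on $B_{2\varepsilon}(x_i)$, integrate by parts on $\Omega\setminus\bigcup_iB_\varepsilon(x_i)$. Step 1 kills the interior term and $\xi$ has compact support, so
\[
\int_\Omega T':\nabla\xi=\sum_i\int_{\p B_\varepsilon(x_i)}T'(\nu)\cdot v_i .
\]
By the divergence theorem on annuli, each flux $F_i=\int_{\p B_s(x_i)}T'(\nu)\,\dif\sigma$ is independent of $s\in(0,r_0)$.
\item[(b)] Averaging over $s\in(\varepsilon,2\varepsilon)$ gives
\[
|F_i|\leqslant C\varepsilon^{-1}\int_{A_\varepsilon}|\nabla u||\nabla X|,\qquad A_\varepsilon=B_{2\varepsilon}(x_i)\setminus B_\varepsilon(x_i).
\]
\item[(c)] The same reasoning applied to $T$ gives fluxes $G_i$ with $|G_i|\leqslant C\varepsilon^{-1}\int_{A_\varepsilon}|\nabla u|^2$. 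For the stationarity of $u$ with arbitrary $\xi\in C^\infty_c(\Omega,\R^n)$, decompose $\xi=\psi_\varepsilon\xi+(1-\psi_\varepsilon)\xi$. Here $\psi_\varepsilon$ cuts off $B_\varepsilon(x_i)$, and one replaces $\xi$ near $x_i$ by the constant $\xi(x_i)$, which costs at most $\int_{B_{2\varepsilon}}|T|\,|\nabla(\xi-\xi(x_i))\psi|\leqslant C\int_{B_{2\varepsilon}}|\nabla u|^2\to0$. The problem then reduces to showing $G_i=0$.
\end{itemize}

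\emph{Step 3: decay estimates, where $n\geqslant4$ enters.} This is the main obstacle. For $u$, I would first establish the scale-invariant bound $|\nabla u|(x)\leqslant C|x-x_i|^{-1}$ near an isolated singularity. For an \textbf{HSI} this follows from the regularity scale, or from $\varepsilon$-regularity plus smooth convergence to the regular tangent map away from the origin. Given that bound, $\varepsilon^{-1}\int_{A_\varepsilon}|\nabla u|^2\leqslant C\varepsilon^{n-3}\to0$ for $n\geqslant4$, so $G_i=0$ and $u$ is stationary. If the bound is not available for a merely weak harmonic map, my fallback is a pigeonhole over dyadic annuli. I would combine $\nabla u\in L^2$ with the fact that $G_i$ is scale independent to pick $\varepsilon_k\to0$ along which the average vanishes. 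The point to check is that the weaker information still forces $G_i=0$, using $n-3>0$.

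For $X$, the weighted norm with $\tau\geqslant-1$ gives
\[
\int_{A_\varepsilon}|\nabla X|^2\leqslant C\varepsilon^{n+2\tau-2}\int_{A_\varepsilon}|\rho_u^{-\tau+1}\nabla X|^2\rho_u^{-n}\leqslant C\varepsilon^{n-4}\,o(1),
\]
where the $o(1)$ is the tail of a convergent integral. Together with $\int_{A_\varepsilon}|\nabla u|^2\leqslant C\varepsilon^{n-2}$, Cauchy–Schwarz bounds $|F_i|$ by $C\varepsilon^{-1}\varepsilon^{(n-2)/2}\varepsilon^{(n-4)/2}o(1)=C\varepsilon^{n-4}o(1)\to0$ for $n\geqslant4$. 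Hence $F_i=0$ and $X$ is a stationary Jacobi field.
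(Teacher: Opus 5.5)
Your reduction to the $s$-independent fluxes $F_i,G_i$ is sound, and your bound on $F_i$ is essentially the paper's estimate for $X$. The gap is in Step 3 for $u$, and it carries over to $X$.

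The statement concerns a weak harmonic map with isolated singularities, so you cannot use the \textbf{HSI} structure. \textbf{HSI}s are stationary by definition. Moreover, the bound $|\nabla u|\leqslant C|x-x_i|^{-1}$ obtained via $\varepsilon$-regularity and convergence to tangent maps rests on the monotonicity formula, which is itself a consequence of the stationarity you are trying to prove.

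Your fallback does not close this gap. From $\nabla u\in L^2$ and the $s$-independence of $G_i$ you only get $|G_i|\leqslant Cr^{-1}\int_{B_r(x_i)}|\nabla u|^2$. Square integrability alone does not make this tend to zero: a density $|\nabla u|^2\sim|x-x_i|^{1-n}$ is integrable yet gives a constant. So no choice of $\varepsilon_k$ helps, and $n$ never enters the argument. The same missing decay $\int_{A_\varepsilon}|\nabla u|^2\leqslant C\varepsilon^{n-2}$ is what you use to bound $F_i$, so the $X$ part is incomplete as well.

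The missing idea is to handle the radial field $\xi=(x-x_i)\eta$ before the constant ones. For this field the cut-off error is controlled by $\nabla u\in L^2$ alone. In your language, on $\operatorname{reg}u$ one has $\partial_\alpha\bigl(T_{\alpha\beta}(x-x_i)^\beta\bigr)=(n-2)|\nabla u|^2$. The inner boundary term satisfies $\bigl|\int_{\partial B_s(x_i)}T(\nu)\cdot(x-x_i)\bigr|\leqslant Cs\int_{\partial B_s(x_i)}|\nabla u|^2$. Its average over $s\in(\varepsilon,2\varepsilon)$ is at most $C\int_{A_\varepsilon}|\nabla u|^2\to0$, because the weight $|x-x_i|$ supplies exactly the factor $\varepsilon$ that the constant field lacks.

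This yields $s\int_{\partial B_s(x_i)}(|\nabla u|^2-2|\partial_r u|^2)=(n-2)\int_{B_s(x_i)}|\nabla u|^2$. That is the monotonicity of $s^{2-n}\int_{B_s(x_i)}|\nabla u|^2$, hence $\int_{B_r(x_i)}|\nabla u|^2\leqslant Cr^{n-2}$. With this in hand, $|G_i|\leqslant C\varepsilon^{n-3}\to0$, and your estimate for $F_i$ goes through exactly as written. This is precisely the Claim inside the paper's proof, which precedes its $r^{n-3}$ and $r^{2n-6+2\tau}$ estimates.
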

\begin{proof}
    Stationarity of $u$ is well-known, we recall the proof here for completeness. Set the stress energy tensor $S_{\alpha\beta}=\vert\nabla u\vert^2\delta_{\alpha\beta}-2\langle\p_{\alpha} u,\p_{\beta}u\rangle$, we calculate on $\reg u$,
    \begin{equation}
        \sum_{\alpha=1}^n\p_{\alpha}S_{\alpha\beta}=\p_{\beta}\vert\nabla u\vert^2-2\langle\Delta u,\p_{\beta}u\rangle-2\sum_{\alpha=1}^n\langle\p_{\alpha}u,\p^2_{\alpha\beta}u\rangle=-2\langle\Delta u,\p_{\beta}u\rangle\label{eq: divergence of stress energy tensor}.
    \end{equation}
    Note that this calculation is valid for arbitrary smooth maps, and in particular when $u$ is harmonic, we have that $\sum_{\alpha}\p_{\alpha}S_{\alpha\beta}=0$. Take $\xi\in C_c^{\infty}(\reg u,\R^n)$. By multiplying $\xi^{\beta}$, summing for $\beta\in\lbrace1,\dots,n\rbrace$ and integrating by parts, we deduce the stationary equation for such $\xi$ supported in $\reg u$.
    For a general $\xi\in C_{c}^{\infty}(\Omega,\R^n)$, for small $r$ take a cutoff function $\zeta_r\in C_c^{\infty}(B_r(\sing u))$ such that $\zeta_r\in[0,1],\zeta_r=1$ on $B_{r/2}(\sing u)$ and $\vert\nabla\zeta_r\vert\leqslant C/r$ on $B_r(\sing u)\setminus B_{r/2}(\sing u)$. Since $\xi(1-\zeta_r)$ is supported in $\reg u$, we have
    \begin{equation}
        \begin{aligned}
            0=&\int_{\Omega}\left(\vert\nabla u\vert^2\operatorname{div}(\xi(1-\zeta_r))-2\sum_{\alpha,\beta=1}^n\langle\p_{\alpha}u,\p_{\beta}u\rangle\p_{\alpha}(\xi^{\beta}(1-\zeta_r))\right)\dif x\\=&\int_{\Omega}\left(\vert\nabla u\vert^2\operatorname{div}\xi-2\sum_{\alpha,\beta=1}^n\langle\p_{\alpha}u,\p_{\beta}u\rangle\p_{\alpha}\xi^{\beta}\right)(1-\zeta_r)\dif x+R(r).
        \end{aligned}\label{eq: stationary equation of harmonic on regular part}
    \end{equation}
    \begin{claim}
        For each $x_0\in\sing u$ and $r$ sufficiently small, $r^{2-n}\int_{B_r(x_0)}\vert\nabla u\vert^2\dif x$ is monotone non-decreasing.
    \end{claim}
    \begin{proof}
    In order to prove the claim, choose $\xi(x)=(x-x_0)\eta(\vert x-x_0\vert)$ in \eqref{eq: stationary equation of harmonic on regular part}, where $\eta$ is a cutoff function such that $\eta=1$ on $B_{r_0}(x_0)$ and $\eta=0$ on $B_{2r_0}(x_0)$ and $r_0$ is a small number such that $\sing u\cap B_{r_0}(x_0)=\lbrace x_0\rbrace$. In this case, since $\vert\xi\vert\leqslant r$ on $\operatorname{spt}\nabla\zeta\cap B_{r_0}(x_0)= A_{r/2,r}(x_0)$, the remaining term in \eqref{eq: stationary equation of harmonic on regular part} is controlled by
    $$\vert R(r)\vert\leqslant C\int_{\Omega}\vert\nabla u\vert^2\vert\nabla\zeta_r\vert\vert\xi\vert\dif x\leqslant C\int_{A_{r/2,r}(x_0)}\vert\nabla u\vert^2\dif x\rightarrow0\text{ as }r\rightarrow0.$$
    We see there holds
    $$\int_{\Omega}((n-2)\eta(\vert x-x_0\vert)\vert\nabla u\vert^2+\vert x-x_0\vert\eta'(\vert x-x_0\vert)(\vert\nabla u\vert^2-2\vert\nabla_{(x-x_0)/\vert x-x_0\vert}u\vert^2))\dif x=0.$$
    We can then derive the monotonicity formula of $u$ on $B_{r_0}(x_0)$ as usual, see for example \cite{LinWangbook}, to get the claim.   
    \end{proof}
    In particular, $r^{2-n}\int_{B_r(x_0)}\vert\nabla u\vert^2\dif x$ is bounded by a constant that may depend on $u,\Omega$ and $n$. For a general $\xi\in C_c^{\infty}(\Omega,\R^n)$, the remaining term in \eqref{eq: stationary equation of harmonic on regular part} is controlled by
    $$\vert R(r)\vert\leqslant C\int_{\Omega}\vert\nabla u\vert^2\vert\nabla\zeta_r\vert\vert\xi\vert\dif x\leqslant C\sup\vert\xi\vert r^{-1}\int_{B_{r}(\sing u)}\vert\nabla u\vert^2\dif x\leqslant Cr^{n-3}\rightarrow0\text{ as }r\rightarrow0.$$
    Hence, taking $r\rightarrow0$ in \eqref{eq: stationary equation of harmonic on regular part}, we see that $u$ is stationary.

    The stationarity of $X$ follows a similar route. Standard elliptic regularity shows that $X$ is smooth on $\reg u$. Consider a family of maps $(u_t)|_{t\in(-\epsilon,\epsilon)}$ with $u_0=u$ and $\p_t u_t|_{t=0}=X$ on $\reg u$. Differentiating of \eqref{eq: divergence of stress energy tensor} for $u_t$ at $t=0$, we see that the linearization of the stress energy tensor $$T_{\alpha\beta}=\langle\nabla u,\nabla X\rangle\delta_{\alpha,\beta}-\langle\p_{\alpha}u,\p_{\beta}X\rangle-\langle\p_{\beta}u,\p_{\alpha}X\rangle,$$
    satisfies
    \begin{equation}
        \sum_{\alpha=1}^n\p_{\alpha}T_{\alpha\beta}=-\frac{\textup{d}}{\textup{d}t}\langle\tau(u_t),\p_{\beta}u_t\rangle|_{t=0}=-\langle L_uX,\p_{\beta} u\rangle-\langle\tau(u),\p_{\beta}X\rangle,\label{eq: divergence of stress-energy tensor of section}
    \end{equation}
    where $\tau(u)=\Delta u+A_u(\nabla u,\nabla u)=\pi_{T_uM}\Delta u$ is the tension field of $u$, whose linearization is $L_u$. In particular, since $u$ is harmonic and $X$ is a Jacobi field of $u$, we have that $\sum_{\alpha=1}^n\p_{\alpha}T_{\alpha\beta}=0$.
    Hence, for arbitrary $\xi\in C_c^{\infty}(\Omega,\R^n)$ that is constant near each singularity of $u$, take $\zeta_r$ as before. Since $\xi^{\beta}(1-\zeta_r)$ is constant near each singularity of $u$, we can multiply \eqref{eq: divergence of stress-energy tensor of section} by it, sum for $\beta$ and integrate by parts
    \begin{equation}
        \int_{\Omega}\left(\langle\nabla u,\nabla X\rangle\operatorname{div}\xi-\sum_{\alpha,\beta=1}^n(\langle\p_{\alpha} u,\p_{\beta}X\rangle+\langle\p_{\beta}u,\p_{\alpha}X\rangle)\p_{\alpha}\xi^{\beta}\right)(1-\zeta_r)\dif x=R(r),\label{eq: stationary equation of field on regular part}
    \end{equation}
    where the remaining term is bounded by
    \begin{align*}
        \vert R(r)\vert^2\leqslant &C(\xi)\int_{B_r(\sing u)}\vert\nabla u\vert^2\dif x\int_{B_r(\sing u)\setminus B_{r/2}(\sing u)}\vert\nabla X\vert^2\vert\nabla\zeta_r\vert^2\dif x\\\leqslant& C r^{n-4}\int_{B_r(\sing u)\setminus B_{r/2}(\sing u)}\vert\nabla X\vert^2\dif x\\\leqslant& Cr^{2n-6+2\tau}\int_{B_r(\sing u)\setminus B_{r/2}(\sing u)}\vert\nabla X\vert^2\rho_u^{-n+2-2\tau}\dif x\rightarrow0\text{ as }r\rightarrow0,
    \end{align*}
    where to get the limit we used the integrability of $\int_{\Omega}\vert\nabla X\vert^2\rho_u^{-n+2-2\tau}\dif x<\infty$ and the fact that $2n-6+2\tau\geqslant0$. Hence, taking $r\rightarrow0$ in \eqref{eq: stationary equation of field on regular part}, we see that $X$ is stationary. 
\end{proof}

When $n=3$, we have the following characterization of the stationarity of homogeneous harmonic maps and their Jacobi fields.

\begin{prop}\label{prop: stationarity when n=3}
    Let $h:\R^3\rightarrow M$ be a regular 0-homogeneous map that is weakly harmonic, $u=h|_{\p B_1}$ and  $X(r\omega)=r^{\gamma}Y(\omega)$ be a Jacobi field of $h$. Here $r=\vert x\vert,\omega=x/r$ are the polar coordinates on $\R^3$. Then,
    \begin{enumerate}
        \item $h$ is stationary if and only if
        $$\int_{\Sp^2}\vert\nabla u\vert^2\omega\dif\omega=0;$$
        \item in the case where $h$ is stationary, if $\gamma\ne0$, $X$ is stationary. If $\gamma=0$, $X$ is stationary if and only if $$\int_{\Sp^2}\langle \nabla Y,\nabla u\rangle\omega\dif\omega=0.$$ 
    \end{enumerate}
\end{prop}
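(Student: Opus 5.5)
The plan is to reuse the cutoff argument from the proof of Proposition \ref{prop: stationarity for n greater than 3}. On $\R^3\setminus\{0\}$ both $h$ and $X$ are smooth, and by \eqref{eq: divergence of stress energy tensor} and \eqref{eq: divergence of stress-energy tensor of section} the tensors $S_{\alpha\beta}$ and $T_{\alpha\beta}$ are divergence free there. For a test field $\xi$, I multiply by $\xi^\beta(1-\zeta_r)$ and integrate by parts. Since the tensors are divergence free away from $0$, the defect equals the flux through small spheres. Taking $\zeta_r$ to be a sharp cutoff (or passing to the limit), the error term becomes
\[
R(r)=\int_{\p B_r}S_{\alpha\beta}\,\xi^\beta\,\frac{x_\alpha}{r}\,\dif\sigma ,
\]
with the analogous expression for $T$. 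Stationarity is then equivalent to $R(r)\to0$ as $r\to0$ for every admissible $\xi$.

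For part (1), I split $\xi=\xi(0)+O(|x|)$. The $O(|x|)$ part contributes at most $Cr\cdot r^{-2}\cdot r^2=O(r)$, since $|\nabla h|^2\sim r^{-2}$ and the area of $\p B_r$ is $\sim r^2$. The constant part contributes $\xi^\beta(0)\int_{\p B_r}S_{\alpha\beta}\omega_\alpha\dif\sigma$. By $0$-homogeneity this is independent of $r$, and it can be evaluated on $\Sp^2$. There $\p_r h=0$, so $S_{\alpha\beta}\omega_\alpha=|\nabla u|^2\omega_\beta-2\langle\p_r h,\p_\beta h\rangle=|\nabla u|^2\omega_\beta$. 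Hence the flux is $\xi(0)\cdot\int_{\Sp^2}|\nabla u|^2\omega\dif\omega$. This vanishes for all $\xi(0)$ exactly when the stated condition holds, which gives the equivalence. Note that $T$, and hence $S$, are integrable near $0$, so the limiting weak identity makes sense.

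For part (2), $\xi$ is constant near $0$, so only the constant-vector flux $v_\beta\int_{\p B_r}T_{\alpha\beta}\omega_\alpha\dif\sigma$ matters. By divergence-freeness this quantity is independent of $r$, so it must vanish exactly when stationarity holds. Using $\p_r h=0$, I get
\[
T_{\alpha\beta}\omega_\alpha=\langle\nabla h,\nabla X\rangle\omega_\beta-\langle\p_\beta h,\p_rX\rangle .
\]
By homogeneity $\p_r X=\gamma r^{\gamma-1}Y$ and $\langle\nabla h,\nabla X\rangle=r^{\gamma-2}\langle\nabla u,\nabla Y\rangle$, while $\p_\beta h$ is tangential of size $r^{-1}$. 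So the flux equals $r^{\gamma}\,F(v)$, where $F(v)$ is a fixed integral over $\Sp^2$. Since the flux is constant in $r$, we must have $F(v)=0$ whenever $\gamma\ne0$, which makes $X$ automatically stationary.

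When $\gamma=0$ we have $\p_rX=0$, so the flux reduces to $v\cdot\int_{\Sp^2}\langle\nabla Y,\nabla u\rangle\omega\dif\omega$, which yields the stated criterion. One also needs the integrability of $\nabla X\cdot\nabla h\sim r^{\gamma-2}$ near $0$ in $\R^3$ for the stationarity identity of Definition \ref{defn: stationary jaocbi fields} to make sense. This requires $\gamma>-1$, which holds because $X\in H^1_{\loc}$. The main subtlety, and the step to check most carefully, is the $r$-independence argument for $\gamma\neq0$, and in particular that divergence-freeness of $T$ on the annulus legitimately forces the scaling factor $r^\gamma$ to kill $F$.
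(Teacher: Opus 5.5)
Your argument is correct and is essentially the paper's proof: integrate the divergence identities for $S$ and $T$ over $\R^3\setminus B_\epsilon$, compute the boundary flux using $\partial_r h=0$ and homogeneity, and read off the conclusion from how the flux depends on $\epsilon$. Using the $r$-independence of the flux directly also lets you skip the paper's integration by parts on $\Sp^2$, which produces the factor $(1+\gamma)$.

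You should drop your closing remark, because it is both unnecessary and wrong. Since $\xi$ is constant near $0$, the integrand of the stationarity identity for $X$ vanishes near $0$, so no integrability condition and no restriction $\gamma>-1$ is needed. Moreover, $X\in H^1_{\loc}(\reg u)$ would not give $\gamma>-1$ anyway. The statement must also cover $\gamma\leqslant-1$, for example the translation fields $\partial_v h$, which have $\gamma=-1$.
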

\begin{proof}
    (1) is well known; see for example W. Ding, J. Li and W. Li\cite{DingLiLi}. For completeness, we include the proof of it here. For any $\xi\in C_c^{\infty}(\R^3,\R^3)$, multiply \eqref{eq: divergence of stress energy tensor} by $\xi^{\beta}$, sum for $\beta$ and integrate on $\R^3\setminus B_{\epsilon}$ gives
    $$\int_{\R^3\setminus B_{\epsilon}}\left(\vert\nabla h\vert^2\operatorname{div}\xi-2\sum_{\alpha,\beta=1}^3\langle\p_{\alpha}h,\p_{\beta}h\rangle\p_{\alpha}\xi^{\beta}\right)\dif x=-\frac{1}{\epsilon}\int_{\p B_{\epsilon}}\vert\nabla h\vert^2\langle\xi,x\rangle\dif\sigma_{\p B_{\epsilon}},$$
    where $x/r$ is the outer normal at $x\in\p B_r$ and we use the fact that $\sum_{\alpha=1}^3x_{\alpha}\p_{\alpha}h=r\p_rh=0$. Let us change the variable $x=\epsilon\omega,\omega\in\Sp^2$. Since $\nabla h(r\omega)=\nabla u(\omega)/r$ and $\dif\sigma_{\p B_{\epsilon}}=\epsilon^2\dif\omega$, we have that 
    $$\frac{1}{\epsilon}\int_{\p B_{\epsilon}}\vert\nabla h\vert^2\langle\xi,x\rangle\dif\sigma_{\p B_{\epsilon}}=\int_{\Sp^2}\vert\nabla^{\Sp^2} u\vert^2\langle\xi(\epsilon\omega),\omega\rangle\dif\omega\rightarrow\int_{\Sp^2}\vert\nabla^{\Sp^2} u\vert^2\langle\xi(0),\omega\rangle\dif\omega\text{ as }\epsilon\rightarrow0.$$
    $h$ being stationary is equivalent to above equation vanishing for any $\xi(0)$, which is equivalent to the conclusion of (1).

    The proof of (2) is similar. Assume $\xi$ is constant near 0. We multiply \eqref{eq: divergence of stress-energy tensor of section} by $\xi^{\beta}$, sum for $\beta$ and integrate on $\R^3\setminus B_{\epsilon}$. We deduce
    \begin{align*}
        &\int_{\R^3\setminus B_{\epsilon}}\left(\langle\nabla h,\nabla  X\rangle\operatorname{div}\xi-\sum_{\alpha,\beta=1}^3(\langle\p_{\alpha} h,\p_{\beta}X\rangle+\langle\p_{\beta}h,\p_{\alpha}X\rangle)\p_{\alpha}\xi^{\beta}\right)\dif x\\=&-\frac{1}{\epsilon}\int_{\p B_{\epsilon}}\left(\langle\nabla h,\nabla X\rangle\langle \xi,x\rangle-\sum_{\alpha,\beta=1}^3\langle\p_{\beta}h,\p_{\alpha}X\rangle\xi^{\beta}x_{\alpha}\right)\dif\sigma_{\p B_{\epsilon}}\\=&-\epsilon^{\gamma}\int_{\Sp^2}\left(\langle\nabla^{\Sp^2}u,\nabla^{\Sp^2}Y\rangle\langle\xi(0),\omega\rangle-\gamma\langle\nabla^{\Sp^2}_{\pi_{T_{\omega}\Sp^2}\xi(0)}u,Y(\omega)\rangle\right)\dif\omega.
    \end{align*}
    where we used the fact that on $\p B_{\epsilon}$, $\sum_{\alpha=1}^3x_{\alpha}\p_{\alpha}X=\epsilon\p_rX=\gamma\epsilon^{\gamma}Y$ and $\xi(\epsilon\cdot)=\xi(0)$ for small $\epsilon$.
    Note that if we integrate by parts on $\Sp^2$
    \begin{align*}
        \int_{\Sp^2}\langle\nabla^{\Sp^2}u,\nabla^{\Sp^2}Y\rangle\langle\xi(0),\omega\rangle\dif\omega=-\int_{\Sp^2}\langle\nabla^{\Sp^2}_{\pi_{T_{\omega}\Sp^2}\xi(0)}u,Y\rangle\dif\omega.
    \end{align*}
    Hence 
    \begin{align*}
        &\int_{\R^3\setminus B_{\epsilon}}\left(\langle\nabla h,\nabla  X\rangle\operatorname{div}\xi-\sum_{\alpha,\beta=1}^3(\langle\p_{\alpha} h,\p_{\beta}X\rangle+\langle\p_{\beta}h,\p_{\alpha}X\rangle)\p_{\alpha}\xi^{\beta}\right)\dif x\\&=-\epsilon^{\gamma}(1+\gamma)\int_{\Sp^2}\langle\nabla^{\Sp^2}u,\nabla^{\Sp^2}Y\rangle\langle\xi(0),\omega\rangle\dif\omega.
    \end{align*}
    Let $\epsilon\rightarrow0$. If $\gamma>0$, RHS converges to 0 which means $X$ is stationary. If $\gamma<0$, since LHS always converges to a finite value for any choice of $\xi(0)$, we know that RHS is forced to be 0 identically, otherwise it blows up when $\epsilon\rightarrow0$. In this case $X$ must also be stationary. Finally, if $\gamma=0$, the limit of RHS is $-\int_{\Sp^2}\langle\nabla^{\Sp^2}u,\nabla^{\Sp^2}Y\rangle\langle\xi(0),\omega\rangle\dif\omega$, and $X$ being stationary is equivalent to this integral vanishing for any choice of $\xi(0)$.
\end{proof}

\subsection{Conventions}
Here we list some conventions that are frequently used in this paper. We denote balls by $B_r(p)$, and when the center $p$ is the origin 0, we simply denote by $B_r$. For two positive numbers $s<r$, we denote by $A_{s,r}(p)=B_r(p)\setminus\overline{B_s(p)}$ the open annulus centered at $p$.

Given an \textbf{HSI} u with singular set $\lbrace x_1,\cdots x_Q\rbrace$, recall that we take $\rho_u^{\tau}$ to be a smooth function on $\reg u$ which equals $\vert\cdot-x_i\vert^{\tau}$ on each $B_{r_0}(x_i)$ and equals 1 outside $\cup_i B_{r_0}(x_i)$, where $r_0$ is a small number to make these balls dijoint from each other and $\p\Omega$. The energy density of $u$ is denoted by $\Theta_u(x_0,r)=r^{2-n}\int_{B_r(x_0)}\vert\nabla u\vert^2\dif x$ and $\Theta_u(x_0)=\lim_{r\rightarrow0}\Theta_u(x_0,r)$, where the latter limit exists by monotonicity formula.

\section{The Fredholm index of the Jacobi operator}\label{s: fredholm index}
In this section, we study the Fredholm index of the Jacobi operator of an \textbf{HSI}.

First let us recall the analysis of Jacobi fields of a tangent map. Let $h:\R^n\rightarrow M$ be such a map and $u=h|_{\Sp^{n-1}}$. Let $x=r\omega$ be polar coordinates of $\R^n$. The Jacobi operator of $h$ can be written as
\begin{align*}
    L_h&=\Delta+DA_h(\cdot,\nabla h,\nabla h)+2A_h(\nabla \cdot,\nabla h)\\&=\p_r^2+\frac{n-1}{r}\p_r+\frac{1}{r^2}(\Delta_{\Sp^{n-1}}+DA_u(\cdot,\nabla^{\Sp^{n-1}}u,\nabla^{\Sp^{n-1}}u)+2A_u(\nabla^{\Sp^{n-1}}\cdot,\nabla^{\Sp^{n-1}}u))\\&=\p_r^2+\frac{n-1}{r}\p_r+\frac{1}{r^2}L^{\Sp^{n-1}}_u.
\end{align*}
The homogeneous Jacobi fields for $h$ can be found by separation of variables as done by R. Hardt and L. Mou \cite{HardMou}, which we recall here; see also L. Caffarelli, R. Hardt and L. Simon \cite{CaffarelliHardSimon}. Let again $\lambda_1\leqslant\lambda_2\leqslant\cdots\leqslant\lambda_j\rightarrow\infty$ be the eigenvalues of $L^{\Sp^{n-1}}_u$ and let $X_i$ be the corresponding orthonormal eigensections. The growth rate of homogeneous Jacobi fields for $h$ can be solved from the following equation
$$\gamma^2+(n-2)\gamma-\lambda_j=0.$$
which has two roots 
$$\gamma_j^{\pm}=-\frac{n-2}{2}\pm\sqrt{\left(\frac{n-2}{2}\right)^2+\lambda_j}\in\CC.$$
We collect the real parts of these roots $\Gamma(h)=\lbrace\text{Re}(\gamma_j^{\pm})\rbrace\subset\R$ and call them the \textbf{asymptotic spectrum of} $h$. Moreover, set $$\gamma_-(h)=\sup(\Gamma(h)\cap\R_{<0})<0.$$
Note that Re$(\gamma_j^+)<0$ if and only if $\lambda_j<0$. Hence $\gamma_-(h)$ records the largest negative indicial growth rate. Also, since each derivative $\p_{\alpha}h,1\leqslant\alpha\leqslant n,$ has growth rate $-1$, we see $-1\in\Gamma(h)$. As a result, $\gamma_-(h)\geqslant-1$.

Write the polar coordinates on $\R^n$ as $x=r\omega$. A general (real) Jacobi field is given by
\begin{align}\label{eq: expansion of Jacobi field}
    X(r\omega)=\sum_{j=1}^{\infty}(\eta_j^+(r)+\eta_j^-(r))X_j(\omega),
\end{align}
where the coefficients are
\begin{equation}\label{eq: coefficients in the expansion of Jacobi fields}
    \begin{gathered}
        \eta_j^+(r)=c_j^+r^{\gamma_j^+}\\
            \eta_j^-=\begin{cases}
        c_j^-r^{\gamma_j^-}, &\text{ if }\lambda_j\ne-\frac{(n-2)^2}{4};\\
        c_j^-r^{\gamma_j^-}\log r,&\text{ if }\lambda_j=-\frac{(n-2)^2}{4}.
    \end{cases}
    \end{gathered}
\end{equation}
Moreover, the coefficients $c_j^{\pm}\in\R$ if $\lambda_j\geqslant-\frac{(n-2)^2}{4}$, and $c_j^{\pm}\in\CC$, $c_j^-=\overline{c_j^+}$ if $\lambda_j<-\frac{(n-2)^2}{4}$.

\begin{defn}
    Let $u:\Omega\rightarrow M$ be an \textbf{HSI} and $\tau>-1$. Consider the Jacobi operator $L_u:\hat{H}^{1;\tau}(\reg u,T_uM)\rightarrow H^{-1;\tau-2}(\reg u,T_uM)$. We define
    \begin{enumerate}
        \item $\widehat{\ker}_{\tau}L_u$ to be the space of \emph{stationary} Jacobi fields in $\hat{H}^{1;\tau}_0(\reg u,T_uM)$;
        \item $\widehat{\operatorname{coker}}_{\tau}L_u$ to be $H^{-1;\tau-2}(\reg u,T_uM)/L_u(\hat{H}_0^{1;\tau}(\reg u,T_uM))$;
        \item the index of $L_u$ to be $$\widehat{\ind}_{\tau}L_u=\dim\widehat{\ker}_{\tau}L_u-\dim \widehat{\operatorname{coker}}_{\tau}L_u,$$
        whenever it is finite.
    \end{enumerate}
    Finally, the kernel, cokernel and index of $L_u$ on $H^{1;\tau}_0(\reg u,T_uM)$ will be denoted by $\ker_{\tau}L_u$, $\operatorname{coker}_{\tau}L_u$, $\ind_{\tau}L_u$ respectively.
\end{defn}

\begin{rmk}
    So defined, when $n=3$, $\widehat{\ind}_{\tau}L_u$ is not the real Fredholm index of the operator $$L_u:\hat{H}_0^{1;\tau}(\reg u,T_uM)\rightarrow H^{-1;\tau-2}(\reg u,T_uM),$$ as the kernel of $L_u$ does not naturally enforce the stationarity condition. To realize $\widehat{\ind}_{\tau}L_u$ as the Fredholm index of a linear operator, we need to introduce extra coordinates as follows. For an \textbf{HSI} $u$, write $\sing u=\lbrace x_i\rbrace_{i=1}^Q$. For each $i$, choose a cutoff function $\zeta_i\in C_c^{\infty}(B_{2r_0}(x_i),[0,1])$, so that $\zeta_i|_{B_{r_0}(x_i)}=1$, where $r_0$ is a small number such that $B_{10r_0}(x_i)$ does not touch $\p\Omega$ and does not contain other singular points. We then define
    $$\tilde L_u:X\mapsto(L_uX,\mathcal{F}^{\alpha}_i(X)),\hat{H}_0^{1;\tau}(\reg u,T_uM)\rightarrow H^{-1;\tau-2}(\reg u,T_uM)\oplus\R^{3Q},$$
    where we fix $\lbrace e_{\lambda}\rbrace_{\lambda=1}^3$ be an orthonormal frame of $\R^3$ and set $$\mathcal{F}^{\lambda}_i(X)=\int_{\Omega}\left(\langle\nabla u,\nabla  X\rangle\operatorname{div}(\zeta_ie_{\lambda})-\sum_{\alpha}^3(\langle\p_{\alpha} u,\p_{\lambda}X\rangle+\langle\p_{\lambda}u,\p_{\alpha}X\rangle)\p_{\alpha}\zeta_i\right)\dif x.$$
    It is easy to see, using the fact that $\tau>-1$, that the kernel and cokernel of $\tilde L_u$ are exactly $\widehat{\ker}_{\tau}L_u$ and $\widehat{\operatorname{coker}}_{\tau}L_u$. However, in order to make the theory consistent, and as in Section \ref{s: meagerness}, what we really care about is the difference $\dim\widehat{\ker}_{\tau}L_u-\dim \widehat{\operatorname{coker}}_{\tau}L_u$, we only work with the operator $L_u$ and do not introduce a new operator $\tilde L_u$ in the rest of this paper.
\end{rmk}

We first calculate the index of the Jacobi operator of a regular 0-homogeneous map.

\begin{prop}\label{prop: index of tangent map}
    Let $h:\R^n\rightarrow M$ be a non-constant regular 0-homogeneous stationary harmonic map and $\tau\in(\gamma_-(h),0)$. Then the Jacobi operator $L_h:\hat{H}^{1;\tau}_{0}(B_1\setminus\lbrace0\rbrace,T_hM)\rightarrow H^{-1;\tau-2}(B_1\setminus\lbrace0\rbrace, T_hM)$ has index $-I(h)$.
\end{prop}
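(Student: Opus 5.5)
Separation of variables in the eigenbasis $\{X_j\}$ of $L^{\Sp^{n-1}}_u$ reduces everything to a family of ODEs. Writing $X(r\omega)=\sum_j f_j(r)X_j(\omega)$, the operator $L_h$ acts as
\[
f_j\mapsto f_j''+\tfrac{n-1}{r}f_j'+\tfrac{\lambda_j}{r^2}f_j,
\]
and the norm of $H^{1;\tau}$ splits into the corresponding weighted one-dimensional norms on $(0,1)$. In the variable $t=-\log r$ with $f_j=r^{\tau}g_j$, the weight $r^{-2\tau-n}\,r^{n-1}\dif r$ becomes $\dif t$ on $(0,\infty)$. So each mode is a constant-coefficient second-order operator on a half-line, with Dirichlet condition at $t=0$, and its characteristic exponents are $\gamma_j^\pm-\tau$. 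Since $\tau\notin\Gamma(h)$ (we have $\tau\in(\gamma_-(h),0)$ and $0\notin\Gamma(h)$ unless $\lambda_j=0$, which is handled by the strict inequality $\tau<0$), no exponent has real part zero. Hence each mode operator is Fredholm on $H^1\to H^{-1}$ of the half-line.

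For the unaugmented space $H^{1;\tau}_0$ I would compute the index mode by mode.
\begin{itemize}
\item If both exponents have real part $>\tau$ (decay as $t\to\infty$), the kernel is one-dimensional before imposing Dirichlet at $r=1$. The Dirichlet condition then kills it unless the solution vanishes at $r=1$. Counting properly, the index is $0$.
\item If one exponent is $>\tau$ and the other $<\tau$, the index is $0$.
\item If both are $<\tau$, the index is $-1$.
\end{itemize}
The standard count is that the index of a Dirichlet half-line problem equals (number of exponents with real part $>\tau$) $-1$. Summing, $\ind_\tau L_h=-\#\{j:\operatorname{Re}\gamma_j^+<\tau\}$, with complex pairs counted consistently. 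Since $\tau>\gamma_-(h)$ and $\tau<0$, the condition $\operatorname{Re}\gamma_j^+<\tau$ holds exactly when $\operatorname{Re}\gamma_j^+<0$, i.e. $\lambda_j<0$. Thus $\ind_\tau L_h=-(\text{Morse index of }L^{\Sp^{n-1}}_u)$. Uniform invertibility for large $j$ (where $\gamma_j^\pm$ are far from $\tau$) is needed so that the infinite direct sum is Fredholm with the finite sum of indices. I would get it from an energy estimate, coercive once $\lambda_j$ is large, giving bounds uniform in $j$.

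The augmentation adds $H_{\mathrm{Tr}}$, spanned by $\zeta\p_\alpha h$, which has dimension $n$. As finite-dimensional augmentation it raises the index by $n$, provided the stationarity condition is harmless. For $n\geqslant4$, Proposition \ref{prop: stationarity for n greater than 3} (with $\tau\geqslant-1$) makes every Jacobi field in the augmented space stationary, so $\widehat{\ind}_\tau=n-\operatorname{Morse}=-I(h)$. For $n=3$, $\p_\alpha h$ are Jacobi fields of growth $-1$, corresponding to $\lambda=0$ modes of the link (the three fields $\p_{\omega}$-rotations of $u$). I would use the operator $\tilde L_u$ from the Remark: it adds $3$ to the domain and $3$ to the target, so the index is unchanged. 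This gives $-\operatorname{Morse}=-I(h)$.

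The main obstacle is the $n=3$ bookkeeping together with the borderline exponents near $\tau$. One must check that the constraints $\mathcal F_i^\lambda$ are surjective or exactly compensated, using Proposition \ref{prop: stationarity when n=3} to see that only $\gamma=0$ modes carry a nontrivial stationarity constraint. One must also treat correctly the $\log r$ case and complex pairs with $\lambda_j<-(n-2)^2/4$, where both roots share real part $-(n-2)/2$. Since $\gamma_-(h)\geqslant-(n-2)/2$ in that case and $\tau>\gamma_-$, these pairs lie entirely below $\tau$ and contribute $-2$, matching two negative eigenvalues counted with multiplicity.
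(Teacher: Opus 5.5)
Your route is correct in outline but genuinely different from the paper's. The paper never computes an unaugmented index. It reads off $\widehat{\ker}_{\tau}L_h=\{0\}$ directly from the expansion \eqref{eq: expansion of Jacobi field}: a zero-trace mode must contain the too-singular rate $\gamma_j^-$, and the translation part is forced to vanish. It then identifies $(\widehat{\operatorname{coker}}_{\tau}L_h)^*$ with $\ker_{2-\tau-n}L_h$ cut down by the $n$ conditions $\langle\p_rX,\nabla_vh\rangle_{L^2(\p B_1)}=0$, which come from Green's formula against $\zeta\nabla_vh$. For $n\geqslant4$ these conditions remove exactly the $n$ negative modes spanned by $\nabla_vh$; for $n=3$ they remove nothing, because $\nabla_vh$ then lies in the $\lambda=0$ eigenspace. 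You instead obtain $\ind_{\tau}L_h=-\#\{j:\lambda_j<0\}$ from the half-line Dirichlet count, add $n$ by Lemma \ref{lem: difference of index of restriction}, and subtract $3$ through $\tilde L_u$ when $n=3$. Your version yields only the index, not the kernel and cokernel separately. In exchange it proves Fredholmness (closed range, via uniform coercivity of the high modes), which the paper's kernel/annihilator computation uses without comment. The paper's version is more explicit and does not need the Remark.

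Two points must be repaired.

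First, for $n=3$ the step you flagged is genuinely needed. $\ker\tilde L_h=\widehat{\ker}_{\tau}L_h$ is automatic, but $\dim\operatorname{coker}\tilde L_h=\dim\widehat{\operatorname{coker}}_{\tau}L_h+3-\dim\mathcal F(\ker L_h)$. So you must show that $\mathcal F=(\mathcal F^1,\mathcal F^2,\mathcal F^3)$ maps the kernel of $L_h$ on $\hat H^{1;\tau}_0$ onto $\R^3$, and it suffices to test on the fields $(1-r)\nabla_vh$, which lie in that kernel. By Proposition \ref{prop: stationarity when n=3}(2), $\nabla_vh$ (rate $-1$) is stationary. By contrast, $r\nabla_vh$ has rate $0$ with $Y=\nabla^{\Sp^2}_{\pi_{T_\omega\Sp^2}v}u$. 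Pairing its defect $\int_{\Sp^2}\langle\nabla Y,\nabla u\rangle\omega\dif\omega$ with $v$ and integrating by parts gives $-\int_{\Sp^2}\vert\nabla^{\Sp^2}_{\pi_{T_\omega\Sp^2}v}u\vert^2\dif\omega$, which is nonzero for $v\neq0$ because $h$ is non-constant. Hence $v\mapsto\mathcal F((1-r)\nabla_vh)$ is injective, and therefore onto $\R^3$. This is exactly the paper's observation that $r\nabla_vh$ is not stationary. These $\lambda=0$ fields come from the conformal gradient fields $\pi_{T_\omega\Sp^2}v$, not from rotations.

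Second, your last paragraph contradicts your own count. A complex pair $\gamma_j^{\pm}$ comes from a single eigenvalue $\lambda_j<-(n-2)^2/4$, that is, one real mode with both exponents below $\tau$. It therefore contributes $-1$, not $-2$, and counting $-2$ would overshoot the Morse index. Similarly, your first bullet should give index $+1$, not $0$. That case never occurs anyway, since $\operatorname{Re}\gamma_j^-<0$ and every negative element of $\Gamma(h)$ is at most $\gamma_-(h)<\tau$.
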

\begin{proof}
    We first compute the dimension of $\widehat\ker_{\tau}L_h$ for $n\geqslant 4$. In this case the stationarity condition is automatic by Proposition \ref{prop: stationarity for n greater than 3}.  Let $X\in\widehat\ker_{\tau}L_h$ and write the polar coordinates on $\R^n$ as $x=r\omega$. Hence, we can write $(X-\p_vh)\in H^{1;\tau}(\reg h,T_hM)$ for some $v\in\R^n$. In order to keep the zero trace of the vector field, we note that $r^{4-n}\p_vh,n\geqslant5$ and $\log r\p_vh,n=4$ is the only other choice of Jacobi field with same boundary value as $\p_vh$. But as $3-n\leqslant-1$ while $\tau>\gamma_-(h)\geqslant-1$, this term does not belong to $H^{1;\tau}(B_1\setminus\lbrace0\rbrace, T_hM)$ if $v\ne0$. 
    
    As a result, $v=0$ and $X\in H^{1;\tau}_0(\reg h,T_hM)$. Recall the representation formulas \eqref{eq: expansion of Jacobi field} and \eqref{eq: coefficients in the expansion of Jacobi fields}. In order that $X|_{\p B_1}=0$, we must have $\eta_j^+(1)+\eta_j^-(1)=0$ for each $j$ in the expansion. As such, any such $X$ can be written as $X(r\omega)=\sum_{j=1}^{\infty}\eta_j(r)X_j(\omega)$, where 
    \begin{align*}
        \eta_j(r)=\begin{cases}
            c_j(r^{\gamma_j^+}-r^{\gamma_j^-}),&\text{ if }\lambda_j>-\frac{(n-2)^2}{4};\\
            c_jr^{-(n-2)/2}\log r,&\text{ if }\lambda_j=-\frac{(n-2)^2}{4};\\
            c_jr^{-(n-2)/2}\sin\left(\sqrt{-(n-2)^2/4-\lambda_j}\log r\right),&\text{ if }\lambda_j<-\frac{(n-2)^2}{4}.
        \end{cases}
    \end{align*}
    If there is a non-zero $c_j(r^{\gamma_j^+}-r^{\gamma_j^-})\in H^{1;\tau}(B_1\setminus\lbrace0\rbrace,T_hM)$, then $\gamma_j^->\tau$. But $\gamma_j^-<-(n-2)/2\leqslant-1$ while $\tau>\gamma_-(h)\geqslant-1$, this is impossible. Similarly, either of other two types in the expansion of $X$ are not allowed. We conclude that $\widehat{\ker}_{\tau}L_h=\lbrace 0\rbrace$.

    Next let us calculate $\dim\widehat{\operatorname{coker}}_{\tau}L_h$. To this end, we compute the kernel of adjoint operator $L_h^*$ whose domain is $H^{1;2-\tau-n}_0(B_1\setminus\lbrace0\rbrace,T_hM)$. A vector field $X\in\ker L_h^*$ is characterized by $0=\langle L_h^*X,Y\rangle_{L^2}=\langle X,L_hY\rangle_{L^2}$, for all $Y\in \hat{H}^{1;\tau}_{0}(B_1\setminus\lbrace0\rbrace,T_hM)$. Fix a cutoff function $\zeta\in C_c^{\infty}(B_1,[0,1])$ which equals to 1 on $B_{1/2}$. Recall that $\hat{H}^{1;\tau}_{0}(B_1\setminus\lbrace0\rbrace,T_hM)=H^{1;\tau}_{0}(B_1\setminus\lbrace0\rbrace,T_hM)\oplus H_{\text{Tr}}$, where $H_{\text{Tr}}=\lbrace\zeta\nabla_vh:v\in\R^n\rbrace$. By taking $Y\in H^{1;\tau}_{0}(B_1\setminus\lbrace0\rbrace,T_hM)$, we see that $X\in\ker_{2-\tau-n}L_h$. By taking  $Y=\zeta\nabla_vh\in H_{\text{Tr}}$, we see that $$0=\langle X,L_h\zeta\nabla_vh\rangle_{L^2}=\langle X,L_h((\zeta-1)\nabla_vh)\rangle_{L^2}.$$
    Note that $(1-\zeta)\nabla_vh$ vanishes on $B_{1/2}$ where $X$ may fail to be in $H^1(B_1)$. Hence, we can apply Green's formula to get
    $$0=\int_{B_1}(\langle L_hX,(1-\zeta)\nabla_vh\rangle-\langle X,L_h((1-\zeta)\nabla_vh)\rangle\dif x=\int_{\p B_1}\langle\p_r X,\nabla_vh\rangle\dif\omega.$$
    Equivalently, 
    $$(\widehat{\operatorname{coker}}_{\tau}L_h)^*=\ker_{2-\tau-n}L_h\cap\lbrace X\in H^{1;2-\tau-n}_0(B_1\setminus\lbrace0\rbrace,T_hM):\langle \p_rX,\nabla_v h\rangle_{L^2(\p B_1)}=0\text{ for all }v\in\R^n\rbrace.$$
    As in the calculation in the $\widehat{\ker}_{\tau}L_h$, we can write $X(r\omega)=\sum_{j=1}^{\infty}\eta_j(r)X_j(\omega)$. For the coefficient $\eta_j(r)$ to be non-zero, we must have $\operatorname{Re}(\gamma_j^{-})>2-n-\tau$, or equivalently $\operatorname{Re}(\gamma^+_j)=2-n-\operatorname{Re}(\gamma_j^{-})<\tau$. By the choice of $\gamma_-(h)<\tau<0$, these are exactly the ones with $\lambda_j<0$. 
    
    On the other hand, by the orthogonality of $X_j$ for different $j$, what we can see in the product $\langle\p_r X,\nabla_vh\rangle_{L^2(\p B_1)}$ are of the form $\nabla_vh(\omega)$, which corresponds to $\gamma_j^+=-1<\tau$. The dimension of $\lbrace\nabla_vh(\omega):v\in\R^n\rbrace$ is $n$, which is the dimension of space excluded by $\langle\p_r X,\nabla_vh\rangle_{L^2(\p B_1)}=0$. In summary 
    \begin{align*}
        (\widehat{\operatorname{coker}}_{\tau}L_h)^*=&\ker_{2-\tau-n}L_h\cap\lbrace X\in H^{1;2-\tau-n}_0(B_1\setminus\lbrace0\rbrace,T_hM):\langle \p_rX,\nabla_v h\rangle_{L^2(\p B_1)}=0\text{ for all }v\in\R^n\rbrace\\=&\left\lbrace\sum_{\lambda_j<0}\eta_j(r)X_j(\omega):X_j(\omega)\perp\lbrace\nabla_vh(\omega):v\in\R^n\rbrace\right\rbrace,
    \end{align*}
    the dimension of which is $\dim \widehat {\operatorname{coker}}_{\tau}L_h=\dim(\widehat{\operatorname{coker}}_{\tau}L_h)^*=\#(\lbrace\lambda_j\rbrace\cap(-\infty,0))-n=I(h)$. As a consequence,
    $$\widehat{\ind}_{\tau}L_h=\dim\widehat{\ker}_{\tau}L_h-\dim \widehat{\operatorname{coker}}_{\tau}L_h=0-I(h)=-I(h).$$

    Finally let us look at the case $n=3$. This time, $\gamma_j^-=-1$ and $\gamma_j^+=0$ are the growth rates for $\nabla_vh$ and $r\nabla_vh$, which makes $(1-r)\nabla_vh\in\hat H^{1;\tau}_0(B_1\setminus\lbrace0\rbrace,T_hM)$. But $\nabla_vh$ is a stationary Jacobi field corresponding to the translation of $h$ while $r\nabla_vh$ is not by Proposition \ref{prop: stationarity when n=3}. Hence, again $\widehat{\ker}_{\tau}L_h=\lbrace0\rbrace$. On the other hand, $(1-r)\nabla_vh$ corresponds to $\lambda_j=0$ which makes them no longer in $(\widehat{\operatorname{coker}}_{\tau}L_h)^*$. As such, no vector field is excluded by condition $\langle\p_r X,\nabla_vh\rangle_{L^2(\p B_1)}=0$. We see $\dim\widehat{\operatorname{coker}}_{\tau}L_h=\#(\lbrace\lambda_j\rbrace\cap(-\infty,0))=I(h)$. As a result, once more we have
    $$\widehat{\ind}_{\tau}L_h=\dim\widehat{\ker}_{\tau}L_h-\dim \widehat{\operatorname{coker}}_{\tau}L_h=0-I(h)=-I(h).$$
\end{proof}

We can sum up the effect of each singular point to compute the total index of the Jacobi operator of an \textbf{HSI}.

\begin{thm}\label{thm: count of index}
    Let $u$ be an \textbf{HSI} and 
    \begin{equation}\label{eq: range of rate}
        \sup_{x_0\in\sing u}\gamma_-(h_{x_0})<\tau<0.\tag{*}
    \end{equation}
    Then the Jacobi operator $L_u:\hat{H}^{1;\tau}_{0}(\reg u,T_uM)\rightarrow \hat{H}^{-1;\tau-2}(\reg u, T_uM)$ is a Fredholm operator with index $$\widehat{\operatorname{index}}_{\tau}L_u=-\sum_{x_0\in\sing u}I(h_{x_0}).$$
\end{thm}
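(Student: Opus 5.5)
The plan is to follow the strategy announced in the introduction and localize the index at the singular points. Write $\sing u=\{x_1,\dots,x_Q\}$, fix $r_1\le r_0$ small, and split $\Omega$ by smooth cutoffs into three pieces: an interior regular region $\Omega_1=\Omega\setminus\bigcup_i \overline{B_{r_1/2}(x_i)}$ containing $\p\Omega$, the union of balls $\bigcup_i B_{r_1}(x_i)$, and an overlap annular region. Accordingly, I would decompose $\hat H^{1;\tau}_0(\reg u,T_uM)=\mathscr{D}_1\oplus\mathscr{D}_2\oplus\mathscr{D}_3$ and $H^{-1;\tau-2}=\mathscr{R}_1\oplus\mathscr{R}_2\oplus\mathscr{R}_3$. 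Here $\mathscr{D}_2$ consists of fields supported in $\bigcup_i B_{r_1}(x_i)$ (including the translation part $H_{\mathrm{Tr}}$, suitably cut off), $\mathscr{D}_1$ of $H^1_0$ fields supported away from the singular set, and $\mathscr{D}_3$ is a finite-dimensional or complementary gluing piece. I would then write $L_u$ as a $3\times3$ block matrix.

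The first step is to show that the diagonal blocks not involving $\mathscr{D}_2\to\mathscr{R}_2$ have index zero. On a smooth subdomain avoiding $\sing u$, the weights are bounded above and below, so the spaces are ordinary $H^1_0$ and $H^{-1}$. There $L_u=\Delta+(\text{lower order, smooth coefficients})$, and $\Delta:H^1_0\to H^{-1}$ is an isomorphism. Rellich compactness makes the lower-order terms compact, so the block is Fredholm of index $0$.

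The second step is to show that the off-diagonal blocks are compact. They arise only from commutators $[L_u,\zeta]$ with cutoffs, which are first-order operators supported on compact annuli away from $\sing u$. There the weighted norms are equivalent to unweighted ones, so Rellich again gives compactness. By stability of the Fredholm index under compact perturbations, $\widehat{\ind}_\tau L_u$ then equals the index of the $\mathscr{D}_2\to\mathscr{R}_2$ block. That block splits as a direct sum over $i$ of $L_u$ acting on $\hat H^{1;\tau}_0(B_{r_1}(x_i)\setminus\{x_i\})\to H^{-1;\tau-2}(B_{r_1}(x_i)\setminus\{x_i\})$. The stationarity condition is local to each singular point (for $n=3$ via the functionals $\mathcal F^\lambda_i$ of the remark), so it also splits.

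The third step, which I expect to be the main obstacle, is to compare $L_u$ on $B_{r_1}(x_i)$ with $L_{h_{x_i}}$ on $B_{r_1}$, rescaled to $B_1$ by the weighted-space scaling. By uniqueness of tangent maps for analytic targets (Simon) together with the quantitative convergence of Appendix \ref{a: quantitaive estimates}, we have $|\nabla^j(u-h_{x_i})(x_i+x)|\le C|x|^{-j}\epsilon(|x|)$ with $\epsilon\to0$, in fact at a polynomial rate. Hence $L_u-L_{h}$, viewed as a map $H^{1;\tau}\to H^{-1;\tau-2}$, has operator norm $o(1)$ as $r_1\to0$. It is also compact, being a limit of operators truncated away from the origin. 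The delicate points are:
\begin{itemize}
\item identifying $T_uM$ with $T_hM$ through parallel transport or projection, at a cost of a small perturbation;
\item matching the translation parts $\p_vu$ with $\p_vh$, whose difference lies in $H^{1;\tau}$ because $\tau<0$ and the convergence rate is positive;
\item for $n=3$, verifying that the stationarity functionals correspond up to a compact error.
\end{itemize}
Since $\tau\in(\gamma_-(h_{x_i}),0)$ avoids the asymptotic spectrum, $L_{h}$ is Fredholm on these spaces; this is essentially the content of the proof of Proposition \ref{prop: index of tangent map}. A small-norm perturbation preserves both Fredholmness and the index, so the local index equals $-I(h_{x_i})$ by Proposition \ref{prop: index of tangent map}. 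Summing over $i$ gives $\widehat{\ind}_\tau L_u=-\sum_{x_0\in\sing u}I(h_{x_0})$.
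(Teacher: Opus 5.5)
Your architecture is the paper's. You use a three-block localization in which the blocks away from $\sing u$ have index zero and the off-diagonal blocks are compact (Lemma \ref{lem: decomposition of index}). You then compare $L_u$ with $\pi^{-1}L_h\pi$ on small balls via a compact perturbation and apply Proposition \ref{prop: index of tangent map}. Two steps, however, do not go through as written.

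\textbf{The gluing step.} This is where the paper does its real work, and you have not constructed it.

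As you set it up, $\mathscr D_1$ is supported in $\Omega\setminus\overline{B_{r_1/2}(x_i)}$ and $\mathscr D_2$ in $B_{r_1}(x_i)$. Both then contain every field supported in the overlap annuli, so the sum is not direct. If instead you cut without overlap along $\p B_{r_0}(x_i)$, then $\mathscr D_1\oplus\mathscr D_2$ is the kernel of the trace map onto $H^{1/2}(\bigcup_i\p B_{r_0}(x_i),T_uM)$. So $\mathscr D_3$ is necessarily an infinite-dimensional space of trace extensions $\mathcal E\psi$, not a finite-dimensional one.

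Compactness of the blocks $\mathscr D_3\to\mathscr R_1\oplus\mathscr R_2$ and $\mathscr D_1\oplus\mathscr D_2\to\mathscr R_3$ then depends on the choice of $\mathcal E$. Replace $\mathcal E$ by $\mathcal E+T$, with $T:H^{1/2}\to H^1_0(\bigcup_iA_{r_0,2r_0}(x_i))$ bounded but not compact. This still gives a complement, but it adds the non-compact term $X\mapsto\int\langle\nabla T\psi,\nabla X\rangle\dif x$ on $\mathscr D_1$. So your claim that the off-diagonal blocks are just commutators with cutoffs is not automatic.

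The paper makes that claim true by taking $\mathcal E\psi=\zeta f^\pm$. Here $L_uf^\pm=\lambda f^\pm$ on $A_{r_0,2r_0}(x_i)$, resp.\ $A_{r_0/2,r_0}(x_i)$, with $f^\pm=\psi$ on $\p B_{r_0}(x_i)$ and $f^\pm=0$ on the other sphere. Integrating by parts against $X$ vanishing on $\p B_{r_0}(x_i)$ leaves only $\int\langle(\Delta\zeta+\lambda\zeta)f^\pm+2\nabla_{\nabla\zeta}f^\pm,X\rangle\dif x$, which is compact in $\psi$.

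Your first-step argument also does not apply to $L_3:\mathscr D_3\to\mathscr R_3=\mathscr D_3^*$, since this is not a Dirichlet problem on a subdomain. The paper gets index $0$ there from coercivity of $\lambda I-L_3$ and Lax--Milgram, with $I$ compact.

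\textbf{The translation parts.} Your treatment here is wrong. Uniqueness of tangent maps (Simon) gives only $\sup_{B_r}\sum_{i\leqslant2}|x|^i|\nabla^i(u-h)|\to0$, in general at a logarithmic rate. Polynomial rates need extra hypotheses, such as integrability of the link, and the paper uses nothing beyond this qualitative decay.

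Even a rate $r^\beta$ would put $\p_vu-\pi^{-1}\p_vh$ in $H^{1;\tau}$ only for $\beta>1+\tau$, because its weighted norm contains $\int_0\epsilon(r)^2r^{-3-2\tau}\dif r$. So $\beta>0$ and $\tau<0$ do not suffice.

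No matching is needed. By Lemma \ref{lem: difference of index of restriction}, adjoining the $n$-dimensional translation space shifts the index by the same amount for $L_u$ on $B_{r_0}(x_i)$ and for $L_h$ on $B_1$. It therefore suffices to compare $L_u$ with $\pi^{-1}L_h\pi$ on the unaugmented spaces $H^{1;\tau}_0$. There your $o(1)$ coefficient bound, and hence compactness via truncation near the origin, is exactly what the paper uses.
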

We start with the following localization lemma.
\begin{lem}\label{lem: decomposition of index}
    Let $u,\tau$ be as in Theorem \ref{thm: count of index} and write $\sing u=\lbrace x_i\rbrace_{i=1}^Q$. For any $r_0$ small such that $B_{10r_0}(x_i)\Subset \Omega$ mutually disjoint for different $i$, there holds $$\widehat{\operatorname{index}}_{\tau}L_u=\sum_{i=1}^Q\widehat{\operatorname{index}}_{\tau}(L_u|_{B_{r_0}(x_i)}), $$where $L_u|_{B_{r_0}(x_i)}$ is the operator $L_u:\hat{H}^{1;\tau}_{0}(B_{r_0}(x_i)\setminus\lbrace x_i\rbrace,T_uM)\rightarrow H^{-1;\tau-2}(B_{r_0}(x_i)\setminus\lbrace x_i\rbrace, T_uM)$.
\end{lem}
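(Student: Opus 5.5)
Following the strategy announced in the introduction, I would decompose the domain and target of $L_u$ into three pieces and show that, modulo compact operators, $L_u$ is block-diagonal with only the singular blocks contributing index. Fix radii $r_0<r_1<r_2$ with $B_{10r_0}(x_i)$ disjoint. Take cutoffs $\chi_i$ equal to $1$ on $B_{r_0}(x_i)$ and supported in $B_{2r_0}(x_i)$, and set $\chi_0=1-\sum_i\chi_i$. Rather than splitting functions directly, I would compare $L_u$ on $\Omega$ with the direct sum operator
\[
\mathcal{L}=\bigoplus_{i=1}^Q L_u|_{B_{r_0}(x_i)}\oplus L_u|_{\Omega'}\oplus L_u|_{A},
\]
where $\Omega'=\Omega\setminus\bigcup_i\overline{B_{r_0/2}(x_i)}$ is a smooth domain avoiding $\sing u$ and $A=\bigcup_i A_{r_0/2,2r_0}(x_i)$ is a union of annuli. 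Because the last two domains stay away from the singular set, their Dirichlet realizations $H^1_0\to H^{-1}$ are compact perturbations of isomorphisms. Indeed, $-\Delta$ is an isomorphism there, and the zeroth and first order terms $DA_u(\cdot,\nabla u,\nabla u)+2A_u(\nabla\cdot,\nabla u)$ are compact $H^1_0\to H^{-1}$ by Rellich. Hence these two blocks have index $0$. The tangency constraint $X\in T_uM$ is handled by composing with the smooth projection $\pi_{T_uM}$, which commutes with everything up to lower-order, hence compact, terms.

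The actual comparison is a standard patching (parametrix) argument for Fredholm indices. I would define
\[
\Phi:\hat H^{1;\tau}_0(\reg u)\to\bigoplus_i\hat H^{1;\tau}_0(B_{r_0}(x_i))\oplus H^1_0(\Omega')\oplus H^1_0(A),
\]
using a partition $\psi_i^2+\psi_0^2+\psi_A^2=1$ adapted to the three pieces, with $X\mapsto(\psi_iX,\psi_0X,\psi_AX)$. The map $\Psi$ on the range side is defined analogously, and the gluing maps $\Phi',\Psi'$ are the recombinations $(Y_i,Y_0,Y_A)\mapsto\sum\tilde\psi Y$, with $\tilde\psi$ slightly larger cutoffs. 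Translation-like fields $\zeta_i\partial_v u$ are kept in the $i$-th component by choosing $\psi_i\equiv1$ on $\operatorname{spt}\zeta_i$, possible after adjusting $r_0$ in $H_{\mathrm{Tr}}$ (which does not change $\hat H$). Then $\Psi L_u-\mathcal{L}\Phi$ consists of commutators $[L_u,\psi]=2\nabla\psi\cdot\nabla+\Delta\psi$, supported in annuli away from $\sing u$. There weights are bounded, so these commutators are compact $H^1\to H^{-1}$ by Rellich. Likewise $\Phi'\Phi=\mathrm{id}$ and $\Psi'\Psi=\mathrm{id}$ exactly. Consequently $L_u$ is Fredholm if and only if $\mathcal{L}$ is, and the indices agree: $\operatorname{index}L_u=\operatorname{index}\mathcal{L}=\sum_i\operatorname{index}(L_u|_{B_{r_0}(x_i)})+0+0$.

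Fredholmness of each $L_u|_{B_{r_0}(x_i)}$ under \eqref{eq: range of rate} is presumably established next, by comparison with $L_{h_{x_i}}$ and Proposition \ref{prop: index of tangent map}. For the lemma I would either assume it or state the identity in the sense that one side is Fredholm iff the other is.

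The main obstacle is that $\widehat{\ind}_\tau$ is not literally the Fredholm index when $n=3$, since $\widehat{\ker}$ imposes stationarity. I would handle this through the augmented operator $\tilde L_u$ of the Remark. The functionals $\mathcal F^\lambda_i$ only involve $X$ on $\operatorname{spt}\nabla\zeta_i\subset A_{r_0,2r_0}(x_i)$. Choosing $\zeta_i$ supported inside $B_{r_0}(x_i)$ localizes them to the $i$-th ball, so $\tilde L_u$ splits as the corresponding local augmented operators plus compact terms, and the same patching argument applies verbatim. For $n\ge4$, Proposition \ref{prop: stationarity for n greater than 3} ensures that kernels are automatically stationary, so no modification is needed.
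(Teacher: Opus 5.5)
\textbf{The gap.} The failing step is ``Consequently $L_u$ is Fredholm iff $\mathcal{L}$ is, and the indices agree.'' What you have built is an embedding $\Phi$ with left inverse $\Phi'$ (and $\Psi,\Psi'$ on the target side) that intertwines $L_u$ with $\mathcal{L}$ modulo compacts. Such relations can transport parametrices. If $\mathcal{G}$ is a parametrix for $\mathcal{L}$, then $\Phi'\mathcal{G}\Psi$ is a left parametrix for $L_u$, and a right one follows if you also check that $L_u\Phi'-\Psi'\mathcal{L}$ is compact. So you do get Fredholmness. They say nothing about the index, however, because $\Phi$ has infinite-dimensional cokernel and $\Psi'$ has infinite-dimensional kernel.

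A concrete check: take two pieces $U_1=U_2=\Omega$ with the constant partition $\psi_1=\psi_2=2^{-1/2}$. Then $\Psi L_u=\mathcal{L}\Phi$, $L_u\Phi'=\Psi'\mathcal{L}$, $\Phi'\Phi=\mathrm{id}$ and $\Psi'\Psi=\mathrm{id}$ all hold exactly, with $\mathcal{L}=L_u\oplus L_u$. Your reasoning would then give $\operatorname{ind}L_u=2\operatorname{ind}L_u$, i.e.\ the index always vanishes. That is false, e.g.\ for $u=h$ on $B_1$ with $I(h)>0$ by Proposition \ref{prop: index of tangent map}.

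Overlapping pieces cannot all be placed on the same side of a direct sum; the overlap has to enter with the opposite sign. Your final number is right only because the extra summands $L_u|_{\Omega'}$ and $L_u|_A$ happen to have index zero, not because the argument shows it. (A smaller point: with $\sum\psi^2=1$, the identity $\Phi'\Phi=\mathrm{id}$ needs $\Phi'(Y)=\sum\psi Y$ with the same cutoffs; with larger $\tilde\psi$ you get $\sum\psi X\neq X$.)

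\textbf{How to close it.} The paper avoids overlaps altogether. It splits $\hat H^{1;\tau}_0(\reg u,T_uM)=\mathscr{D}_1\oplus\mathscr{D}_2\oplus\mathscr{D}_3$, where $\mathscr{D}_1$ is $H^1_0$ outside $\bigcup_iB_{r_0}(x_i)$ and $\mathscr{D}_2=\bigoplus_i\hat H^{1;\tau}_0(B_{r_0}(x_i))$. The third piece $\mathscr{D}_3$ is the image of an extension operator $\mathcal{E}$ for traces on $\bigcup_i\p B_{r_0}(x_i)$, built from solutions of $L_uf=\lambda f$ on the adjacent annuli so that the off-diagonal blocks are compact. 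The target is split by duality, so $L_u$ is block diagonal plus compact. Its index is then the sum of the three diagonal indices, and the first and third vanish.

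If you want to keep a partition of unity, replace your embedding by the rotation trick from relative index theory. Set $U_1=\bigcup_iB_{r_0}(x_i)$, $U_2=\Omega\setminus\bigcup_i\overline{B_{r_0/2}(x_i)}$ and $U_{12}=U_1\cap U_2$. Take $\psi_1=\cos\theta$, $\psi_2=\sin\theta$, with $\theta=0$ on $\bigcup_iB_{5r_0/8}(x_i)$ and $\theta=\pi/2$ outside $\bigcup_iB_{7r_0/8}(x_i)$.
\begin{itemize}
\item The map $(X,Y)\mapsto(\psi_1X-\psi_2Y,\ \psi_2X+\psi_1Y)$ is an \emph{isomorphism} $\hat H^{1;\tau}_0(\reg u)\oplus H^1_0(U_{12})\to\hat H^{1;\tau}_0(U_1)\oplus H^1_0(U_2)$, with inverse $(Z_1,Z_2)\mapsto(\psi_1Z_1+\psi_2Z_2,\ -\psi_2Z_1+\psi_1Z_2)$.
\item The same formula, with multiplication by $\psi_k$ understood by duality, gives the corresponding isomorphism of the targets.
\item These two isomorphisms intertwine $L_u\oplus L_u|_{U_{12}}$ with $L_u|_{U_1}\oplus L_u|_{U_2}$ up to the commutators $[L_u,\psi_k]$. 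These are compact because they are supported away from $\sing u$.
\end{itemize}
This yields $\operatorname{ind}L_u+\operatorname{ind}L_u|_{U_{12}}=\operatorname{ind}L_u|_{U_1}+\operatorname{ind}L_u|_{U_2}$. Your observation that the two regular pieces have index zero then gives the lemma. Your treatment of the $n=3$ stationarity through $\tilde L_u$, with $\operatorname{spt}\nabla\zeta_i$ placed where $\psi_1=1$, fits into this scheme and is a point the paper's proof passes over.
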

\begin{proof}
    Consider the bounded linear extension operator $$\mathcal{E}:H^{1/2}(\cup_{i=1}^Q\p B_{r_0}(x_i),T_uM)\rightarrow H^1_0(\cup_{i=1}^QA_{r_0/2,2r_0}(x_i),T_uM).$$
    Then, we can write $\hat{H}^{1;\tau}_0(\reg u,T_uM)$ as the direct sum
    \[
    \begin{array}{@{}c@{}c@{\;\;}c@{\;\;}c@{\;\;}c@{\;\;}c}
        &H_0^1(\Omega\setminus\cup_{i=1}^Q B_{r_0}(x_i),T_uM)
        &\oplus
        &\displaystyle\bigoplus_{i=1}^Q\hat H_0^{1;\tau}(B_{r_0}(x_i),T_uM)
        &\oplus
        &\displaystyle\mathcal E H^{1/2}(\cup_{i=1}^Q\partial B_{r_0}(x_i),T_uM)\\\llap{$=:\;$}
        &\mathscr{D}_1&\oplus&\mathscr{D}_2&\oplus&\mathscr{D}_3.
    \end{array}
    \]  
    Take the dual space of this direct sum at $\tau'=2-\tau-n$, and the fact that  $H^{-1;\tau-2}(\reg u,T_uM)=(H^{1;2-\tau-n}_0(\reg u,T_uM))^*$ gives
    \[
    \begin{array}{@{}c@{}c@{\;\;}c@{\;\;}c@{\;\;}c@{\;\;}c}
        &H^{-1}(\Omega\setminus\cup_{i=1}^QB_{r_0}(x_i),T_uM)
        &\oplus
        &\displaystyle\bigoplus_{i=1}^QH^{-1;\tau-2}(B_{r_0}(x_i),T_uM)
        &\oplus
        &\displaystyle\mathcal (\mathcal{E}H^{1/2}(\cup_{i=1}^Q\p B_{r_0}(x_i),T_uM))^*\\\llap{$=:\;$}
        &\mathscr{R}_1&\oplus&\mathscr{R}_2&\oplus&\mathscr{R}_3.
    \end{array}
    \]
     We have the decomposition of $L_u$ accordingly, $$L_u=L_1+L_2+L_3+R+R',$$
     where $$L_i=\pi_{\mathscr{R}_i}L_u|_{\mathscr{D}_i}:\mathscr{D}_i\rightarrow\mathscr{R}_i\text{ for }i=1,2,3\text{ and }R=\pi_{\mathscr{R}_1\oplus\mathscr{R}_2}L_u|_{\mathscr{D}_3},R'=\pi_{\mathscr{R}_3}L_u|_{\mathscr{D}_1\oplus\mathscr{D}_2}.$$
    \begin{claim}
        We can choose the extension operator $\mathcal{E}$ to make $R$ and $R'$ compact operators.
    \end{claim}
    \begin{proof}[Proof of Claim]
        We build the extension operator as follows. We pick a large number $\lambda$ which makes $L_u-\lambda$ invertible on $A_{r_0/2,2r_0}(x_i)$ for each $i$. For each $\psi\in H^{1/2}(\cup_{i=1}^Q\p B_{r_0}(x_i),T_uM)$ consider the solution 
        \begin{align*}
            \begin{cases}
                L_uf^+=\lambda f^+,&\text{ in }\cup_{i=1}^QA_{r_0,2r_0}(x_i);\\
                f^+=\psi,&\text{ on }\cup_{i=1}^Q\p B_{r_0}(x_i);\\
                f^+=0,&\text{ on }\cup_{i=1}^Q\p B_{2r_0}(x_i);
            \end{cases}\quad\text{and}\quad
                    \begin{cases}
                L_uf^-=\lambda f^-,&\text{ in }\cup_{i=1}^QA_{r_0/2,r_0}(x_i);\\
                f^-=\psi,&\text{ on }\cup_{i=1}^Q\p B_{r_0}(x_i);\\
                f^-=0,&\text{ on }\cup_{i=1}^Q\p B_{r_0/2}(x_i).
            \end{cases}
        \end{align*}
        Then let us take a cutoff function $\zeta\in C_c^{\infty}(\cup_{i=1}^QA_{r_0/2,2r_0}(x_i),[0,1])$ which equals 1 on $A_{3r_0/4,3r_0/2}(x_i)$ for each $i$. We then define $\mathcal{E}\psi$ to equal $\zeta f^+,\zeta f^-$ on $\cup_{i=1}^Q A_{r_0,2r_0}(x_i),\cup_{i=1}^QA_{r_0/2,r_0}(x_i)$ respectively. So defined, we have $\mathcal{E}:H^{1/2}(\cup_{i=1}^Q\p B_{r_0}(x_i),T_uM)\rightarrow H^1_0(\cup_{i=1}^QA_{r_0/2,2r_0}(x_i),T_uM)$ is a linear operator and bounded by elliptic regularity theory.

        We next show that this choice of $\mathcal{E}$ makes $R$ a compact operator. Note that $R$ is characterized by the following: for each $X\in H_0^1(\Omega\setminus\cup_{i=1}^Q B_{r_0}(x_i),T_uM)\oplus\bigoplus_{i=1}^Q H_0^{1;2-\tau-n}(B_{r_0}(x_i),T_uM)$. We calculate
        \begin{align*}
            &R\mathcal{E}\psi(X)\\=&\int_{\Omega}(\langle\nabla\mathcal{E}\psi,\nabla X\rangle-\langle DA_u(\mathcal{E}\psi,\nabla u,\nabla u),X\rangle)\dif x\\=&-\sum_{i=1}^Q\left(\int_{A_{r_0,2r_0}(x_i)}\langle L_u(\zeta f^+),X\rangle\dif x+\int_{A_{r_0/2,r_0}(x_i)}\langle L_u(\zeta f^-),X\rangle\dif x\right)\\=&-\sum_{i=1}^Q\left(\int_{A_{r_0,2r_0}(x_i)}\langle (\Delta\zeta+\lambda\zeta)f^++2\nabla_{\nabla\zeta}f^+,X\rangle\dif x+\int_{A_{r_0/2,r_0}(x_i)}\langle (\Delta\zeta+\lambda\zeta)f^-+2\nabla_{\nabla\zeta}f^-,X\rangle\dif x\right).
        \end{align*}
        As a result, $\vert R\mathcal{E}\psi(X)\vert\leqslant C\Vert\mathcal{E}\psi\Vert_{L^2(\Omega)}\Vert X\Vert_{H^1(\cup_{i=1}^QA_{r_0/2,2r_0}(x_i))}$. By Sobolev embedding, $$\mathcal{E}:H^{1/2}(\cup_{i=1}^Q\p B_{r_0}(x_i),T_uM)\rightarrow L^2(\cup_{i=1}^QA_{r_0/2,2r_0}(x_i),T_uM),$$ is compact. This implies $R$ is compact. A similar duality argument shows that $R'$ is compact.
    \end{proof}
    
    Once we know $R$ is compact, we know that the $\widehat{\ind}_{\tau}L_u$ equals to the sum of indices of $L_i,1\leqslant i\leqslant3$. Next let us show that $L_1$ and $L_3$ have index 0.

    $L_1=L_u:H^1_0(\Omega\setminus\cup_{i=1}^QB_{r_0}(x_i),T_uM)\rightarrow H^{-1}(\Omega\setminus\cup_{i=1}^QB_{r_0}(x_i),T_uM)$ is a uniformly elliptic operator with smooth coefficients. For large $\lambda$, $L_1-\lambda$ is invertible. Hence, $L_1$ differs from an invertible operator by a compact operator $X\mapsto\lambda X$, which makes the index of $L_1$ equals to 0. Since $\mathscr{R}_3=\mathscr{D}_3^*$, $L_3$ is characterized by $$-L_3\mathcal{E}\psi(\mathcal{E}\bar\psi)=\int_{\Omega}(\langle\nabla\mathcal{E}\psi,\nabla\mathcal{E}\bar\psi\rangle-\langle DA_u(\mathcal{E}\psi,\nabla u,\nabla u),\mathcal{E}\bar\psi\rangle)\dif x.$$
    Again, for a large $\lambda$ there holds 
    $$c\Vert\mathcal{E}\psi\Vert_{H^1(\Omega)}^2\leqslant -L_3\mathcal{E}\psi(\mathcal{E}\psi)+\lambda\Vert\mathcal{E}\psi\Vert_{L^2(\Omega)}^2\leqslant C\Vert\mathcal{E}\psi\Vert_{H^1(\Omega)}^2.$$
    By Lax-Milgram theorem, $L_3-\lambda I:\mathcal{D}_3\rightarrow\mathcal{R}_3=\mathscr{D}_3^*$ is an isomorphism, where $I: D_3\rightarrow D_3^*$ is defined by $IX(Y)=\int_{\Omega}\langle X,Y\rangle\dif x$. $I$ is compact, which makes $L_3$ has index zero.

    Finally, since $\lbrace B_{r_0}(x_i)\rbrace_{i=1}^Q$ are mutually disjoint balls, the index of $L_2$ equals $\sum_{i=1}^Q\widehat{\operatorname{index}}_{\tau}(L_u|_{B_{r_0}(x_i)})$. We have proved the lemma.
\end{proof}

We also need the following fact from functional analysis.

\begin{lem}\label{lem: difference of index of restriction}
    Let $V,W$ be Banach spaces and $L:V\rightarrow W$ be a bounded linear operator. Let $V_0\subset V$ be a closed subspace of finite codimension $d\in\N$ and denote by $L_0$ the restriction of $L$ to $V_0$. Then $L$ being a Fredholm operator is equivalent to $L_0$ being a Fredholm operator, and $$\ind L=\ind L_0+d.$$
\end{lem}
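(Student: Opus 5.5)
The plan is to reduce the statement to the classical fact that a product of Fredholm operators is Fredholm with additive index. Let $\iota:V_0\to V$ be the inclusion, so that $L_0=L\circ\iota$. First I will check that $\iota$ is Fredholm of index $-d$. It is injective, so $\dim\ker\iota=0$. Its range $V_0$ is closed. Its cokernel is $V/V_0$, which has dimension $d$. Hence $\ind\iota=-d$.

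The direction ``$L$ Fredholm $\Rightarrow$ $L_0$ Fredholm'' then follows from the composition theorem (see e.g. \cite{TheoSalamon}): $L_0=L\iota$ is Fredholm with
\[
\ind L_0=\ind L+\ind\iota=\ind L-d,
\]
which is the claimed formula.

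For the converse I will argue by hand, since composition does not directly give it. Because $V_0$ is closed of finite codimension, pick a $d$-dimensional complement $F$ with $V=V_0\oplus F$; this is a topological direct sum because $F$ is finite dimensional and $V_0$ is closed.

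1. \emph{Kernel.} Suppose $L_0$ is Fredholm. Then $\ker L\cap V_0=\ker L_0$ is finite dimensional. Moreover $\ker L/(\ker L\cap V_0)$ injects into $V/V_0$. Therefore $\ker L$ is finite dimensional.
2. \emph{Range.} We have $L(V)=L(V_0)+L(F)$, where $L(V_0)$ is closed of finite codimension and $L(F)$ is finite dimensional. A closed subspace plus a finite-dimensional subspace is closed, so $L(V)$ is closed and still has finite codimension.

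So $L$ is Fredholm. Now the composition formula from the first direction applies again and gives $\ind L=\ind L_0+d$.

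The only slightly delicate point is the closedness in step 2, namely that a closed subspace plus a finite-dimensional subspace is closed. I will handle it by quotienting by the closed subspace: the image of the finite-dimensional subspace in the quotient is finite dimensional, hence closed, and its preimage is the sum.

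Alternatively, the index identity can be verified directly by a dimension count. One uses
\[
\dim\ker L-\dim\ker L_0=\dim\bigl(\ker L/(\ker L\cap V_0)\bigr)
\]
and
\[
\operatorname{codim} L(V_0)-\operatorname{codim} L(V)=\dim\bigl(L(V)/L(V_0)\bigr),
\]
and notes that the map $V/V_0\to L(V)/L(V_0)$ induced by $L$ is surjective with kernel isomorphic to $\ker L/(\ker L\cap V_0)$. This gives exactly
\[
d=\dim\bigl(\ker L/(\ker L\cap V_0)\bigr)+\dim\bigl(L(V)/L(V_0)\bigr),
\]
which rearranges to $\ind L=\ind L_0+d$.
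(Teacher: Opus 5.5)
Your proof is correct and complete. The paper does not prove this lemma: it is quoted as a standard fact from functional analysis, with \cite{TheoSalamon} as the general reference for such material. There is therefore no competing argument to compare against, and your proposal supplies what the paper leaves to the reader.

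On your two routes: the composition route is the quickest once you accept the product theorem for Fredholm operators. That theorem's proof is itself usually a dimension count like the one at the end of your proposal. The direct count, via the surjection $V/V_0\to L(V)/L(V_0)$ with kernel $\ker L/(\ker L\cap V_0)$, is fully elementary and gives the index formula without the composition theorem.

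If you wanted to rely on the direct count alone, you would need one more ingredient: closedness of $L(V_0)$ in the forward direction. For the converse, your quotient argument already suffices. The forward closedness does not follow just from $L(V_0)$ having finite codimension in the closed space $L(V)$, since finite-codimensional subspaces need not be closed. Instead:
\begin{itemize}
\item note that $L(V_0)=L(V_0+\ker L)$;
\item note that $V_0+\ker L$ is closed;
\item use that $L$, restricted to a closed complement $V_1$ of the finite-dimensional $\ker L$, is an isomorphism onto $L(V)$ by the open mapping theorem.
\end{itemize}
Then $L(V_0)$ is the image of the closed set $(V_0+\ker L)\cap V_1$ under this isomorphism. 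Alternatively, cite the general fact that the range of a bounded operator with finite algebraic codimension is closed. As you have written it, with the composition theorem handling the forward direction, nothing is missing.
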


\begin{proof}[Proof of Theorem \ref{thm: count of index}]
    By Proposition \ref{prop: index of tangent map} and Lemma \ref{lem: decomposition of index}, we only need to show that if $r_0$ is chosen small enough, then 
    $$\widehat{\ind}_{\tau}(L_u|_{B_{r_0}(x_i)})=\widehat{\ind}_{\tau}(L_h|_{B_1}).$$ By Lemma \ref{lem: difference of index of restriction}, this is equivalent to 
    $$\ind_{\tau}(L_u|_{B_{r_0}(x_i)})=\ind_{\tau}(L_h|_{B_1}).$$ Fix $x_0\in\sing u$. Translating, we assume $x_0=0$ and write $h_{x_0}=h$. 
    
    To start with, we note that there exists $d_0>0$ which may depend on the geometry of $M$, such that if $p,q\in M$ satisfy $\vert p-q\vert<d_0$, then the orthogonal projection $\pi_{T_qM}|_{T_pM}:T_pM\rightarrow T_qM$ is a bi-Lipschitz isomorphism, with bi-Lipschitz constants bounded from above and below by 2, $1/2$ respectively. By Corollary \ref{Cor: uniform graph on tangent maps}, there holds
    $$\lim_{r\rightarrow0}\sup_{x\in B_r}\sum_{i=0}^2\vert x\vert^i\vert\nabla^i(u-h)\vert=0$$
    Hence, for $r_0$ sufficiently small, $\Vert u-h\Vert_{L^{\infty}(B_{r_0})}<d_0$ which ensures that the orthogonal projection $\pi:H^{-1;\tau'}(B_{r_0}\setminus\lbrace 0\rbrace,T_uM)\rightarrow H^{-1;\tau'}(B_{r_0}\setminus\lbrace 0\rbrace,T_hM),(\pi X)(x)=\pi_{T_{h(x)}M}X(x)$ is a bi-Lipschitz isomorphism, for any $\tau'\in\R$. Now, we calculate the difference
    $$L_u-\pi ^{-1}L_h\pi=A\nabla+B,\text{ where }\lim_{r\rightarrow0}\sup_{x\in B_r}(\vert x\vert\vert A\vert+\vert x\vert^2\vert B\vert)=0.$$
    We now show that $K=A\nabla+B:H^{1;\tau}_0(B_{r_0}\setminus\lbrace 0\rbrace,T_uM)\rightarrow H^{-1;\tau-2}(B_{r_0}\setminus\lbrace 0\rbrace,T_uM)$ is a compact operator. As an element in $(H^{1;2-\tau-n}_0(B_{r_0}\setminus\lbrace 0\rbrace,T_uM))^*$, this operator is characterized by
    \begin{equation}
        \begin{aligned}
            &\vert(KX)(Y)\vert=\left\vert\int_{B_{r_0}}\langle A\nabla X+BX,Y\rangle\dif x\right\vert\\=&\left\vert\int_{B_{r_0}}\langle (\vert x\vert A)\vert x\vert^{1-\tau-n/2}\nabla X+(\vert x\vert^2B)\vert x\vert^{-\tau-n/2}X,\vert x\vert^{-(2-\tau-n)-n/2}Y\rangle\dif x\right\vert\\\leqslant&\sup_{B_{r_0}}(\vert x\vert \vert A\vert+\vert x\vert^2\vert B\vert)\Vert X\Vert_{H^{1;\tau}(B_{r_0})}\Vert Y\Vert_{L^{2;2-\tau-n}(B_{r_0})}.
    \end{aligned}\label{eq: compact operator in section 3}
    \end{equation}
    Let us choose a cutoff function $\zeta_{r}\in C_{c}^{\infty}(B_{2r},[0,1])$ which equals to 1 on $B_r$. Consider $K_{\delta}=(1-\zeta_{\delta})K$. We decompose $K_{\delta}=\iota \bar K_{\delta}$, where 
    $$\bar K_{\delta}=(1-\zeta_{\delta})(A\nabla+B):H^{1;\tau}_0(B_{r_0}\setminus\lbrace 0\rbrace,T_uM)\rightarrow V:=\lbrace X\in L^2(B_{r_0},T_uM): X|_{B_{\delta}}=0\rbrace,$$
    and $\iota: V\rightarrow H^{-1;\tau-2}(B_{r_0}\setminus\lbrace0\rbrace,T_uM)$ is the inclusion. $\iota$ is compact by Sobolev embedding while $\bar K_{\delta}$ is bounded, which makes $K_{\delta}$ compact. By \eqref{eq: compact operator in section 3}, $K$ is the operator norm limit of compact operators $K_{\delta}$, which makes $K$ itself compact.

    In other words, $L_u-\pi^{-1}L_h\pi$ is compact, which makes $\ind_{\tau}L_u$ equals to the index of $\pi^{-1}L_h\pi$. $\pi$ is the isomorphism from the kernel and cokernel of $\pi^{-1}L_h\pi$ to that of $L_h$, hence index of $\pi^{-1}L_h\pi$ equals to $\ind_{\tau}(L_h|_{B_{r_0}})$. Finally, by scaling invariance, $\ind_{\tau}(L_h|_{B_{r_0}})=\ind_{\tau}(L_h|_{B_1})$, which concludes the proof.
\end{proof}

In the application, instead of $\widehat{\text{coker}}_{\tau}L_u$, what we really care about is the following space. 

\begin{lem}\label{lem: equiv of dimension of coker}
    Let $u$ be an \textbf{HSI}, $\varphi=u|_{\p\Omega}\in C^{k,\alpha}(\p\Omega,M)$ and let $\tau$ satisfy \eqref{eq: range of rate}.
    
    Then there exists a linear subspace $\Psi\subset C^{k,\alpha}(\p\Omega,T_{\varphi}M)$ of dimension $\dim\widehat{\text{coker}}_{\tau}L_u$ with the following property. If $X\in \hat{H}^{1;\tau}(\reg u,T_uM)$ is a stationary Jacobi field with $X|_{\p\Omega}\in\Psi$, we must have $X|_{\p\Omega}=0$. 
\end{lem}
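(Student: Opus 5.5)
Let $d=\dim\widehat{\operatorname{coker}}_{\tau}L_u$, which is finite by Theorem \ref{thm: count of index}. The plan is to identify the cokernel with a space of boundary fluxes. These fluxes come from the solutions of the adjoint problem, and $\Psi$ will be a $d$-dimensional space of boundary data that pairs nondegenerately with them. The strategy follows the Green's formula argument in the proof of Proposition \ref{prop: index of tangent map}.

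First I would describe $(\widehat{\operatorname{coker}}_{\tau}L_u)^*$. It consists of the $Z\in H^{1;2-\tau-n}_0(\reg u,T_uM)$ with $\langle Z,L_uY\rangle=0$ for all $Y\in\hat H^{1;\tau}_0(\reg u,T_uM)$. Testing against $Y\in H^{1;\tau}_0$ gives $L_uZ=0$. Elliptic regularity up to $\p\Omega$ makes $Z$ smooth, of class $C^{k,\alpha}$ (or $C^{k-1,\alpha}$) near $\p\Omega$, since $u$ is smooth there. Testing against the translation fields $\zeta_j\p_\alpha u$ gives finitely many extra linear constraints. Call this $d$-dimensional space $\mathcal{Z}$.

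Second, for $X\in\hat H^{1;\tau}(\reg u,T_uM)$ a Jacobi field with arbitrary boundary trace and $Z\in\mathcal Z$, I would apply Green's formula on $\Omega\setminus\cup_iB_\epsilon(x_i)$ and let $\epsilon\to0$. Write $X=X_0+\phi$ with $X_0\in H^{1;\tau}$ and $\phi$ translation-like. The contributions on the small spheres $\p B_\epsilon(x_i)$ from $X_0$ vanish by the weights, since the rates $\tau$ and $2-\tau-n$ are dual; to see this in the limit I would use the expansions \eqref{eq: expansion of Jacobi field} after comparing $u$ with $h_{x_i}$, or equivalently approximate by cutoffs. The contributions from $\phi=\p_vu$ near $x_i$ are exactly the quantities killed by the translation constraints on $Z$, as in Proposition \ref{prop: index of tangent map}. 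This is the step I expect to be delicate: the boundary terms at the singular points must be shown to vanish exactly, using $\tau\in(\gamma_-,0)$ and the same identity $\langle Z,L_u((1-\zeta)\p_vu)\rangle$ as before. The result is
$$\int_{\p\Omega}\langle X,\p_\nu Z\rangle\dif\sigma=0\quad\text{for all }Z\in\mathcal Z.$$

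Third, the map $Z\mapsto\pi_{T_\varphi M}\p_\nu Z|_{\p\Omega}$ is injective on $\mathcal Z$. Indeed, $Z|_{\p\Omega}=0$ and $\p_\nu Z=0$ imply $Z\equiv0$ near $\p\Omega$ by unique continuation for the elliptic system $L_u$, hence on the connected set $\reg u$ (isolated singularities do not disconnect). Here $\p_\nu Z$ is automatically tangent, since $Z$ is tangent and vanishes on $\p\Omega$. So $\mathcal N=\{\p_\nu Z|_{\p\Omega}:Z\in\mathcal Z\}$ is a $d$-dimensional subspace of $L^2(\p\Omega,T_\varphi M)$. Since $C^{k,\alpha}(\p\Omega,T_\varphi M)$ is dense in $L^2$, I would choose a $d$-dimensional $\Psi\subset C^{k,\alpha}$ on which the pairing $\Psi\times\mathcal N\to\R$ is nondegenerate: approximate an $L^2$-dual basis of $\mathcal N$ by smooth sections, and nondegeneracy is an open condition. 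Then if $X$ is a stationary Jacobi field with $X|_{\p\Omega}=\psi\in\Psi$, the second step gives $\langle\psi,n\rangle=0$ for all $n\in\mathcal N$, so $\psi=0$. Stationarity is not needed beyond $X$ lying in the domain class.
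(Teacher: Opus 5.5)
Your proposal is correct and is essentially the paper's argument: the paper also realizes $(\widehat{\operatorname{coker}}_{\tau}L_u)^*$ as adjoint solutions in $H^{1;2-\tau-n}_0$, uses unique continuation to make $\bar X\mapsto\p_\nu\bar X$ injective, and concludes by Green's formula. The differences are only technical. The paper takes $\Psi=\{\p_\nu\bar X\}$ itself, so the conclusion reads $\int_{\p\Omega}|\p_\nu\bar X|^2=0$, whereas you choose a space dual to it. The paper also bypasses your delicate $\epsilon\to0$ analysis at the singular points: for a cutoff $\zeta$ equal to $1$ near $\p\Omega$ and $0$ near $\sing u$, the field $(\zeta-1)X$ lies in $\hat H^{1;\tau}_0$, so $L_u((\zeta-1)X)(\bar X)=0$ holds directly by the definition of the annihilator.
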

\begin{proof}
    Let us consider the dual space $(\widehat{\text{coker}}_{\tau}L_u)^*\subset H^{1;2-\tau-n}_0(\reg u,T_uM)$, which has dimension equal to $\dim\widehat{\text{coker}}_{\tau}L_u$ and is characterized by 
    $$(\widehat{\text{coker}}_{\tau}L_u)^*=\lbrace X\in H^{1;2-\tau-n}_0(\reg u,T_uM):L_uY(X)=0,\text{ for all }Y\in\hat H^{1;\tau}_0(\reg u,T_uM)\rbrace.$$
    In particular, $X\in(\widehat{\text{coker}}_{\tau}L_u)^*$ is smooth and solves $L_uX=0$ on $\reg u$. The unique continuation together with elliptic regularity gives $X\mapsto \p_{\nu}X,(\widehat{\text{coker}}_{\tau}L_u)^*\rightarrow C^{k,\alpha}(\p\Omega,T_{\varphi}M)$ is injective. Hence, the set
    $$\Psi=\lbrace \p_{\nu}X:X\in(\widehat{\text{coker}}_{\tau}L_u)^*\rbrace,$$
    is a set with same dimension as $\widehat{\text{coker}}_{\tau}L_u$. We claim that $\Psi$ has the desired property. Indeed, suppose $X$ is a stationary Jacobi field with $X|_{\p\Omega}\in\Psi$, then for some $\bar X\in(\widehat{\text{coker}}_{\tau}L_u)^*$, there holds $X|_{\p\Omega}=\p_{\nu}\bar X$. Pick a cutoff function $\zeta$ which equals 1 near $\p\Omega$ and $0$ near $\sing u$. By Green's formula and the definition of $(\widehat{\text{coker}}_{\tau}L_u)^*$, there holds
    \begin{align*}
        0=L_u((\zeta-1)X)(\bar X)=L_u(\zeta X)(\bar X)=\int_{\Omega}\langle L_u(\zeta X),\bar X\rangle\dif x=&\int_{\Omega}(\langle L_u(\zeta X),\bar X\rangle-\langle\zeta X,L_u\bar X\rangle)\dif x\\=&-\int_{\p\Omega}\langle\p_{\nu}\bar X,X\rangle\dif\sigma=-\int_{\p\Omega}\vert \p_{\nu}\bar X\vert^2\dif\sigma.
    \end{align*}
    Therefore, $\p_{\nu}\bar X=X|_{\p\Omega}=0$, as desired.
\end{proof}

\section{Meagerness of singular points with non-zero index}\label{s: meagerness}
In this section, we wish to prove the following theorem. Throughout this section, the $k$ in boundary regularity is assumed to be at least 2.
\begin{thm}\label{thm: genericity of Jacobi operator}
    For each $k\in\N_{\geqslant2},\alpha\in(0,1)$, there exists a generic subset $\mathcal{B}$ of $C^{k,\alpha}(\p\Omega,M)$ or $C^{\infty}(\p\Omega,M)$, such that for each $\varphi\in \mathcal{B}$, and $\tau$ satisfying \eqref{eq: range of rate}, every \textbf{HSI} $u$ with boundary value $u|_{\p\Omega}=\varphi$ satisfies
    $$\widehat{\ind}_{\tau}L_u=0.$$
\end{thm}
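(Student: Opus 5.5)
The plan is a Baire category argument in the style of Y. Li and Z. Wang \cite{LiWangGenericRegularity} and Carlotto--Li--Wang \cite{CLWNon-persistence}. By Theorem \ref{thm: count of index} and $I(h)\geqslant0$, we always have $\widehat{\ind}_{\tau}L_u=-\sum_{x_0\in\sing u}I(h_{x_0})\leqslant0$. This index does not depend on $\tau$ within the range \eqref{eq: range of rate}. So it suffices to show that the set of boundary data admitting an \textbf{HSI} $u$ with $\sum_{x_0}I(h_{x_0})\geqslant1$ is meager.

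\textbf{Step 1: exhaust by countably many pieces.} For $j\in\N$, let $\mathcal{B}_j$ be the set of $\varphi$ admitting such a $u$ with $u|_{\p\Omega}=\varphi$ and the following bounds:
\begin{itemize}
\item $E(u)\leqslant j$;
\item $\#\sing u\leqslant j$;
\item singular points are $1/j$-separated from each other and from $\p\Omega$;
\item $\Vert\varphi\Vert_{C^{k,\alpha}}\leqslant j$;
\item the regularity scale satisfies $r_u(x)\geqslant\dist(x,\sing u)/j$.
\end{itemize}
Every \textbf{HSI} with negative index lies in some $\mathcal{B}_j$. I will show that each $\mathcal{B}_j$ is closed and has empty interior. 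The $C^\infty$ statement then follows by intersecting over $k$.

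\textbf{Step 2: closedness.} Take $\varphi_i\to\varphi$ with witnesses $u_i$. By \eqref{eq: strong convergence}, a subsequence converges strongly in $H^1_{\loc}$ to some $u$. The uniform regularity-scale bound and boundary regularity give smooth convergence away from finitely many limit points $x_i^\infty$. Each such point has a regular tangent map, since the bound $r_u\gtrsim\dist(\cdot,\sing u)$ passes to the limit; hence $u$ is an \textbf{HSI} in $\mathcal{B}_j$-type bounds.

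It remains to show that $\sum I$ does not drop in the limit. Here I use analyticity: densities of links in a compact family are locally finite. Together with the quantitative uniqueness of Appendix \ref{a: quantitaive estimates}, this gives that the tangent maps of $u_i$ at $x_i\to x^\infty$ converge smoothly on the link to $h_{x^\infty}$. Morse index is lower semicontinuous under smooth convergence, so $I(h_{x^\infty})\geqslant\limsup I(h_{x_i})$ up to the $-n$ normalization.

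I expect one subtlety here. Two singular points cannot merge because of the separation bound. A singular point also cannot disappear, because the density is at least $\varepsilon$-regularity's threshold.

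\textbf{Step 3: empty interior (the main obstacle).} Fix $\varphi\in\mathcal{B}_j$ with witness $u$. Let $\Psi\subset C^{k,\alpha}(\p\Omega,T_\varphi M)$ be the space from Lemma \ref{lem: equiv of dimension of coker}, so $\dim\Psi=\dim\widehat{\operatorname{coker}}_\tau L_u>\dim\widehat{\ker}_\tau L_u$. Consider the perturbations $\varphi_\psi=\Pi_M(\varphi+\psi)$ for $\psi\in\Psi$ small, where $\Pi_M$ is the nearest-point projection.

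Suppose for contradiction that every $\varphi_\psi$ with $|\psi|<\epsilon$ lies in $\mathcal{B}_j$, with witnesses $u_\psi$ which, after shrinking $\epsilon$ and using Step 2 compactness, are close to $u$. Consider pairs $u_\psi,u_{\psi'}$ with nearby singular points. By Lemma \ref{lem: induced Jacobi fields for pairs}, the normalized differences $(u_\psi-u_{\psi'})/\Vert u_\psi-u_{\psi'}\Vert$, aligned at their singular points, subconverge to a nonzero stationary Jacobi field in $\hat H^{1;\tau}(\reg u,T_uM)$. Its boundary trace is the limit of $(\psi-\psi')/\Vert u_\psi-u_{\psi'}\Vert$, which lies in $\Psi$.

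By Lemma \ref{lem: equiv of dimension of coker}, this trace must vanish. I expect two consequences:
\begin{enumerate}
\item $\Vert\psi-\psi'\Vert=o(\Vert u_\psi-u_{\psi'}\Vert)$;
\item the normalized limits lie in $\widehat{\ker}_\tau L_u$.
\end{enumerate}
Thus the map $u_\psi\mapsto\psi$ is, in the limit, a Lipschitz map from a set whose blow-ups lie in a space of dimension $\dim\widehat{\ker}_\tau L_u$. Its image should therefore have Hausdorff dimension at most $\dim\widehat{\ker}_\tau L_u<\dim\Psi$, which contradicts the image being a full ball in $\Psi$.

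I would carry this out as the local Sard--Smale statement, Theorem \ref{thm: local sard smale theorem}. Concretely, I would prove that the set of $\psi$ admitting nearby witnesses is contained in a countable union of Lipschitz graphs over $\dim\widehat{\ker}_\tau L_u$-dimensional subspaces. The delicate point is the control of the aligned differences near the singular points, namely the growth rate $\tau$ and the translation part. That control is what the Appendix estimates are for, and I expect this to be the hardest step.
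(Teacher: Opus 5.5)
Your proposal has a genuine gap in Step 2, and without Step 2 the Baire scheme does not close: showing that $\mathcal{B}_j$ has empty interior does not make it nowhere dense unless $\mathcal{B}_j$ is closed.

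\textbf{The semicontinuity runs the wrong way.} Negative eigenvalues of a link persist under smooth perturbation, so the Morse index of a limit link is \emph{at most} the $\liminf$ of the indices. You write ``lower semicontinuous'' but then use the opposite inequality. A negative eigenvalue $\lambda_{n+1}$ of the links of $h_{x_i}$ may tend to $0$. Hence $I(h_{x_i})\geqslant1$ does not give $I(h_{x^\infty})\geqslant1$: the condition $I>0$ is open, not closed.

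\textbf{The tangent maps need not converge to $h_{x^\infty}$.} Finiteness of densities only makes $\Theta_{u_i}(x_i)$ constant along a subsequence. Upper semicontinuity then gives $\Theta_u(x^\infty)$ at least that constant, possibly strictly larger. Nothing in your bounds prevents $u_i$ from looking like one cone at scales $\gtrsim s_i$ and like a lower-density cone below $s_i\to0$. Proposition \ref{Prop: uniqueness of tangent map} and Corollary \ref{Cor: uniform graph on tangent maps} need exactly the pinching $\Theta_{u_i}(x_i,2)-\Theta_{u_i}(x_i)\to0$ that such a jump destroys.

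\textbf{Step 3 has the same density problem.} Lemma \ref{lem: induced Jacobi fields for pairs}, and the appendix estimates behind it, apply only to pairs whose singular points have the same densities as those of $u$. Otherwise the aligned differences need not converge to a nonzero tame field, and Lemma \ref{lem: equiv of dimension of coker} gives nothing. In addition, compactness only makes the witnesses $u_\psi$ close to \emph{some} witness of $\varphi$, not to your chosen $u$. A different witness comes with a different $\Psi$, so your slice argument does not apply to it.

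\textbf{How the paper avoids this.} It never asks for closedness of the negative-index set.
\begin{itemize}
\item For each $(u,\varphi)$ with $\widehat{\ind}_\tau L_u<0$, it uses the openness of $I>0$ to choose $\kappa$ so that the whole canonical neighborhood $\mathcal{L}^{k,\alpha}(u,\varphi;\Lambda,\kappa)$ of Definition \ref{defn: definition of canonical neighborhood}, which prescribes the densities, consists of negative-index maps.
\item Compactness of this neighborhood (Proposition \ref{prop: compactness of canonical neighborhood}, where locality near $u$ rules out density jumps) makes $C^{k,\alpha}(\p\Omega,M)\setminus\Pi(\mathcal{L}^{k,\alpha})$ open.
\item Theorem \ref{thm: local sard smale theorem} shows this set is dense.
\item The covering Theorem \ref{thm: covering of space of HSI} writes the negative-index set as a countable union of such neighborhoods.
\end{itemize}
Your blow-up and dimension-count idea is essentially the mechanism of Theorem \ref{thm: local sard smale theorem}. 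There it is realized by the bi-Lipschitz projection of Lemma \ref{lem: Lipshitz of projection} together with induction on $\dim\widehat{\ker}_\tau L_u$. But it has to run inside these density-matched neighborhoods, not on global closed sets $\mathcal{B}_j$. Finally, passing to $C^\infty$ needs Lemma \ref{lem: propagate generic to countable intersection}; a bare intersection over $k$ is not enough.
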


Recall that, all harmonic maps below belong to one of three classes: energy minimizing/ stable stationary/ stationary harmonic maps with strong convergence assumption \eqref{eq: strong convergence}.

\subsection{Compact subclass of regular harmonic tangent maps}
\begin{defn}
    Let us set $\mathcal{T}_n$ be the set of all non-constant regular 0-homogeneous stationary harmonic map from $\R^n$ to $M$. Moreover, for each $\Lambda>0$, set $$\mathcal{T}_n(\Lambda)=\left\lbrace h\in\mathcal{T}_n:\inf_{x\in \Sp^{n-1}}r_h(x)\geqslant\Lambda^{-1}\right\rbrace.$$
\end{defn}
By the definition of regularity scale, the density $\Theta_h(0)$ is bounded by a constant depending on $\Lambda$ for maps $h\in\mathcal{T}_n(\Lambda)$. Let us set $\Theta(\Lambda)$ be the supremum of this density for $h\in\mathcal{T}_n(\Lambda)$.

By definition, $\mathcal{T}_n(\Lambda)$ is compact under $H^1(B_1)$ norm and the $C^{\infty}$ topology of links on $\Sp^{n-1}$. Moreover, if $h_k\in\mathcal{T}_n(\Lambda)$ and $h_k\rightarrow h$ as $k\rightarrow\infty$, by the smooth convergence of links $h_k|_{\Sp^{n-1}}$ we have that $\gamma^{\pm}_j(h_k)\rightarrow\gamma_j^{\pm}(h)$ as $k\rightarrow\infty$. Moreover, \L ojasiewicz-Simon inequality implies the set of possible densities for an $h\in\mathcal{T}_n(\Lambda)$ is finite.
\begin{lem}\label{lem: discreteness of density}
    $\lbrace\Theta_h(0):h\in\mathcal{T}_n(\Lambda)\rbrace$ is a finite set for each $\Lambda>0$.
\end{lem}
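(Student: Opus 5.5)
The plan is to reduce the statement to the \L{}ojasiewicz--Simon analysis of the energy functional on links, combined with the compactness of $\mathcal{T}_n(\Lambda)$. For $h\in\mathcal{T}_n(\Lambda)$ with link $\phi=h|_{\Sp^{n-1}}$, we have
$$\Theta_h(0)=\int_{B_1}|\nabla h|^2\dif x=\frac{1}{n-2}\int_{\Sp^{n-1}}|\nabla^{\Sp^{n-1}}\phi|^2\dif\omega=\frac{2}{n-2}\mathcal{E}(\phi),$$
where $\mathcal{E}$ is the Dirichlet energy on $\Sp^{n-1}$. (For $n=3$ the same formula holds with the factor $1/(n-2)=1$.) Moreover, $h$ is stationary harmonic exactly when $\phi$ is a smooth harmonic map $\Sp^{n-1}\to M$. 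So the set in question is
$$\left\{\tfrac{2}{n-2}\mathcal{E}(\phi)\;:\;\phi \text{ a link of some } h\in\mathcal{T}_n(\Lambda)\right\},$$
and it suffices to show that $\mathcal{E}$ takes only finitely many values on the set $\mathcal{K}_\Lambda$ of such links.

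The first step is compactness. By definition of $\mathcal{T}_n(\Lambda)$, the links satisfy uniform $C^2$ bounds on $\Sp^{n-1}$ via the regularity scale. Using the harmonic map equation and Schauder theory, these upgrade to uniform $C^{k}$ bounds for every $k$. Hence $\mathcal{K}_\Lambda$ is compact in $C^{2,\alpha}(\Sp^{n-1},M)$; the paper already asserts this compactness for $\mathcal{T}_n(\Lambda)$. The limits remain harmonic, and the closure may also contain constants, which only add the single value $0$.

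The second step is local constancy of $\mathcal{E}$ on critical points. Since $M$ is closed and analytic, $\mathcal{E}$ is an analytic functional on $C^{2,\alpha}(\Sp^{n-1},M)$. By L. Simon's \L{}ojasiewicz inequality \cite{SimonAsymptotics}, each critical point $\phi_0$ has a neighborhood $U_{\phi_0}$ in $C^{2,\alpha}$ and constants $\theta\in(0,1/2]$, $C$ such that
$$|\mathcal{E}(\phi)-\mathcal{E}(\phi_0)|^{1-\theta}\leqslant C\,\Vert\mathcal{M}(\phi)\Vert_{L^2}\quad\text{for all }\phi\in U_{\phi_0},$$
where $\mathcal{M}$ is the Euler--Lagrange operator (the tension field). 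If $\phi\in U_{\phi_0}$ is itself harmonic, then $\mathcal{M}(\phi)=0$, which forces $\mathcal{E}(\phi)=\mathcal{E}(\phi_0)$. Thus the energy is constant on $U_{\phi_0}\cap\mathcal{K}_\Lambda$.

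The final step covers the compact closure of $\mathcal{K}_\Lambda$ by the neighborhoods $U_{\phi_0}$ and extracts a finite subcover. This yields finitely many energy values, and hence finitely many densities.

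The main obstacle is the second step: checking that the \L{}ojasiewicz--Simon inequality applies in the setting used here. This means the $C^{2,\alpha}$ neighborhood must be the one where Simon's inequality holds, and the constraint $\phi\in M$ must be handled. For the constraint, I would write nearby maps as $\phi=\Pi_M(\phi_0+\xi)$ with $\xi$ a section of $\phi_0^*TM$, where $\Pi_M$ is the nearest-point projection. The reduced functional is then analytic in $\xi$, and its linearization is the elliptic Jacobi operator $L^{\Sp^{n-1}}_{\phi_0}$, which is Fredholm. A Lyapunov--Schmidt reduction to the finite-dimensional kernel then gives the inequality, as in Simon's work.
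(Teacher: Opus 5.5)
Your proposal is correct and follows essentially the same route as the paper: the links of $\mathcal{T}_n(\Lambda)$ are compact in a smooth topology, and the \L{}ojasiewicz--Simon inequality, whose right-hand side vanishes for harmonic maps, forces nearby harmonic links to have equal energy and hence equal density $\frac{1}{n-2}\int_{\Sp^{n-1}}\vert\nabla u\vert^2$. The only difference is cosmetic. The paper argues by contradiction along a sequence with distinct densities converging smoothly to a limit link, whereas you extract a finite subcover of the compact closure; the two arguments are equivalent.
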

\begin{proof}
    Suppose the lemma is not true, we can find a sequence of maps $h_k\in\mathcal{T}_n(\Lambda)$, such that $\Theta_{h_k}(0)$ are all distinct numbers. After passing to a subsequence, we can assume $h_k\rightarrow h$ strongly in $H^1(B_2)$ and $u_k\rightarrow u$ smoothly on $\Sp^{n-1}$, where $u_k$ and $u$ are the restrictions of $h_k$ and $h$ on $\Sp^{n-1}$ respectively.

    The smooth convergence allows us to apply the \L ojasiewicz-Simon inequality \cite[Section 3.14]{SimonBook} to get
    $$\left\vert\int_{\Sp^{n-1}}\vert\nabla u_k\vert^2-\int_{\Sp^{n-1}}\vert\nabla u\vert^2\right\vert^{1-\alpha}\leqslant C\Vert\tau_{\Sp^{n-1}}(u_k)\Vert_{L^2(\Sp^{n-1})}=0,\text{ for } k\text{ sufficiently large,}$$
    where $\tau_{S^{n-1}}(u_k)=\Delta_{S^{n-1}}u_k+A_{u_k}(\nabla^{S^{n-1}}u_k,\nabla^{S^{n-1}}u_k)$ is the tension field of $u_k$. As a result, we see
    $$\Theta_{h_k}(0)=\frac{1}{n-2}\int_{\Sp^{n-1}}\vert \nabla u_k\vert^2=\frac{1}{n-2}\int_{\Sp^{n-1}}\vert\nabla u\vert^2=\Theta_h(0),\text{ for large }k.$$
    This contradicts the assumption that $\Theta_{h_k}(0)$ are all distinct.
\end{proof}
We are also able to derive a quantitative lower bound on the $L^2$ norm of partial derivatives. 
\begin{lem}\label{lem: lower bound of derivative of tangent map}
    There exists a constant $C>0$ depending only on $n,\Lambda,M$ such that for each $h\in\mathcal{T}_n(\Lambda)$, we have 
    $$\vert v\vert\leqslant C\Vert\nabla_vh\Vert_{L^2(B_1)}.$$
\end{lem}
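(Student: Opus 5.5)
The argument is by contradiction combined with the compactness of $\mathcal{T}_n(\Lambda)$. The quantity $\Vert\nabla_vh\Vert_{L^2(B_1)}$ is homogeneous of degree one in $v$, so it suffices to treat unit vectors $|v|=1$ and show that $\inf\lbrace\Vert\nabla_vh\Vert_{L^2(B_1)}:h\in\mathcal{T}_n(\Lambda),\ |v|=1\rbrace>0$.

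Suppose this infimum were zero. Then there are $h_k\in\mathcal{T}_n(\Lambda)$ and unit vectors $v_k$ with $\Vert\nabla_{v_k}h_k\Vert_{L^2(B_1)}\to0$. Pass to a subsequence along which $v_k\to v$ with $|v|=1$. By the compactness of $\mathcal{T}_n(\Lambda)$ noted above, we may also assume $h_k\to h\in\mathcal{T}_n(\Lambda)$, with the links converging in $C^\infty(\Sp^{n-1})$. Note that $h$ is non-constant by the definition of $\mathcal{T}_n$.

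Since $h_k$ and $h$ are $0$-homogeneous, we have $\nabla h_k(r\omega)=r^{-1}\nabla^{\Sp^{n-1}}u_k(\omega)$, so
$$\Vert\nabla_{v}h_k\Vert^2_{L^2(B_1)}=\int_0^1 r^{n-3}\dif r\int_{\Sp^{n-1}}|\nabla^{\Sp^{n-1}}_{\pi_{T_\omega\Sp^{n-1}}v}u_k|^2\dif\omega .$$
The radial integral is finite because $n\geqslant3$. The smooth convergence of the links then gives $\nabla_{v_k}h_k\to\nabla_vh$ in $L^2(B_1)$. Hence $\nabla_vh\equiv0$ on $B_1$, and by homogeneity on all of $\R^n$. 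Thus $h$ is invariant under translations in the direction $v$, and it is also $0$-homogeneous.

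It remains to show that such an $h$ must be constant, which is the key step. Take any $x\neq0$. Invariance along $v$ gives $h(x)=h(x+tv)$ for all $t$, and homogeneity gives $h(x+tv)=h\big((x+tv)/|x+tv|\big)$. As $t\to+\infty$, the point $(x+tv)/|x+tv|$ tends to $v$. Since $h$ is continuous on $\Sp^{n-1}$ (its link is smooth), $h(x)=h(v)$ for every $x\neq0$. So $h$ is constant, contradicting $h\in\mathcal{T}_n$.

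The only delicate point is justifying the $L^2$ convergence of $\nabla_{v_k}h_k$. This follows from the uniform smooth convergence of the links together with the integrability of $r^{n-3}$ near $0$. Alternatively, one can use the regularity-scale bound $|\nabla h_k|\leqslant\Lambda/|x|$ and dominated convergence. The constant $C$ obtained this way depends only on $n$, $\Lambda$ and $M$, as required.
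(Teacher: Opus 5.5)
Your proof is correct and follows the paper's route. Both argue by contradiction and use compactness of $\mathcal{T}_n(\Lambda)$ to pass to a limit $h$ with $\nabla_v h\equiv 0$. They differ only in the last step: the paper observes that the singular set of $h$ would then contain the whole line $\R v$ rather than just $\{0\}$, whereas you show directly that translation invariance plus $0$-homogeneity forces $h$ to be constant.

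One small fix is needed. For $x$ on the ray $\{sv:s<0\}$, the segment from $x$ to $x+tv$ (for large $t$) passes through the origin, where $h$ is not smooth. For those points you should instead obtain $h(x)=h(v)$ by continuity of $h$ on $\R^n\setminus\{0\}$, approximating $x$ by points off the line $\R v$.
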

\begin{proof}
    Suppose this inequality is not true, there exists a sequence of unit vectors $e_k$, and maps $h_k\in\mathcal{T}_n(\Lambda)$ such that $$\Vert\nabla_{e_k}h_k\Vert_{L^2(B_1)}\rightarrow0.$$ Assume $h_k\rightarrow h\in\mathcal{T}_n(\Lambda)$ and $e_k\rightarrow e$ as $k\rightarrow\infty$, we see from above assumption that $\nabla_eh=0$. However, this implies that the singular set of $h$ contains at least the line spanned by $e$, which contradicts our assumption that the singular set of $h\in\mathcal{T}_n(\Lambda)$ contains only $0$.
\end{proof}

\subsection{Key definitions and tools}
We will work with following spaces when proving our results.
\begin{defn}
    Let us denote by $\Hau^{k,\alpha}$ the set of pairs $(u,\varphi)$ where $u\in H^1(\Omega,M)$ is an \textbf{HSI} and $\varphi\in C^{k,\alpha}(\p\Omega,M)$ is the boundary value of $u$, $u|_{\p\Omega}=\varphi$. We also denote the projection to boundary value by $$\Pi:\mathcal{H}^{k,\alpha}(\Omega,M)\rightarrow C^{k,\alpha}(\p\Omega,M),(u,\varphi)\mapsto\varphi.$$ 
\end{defn}
The convergence in $\Hau^{k,\alpha}$ is understood as the $H^1$ convergence in the first factor and the $C^{k,\alpha}$ convergence in the second factor. Note that under our assumption of strong convergence of harmonic maps, boundary regularity holds automatically for $u\in\Hau^{k,\alpha}$ by the analysis established by F. Lin \cite{LinGradientEstimate} and D. Hsu \cite{Hsu}, and the fact that any harmonic map $u$ from the half space $\lbrace (x_,y)\in\R^2:y\geqslant0\rbrace$ to $M$ with $u(x,0)=$ Constant itself must be a constant; see \cite{EellsLemaire} and the references therein. 

\begin{defn}\label{defn: tame field}
    Let $u$ be an \textbf{HSI}. A section $X$ in $\hat{L}^{2;\tau}$ is called a \textbf{$\tau$-growth} section. Moreover, if $\tau$ satisfies \eqref{eq: range of rate}, we call $X$ a \textbf{tame} section.
\end{defn}

We use the following asymptotic rate to control the local behavior of a section near the singular points. This notion is first introduced in Z. Wang \cite{WangDeformationI}.

\begin{defn}
    Let $u$ be an \textbf{HSI}, $x_0$ be a singular point of $u$, and an $L^2_{\text{loc}}$ section $X$ defined near $x_0$. The asymptotic rate of $X$ at $x_0$ is defined as $$\mathcal{AR}_{x_0}(X)=\sup\left\lbrace\gamma\in\R:\lim_{r\rightarrow0}\int_{A_{r,2r}(x_0)}\vert X(x)\vert^2\vert x-x_0\vert^{-n-2\gamma}\dif x=0\right\rbrace.$$
\end{defn}
Heuristically, $\mathcal{AR}_p(X)=\gamma$ means $X$ grows like $\rho^{\gamma}$ near $p$. 
\begin{lem}\label{lem: asymtpotic rate and growth rate relation}
    Let $\tau$ satisfy \eqref{eq: range of rate} and $X\in W^{2,2}_{\textup{loc}}(\reg u, T_uM)$ be a non-zero section which is $C^k$ on $\p\Omega$, $k\geqslant2$.
    \begin{enumerate}
        \item If $L_uX\in L^{2;\tau-2}(\reg u,T_uM)$, then $\mathcal{AR}_{x_0}(X)\in\lbrace-\infty\rbrace\cup\Gamma(h_{x_0})\cup\lbrace-(n-2)/2\rbrace\cup[\tau,\infty)$ for every $x_0\in\sing u$;
        \item If $X\in L^{2;\tau}(\reg u,T_uM)$ (resp. $\hat{L}^{2;\tau}(\reg u,T_uM)$), then $\mathcal{AR}_{x_0}(X)\geqslant\tau$ (resp. $\mathcal{AR}_{x_0}(X)\geqslant-1)$ for each $x_0\in\sing u$;
        \item Let $k\in\N,k\geqslant2$. If $\mathcal{AR}_{x_0}(X)\geqslant\tau$ for each $x_0\in\sing u$ and $L_uX\in H^{k-2;\tau-2}(\reg u,T_uM)$, then $X\in H^{k;\tau'}(\reg u,T_uM)$ for each $\tau'<\tau$.
    \end{enumerate}
\end{lem}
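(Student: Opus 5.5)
The plan is to reduce all three statements to a one-dimensional analysis of the Fourier coefficients of $X$ along the eigensections of the link operator, near a fixed singular point $x_0$. Translate $x_0=0$, write $h=h_{x_0}$, and use polar coordinates $x=r\omega$. By Corollary \ref{Cor: uniform graph on tangent maps} we have $\sup_{B_r}\sum_{i\le2}|x|^i|\nabla^i(u-h)|\to0$. Exactly as in the proof of Theorem \ref{thm: count of index}, after projecting by $\pi$ onto $T_hM$ we can write
\[
L_uX=\pi^{-1}L_h\pi X+A\nabla X+BX,
\]
with $|x||A|+|x|^2|B|=o(1)$. Set $Y=\pi X$ and expand $Y(r\omega)=\sum_j y_j(r)X_j(\omega)$ in the $L^2(\Sp^{n-1})$-orthonormal eigensections of $L^{\Sp^{n-1}}_{u}$ (the link of $h$). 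Passing to $t=-\log r$ and $y_j=r^{-(n-2)/2}z_j$ turns each coefficient equation into
\[
z_j''-\Big(\tfrac{(n-2)^2}{4}+\lambda_j\Big)z_j=f_j,
\]
where $f_j$ collects the contributions of $L_uX$ and of the perturbation $A\nabla X+BX$. In these variables the annular quantity $\int_{A_{r,2r}}|X|^2|x|^{-n-2\gamma}$ becomes comparable to $\sum_j\int_{t}^{t+\log2}e^{2(\gamma+\frac{n-2}{2})s}|z_j|^2\,ds$. The indicial exponents $\gamma_j^\pm$ appear as the exponential rates $-(\gamma_j^\pm+\tfrac{n-2}{2})$.

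Item (2) is immediate and I would do it first. If $X\in L^{2;\tau}$, then $\int_{A_{r,2r}}|X|^2|x|^{-n-2\tau}$ is the tail of a convergent integral, so it tends to $0$ and $\mathcal{AR}(X)\ge\tau$. In the hatted case, write $X=\p_vu+(X-\p_vu)$. Homogeneity of $h$ and the $C^1$ closeness of $u$ to $h$ give $|\p_vu|\le C|x|^{-1}$, hence $\mathcal{AR}(\p_vu)\ge-1$. The other piece has rate at least $\tau>-1$. The rate of a sum is at least the minimum of the rates, which gives $\mathcal{AR}_{x_0}(X)\ge-1$.

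For item (1), the plan is a three-annulus / frequency-growth argument in the style of L. Simon and Z. Wang \cite{WangDeformationI}. Suppose $\gamma:=\mathcal{AR}(X)<\tau$ is finite and does not lie in $\Gamma(h)\cup\{-(n-2)/2\}$. Choose $\epsilon$ small so that $[\gamma-\epsilon,\gamma+\epsilon]$ avoids $\Gamma(h)\cup\{-(n-2)/2\}$ and $\gamma+\epsilon<\tau$.
\begin{enumerate}
\item Split the modes into those with $\operatorname{Re}\gamma_j^\pm>\gamma+\epsilon$ (decaying as $r\to0$ relative to $r^\gamma$) and those with rate $<\gamma-\epsilon$ (growing). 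Oscillatory modes with $\lambda_j<-(n-2)^2/4$ have real part $-(n-2)/2$ and fall into one group or the other, since that value is excluded. The condition $L_uX\in L^{2;\tau-2}$ with $\tau>\gamma+\epsilon$ makes the forcing from $L_uX$ negligible at scale $r^\gamma$.
\item The perturbation $A\nabla X+BX$ is $o(1)$ relative to the scale of $X$, after using interior $W^{2,2}$ estimates on annuli to control $\nabla X$ by $X$ and $L_uX$.
\item Apply the standard discrete ODE lemma for the quantity $a_k=\|X\|_{L^2(A_{2^{-k-1},2^{-k}})}$ weighted by $2^{k(\gamma\pm\epsilon)}$. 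Either $a_k$ grows at rate at least $\gamma-\epsilon$ for all large $k$, giving $\mathcal{AR}\le\gamma-\epsilon$, or it decays like rate at least $\gamma+\epsilon$, giving $\mathcal{AR}\ge\gamma+\epsilon$.
\end{enumerate}
Both alternatives contradict $\mathcal{AR}(X)=\gamma$. This three-annulus dichotomy, uniform in the mode index $j$, is the main obstacle. I would prove it first for $L_h$ by explicit mode-by-mode solution formulas, then absorb the $o(1)$ perturbation, since the gap $\epsilon$ around $\gamma$ gives uniform constants.

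For item (3), the rate bound $\mathcal{AR}\ge\tau$ combined with the same annular dichotomy upgrades to the polynomial bound $\|X\|_{L^2(A_{r,2r})}\le C_{\tau'}r^{\tau'+n/2}$ for every $\tau'<\tau$. Rescale each annulus $A_{r,2r}$ to unit size. Interior and boundary Schauder/$W^{k,2}$ estimates for $L_u$, whose rescaled coefficients are uniformly bounded in $C^k$ by the regularity scale, then give
\[
\sum_{i\le k}r^{i}\|\nabla^iX\|_{L^2(A_{r,2r})}\le C\big(\|X\|_{L^2(A_{r/2,4r})}+r^2\|L_uX\|_{H^{k-2}\text{-scaled}}\big).
\]
Summing over dyadic $r$ with the weight $r^{-2\tau'-n}$ converges because $\tau'<\tau$. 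Away from $\sing u$, standard elliptic estimates up to the smooth boundary, using that $X$ is $C^k$ on $\p\Omega$, give $H^k$ control. Together these yield $X\in H^{k;\tau'}$.
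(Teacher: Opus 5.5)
Your items (2) and (3) are essentially the paper's argument: (2) comes directly from the definitions, and (3) from scaled elliptic estimates on dyadic annuli. In (3) you do not even need a dichotomy. Since $\mathcal{AR}_{x_0}(X)\geqslant\tau$, we already have $\int_{A_{r,2r}}|X|^2|x|^{-n-2\tau''}\to0$ for any $\tau''\in(\tau',\tau)$, which gives a summable dyadic bound at weight $\tau'$.

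For item (1) your route is genuinely different. The paper argues by contradiction via blow-up, in four steps:
\begin{enumerate}
\item It chooses radii $r_j\to0$ at which $\int_{A_{r,2r}}|X|^2|x|^{-n-2\gamma''}$ attains its running maximum over $[r_j,r_1]$. This is possible because that quantity is unbounded.
\item The running maximum bounds the rescalings $X(r_j\cdot+x_0)$ at all larger scales with rate $\gamma''$.
\item At smaller scales it bounds them with rate $\gamma'$ by the one-sided growth estimate of Lemma \ref{lem: Jacobi operator growth estimate}, which applies since $\mathcal{AR}_{x_0}(X)>\gamma'$.
\item After normalizing, it extracts a nonzero Jacobi field of $h_{x_0}$ on $\R^n\setminus\{0\}$ with two-sided growth bounds. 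Separation of variables forbids this, because $[\gamma',\gamma'']\cap\Gamma(h_{x_0})=\emptyset$.
\end{enumerate}
All error terms ($u$ versus $h_{x_0}$, and the forcing $L_uX$) are thus handled softly by compactness. The only three-annulus input is the convexity-type inequality of Lemma \ref{lem: perturbed three circle lemma for Jacobi fields}, which the paper needs elsewhere anyway.

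Your direct two-sided dichotomy avoids any limiting object and yields explicit rates. It needs strictly more than that convexity inequality, however. Applied at the weights $\gamma\pm\epsilon$, the inequality leaves open precisely the case you must exclude: the $(\gamma-\epsilon)$-weighted annular norms nonincreasing while the $(\gamma+\epsilon)$-weighted ones are eventually increasing. Once increasing, it only guarantees a growth factor close to $1$ per step.

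So you must genuinely prove the Simon/Allard--Almgren form. That means either a definite growth factor of order $K^{c\sigma}$ once the weighted norm starts to increase, or a componentwise growing/decaying splitting of each mode ODE. The splitting must include the finitely many oscillatory and logarithmic modes with real part $-(n-2)/2$. The coupling through $A\nabla X+BX$ involves $\p_t$ and angular derivatives, and must be controlled uniformly in $j$.

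This is feasible, and such a strict-margin statement does survive the $o(1)$ perturbation by compactness, as you suggest. But it is the bulk of the work, and the paper's running-maximum trick sidesteps it.
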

Note that we have the following inequality for asymptotic rate
$$\mathcal{AR}_{x_0}(X_1+X_2)\geqslant\min\lbrace\mathcal{AR}_{x_0}(X_1),\mathcal{AR}_{x_0}(X_2)\rbrace.$$
Moreover, we have the weighted version of Holder inequality: if $X_i\in L^{2;\tau_i}(\reg u,T_uM)$, then
\begin{align*}
    \begin{cases}
        X_1\cdot X_2\in L^1(\Omega),&\text{ for }\tau_1+\tau_2\geqslant-n;\\
        X_1\cdot X_2\in L^{1;\tau}(\reg u,T_uM),&\text{ for }\tau_1+\tau_2\geqslant\tau.
    \end{cases}
\end{align*}
Indeed, the corresponding norms are bounded by
\begin{align*}
    \Vert X_1\cdot X_2\Vert_{L^1(\Omega)}\leqslant C\Vert X_1\Vert_{L^{2;\tau_1}(\reg u)}\Vert X_2\Vert_{L^{2;\tau_2}(\reg u)},
\end{align*}
$$\Vert X_1\cdot X_2\Vert_{L^{1;\tau}(\Omega)}\leqslant C\Vert X_1\Vert_{L^{2;\tau_1}(\reg u)}\Vert X_2\Vert_{L^{2;\tau_2}(\reg u)}.$$
\begin{proof}[Proof of Lemma \ref{lem: asymtpotic rate and growth rate relation}]
    For part (1), we show that if $\gamma=\mathcal{AR}_{x_0}(X)\in(-\infty,\tau)$, then $\gamma\in\Gamma(h_{x_0})\cup\lbrace-(n-2)/2\rbrace$. If this does not hold, then we can pick $\gamma'<\gamma<\gamma''$ and $\sigma>0$, such that 
    $$[\gamma'-\sigma,\gamma''+\sigma]\cap(\Gamma(h_{x_0})\cup\lbrace-(n-2)/2\rbrace\cup[\tau,\infty))=\emptyset.$$
    By the definition of asymptotic rate
    $$\limsup_{r\rightarrow0}\int_{A_{r,2r}(x_0)}\vert X(x)\vert^2\vert x-x_0\vert^{-n-2\gamma''}\dif x=\infty.$$
    As such, we can pick a decreasing sequence $r_j\rightarrow0$ such that for every $r\in[r_j,r_1]$, there holds
    \begin{equation}\label{eq: lower bound 2 in asymptotic rate}
        \int_{A_{r_j,2r_j}(x_0)}\vert X(x)\vert^2\vert x-x_0\vert^{-n-2\gamma''}\dif x\geqslant\int_{A_{r,2r}(x_0)}\vert X(x)\vert^2\vert x-x_0\vert^{-n-2\gamma''}\dif x,
    \end{equation}
    and the integrals on the left hand side are positive for all $j\geqslant1$. Set $X_j=X(r_j\cdot+x_0)$. Above inequality at $r=r_1$ implies
    \begin{equation}\label{eq: lower bound of X_j in asymptotic rate}
        \begin{aligned}
            \Vert X_j\Vert_{L^2({A_{1,2}})}^2&\geqslant C(\gamma'')r_j^{2\gamma''}\int_{A_{r_j,2r_j}(x_0)}\vert X\vert^2\vert x-x_0\vert^{-n-2\gamma''}\dif x\\&\geqslant C(\gamma'')r_j^{2\gamma''}\int_{A_{r_1,2r_1}(x_0)}\vert X\vert^2\vert x-x_0\vert^{-n-2\gamma''}\dif x.
        \end{aligned}
    \end{equation}
    On the other hand, the equation of $X$ is $L_uX=f$ for $f\in L^{2;\tau-2}$. Rescaling, we see $X_j$ satisfies the equation $L_{u_j}X_j=f_j$, where $u_j=u(r_j\cdot+x_0)$ and $f_j=r_j^2f(r_j\cdot+x_0)$. For $j$ large enough, $u_j$ is close to the $h_{x_0}$ in the sense of Lemma \ref{lem: Jacobi operator growth estimate}. In this case, Lemma \ref{lem: Jacobi operator growth estimate} gives the growth estimate of $X_j$:
    \begin{equation}\label{eq: upper bound 1 in asymptotic rate}
        \Vert X_j\Vert_{L^2(A_{rK^{-1},r})}\leqslant C(\sigma,\Lambda)(F_j+\Vert X_j\Vert_{L^2(A_{K^{-1},1})})r^{n/2+\gamma'},
    \end{equation}
    where $\Lambda=\sup_{\omega\in\Sp^{n-1}}\vert\nabla h_{x_0}\vert$ and 
    $$F_j\leqslant \Vert\vert x\vert^{2-\gamma'-n/2}f_j\Vert_{L^2(B_1)}\leqslant C(\sigma,\Lambda,\tau)\Vert f\Vert_{L^{2;\tau-2}(B_{2r_1}(x_0)}r_j^{\tau}\leqslant C\Vert f\Vert_{L^{2;\tau-2}}\Vert X_j\Vert_{L^2(A_{1,2})}r_j^{\tau-\gamma''},$$
    where in the last step we used the equation \eqref{eq: lower bound of X_j in asymptotic rate}. The bound on $X_j$ allows us to consider the normalization $X_j/\Vert X_j\Vert_{L^2(A_{1,2})}$ which after passing to a subsequence converges to a non-trivial field $\hat X$ strongly in $L^2_{\loc}(B_2\setminus\{0\})$ and weakly in $H^1_{\loc}(B_2\setminus\{0\})$. The bound on $F_j$ implies that $\hat X$ is a Jacobi field of $h_{x_0}$. And \eqref{eq: lower bound 2 in asymptotic rate} and \eqref{eq: upper bound 1 in asymptotic rate} gives respectively
    $$\sup_{s>1}\int_{A_{s,2s}}\vert \hat X(x)\vert^2\vert x\vert^{-n-2\gamma''}\dif x<\infty\text{ and}\sup_{s\in(0,1)}\int_{A_{s,2s}}\vert \hat X(x)\vert^2\vert x\vert^{-n-2\gamma'}\dif x<\infty.$$
    But there is no such non-trivial Jacobi field of $h_{x_0}$ given the assumption $[\gamma',\gamma'']\cap \Gamma(h_{x_0})=\emptyset$. This is the desired contradiction.

    Part (2) of Lemma \ref{lem: asymtpotic rate and growth rate relation} follows directly from the definition and the fact that $\p_vu$ has asymptotic rate $-1$ at each $x_0\in\sing u$ for $v\ne0$. And (3) follows from by an argument similar to the derivation of \eqref{eq: lower bound of X_j in asymptotic rate} and standard elliptic regularity theory. 
\end{proof}

\begin{cor}\label{cor: asy rate of tame field}
    $\widehat{\ker}_{\tau}L_u$ is independent of $\tau$ satisfying \eqref{eq: range of rate}. If $X\in\widehat{\ker}_{\tau}L_u,$ then for every $p\in\sing u$, there exists $v\in\R^n$ such that $\mathcal{AR}_p(X-\p_vu)\geqslant0$.
\end{cor}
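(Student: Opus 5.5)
The plan is to prove the second assertion first and then read off the independence of $\tau$ from it.

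Fix $X\in\widehat{\ker}_{\tau}L_u$ and a singular point $p$. By definition of $\hat H^{1;\tau}_0$ there is a translation-like $\phi$ with $X-\phi\in H^{1;\tau}(\reg u,T_uM)$. Near $p$ we have $\phi=\p_vu$ for some $v\in\R^n$, so $Y:=X-\p_vu$ is defined in a punctured neighbourhood of $p$ and lies in $L^{2;\tau}$ there. Since $X$ is a Jacobi field and $\p_vu$ is also a Jacobi field on $\reg u$ (differentiate the harmonic map equation in the direction $v$), we get $L_uY=0$ near $p$. Multiplying $Y$ by a cutoff equal to $1$ near $p$ produces a section $\tilde Y$ with $L_u\tilde Y$ smooth and compactly supported away from $\sing u$. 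In particular $L_u\tilde Y\in L^{2;\tau'-2}$ for every $\tau'$.

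The key step is to apply Lemma \ref{lem: asymtpotic rate and growth rate relation}(1) with an exponent slightly above $0$ in place of $\tau$. Part (1) of that lemma only needs its $\tau$ to bound the error term $F_j$, and the contradiction argument there uses no upper bound on $\tau$. Choose $\tau_1>0$ so small that $(0,\tau_1)\cap\Gamma(h_p)=\emptyset$; this is possible because $\Gamma(h_p)$ is discrete. Then
$$\mathcal{AR}_p(\tilde Y)\in\lbrace-\infty\rbrace\cup\Gamma(h_p)\cup\lbrace-(n-2)/2\rbrace\cup[\tau_1,\infty).$$
By part (2), $\mathcal{AR}_p(\tilde Y)\geqslant\tau$. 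Since $\tau>\gamma_-(h_p)$, no element of $\Gamma(h_p)$ lies in $[\tau,0)$. Also $-(n-2)/2\leqslant-1<\tau$ for $n\geqslant4$, and for $n=3$ we have $-1/2\leqslant\gamma_-(h_p)<\tau$ because $\gamma_j^+$ attains the real part $-1/2$ only when $\lambda_j<0$. Hence $\mathcal{AR}_p(\tilde Y)\geqslant0$, which is the second assertion.

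The main obstacle is justifying this re-use of part (1) with $\tau_1>0$, i.e.\ outside the range \eqref{eq: range of rate}. I will check that the proof of part (1) only used that $L_uX$ decays at the prescribed rate. If that reading turns out to be problematic, the fallback is a direct argument: expand $Y$ via the model \eqref{eq: expansion of Jacobi field}, treating $L_u-L_{h_p}$ as a perturbation through the growth estimate of Lemma \ref{lem: Jacobi operator growth estimate}, and conclude that the modes with rates in $[\tau,0)$ are absent.

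For the independence of $\tau$, take $\tau<\tau'$, both satisfying \eqref{eq: range of rate}. The weight $\rho^{-\tau}$ versus $\rho^{-\tau'}$ gives $H^{1;\tau'}\subset H^{1;\tau}$, hence $\widehat{\ker}_{\tau'}\subset\widehat{\ker}_{\tau}$. Conversely, let $X\in\widehat{\ker}_{\tau}L_u$. By the above, $X-\phi$ has asymptotic rate at least $0>\tau'$ at each singular point. The section $X-\phi$ satisfies $L_u(X-\phi)=-L_u\phi$, and $L_u\phi$ is smooth and compactly supported in $\reg u$ because $\phi=\p_vu$ is Jacobi near each singular point. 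Lemma \ref{lem: asymtpotic rate and growth rate relation}(3), applied with a rate strictly between $\tau'$ and $0$, then gives $X-\phi\in H^{1;\tau'}$ (indeed $H^{k;\tau'}$). Stationarity and the zero boundary trace do not depend on $\tau$, so $X\in\widehat{\ker}_{\tau'}L_u$, and the two kernels coincide.
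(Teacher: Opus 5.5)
Your route is the paper's: split $X=X_0+\phi$ with $\phi=\p_vu$ near $p$, note $L_uX_0=0$ near $p$, combine parts (1) and (2) of Lemma \ref{lem: asymtpotic rate and growth rate relation}, and get $\tau$-independence from part (3). However, the ``main obstacle'' you flag is self-inflicted, and your way of resolving it is not supported by the paper. You never need an exponent above $0$. Every $\tau'\in(\tau,0)$ itself satisfies \eqref{eq: range of rate}, so part (1) applies verbatim at $\tau'$. Together with $\mathcal{AR}_p(X_0)\geqslant\tau$ and $\Gamma(h_p)\cap[\tau,0)=\emptyset$, it gives $\mathcal{AR}_p(X_0)\geqslant\tau'$, and letting $\tau'\uparrow0$ gives $\mathcal{AR}_p(X_0)\geqslant0$. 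This is exactly the paper's argument. By contrast, the proof of part (1) runs through Lemma \ref{lem: Jacobi operator growth estimate} and the three-circle Lemma \ref{lem: three circle lemma for Jacobi fields}, which is stated only for exponents in $(-1,0)$. Since $0\in\Gamma(h_p)$ (the rotation fields $\nabla_{Ax}h_p$, $A$ skew, are $0$-homogeneous Jacobi fields), excluding a value of $\mathcal{AR}_p$ in $(0,\tau_1)$ would force the auxiliary exponent $\gamma'$ in that proof to be positive. So your claim that part (1) ``uses no upper bound on $\tau$'' is not available as written.

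The genuine error is the $n=3$ step. The inference $-1/2\leqslant\gamma_-(h_p)$ is false whenever the link of $h_p$ is stable, e.g.\ $h_p(x)=x/|x|$ from $\R^3$ to $\Sp^2$, a case relevant to Theorem \ref{thm: generic regularity in dimension 3}. Then all $\lambda_j\geqslant0$, and every negative element of $\Gamma(h_p)$ is some $\gamma_j^-\leqslant-1$. The value $-1$ is attained by the translation fields $\p_\alpha h_p$, which have $\lambda_j=0$. Hence $\gamma_-(h_p)=-1$, and any $\tau\in(-1,-1/2]$ is admissible. For such $\tau$ the exceptional value $-(n-2)/2=-1/2$ lies in $[\tau,0)$, and part (1) as stated does not exclude it. The paper's proof passes over this point silently as well.

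The repair is to rerun the proof of part (1). There the value $-(n-2)/2$ only has to be avoided by the auxiliary exponent $\gamma'$ fed into Lemma \ref{lem: Jacobi operator growth estimate}, while the final contradiction uses only $[\gamma',\gamma'']\cap\Gamma(h_p)=\emptyset$. If $-1/2\notin\Gamma(h_p)$, choose $\gamma'<-1/2<\gamma''$ with $\gamma'$ at positive distance from $-1/2$; this gives $\mathcal{AR}_p(X_0)\neq-1/2$. If $-1/2\in\Gamma(h_p)$, then $-1/2\leqslant\gamma_-(h_p)<\tau$ and the value is excluded anyway. With these two changes your proof goes through, including the $\tau$-independence via part (3).
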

\begin{proof}
    By Definition \ref{defn: tame field} of tame field, we can decompose $X=X_0+\phi$ where $\phi$ is a translation-like section and $X_0\in L^{2;\tau}(\reg u)$ for $\tau$ satisfying \eqref{eq: range of rate}. Hence, $L_u\phi$ vanishes near each singular point. This implies $L_uX_0=0$ near each singular point. Hence Lemma \ref{lem: asymtpotic rate and growth rate relation} (1) and (2) applies to give $X_0$ has asymptotic rate at least $\tau'$ for each $\tau'\in(\tau,0)$. By the arbitrariness of $\tau'$, we have that $\mathcal{AR}_{x_0}(X_0)\geqslant0$ for each $x_0\in\sing u$. The fact $\widehat{\ker}_{\tau}L_u$ is independent of $\tau$ satisfying \eqref{eq: range of rate} is a corollary of Lemma \ref{lem: asymtpotic rate and growth rate relation} (3), which allows us to improve the growth rate of $X_0$ near each $x_0\in\sing u$.
\end{proof}

\subsection{Canonical neighborhoods}
Let us set
$$\mathcal{R}^{k,\alpha}(u,\varphi)=\lbrace\Lambda>0:\Vert \varphi\Vert_{C^{k,\alpha}(\p\Omega)}\leqslant\Lambda,r_u\geqslant\Lambda^{-1}\rho_u\rbrace,$$
and also the possible triples to form $\mathcal{R}^{k,\alpha}(u,\varphi)$,
$$\mathcal{T}^{k,\alpha}=\lbrace(u,\varphi,\Lambda):(u,\varphi)\in\mathcal{H}^{k,\alpha}(\Omega,M),\Lambda\in\mathcal{R}^{k,\alpha}(u,\varphi)\rbrace.$$
\begin{defn}\label{defn: definition of canonical neighborhood}
    The \textbf{canonical (pseudo-)neighborhood}, denoted by $\mathcal{L}^{k,\alpha}(u,\varphi;\Lambda,\delta)$ of $(u,\varphi)$ in $\mathcal{H}^{k,\alpha}(\Omega,M)$ is defined to be pairs $(u',\varphi')\in\Hau^{k,\alpha}(\Omega,M)$ that satisfy the following conditions.
    \begin{itemize}
        \item $\Vert\varphi'\Vert_{C^{k,\alpha}}\leqslant\Lambda$ and $\Vert\varphi-\varphi'\Vert_{C^k}\leqslant\delta$;
        \item $u'$ is an \textbf{HSI} with $r_{u'}\geqslant\Lambda^{-1}\rho_{u'}$ and $\Vert u-u'\Vert_{L^2(\Omega)}\leqslant\delta$;
        \item There is a correspondence between the singular sets $\sing u$ and $\sing u'$, such that for each $x_i\in\sing u,i=1,\dots,I$, the corresponding $x_i'\in\sing u'$ satisfies $$\Theta_{u'}(x_i')=\Theta_{u}(x_i)\text{, and }\vert x_i-x_i'\vert\leqslant\frac{1}{3}\min_{i\ne j}\vert x_i-x_j\vert.$$
    \end{itemize}
\end{defn}
Note that in general, $\mathcal{L}^{k,\alpha}(u,\varphi;\Lambda,\delta)$ is not a topological neighborhood of $(u,\varphi)$ in $\Hau^{k,\alpha}(\Omega,M)$, or even may not contain an open set containing $(u,\varphi)$. For example, there could be $(u',\varphi')$ arbitrarily close to $(u,\varphi)$ but not contained in $\mathcal{L}^{k,\alpha}(u,\varphi;\Lambda,\delta)$. This happens when multiple singular points collapse to a singular one. 

One important feature of the canonical neighborhood is that if a sequence $\{u_j\}$ in $\mathcal{L}^{k,\alpha}(u,\varphi)$ converges strongly in $L^2(\Omega)$ to $\hat u$, then this convergence is strong in $H^1(\Omega)$. There are three ingredients needed to see this upgrade of convergence: set $\varphi_j=u_j|_{\p\Omega}$, any subsequence of $\{(u_j,\varphi_j)\}$ satisfies the following up to a further subsequence
\begin{enumerate}
    \item $\varphi_j\rightarrow\hat\varphi=\hat u|_{\p\Omega}$ in $C^{k}(\p\Omega)$, which is because of the bound $\Vert \varphi_j\Vert_{C^{k,\alpha}(\p\Omega)}\leqslant\Lambda$ in (1) of Definition \ref{defn: definition of canonical neighborhood};
    \item The $C^{k}$ convergence of boundary data implies the smooth convergence of $u_j$ near a neighborhood of $\p\Omega$, which is due to the boundary regularity of F. Lin \cite[Section 4]{LinGradientEstimate};
    \item In the interior of $\Omega$, $u_j$ converges to $\hat u$ strongly in $H^1_{\loc}(\Omega)$, the reason of which is that (2) of Definition \ref{defn: definition of canonical neighborhood} gives the uniform bound of the energy of $u_j$ and our assumption \eqref{eq: strong convergence} gives the strong convergence. 
\end{enumerate}
The strong convergence of $\lbrace u_j\rbrace$ itself in $H^1(\Omega)$ then follows.

The canonical neighborhood enjoys the following compactness.

\begin{prop}\label{prop: compactness of canonical neighborhood}
    Given $(u,\varphi)\in\Hau^{k,\alpha}(\Omega,M)$ and $\Lambda\in\mathcal{R}^{k,\alpha}(u,\varphi)$, there exists $\delta_0=\delta_0(u,\varphi;\Lambda)\in(0,1)$ such that $\mathcal{L}^{k,\alpha}(u,\varphi;\Lambda,\delta_0)$ is compact under the $L^2$ norm in the first factor and $C^{k}$ norm in the second factor. 
\end{prop}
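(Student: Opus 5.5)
We take a sequence $(u_j,\varphi_j)\in\mathcal{L}^{k,\alpha}(u,\varphi;\Lambda,\delta_0)$ and show that a subsequence converges, in $L^2$ for the first factor and $C^k$ for the second, to a pair $(\hat u,\hat\varphi)$ that again lies in $\mathcal{L}^{k,\alpha}(u,\varphi;\Lambda,\delta_0)$. Sequential compactness suffices because the topology is metrizable.

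\textbf{Step 1: a convergent subsequence.} The bound $\Vert\varphi_j\Vert_{C^{k,\alpha}}\leqslant\Lambda$ and Arzel\`a--Ascoli give $\varphi_j\to\hat\varphi$ in $C^k$ along a subsequence. The limit satisfies $\Vert\hat\varphi\Vert_{C^{k,\alpha}}\leqslant\Lambda$ by lower semicontinuity, and $\Vert\hat\varphi-\varphi\Vert_{C^k}\leqslant\delta_0$. The energies of $u_j$ are uniformly bounded. In the interior this follows from $r_{u_j}\geqslant\Lambda^{-1}\rho_{u_j}$, which gives $|\nabla u_j|\leqslant\Lambda\rho_{u_j}^{-1}$, and $\rho^{-2}$ is integrable for $n\geqslant3$. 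Near $\partial\Omega$ it follows from $r_{u_j}\geqslant\Lambda^{-1}$ there. Hence $u_j\to\hat u$ weakly in $H^1$ along a further subsequence. By the three-ingredient argument after Definition \ref{defn: definition of canonical neighborhood} (boundary regularity, assumption \eqref{eq: strong convergence} in the interior, and the $C^k$ convergence of traces), the convergence is strong in $H^1(\Omega)$. Strong $H^1$ limits of stationary (resp.\ stable, minimizing) maps stay in the same class, so $\hat u$ is harmonic in the relevant class, with $\hat u|_{\p\Omega}=\hat\varphi$ and $\Vert\hat u-u\Vert_{L^2}\leqslant\delta_0$.

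\textbf{Step 2: regularity of the limit.} Pass to a further subsequence so that each $x_i'^{(j)}\to\hat x_i$. The lower bound $r_{u_j}\geqslant\Lambda^{-1}\rho_{u_j}$ gives uniform $C^2$ control on compact sets away from $\{x_i'^{(j)}\}$. The weight $\rho_{u_j}$ depends on $r_0(u_j)$, but that quantity is controlled because the singular points stay $\frac13$-separated around the fixed $x_i$. So $u_j\to\hat u$ in $C^{1,\beta}_{\loc}$ off $\{\hat x_i\}$, $\hat u$ is smooth there, and $r_{\hat u}\geqslant\Lambda^{-1}\rho_{\hat u}$ by passing the pointwise inequality to the limit.

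For $\delta_0$ small, the points $\hat x_i$ are pairwise distinct. Each satisfies $|\hat x_i-x_i|\leqslant\frac13\min|x_i-x_j|$, so the balls are disjoint, and the points stay away from $\p\Omega$ by the boundary regularity scale. At each $\hat x_i$, upper semicontinuity of density under strong $H^1$ convergence gives $\Theta_{\hat u}(\hat x_i)\geqslant\Theta_u(x_i)>0$, so $\hat x_i\in\sing\hat u$.

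Blow up $\hat u$ at $\hat x_i$. The inequality $r_{\hat u}\geqslant\Lambda^{-1}|x-\hat x_i|$ near $\hat x_i$ survives rescaling, so every tangent map is regular $0$-homogeneous and lies in $\mathcal{T}_n(\Lambda)$. Hence $\hat x_i$ is strongly isolated and $\hat u$ is an \textbf{HSI}.

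\textbf{Step 3: preservation of density (the main obstacle).} It remains to show $\Theta_{\hat u}(\hat x_i)=\Theta_u(x_i)$. Upper semicontinuity gives only $\ge$. The plan is as follows.
\begin{itemize}
\item Use the uniform regularity scale: on $A_{s,2s}(x_i'^{(j)})$ each $u_j$ is $C^2$-close to its tangent cone $h^{(j)}_i$ after rescaling. By the uniform-in-$\Lambda$ quantitative convergence to tangent maps (Corollary \ref{Cor: uniform graph on tangent maps}, relying on the \L ojasiewicz--Simon rate over the compact family $\mathcal{T}_n(\Lambda)$), $\Theta_{u_j}(x_i'^{(j)},s)\to\Theta_{u_j}(x_i'^{(j)})=\Theta_u(x_i)$ as $s\to0$, uniformly in $j$.
\item Let $j\to\infty$ at fixed $s$, using strong $H^1$ convergence and $x_i'^{(j)}\to\hat x_i$. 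This gives $\Theta_{\hat u}(\hat x_i,s)$ within $o_s(1)$ of $\Theta_u(x_i)$, so letting $s\to0$ yields equality.
\item As a fallback, use Lemma \ref{lem: discreteness of density}: the densities in $\mathcal{T}_n(\Lambda)$ form a finite set, and by monotonicity $\Theta_{\hat u}(\hat x_i)\leqslant\liminf_j\Theta_{u_j}(x_i'^{(j)},s)+o_s(1)$. Combined with $\ge$, this forces equality.
\end{itemize}

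Finally, $\hat u$ has no singular points besides the $\hat x_i$: any other singular point would be a limit of singular points of the $u_j$, which by the correspondence are only the $x_i'^{(j)}$. Thus $(\hat u,\hat\varphi)\in\mathcal{L}^{k,\alpha}(u,\varphi;\Lambda,\delta_0)$, completing the argument. The only role of $\delta_0$ is to keep singular points separated and away from the boundary.
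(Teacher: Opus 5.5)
Your Steps 1 and 2 are fine. Step 3, which you correctly flag as the crux, has a genuine gap.

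Corollary \ref{Cor: uniform graph on tangent maps} does not give you $\Theta_{u_j}(x_i'^{(j)},s)\to\Theta_{u_j}(x_i'^{(j)})$ uniformly in $j$. Its hypothesis is $\Theta_{u_j}(0,2)-\Theta_{u_j}(0)\to0$, which is exactly the uniform density pinching you are trying to prove, so invoking it is circular. The regularity-scale bound $r_{u_j}\geqslant\Lambda^{-1}\rho_{u_j}$ only gives compactness of rescalings on annuli. It does not stop $u_j$ from passing, at scales $s_j\to0$ around $x_i'^{(j)}$, from a cone of density $\Theta_u(x_i)$ at smaller scales to a cone of strictly larger density at larger scales. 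Nothing you use excludes this, and if it happened the limit $\hat u$ would have strictly larger density at $\hat x_i$ and would leave the neighborhood. Your fallback has the same defect. The inequality $\Theta_{\hat u}(\hat x_i)\leqslant\Theta_{\hat u}(\hat x_i,s)=\lim_j\Theta_{u_j}(x_i'^{(j)},s)$ is true. But to exploit the finite density set you need $\lim_j\Theta_{u_j}(x_i'^{(j)},s)<\Theta_u(x_i)+\eta_\Lambda$ for some fixed $s>0$, and the ``$+o_s(1)$'' you write is that same unproved uniformity. Your closing remark also misplaces the role of $\delta_0$. Separation of the singular points is already built into the $\frac13$ condition of Definition \ref{defn: definition of canonical neighborhood}. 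Smallness of $\delta_0$ is needed precisely for density control, and you never use it there.

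The missing idea is to get a fixed-scale density bound from the $L^2$-closeness to $u$ itself.
\begin{enumerate}
\item Let $\eta_\Lambda$ be the minimal gap in the finite set of Lemma \ref{lem: discreteness of density}, and choose $r_i$ with $\Theta_u(x_i,r_i)\leqslant\Theta_u(x_i)+\eta_\Lambda/4$.
\item Suppose that for every $\delta>0$ some $u'\in\mathcal{L}^{k,\alpha}(u,\varphi;\Lambda,\delta)$ had $\Theta_{u'}(x_i',r_i)>\Theta_u(x_i)+\eta_\Lambda/3$. Letting $\delta\to0$, these $u'$ converge to $u$ strongly in $H^1$: the $L^2$ distance tends to zero and the convergence upgrades by the three ingredients listed after Definition \ref{defn: definition of canonical neighborhood}. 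Their singular points converge to $x_i$. You would then get $\Theta_u(x_i,r_i)\geqslant\Theta_u(x_i)+\eta_\Lambda/3$, a contradiction.
\item Fix $\delta_0$ so that this bound holds on the whole neighborhood. For your limit you then get the chain $\Theta_u(x_i)\leqslant\Theta_{\hat u}(\hat x_i)\leqslant\Theta_{\hat u}(\hat x_i,r_i)=\lim_j\Theta_{u_j}(x_i'^{(j)},r_i)\leqslant\Theta_u(x_i)+\eta_\Lambda/3$.
\item Both $\Theta_u(x_i)$ and $\Theta_{\hat u}(\hat x_i)$ lie in the finite set; the latter because the tangent map of $\hat u$ at $\hat x_i$ is in $\mathcal{T}_n(\Lambda)$, as your Step 2 shows. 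They differ by less than $\eta_\Lambda$, so they are equal.
\end{enumerate}
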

\begin{proof}
    The compactness of any sequence in $\mathcal{L}(u,\varphi;\Lambda,\delta)$ is direct from their energy bounds and $C^{k,\alpha}$ bounds. We only need to show that the limit $(\hat u,\hat\varphi)$ of any such sequence still belongs to $\mathcal{L}^{k,\alpha}(u,\varphi;\Lambda,\delta)$, provided $\delta$ is chosen small enough. The only missing ingredient is the correspondence of the singular set, which is (3) of Definition \ref{defn: definition of canonical neighborhood}.

    To get such a correspondence, let us write $\sing u=\lbrace x_i\rbrace_{i=1}^I$. Recall that Lemma \ref{lem: discreteness of density} shows that the possible densities
    $$\lbrace\Theta_{h}(0):h\in\mathcal{T}_n(\Lambda)\rbrace,$$
    is a finite set. Let $\eta_{\Lambda}$ be the minimum of the difference of different densities in this set. For each $i$, we can choose $r_i$ such that $\Theta_u(x_i,r_i)\leqslant\Theta_u(x_i)+\eta_{\Lambda}/4$. Take any $u'\in\mathcal{L}(u,\varphi;\Lambda,\delta)$ and label $\sing u'=\lbrace x_i'\rbrace_{i=1}^I$ where $x_i'$ is the point corresponding to $x_i$. We claim that if $\delta$ is chosen small enough, then $\Theta_{u'}(x_i',r_i)\leqslant\Theta_u(x_i)+\eta_{\Lambda}/3$. If this is not true, then for $\delta_j=1/j\rightarrow0$, there exists a sequence $(u_j,\varphi_j)\in\mathcal{L}(u,\varphi;\Lambda,\delta_j)$ such that 
    \begin{equation}\label{eq: lower bound of density in compactness lemma}
        \Theta_{u_j}(x_{i}^j,r_i)\geqslant\Theta_{u_j}(x_{i}^j)+\eta_{\Lambda}/3=\Theta_u(x_i)+\eta_{\Lambda}/3,
    \end{equation}
    where $\lbrace x_i^j\rbrace$ is the arrangement of $\sing u_j$ corresponding to the arrangement of $\sing u$. By the definition of $\mathcal{L}(u,\varphi;\Lambda,\delta_j)$, when $\delta_j\rightarrow0$, we must have $(u_j,\varphi_j)$ converges strongly to $(u,\varphi)$. The upper semicontinuity of density forces $x_i^j$ to converge to a singular point of $u$, and the distance assumption $\vert x_i^j-x_i\vert\leqslant\min_{i\ne i'}\vert x_i-x_{i'}\vert/3$ guarantees that the limit of $x_i^j$ can only be $x_i$. But \eqref{eq: lower bound of density in compactness lemma} together with the strong $H^1$ convergence leads to 
    $$\Theta_u(x_i,r_i)\geqslant\Theta_u(x_i)+\eta_{\Lambda}/3,$$
    contradicting to our choice of $r_i$.

    Hence the claim is proved, which gives us a small $\delta_0=\delta_0(u,\varphi;\Lambda)$, such that any $u'\in\mathcal{L}(u,\varphi;\Lambda,\delta_0)$ satisfies $\Theta_{u'}(x_i',r_i)\leqslant\Theta_u(x_i)+\eta_{\Lambda}/3$. Now, suppose we have a sequence $(u_j,\varphi_j)\in\mathcal{L}(u,\varphi;\Lambda,\delta_0)$ and this sequence converges to $(\hat u,\hat\varphi)$ with singular sets labeled as before $\sing u_j=\lbrace x_i^j\rbrace_{i=1}^I$. Firstly $\hat u$ must have exactly $I$ singular points $\lbrace\hat x_i\rbrace_{i=1}^I$ which are limits of $x_i^j$. There are no new singular points because of the bound of the regularity scales $r_{u_j}\geqslant\Lambda^{-1}\rho_{u_j}$ and no two singular points can collapse to a single singular points because of the distance bound $\vert x_i^j-x_i\vert\leqslant\min_{i\ne i'}\vert x_i-x_{i'}\vert/3$. Finally, our claim ensures that the limit $\hat x_i$ satisfies the density bound
    $$\Theta_{u}(x_i)\leqslant\Theta_{\hat u}(\hat x_i)\leqslant\Theta_{\hat{u}}(\hat x_i,r_i)\leqslant\Theta_{u}(x_i)+\eta_{\Lambda/3},$$
    where the first inequality is because of the upper semicontinuity of the density. Given our choice of $\eta_{\Lambda}$, the only density in the range $[\Theta_{u}(x_i),\Theta_u(x_i)+\eta_{\Lambda}/3]$ is $\Theta_u(x_i)$. Hence $\Theta_{\hat u}(\hat x_i)=\Theta_u(x_i)$ which is the desired correspondence.
\end{proof}

\subsection{Convergence lemmas}

The following non-concentration of Jacobi fields is an important feature for the convergence in canonical neighborhood.

\begin{lem}\label{lem: non-concentration of Jacobi fields}
    Let $(u_j,\varphi_j)\in\mathcal{L}(u,\varphi;\Lambda,\delta_0)$ such that $(u_j,\varphi_j)\rightarrow (\hat u,\hat \varphi)$, where $\delta_0$ is given by Proposition \ref{prop: compactness of canonical neighborhood}. Given a sequence $X_j$ of tame Jacobi fields of $u_j$ with $\Vert X_j\Vert_{L^2(\Omega)}=1$ with $\Vert X_j|_{\p\Omega}\Vert_{C^{2,\alpha}(\p\Omega)}$ uniformly bounded, after passing to a subsequence, $X_j$ converges strongly in $H^1_{\text{loc}}(\reg \hat u)$ to a tame Jacobi field $\hat X$ and $X_j|_{\p\Omega}\rightarrow X|_{\p\Omega}$ in $C^2(\p\Omega)$. Moreover, for each $\epsilon>0$, there exists $r=r(\epsilon)>0,j_0=j_0(\epsilon)>0$, such that
    $$\sum_{x_0\in\sing u_j}\int_{B_r(x_0)}\vert X_j(x)\vert^2\dif x\leqslant\epsilon,\text{ for all }j\geqslant j_0.$$
    In particular, $\Vert \hat X\Vert_{L^2(\Omega)}=1$.
\end{lem}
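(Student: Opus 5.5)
The plan is to prove the three assertions in order: interior convergence away from the singular set, then the non-concentration estimate near the singular points, and finally the conservation of the $L^2$ norm, which follows by combining the two.

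\emph{Interior and boundary convergence.} By Definition \ref{defn: definition of canonical neighborhood} and the remarks after it, $u_j\to\hat u$ strongly in $H^1(\Omega)$, smoothly near $\p\Omega$, and in $C^\infty_{\loc}(\reg\hat u)$. The last point uses the uniform bound $r_{u_j}\geqslant\Lambda^{-1}\rho_{u_j}$ together with the convergence $x_i^j\to\hat x_i$ of singular points from Proposition \ref{prop: compactness of canonical neighborhood}. On any compact $K\Subset\bar\Omega\setminus\sing\hat u$, the equations $L_{u_j}X_j=0$ have uniformly smooth coefficients. The bounds $\Vert X_j\Vert_{L^2}=1$ and $\Vert X_j|_{\p\Omega}\Vert_{C^{2,\alpha}}\leqslant C$ then give uniform $C^{2,\alpha'}$ bounds on $K$ by interior and boundary Schauder estimates. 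A diagonal subsequence therefore converges in $C^2_{\loc}(\bar\Omega\setminus\sing\hat u)$, in particular strongly in $H^1_{\loc}(\reg\hat u)$, to a Jacobi field $\hat X$ of $\hat u$, with $X_j|_{\p\Omega}\to\hat X|_{\p\Omega}$ in $C^2$.

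\emph{Non-concentration (the main step).} Fix $x_0=\hat x_i$ and let $h=h_{\hat x_i}$. Since the densities of $u_j$ at $x_i^j$ equal $\Theta_u(x_i)$, Appendix \ref{a: quantitaive estimates} gives that $u_j(x_i^j+\cdot)$ is uniformly close to its tangent map on a fixed ball $B_{r_1}$, at every scale. Lemma \ref{lem: discreteness of density} and compactness of $\mathcal{T}_n(\Lambda)$ give a uniform spectral gap: there is $\sigma>0$ with $\Gamma(h')\cap(\tau-2\sigma,\tau+2\sigma)=\emptyset$, and also $\gamma_-(h')<\tau-2\sigma$, for all tangent maps $h'$ near $h$. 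The key decomposition near $x_i^j$ is $X_j=\p_{v_j}u_j+Y_j$, where $\mathcal{AR}_{x_i^j}(Y_j)\geqslant\tau$ (tameness), using Corollary \ref{cor: asy rate of tame field}. For $Y_j$ I would apply the growth estimate of Lemma \ref{lem: Jacobi operator growth estimate} (three-annulus type, uniform in $j$ by the gap). Since $Y_j$ has no modes below rate $\tau$ (modulo the small error $L_{u_j}Y_j=-L_{u_j}\p_{v_j}u_j$, which vanishes because $\p_{v}u_j$ is itself Jacobi), this yields
$$\Vert Y_j\Vert_{L^2(A_{s/2,s}(x_i^j))}\leqslant C\,(s/r_1)^{n/2+\tau-\sigma}\,\Vert Y_j\Vert_{L^2(A_{r_1/2,r_1}(x_i^j))}.$$
Summing dyadically gives $\int_{B_r}|Y_j|^2\leqslant Cr^{n+2\tau-2\sigma}\to0$ uniformly, because $n+2\tau>n-2\geqslant1$. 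For the translation part, $\int_{B_r}|\p_{v_j}u_j|^2\leqslant C|v_j|^2r^{n-2}$. It remains to bound $|v_j|$ uniformly, and here I would use Lemma \ref{lem: lower bound of derivative of tangent map}. On the annulus $A_{r_1/2,r_1}$ the $L^2$ norm of $X_j$ is at most $1$. The pieces $\p_{v_j}u_j$ (rate $-1$) and $Y_j$ (rate $\geqslant\tau>-1$) are separated in the mode decomposition on that annulus with a uniform gap. Hence
$$|v_j|\leqslant C\Vert\p_{v_j}h\Vert\leqslant C\Vert X_j\Vert_{L^2(A)}\leqslant C.$$
If this decoupling is awkward, I would instead argue by contradiction: assume $|v_j|\to\infty$, normalize by $|v_j|$, and pass to a limit. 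This produces a limit Jacobi field that is zero away from $\hat x_i$ but equals a nonzero $\p_v\hat u$ near it, which contradicts unique continuation.

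\emph{Conclusion.} Choose $r$ so that the bound above is below $\epsilon$. Strong $L^2$ convergence on $\Omega\setminus B_r(\sing\hat u)$ then gives $\Vert\hat X\Vert_{L^2}\geqslant 1-\epsilon$ for every $\epsilon$, so $\Vert\hat X\Vert_{L^2}=1$. Tameness of $\hat X$ follows because the limit inherits the decomposition $\p_{\hat v}\hat u+\hat Y$, with $\hat v=\lim v_j$ and with the uniform annular decay bounds on $\hat Y$ persisting in the limit. I expect the uniform bound on $|v_j|$ and the uniformity of the growth estimate in $j$ to be the main obstacle.
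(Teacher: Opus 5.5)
Your outline is the paper's proof. It uses the same splitting $X_j=\partial_{v_j}u_j+Y_j$ near each $x_i^j$ from Corollary~\ref{cor: asy rate of tame field}, and the same uniform growth estimate (Lemma~\ref{lem: Jacobi operator growth estimate}) for $Y_j$. The paper takes $\gamma=-\sigma$; note that the lemma also requires $\gamma$ to stay away from $-(n-2)/2$, which matters when $n=3$. You then reach the same bound $\Vert X_j\Vert_{L^2(B_r(x_i^j))}\leqslant Cr^{n/2-1}$, and the same conclusion $\Vert\hat X\Vert_{L^2}=1$ from strong convergence off the singular set. The one step that needs repair is the one you flagged, the uniform bound on $|v_j|$. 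The paper obtains it from Lemma~\ref{lem: tame field upper bound}.

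Neither of your two justifications for that bound works as written.

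The ``mode separation on the fixed annulus $A_{r_1/2,r_1}$'' does hold for an exactly homogeneous $h$. There it follows from orthogonality of eigensections, plus linear independence of $r^{-1}$ and $r^{0}$ when $n=3$. For non-homogeneous $u_j$, however, it is not directly available, and transferring it from $h$ to $u_j$ needs the multi-scale input below.

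In your fallback, dividing by $|v_j|\to\infty$ gives $X_j/|v_j|\to0$ in $L^2(\Omega)$. So no nonzero limit Jacobi field arises, and unique continuation plays no role.

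The actual mechanism compares different scales. By Lemma~\ref{lem: lower bound of derivative of tangent map} and closeness to the tangent map, $\Vert\partial_{v_j}u_j\Vert_{L^2(A_{s,2s})}\geqslant c|v_j|s^{n/2-1}$. The growth estimate, normalized on the outer annulus, gives $\Vert Y_j\Vert_{L^2(A_{s,2s})}\leqslant Cs^{n/2-\sigma}\bigl(1+\Lambda|v_j|\bigr)$. Combining these,
\[
|v_j|\leqslant Cs^{1-n/2}\Vert X_j\Vert_{L^2(A_{s,2s})}+Cs^{1-\sigma}\bigl(1+\Lambda|v_j|\bigr),
\]
and fixing $s$ small absorbs $|v_j|$ into the left side. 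This works precisely because the rate $-\sigma$ of $Y_j$ strictly exceeds the rate $-1$ of the translation field.

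Your contradiction argument becomes correct once the contradiction is drawn from this rate mismatch. Near $\hat x_i$ one has $Y_j/|v_j|\to-\partial_e\hat u$. But the limit inherits the uniform decay at a rate $\gamma>-1$, which $\partial_e\hat u\neq 0$ violates: its $L^2$ mass on $A_{s,2s}$ is at least $cs^{n/2-1}$.
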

\begin{proof}
    The strong convergence of $X_j$ in $H^1_{\loc}(\reg u)$ follows from the local smooth convergence of $X_j$ in $\reg u$ and this local smooth convergence is because of the classical elliptic estimates.

    In order to prove the $L^2$ strong convergence of $X_j$, let us fix a $\sigma\in(0,1)$ such that
    $$\min_{x_0\in\sing u}\dist (-\sigma,\Gamma(h_{x_0}))=\sigma,\text{ where }h_{x_0}\text{ is the tangent map of }u\text{ at }x_0.$$
    Label $\sing u_j$ as $\lbrace x_i^j\rbrace$. By Corollary \ref{Cor: uniform graph on tangent maps}, the condition of Lemma \ref{lem: tame field upper bound} is satisfied on $B_{r_0}(x_i^j)$ for some small $r_0>0$ and large $j$, which gives a uniform $L^2_{-\sigma}$ estimate of the form
    $$\vert v_j\vert+\sup_{l\geqslant0}\Vert X_j-\p_{v_j}u_j\Vert_{L^2_{-\sigma}(2^{-l-1}r_0,2^{-l}r_0(x_i^j))}\leqslant C(\Lambda,\sigma)\Vert X_j\Vert_{L^2(\Omega)}=C(\Lambda,\sigma).$$
    In particular, if $r<r_0$,
    $$\Vert X_j\Vert_{L^2(B_r(x_i^j))}\leqslant \Vert X_j-\p_{v_j}u_j\Vert_{L^2(B_r(x_i^j))}+\Vert\p_{v_j}u_j\Vert_{L^2(B_r(x_i^j))}\leqslant C(\Lambda,\sigma)r^{n/2-1},$$
    where the $\Vert\p_{v_j}u_j\Vert_{L^2(B_r(x_i^j))}$ is controlled by monotonicity formula. This bound together with the strong convergence away from $\sing u$ gives $\Vert \hat X\Vert_{L^2(\Omega)}=1$. That $\hat X$ is a tame Jacobi field follows from Lemma \ref{lem: asymtpotic rate and growth rate relation} (3).
    
\end{proof}

The normalized convergence of pairs of \textbf{HSI} induces a tame Jacobi field of the limit \textbf{HSI}.
\begin{lem}\label{lem: induced Jacobi fields for pairs}
    Let $\delta_0$ be the constant given by Proposition \ref{prop: compactness of canonical neighborhood}. Given $(u,\varphi)\in\Hau^{k,\alpha}(\Omega,M),\Lambda\in\mathcal{R}^{k,\alpha}(u,\varphi)$ and $\delta>0$, suppose that:
    \begin{enumerate}
        \item $\Psi$ is a finite dimensional subspace of $C^{k,\alpha}(\p\Omega,T_{\varphi}M)$
        \item $\lbrace(\bar u_j,\bar\varphi_j)\rbrace_{j\in\N},\lbrace( u_j^1,\varphi_j^1)\rbrace_{j\in\N},\lbrace (u_j^2,\varphi_j^2)\rbrace_{j\in\N}$ be three sequences in $\mathcal{L}^{k,\alpha}(u,\varphi;\Lambda,\delta_0)$ converge to $(u,\varphi)$ with $\lbrace (u_j^1,\varphi_j^1)\rbrace_{j\in\N},\lbrace (u_j^2,\varphi_j^2)\rbrace_{j\in\N}$ distinct;
        \item For all $j\geqslant1$ and $i=1,2$, either $\varphi^1_j=\varphi_j^2$ for all large $j$, in which case we set $\psi=0$, or $\varphi^i_j=\exp_{\bar\varphi_j}\pi_{\bar \varphi_j}\psi_j^i$ for some $\psi_j^i\in\Psi$ and
        $$\frac{\varphi^2_j-\varphi_j^1}{\Vert\varphi_j^2-\varphi_j^1\Vert_{C^{k,\alpha}}}\rightarrow\psi\ne 0\text{ in }C^{k,\alpha}(\p\Omega,T_{\varphi}M).$$
    \end{enumerate}
    Set $d_j=(\Vert u_j^2-u_j^1\Vert_{L^2(\Omega)}+\Vert\varphi_j^2-\varphi_j^1\Vert_{C^{k,\alpha}(\p\Omega)})$. Then, after passing to a subsequence, we have 
    $$d_j^{-1}(u_j^2-u_j^1)\rightarrow \hat X\text{ in } L^2(\Omega)\text{ and }d_j^{-1}(\varphi_j^2-\varphi_j^1)\rightarrow\hat\psi\text{ in } C^{k,\alpha}(\p\Omega),$$
    where $\hat\psi$ is a non-negative multiple of $\psi$, $\hat X\in C^{\infty}(\reg u,T_uM)$ is a non-zero tame section, $\hat X|_{\p\Omega}=\hat\psi$ and is a stationary Jacobi field per Definition \ref{defn: stationary jaocbi fields}.
\end{lem}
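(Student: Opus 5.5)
The plan is to set $X_j=d_j^{-1}(u_j^2-u_j^1)$ and $\psi_j=d_j^{-1}(\varphi_j^2-\varphi_j^1)$, so that $\Vert X_j\Vert_{L^2}+\Vert\psi_j\Vert_{C^{k,\alpha}}=1$. I will extract a limit and then verify its properties one at a time.

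\emph{Boundary data.} The $\psi_j$ lie in a finite-dimensional family. Indeed, $\varphi_j^i=\exp_{\bar\varphi_j}\pi_{\bar\varphi_j}\psi_j^i$ with $\psi_j^i\in\Psi$, and $\bar\varphi_j\to\varphi$. Hence, after a subsequence, $\psi_j\to\hat\psi$ in $C^{k,\alpha}$, and $\hat\psi$ is a nonnegative multiple of $\psi$ by hypothesis (3). In the case $\varphi_j^1=\varphi_j^2$ we get $\hat\psi=0$.

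\emph{Equation on the regular part.} Away from $\sing u$, the maps $u_j^i$ converge smoothly to $u$, because $r_{u_j^i}\geqslant\Lambda^{-1}\rho_{u_j^i}$ and the singular points converge. The difference of the harmonic map equations is a linear elliptic equation $\tilde L_jX_j=0$, whose coefficients are averaged along the segment between $u_j^1$ and $u_j^2$ and converge to those of $L_u$. Up to an error of order $d_j|X_j|^2$ from the nonlinearity, $X_j$ is tangent to $M$ along $u$. Interior and boundary Schauder estimates on compact subsets of $\bar\Omega\setminus\sing u$ then give local smooth convergence of $X_j$ to some $\hat X$. The limit satisfies $L_u\hat X=0$, $\hat X\in T_uM$, and $\hat X|_{\p\Omega}=\hat\psi$.

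\emph{Main obstacle: non-concentration near the singular points.} This is what gives strong $L^2$ convergence, $\hat X\neq0$, and tameness. The singular points $x_i^{1,j}$ and $x_i^{2,j}$ generally differ. To handle this, I would first show that $|x_i^{2,j}-x_i^{1,j}|\lesssim d_j$. I would then translate $u_j^2$ near $x_i$ so that the singular points align, writing $u_j^2(\cdot)=\tilde u_j^2(\cdot-(x_i^{2,j}-x_i^{1,j}))$. This splits $X_j$ near $x_i$ into a translation part $\p_{v_j}u_j^1$, with $v_j\approx d_j^{-1}(x_i^{1,j}-x_i^{2,j})$, plus the normalized difference of two HSIs sharing a singular point and, by the canonical neighborhood condition, having equal density.

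For that aligned difference I would invoke the quantitative estimates of Appendix \ref{a: quantitaive estimates}. Equal density together with the \L ojasiewicz--Simon inequality (as in Lemma \ref{lem: discreteness of density}) shows that the aligned difference decays at a rate $\geqslant-\sigma>\gamma_-(h_{x_i})$, uniformly in the annuli $A_{2^{-l-1}r_0,2^{-l}r_0}$. This is exactly the type of estimate used in Lemma \ref{lem: non-concentration of Jacobi fields}, via Lemma \ref{lem: tame field upper bound}. The same estimate should bound $|v_j|$ via Lemma \ref{lem: lower bound of derivative of tangent map}: if $|v_j|\to\infty$, renormalizing would produce a nonzero $\p_vh$ with vanishing $L^2$ norm, a contradiction.

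Combining the two parts gives $\Vert X_j\Vert_{L^2(B_r(x_i))}\leqslant Cr^{n/2-1}$, which excludes concentration. Strong $L^2$ convergence then follows, and the limit has the form $\hat X=\p_vu+O(\rho^{-\sigma})$, so it is tame by Lemma \ref{lem: asymtpotic rate and growth rate relation}(3). Nontriviality also follows: if $\hat X=0$, then $\Vert X_j\Vert_{L^2}\to0$, which forces $\Vert\psi_j\Vert\to1$ and hence $\hat\psi\neq0$, contradicting $\hat X|_{\p\Omega}=\hat\psi$. In this step I expect the real difficulty to be making the alignment and the density-equality decay estimate uniform in $j$.

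\emph{Stationarity.} For $n\geqslant4$ it is automatic by Proposition \ref{prop: stationarity for n greater than 3}, since $\hat X\in\hat H^{1;\tau}$ with $\tau>-1$. For $n=3$, take $\xi$ constant near each singular point. Subtract the stationarity identities of $u_j^2$ and $u_j^1$, divide by $d_j$, and pass to the limit. The quadratic term is $\int d_j|\nabla X_j|^2|\nabla\xi|$; it is supported where $\nabla\xi\neq0$, which lies away from $\sing u$, so it tends to zero by the smooth convergence there. The linear terms converge by the same local convergence, which yields the identity in Definition \ref{defn: stationary jaocbi fields}.
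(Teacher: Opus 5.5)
Your proposal is essentially the paper's proof. It uses the same splitting $u_j^2-u_j^1=(u_j^2-u_j^2(\cdot+x_j^2-x_j^1))+(u_j^2(\cdot+x_j^2-x_j^1)-u_j^1)$ near each singular point. It gets the separation bound $|x_j^2-x_j^1|\lesssim d_j$ from the same ingredient: your contradiction argument via Lemma \ref{lem: lower bound of derivative of tangent map} is a compactness version of Lemma \ref{lem: HSI with different singularity}. The weighted bound on the aligned difference comes from Lemma \ref{Lem: hsi with same singularity}. That bound is driven by the growth estimate of Lemma \ref{lem: Jacobi operator growth estimate}, with the density equality supplying small pinching, rather than by \L ojasiewicz--Simon directly. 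The nontriviality and stationarity arguments are also the same.

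One correction concerns stationarity for $n\geqslant4$. Proposition \ref{prop: stationarity for n greater than 3} does not literally apply, because it requires $X\in H^{1;\tau}$ with $\tau\geqslant-1$, and the translation part $\p_vu$ of $\hat X$ is not in $H^{1;-1}$. However, your direct limiting argument, stated for $n=3$, never uses $n=3$, and it is exactly what the paper does in every dimension.
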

\begin{proof}
    The local boundedness of $(u_j^2-u_j^1)/d_j$ in $H^1_{\loc}(\reg u)$ is a direct consequence of classical elliptic PDE estimates and the uniform smoothness of $u,u_j^i,i=1,2$ on $\reg u$ for $j$ large. Passing to a subsequence, we can assume it converges to $\hat X$ weakly in $H^1_{loc}(\reg u)$ and strongly in $L^2_{\loc}(\reg u)$. Using the PDE again, this convergence is indeed strong in $H^1_{\loc}(\reg u)$. On the other hand, the fact that $d_j\geqslant \Vert\varphi_j^2-\varphi_j^1\Vert_{C^{k,\alpha}(\p\Omega)}$ allows us to assume $\Vert \varphi_j^2-\varphi_j^1\Vert_{C^{k,\alpha}(\p\Omega)}/d_j$ converges to $a\in[0,1]$. Then $\hat \psi=a\psi$.
    
    Let us subtract the harmonic map equation of $u_j^1$ and $u_j^2$: in the distributional sense, we have
    $$\Delta(u_j^2-u_j^1)+A_{u_j^2}(\nabla (u_j^1+u_j^2),\nabla (u_j^2-u_j^1))+(A_{u_j^2}-A_{u_{j}^1})(\nabla u_j^1,\nabla u_j^1)=0.$$
    Divide this equation by $d_j$ and take the limit, using the fact that $u_j^1$ and $u_j^2$ both converge strongly to $u$, we see that $\hat X$ is a Jacobi field of $u$. Moreover, we subtract the stationary equation of $u_j^1$, $u_j^2$ to derive: for any $\xi\in C^{\infty}_c(\Omega,\R^n)$ that is constant near each singularity of $u$,
    $$\int_{\Omega}\langle\nabla (u_j^1+u_j^2),\nabla (u_j^2-u_j^1)\rangle\operatorname{div}\xi\dif x=2\sum_{\alpha,\beta=1}^n\int_{\Omega}(\langle\p_{\alpha}(u_j^2-u_j^1),\p_{\beta} u_j^2\rangle+\langle\p_{\alpha}u_j^1,\p_{\beta }(u_j^2-u_j^1)\rangle)\p_{\alpha}\xi^{\beta}\dif x.$$
    Dividing both sides by $d_j$ and taking the limit, by the strong convergence of $u_j^1,\tilde u_j^2$ to $u$ in $H^1(\Omega)$ and of $(u_j^2-u_j^1)/d_j$ to $\hat X$ in $H^1($spt$\nabla\xi)$, we see that $\hat X$ satisfies the stationarity condition in Definition \ref{defn: stationary jaocbi fields}.

    Now, let us show that $X$ is a tame section. Let us fix a $\sigma=\sigma(u,\varphi)>0$ such that
    $$\dist(-2\sigma,\Gamma(h_{x_i})\cup\{-(n-2)/2\})\geqslant2\sigma,\text{ for all }x_i\in\sing u$$
    By the PDE of $\hat X$, it is clear that $\hat X$ is smooth away from the singular points of $u$. Hence, let us fix a singular point $x_0$ of $u$ and look at a sufficiently small ball $B_{r_0}(x_0)$ such that the distance of this ball to other singular points of $u$ and $\p\Omega$ is at least $10 r_0$. Without loss of  generality, assume $x_0=0$.  By our assumption of pseudo-neighborhood of $u$, there exists $\sing u_j^i\cap B_{r_0}(x_0)=\lbrace x_j^i\rbrace$ for a unique $x_j^i\rightarrow 0$. Let us write

    $$u_j^2-u_j^1=(u_j^2-u_j^2(\cdot+x_j^2-x_j^1))+(u_j^2(\cdot+x_j^2-x_j^1)-u_j^1).$$

    Let us show that the first part of difference converges to a translation-like section while the second part satisfies the growth estimate. Indeed, Lemma \ref{lem: HSI with different singularity} shows for $j$ large
    $$\vert x_j^2-x_j^1\vert\leqslant C(\sigma,\Lambda)\Vert u_j^1-u_j^2\Vert_{L^2(\Omega)}\leqslant Cd_j.$$
    Hence subsequentially $(x_j^2-x_j^1)/d_j\rightarrow v\in\R^n$. Then 
    $$(u_j^2(\cdot+x_j^2-x_j^1)-u_j^2)/d_j=\int_0^1\nabla_{(x_j^2-x_j^1)/d_j}u_j^2(\cdot+t(x_j^2-x_j^1))\dif t\rightarrow\nabla_vu,\text{ as }j\rightarrow\infty.$$
    Here we used the fact that $\nabla u_j^2$ converges to $\nabla u$ strongly in $L^2(\Omega)$. On the other hand, since $u_j^2(\cdot+x_j^2-x_j^1)$ and $u_j^1$ have the same singular point within $B_{r_0}$, we can apply Lemma \ref{Lem: hsi with same singularity} to get 
    $$\Vert u_j^2(\cdot+x_j^2-x_j^1)-u_j^1\Vert_{C^{0;-\sigma}(B_{r_0/2})}\leqslant C(\sigma,\Lambda)\Vert u_j^1-u_j^2\Vert_{L^2(\Omega)}\leqslant Cd_j.$$
    In particular,
    $$\int_{A_{2^{-l-1},2^{-l}}}\vert u_j^2(\cdot+x_j^2-x_j^1)-u_j^1\vert^2 \vert x\vert^{-n+4\sigma}\dif x\leqslant C2^{-2l\sigma}d_j^2.$$
    Passing to a subsequence, since $(u_j^2(\cdot+x_j^2-x_j^1)-u_j^1)/d_j\rightarrow \hat X+\p_vu=: X$ strongly in $L^2_{\loc}(B_{r_0}\setminus\{0\})$ as $j\rightarrow\infty$, we have
    $$\Vert X\Vert_{L^{2;-2\sigma}(B_{r_0})}^2\leqslant\sum_{l=0}^{\infty}C2^{-2\sigma l}<\infty.$$
    Hence we successfully decomposed $\hat X$ into a translation-like field $\p_v u$ and a field $X$ which has growth rate control near $0$. This shows that $\hat X$ is a tame field. 

    Finally, in order to show that $\hat X$ is non-zero, we note that the argument above shows that $(u_j^2-u_j^1)/d_j$ converges strongly in $L^2(\Omega)$ to $\hat X$; see also the analogous non-concentration property needed for this strong convergence established in Lemma \ref{lem: non-concentration of Jacobi fields}. Hence, the normalization by $d_j=\Vert u_j^1-u_j^2\Vert_{L^2(\Omega)}+\Vert\varphi^1_j-\varphi_j^2\Vert_{C^{k,\alpha}(\p\Omega)}$ gives
    $$\Vert \hat X\Vert_{L^2(\Omega)}+\Vert\hat\psi\Vert_{C^{k,\alpha}(\p\Omega)}=1,$$
    which gives the nontriviality of $\hat X$.
\end{proof}

\subsection{The local Sard-Smale Theorem}

We are now ready to prove the following local Sard-Smale theorem for the canonical neighborhood.
\begin{thm}\label{thm: local sard smale theorem}
    Let $(u,\varphi)\in\Hau^{k,\alpha}(\Omega,M),\Lambda\in\mathcal{R}^{k,\alpha}(u,\varphi)$ with $\widehat{\ind}L_u<0,\tau$ satisfies \eqref{eq: range of rate}. Then there exists $\delta_2=\delta_2(u,\varphi;\Lambda)>0$ such that for every $0<\delta\leqslant\delta_2$,
    $$\mathcal{B}^{k,\alpha}(u,\varphi;\Lambda,\delta)=C^{k,\alpha
    }(\p\Omega,M)\setminus\Pi(\mathcal{L}^{k,\alpha}(u,\varphi;\Lambda,\delta)),$$
    is an open and dense subset of $C^{k,\alpha
    }(\p\Omega,M)$ under $C^{k,\alpha}$ norm.
\end{thm}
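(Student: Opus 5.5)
Openness follows from compactness and density is the real content. Choose $\delta_2\leqslant\delta_0(u,\varphi;\Lambda)$ from Proposition \ref{prop: compactness of canonical neighborhood}. Then $\mathcal{L}^{k,\alpha}(u,\varphi;\Lambda,\delta)$ is compact for the $L^2\times C^k$ topology, so $\Pi(\mathcal{L})$ is compact in $C^k(\p\Omega,M)$. Since $C^{k,\alpha}$ convergence implies $C^k$ convergence, $\Pi(\mathcal{L})$ is closed in $C^{k,\alpha}$ and its complement is open.

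For density I argue by contradiction. Suppose some $C^{k,\alpha}$ ball $\mathcal{U}$ around a boundary map $\varphi_0$ lies in $\Pi(\mathcal{L}^{k,\alpha}(u,\varphi;\Lambda,\delta))$. Necessarily $\Vert\varphi_0-\varphi\Vert_{C^k}\leqslant\delta$, and we may take $\varphi_0=\bar\varphi$ with $(\bar u,\bar\varphi)\in\mathcal{L}$. Let $\Psi\subset C^{k,\alpha}(\p\Omega,T_\varphi M)$ be the subspace from Lemma \ref{lem: equiv of dimension of coker}. It has dimension $\dim\widehat{\operatorname{coker}}_\tau L_u>\dim\widehat{\ker}_\tau L_u$ by Theorem \ref{thm: count of index} and the hypothesis $\widehat{\ind}L_u<0$. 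Consider the finite-dimensional family $\varphi^\psi=\exp_{\bar\varphi}\pi_{\bar\varphi}\psi$ for $\psi\in\Psi$ small. Each lies in $\mathcal{U}$, so it admits a lift $u^\psi$ with $(u^\psi,\varphi^\psi)\in\mathcal{L}$.

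The key claim is the following. If $\delta_2$ is small, then for every $(\bar u,\bar\varphi)\in\mathcal{L}(u,\varphi;\Lambda,\delta_2)$ and all lifts as above, the map $\psi\mapsto u^\psi$ is injective and locally Lipschitz from $\Psi$ into $L^2$. More precisely, there is $C$ with
\[
\Vert\varphi^{\psi_2}-\varphi^{\psi_1}\Vert_{C^{k,\alpha}}\geqslant C^{-1}\bigl(\Vert u^{\psi_2}-u^{\psi_1}\Vert_{L^2}+\Vert\varphi^{\psi_2}-\varphi^{\psi_1}\Vert_{C^{k,\alpha}}\bigr).
\]
I would prove this by contradiction with Lemma \ref{lem: induced Jacobi fields for pairs}. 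Take sequences $\delta\to0$, so $\bar u_j\to u$, and pairs violating the bound. The normalized differences converge to a nonzero tame stationary Jacobi field $\hat X$ of $u$ with $\hat X|_{\p\Omega}=\hat\psi=a\psi$, where $\psi\in\Psi$. Violation of the bound forces $a=0$. Then $\hat X\in\widehat{\ker}_\tau L_u$ has zero trace.

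This alone is not yet a contradiction when the kernel is nontrivial, so I sharpen the claim to the statement I actually need. The set of possible limits $\hat X$ lies in a space of dimension at most $\dim\widehat{\ker}_\tau L_u+\dim\Psi$, namely the stationary Jacobi fields with trace in $\Psi$. Lemma \ref{lem: equiv of dimension of coker} says such a field with trace in $\Psi$ must have zero trace. Hence every limit trace $\hat\psi$ vanishes, so $a=0$ always.

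The contradiction then comes from a dimension count. Differences of lifts of distinct $\psi$'s produce limits $\hat X$ whose traces $a\psi$ must be zero. Yet $\psi\neq0$, and $a>0$ whenever $\Vert u^{\psi_2}-u^{\psi_1}\Vert_{L^2}\lesssim\Vert\psi_2-\psi_1\Vert$. So the lift map $\psi\mapsto u^\psi$ must be expanding: its inverse is Lipschitz, while its image is asymptotically squeezed into $\widehat{\ker}_\tau L_u$. I would make this precise via a covering/Hausdorff-dimension argument. The image of a $\dim\Psi$-dimensional ball under a map with Lipschitz inverse cannot lie in a small tubular neighborhood of a lower-dimensional linear space $\widehat{\ker}_\tau L_u$, after normalization around $\bar u$, by a packing count at small scales.

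The main obstacle is this last step. Converting the pointwise Jacobi field limits, which are nonlinear and non-unique lifts, into a uniform statement that the lifted family is asymptotically tangent to $\widehat{\ker}_\tau L_u$ requires uniformity in the base pair $(\bar u,\bar\varphi)$. That uniformity is why the argument uses three sequences in Lemma \ref{lem: induced Jacobi fields for pairs}. It also requires choosing $\delta_2$ so that the compactness of Proposition \ref{prop: compactness of canonical neighborhood} and the non-concentration of Lemma \ref{lem: non-concentration of Jacobi fields} apply throughout.
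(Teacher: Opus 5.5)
Your openness argument and your frame for density match the paper: a $J$-dimensional slice $\exp_{\bar\varphi}\Psi$ with $J=\dim\widehat{\operatorname{coker}}_\tau L_u>I=\dim\widehat{\ker}_\tau L_u$, and limits taken from Lemma \ref{lem: induced Jacobi fields for pairs}. The decisive step is missing, however.

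Your stated key claim cannot be proved. As you then observe yourself, every limit trace $a\psi$ lies in $\Psi$ and so vanishes by Lemma \ref{lem: equiv of dimension of coker}. Hence $a=0$ along \emph{every} sequence of distinct pairs over slices accumulating at $(u,\varphi)$, i.e.\ $\Vert\varphi^{\psi_2}-\varphi^{\psi_1}\Vert_{C^{k,\alpha}}=o(\Vert u^{\psi_2}-u^{\psi_1}\Vert_{L^2})$, which is the reverse of your inequality. Your tools do close the case $I=0$: a nonzero limit in $\widehat{\ker}_\tau L_u=\{0\}$ is impossible, so for small $\delta$ each slice carries at most one pair. For $I\geqslant1$, though, the argument stops at a heuristic. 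Containment of the lifted family in a thin neighbourhood of $\bar u+\widehat{\ker}_\tau L_u$ is not enough by itself. The lift may expand by an uncontrolled factor, so the width of that neighbourhood need not be small compared with the separation of lifts of nearby $\psi$'s. What you need is that \emph{every} difference quotient of the lifted family is close to the kernel, uniformly over all pairs and all base points. This must then be turned into a lower Lipschitz bound for a linear map into an $I$-dimensional space.

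That is the content of Lemma \ref{lem: Lipshitz of projection}, which your proposal lacks. Choose $r_0$ so that $X\mapsto\pi^{L^2}_u(\zeta_{u,r_0}X)$ is an isomorphism of the finite-dimensional kernel. Send each pair lying over $\exp_{\bar\varphi}\Psi$ to $P(u',\varphi')=\pi^{L^2}((u'-\bar u)\zeta_{r_0})$ in an $I$-dimensional kernel. Suppose $\Vert P(p_2)-P(p_1)\Vert_{L^2}\leqslant d(p_1,p_2)/j$ for distinct pairs $p_1,p_2$ tending to $(u,\varphi)$, where $d$ is the $L^2\times C^{k,\alpha}$ distance. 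Then the normalized differences converge in $L^2$ to a stationary tame Jacobi field $X$ with trace in $\Psi$, hence with zero trace. So $X\in\widehat{\ker}_\tau L_u$, $\Vert X\Vert_{L^2}=1$ and $\pi^{L^2}_u(\zeta_{u,r_0}X)=0$, contradicting the choice of $r_0$. Consequently $\Pi\circ P^{-1}$ is a Lipschitz map from a subset of $\R^I$ onto $\Pi(\mathcal{L}^{k,\alpha}(u,\varphi;\Lambda,\delta))\cap\exp_{\bar\varphi}\Psi$. Since $I<J$, this set has measure zero in the $J$-dimensional slice and cannot contain $\mathcal{U}\cap\exp_{\bar\varphi}\Psi$.

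The paper projects onto $\widehat{\ker}_\tau L_{\bar u}$. That is why it restricts to $\mathcal{L}^{k,\alpha}_{\text{top}}$ and inducts on $I$ through Lemma \ref{lem: bound of dimension of the kernel}, to handle pairs whose kernel dimension drops. All maps here lie in $L^2(\Omega,\R^N)$, so projecting onto the fixed space $\widehat{\ker}_\tau L_u$ would let your one-step contradiction dispense with that induction. It works only once this bi-Lipschitz estimate is actually proved.
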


\begin{lem}\label{lem: bound of dimension of the kernel}
    Let $u,\varphi,\tau$ be as in Theorem \ref{thm: local sard smale theorem}. There exists $\delta_1=\delta_1(u,\varphi;\Lambda)>0$ such that for each $(u',\varphi')\in\mathcal{L}^{k,\alpha}(u,\varphi;\Lambda,\delta_1)$, there holds
    $$\dim\widehat{\ker}_{\tau}L_{u'}\leqslant\dim\widehat{\ker}_{\tau}L_u.$$
\end{lem}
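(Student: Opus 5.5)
We argue by contradiction, in the standard upper semicontinuity fashion. Suppose no such $\delta_1$ exists. Then there are $\delta_j\to0$ and pairs $(u_j,\varphi_j)\in\mathcal{L}^{k,\alpha}(u,\varphi;\Lambda,\delta_j)$ with $\dim\widehat{\ker}_{\tau}L_{u_j}\geqslant d+1$, where $d=\dim\widehat{\ker}_{\tau}L_u$. As observed after Definition \ref{defn: definition of canonical neighborhood}, $u_j\to u$ strongly in $H^1(\Omega)$ and smoothly near $\p\Omega$, and $\varphi_j\to\varphi$ in $C^k$. By Corollary \ref{cor: asy rate of tame field}, $\widehat{\ker}_{\tau}L_{u_j}$ does not depend on $\tau$ in the range \eqref{eq: range of rate}. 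For large $j$ the tangent maps of $u_j$ at $x_i^j$ are close to $h_{x_i}$ in $\mathcal{T}_n(\Lambda)$, so the spectra $\Gamma$ converge. Hence the fixed $\tau$ remains admissible for every $u_j$, after possibly shrinking the interval slightly.

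Next we take a subspace $V_j\subset\widehat{\ker}_{\tau}L_{u_j}$ of dimension $d+1$. These are stationary Jacobi fields vanishing on $\p\Omega$. Using the $L^2(\Omega)$ inner product, we choose an $L^2$-orthonormal basis $X_j^1,\dots,X_j^{d+1}$ of $V_j$. Each $X_j^m$ has zero boundary trace, so the hypothesis of Lemma \ref{lem: non-concentration of Jacobi fields} on the $C^{2,\alpha}$ boundary norms holds trivially. After a subsequence, $X_j^m\to\hat X^m$ strongly in $L^2(\Omega)$, by the non-concentration estimate, and in $H^1_{\loc}(\reg u)$. The limits are tame Jacobi fields of $u$ with zero boundary trace. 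Strong $L^2$ convergence preserves orthonormality, so $\hat X^1,\dots,\hat X^{d+1}$ are linearly independent.

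It remains to check that each $\hat X^m$ lies in $\widehat{\ker}_{\tau}L_u$, that is, it is stationary and lies in $\hat H^{1;\tau}_0$. For $n\geqslant4$, stationarity is automatic by Proposition \ref{prop: stationarity for n greater than 3}. For $n=3$ we pass to the limit in the stationarity identity of Definition \ref{defn: stationary jaocbi fields}. Take $\xi$ constant near each $x_i$; it is then constant near $x_i^j$ for $j$ large. The integrand is supported on $\operatorname{spt}\nabla\xi$, which is a compact subset of $\reg u$, and there $u_j\to u$ and $X_j^m\to\hat X^m$ in $H^1$. The identity therefore passes to the limit. Membership in $\hat H^{1;\tau}_0$ comes from the tameness given by the lemma together with Lemma \ref{lem: asymtpotic rate and growth rate relation}(3). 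This produces $d+1$ independent elements of $\widehat{\ker}_{\tau}L_u$, a contradiction.

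The main obstacle is the uniform control near the moving singular points, which is where the translation part matters. Write $X_j^m=\p_{v_j}u_j+(\text{remainder})$ near $x_i^j$. We need $|v_j|$ bounded and the remainder to have uniform $L^{2}_{-\sigma}$ decay, so that no mass concentrates and the limit is still of the form translation plus a $\tau$-growth section. This is exactly what Lemma \ref{lem: tame field upper bound} supplies within the proof of Lemma \ref{lem: non-concentration of Jacobi fields}, using Corollary \ref{Cor: uniform graph on tangent maps} and the uniform bound of Lemma \ref{lem: lower bound of derivative of tangent map}. So I would invoke that lemma rather than redo the estimates. The one detail to verify is that the limit translation vectors $v^m=\lim v_j^m$ give $\hat X^m-\p_{v^m}u\in H^{1;\tau}$. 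This should follow from the uniform weighted bounds on the remainders.
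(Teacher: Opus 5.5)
Your proposal is correct and is essentially the paper's proof. You argue by contradiction with an $L^2$-orthonormal $(d+1)$-frame in $\widehat{\ker}_{\tau}L_{u_j}$, get strong $L^2$ convergence from Lemma \ref{lem: non-concentration of Jacobi fields} so that orthonormality survives, and pass the Jacobi equation, the stationarity identity (whose integrand is supported on $\operatorname{spt}\nabla\xi\Subset\reg u$) and tameness to the limit; you simply spell out steps the paper compresses into one line. One small caution: convergence of the link spectra does not by itself keep $\tau$ admissible for $u_j$, since zero eigenvalues of a limit link could become slightly negative and produce rates in $(\tau,0)$, and shrinking the interval does not fix this. You do not need admissibility, though: any such rates tend to $0$ and so exceed $-\sigma$ for large $j$, which is all the $L^2_{-\sigma}$ growth estimate (proved relative to the limit tangent maps $h_{x_i}$) requires.
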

\begin{proof}
    Let us argue by contradiction. Suppose the lemma is not true, then we would get a sequence $(u_j,\varphi_j)\in\mathcal{L}^{k,\alpha}(u,\varphi;\Lambda,1/j)$ such that $\dim\widehat{\ker}_{\tau}L_{u_j}>\dim\widehat \ker_{\tau}L_u$. We can then take an $L^2$-orthonormal family $\{X_j^i\}_{i=1}^{d+1}\subset \widehat \ker_{\tau}L_{u_j}$, where $d=\dim\widehat \ker_{\tau}L_u$. Passing to a subsequence $X_j^i\rightarrow X^i$ as $j\rightarrow\infty$, the weak $H^1(\reg u)$ convergence guarantees that $X^i$ is still in $\widehat \ker_{\tau}L_u$ for each $i$. Lemma \ref{lem: non-concentration of Jacobi fields} also guarantees that the convergence of $X_j^i$ to $X^i$ is strong in $L^2(\Omega)$ for each $i$, which makes $\lbrace X^i\rbrace_{i=1}^{d+1}$ still an $L^2$-orthonormal family. As a result, $ \dim\widehat \ker_{\tau}L_u>d$, which contradicts to the assumption that $d=\dim\widehat \ker_{\tau}L_u$.
\end{proof}
Let then $$\mathcal{L}^{k,\alpha}_{\text{top}}(u,\varphi;\Lambda,\delta)=\lbrace (u',\varphi')\in \mathcal{L}^{k,\alpha}(u,\varphi;\Lambda,\delta):\dim\widehat{\ker}_{\tau}L_{u'}=\dim\widehat{\ker}_{\tau}L_u\rbrace$$ 
be \textbf{HSI} with kernel of highest possible dimension. By Lemma \ref{lem: bound of dimension of the kernel}, this is a closed subset of $\mathcal{L}^{k,\alpha}(u,\varphi;\Lambda,\delta)$. We further set
$$I=\dim\widehat{\ker}_{\tau}L_u,J=\dim\widehat{\operatorname{coker}}_{\tau}L_u.$$
For $(u',\varphi')\in\mathcal{L}^{k,\alpha}_{\text{top}}(u,\varphi;\Lambda,\delta_1)$, let 
$$\pi^{L^2}_{u'}:L^2(\Omega,\R^N)\rightarrow\widehat{\ker}_{\tau}L_{u'},$$
be the orthogonal projection and 
$$\pi_{\varphi}X(x)=\pi_{T_{\varphi(x)}M}(X(x)),$$ 
be the projection to the tangent space at $\varphi(x)$ for any vector field $X:\Omega\rightarrow\R^N$.

By Lemma \ref{lem: equiv of dimension of coker}, we can choose a $J=\dim\widehat{\text{coker}}_{\tau}L_u$-dimensional subspace $\Psi$ such that if $0\ne\psi\in\Psi$, the equation
    \begin{align*}
        \begin{cases}
            L_uX=0,&\text{ in }\Omega;\\
            X=\psi,&\text{ on }\p\Omega;
        \end{cases}
    \end{align*}
has no tame field solution $X$ with $\pi_u^{L^2}X=0$.
\begin{lem}\label{lem: Lipshitz of projection}
    There exists $\delta_2=\delta_2(u,\varphi;\Lambda)\in(0,\delta_1(u,\varphi;\Lambda)),$ where $\delta_1$ is given by Lemma \ref{lem: bound of dimension of the kernel}, $r_0(u,\Lambda)>0$, such that for each $(\bar u,\bar \varphi)\in\mathcal{L}^{k,\alpha}_{\text{top}}(u,\varphi;\Lambda,\delta_2)$, the map
    \begin{align*}
        P_{\bar u,\bar \varphi}:\mathcal{L}^{k,\alpha}_{\text{top}}(u,\varphi;\Lambda,\delta_2)\cap\Pi^{-1}(\exp_{\bar\varphi}\Psi)&\rightarrow\widehat{\ker}_{\tau}L_{\bar u},\\
        (u',\varphi')&\mapsto\pi_{\bar u}^{L^2}((u'-\bar u)\zeta_{\bar u,r_0}),
    \end{align*}
    is bi-Lipschitz onto its image, with the bi-Lipschitz constant bounded by a constant depending on $u,\varphi,\Lambda$. Here $\exp_{\bar\varphi}\Psi=\lbrace\exp_{\bar\varphi}\pi_{\bar\varphi}\psi:\psi\in\Psi,\Vert\psi\Vert_{C^0(\p\Omega)}\leqslant\text{inj}(M)\rbrace$ is a slice of $C^{k,\alpha}(\p\Omega,M)$ of finite dimension $J$.
\end{lem}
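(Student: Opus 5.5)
The map $P_{\bar u,\bar\varphi}$ is linear in $u'-\bar u$ and $\pi^{L^2}_{\bar u}$ is an orthogonal projection, so it is Lipschitz with constant at most $1$ in $L^2$. The real content is the lower bound
$$\Vert u^2-u^1\Vert_{L^2(\Omega)}+\Vert\varphi^2-\varphi^1\Vert_{C^{k,\alpha}}\leqslant C\,\Vert P_{\bar u,\bar\varphi}(u^2,\varphi^2)-P_{\bar u,\bar\varphi}(u^1,\varphi^1)\Vert_{L^2},$$
uniformly over base points $(\bar u,\bar\varphi)$ and pairs $(u^i,\varphi^i)$ in $\mathcal{L}^{k,\alpha}_{\text{top}}(u,\varphi;\Lambda,\delta_2)\cap\Pi^{-1}(\exp_{\bar\varphi}\Psi)$. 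Injectivity then also follows. We argue by contradiction, in the style of Lemma \ref{lem: bound of dimension of the kernel}.

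Suppose no such $\delta_2$ and $C$ exist. Then we obtain sequences $(\bar u_j,\bar\varphi_j)$ and distinct $(u^1_j,\varphi^1_j),(u^2_j,\varphi^2_j)$ in $\mathcal{L}_{\text{top}}(u,\varphi;\Lambda,1/j)$. Their boundary data satisfy $\varphi^i_j=\exp_{\bar\varphi_j}\pi_{\bar\varphi_j}\psi^i_j$ with $\psi^i_j\in\Psi$, and
$$d_j^{-1}\Vert\pi^{L^2}_{\bar u_j}((u^2_j-u^1_j)\zeta_{\bar u_j,r_0})\Vert_{L^2}\to0,\qquad d_j=\Vert u^2_j-u^1_j\Vert_{L^2}+\Vert\varphi^2_j-\varphi^1_j\Vert_{C^{k,\alpha}}.$$
All three sequences converge to $(u,\varphi)$, because they lie in canonical neighborhoods of shrinking size. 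Since $\Psi$ is finite dimensional, the normalized boundary differences converge, after a subsequence, to some $\psi\in\Psi$ or satisfy $\varphi^1_j=\varphi^2_j$. Here we use that $\exp_{\bar\varphi_j}\pi_{\bar\varphi_j}$ is a uniformly bi-Lipschitz chart on the small ball of $\Psi$, and that $\pi_{\bar\varphi_j}\to\pi_\varphi$. Lemma \ref{lem: induced Jacobi fields for pairs} therefore applies. It gives $d_j^{-1}(u^2_j-u^1_j)\to\hat X$ in $L^2$, where $\hat X$ is a nonzero tame stationary Jacobi field of $u$ with boundary value $\hat X|_{\p\Omega}=\hat\psi\in\Psi$ and $\Vert\hat X\Vert_{L^2}+\Vert\hat\psi\Vert_{C^{k,\alpha}}=1$.

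Next we pass the projection to the limit, and this is the main obstacle. Because $\dim\widehat{\ker}_\tau L_{\bar u_j}=\dim\widehat{\ker}_\tau L_u=I$ on the top stratum, we may take an $L^2$-orthonormal basis $\{Y^i_j\}_{i=1}^I$ of $\widehat{\ker}_\tau L_{\bar u_j}$. By Lemma \ref{lem: non-concentration of Jacobi fields}, these converge strongly in $L^2$, after a subsequence, to an orthonormal family in $\widehat{\ker}_\tau L_u$, which is then a basis. The fields $Y^i_j$ vanish on $\p\Omega$, so the hypothesis of that lemma on boundary data is met. Hence $\pi^{L^2}_{\bar u_j}\to\pi^{L^2}_u$ in operator norm on $L^2$. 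Combined with the strong $L^2$ convergence of $d_j^{-1}(u^2_j-u^1_j)$ and the smooth convergence of the cutoffs $\zeta_{\bar u_j,r_0}\to\zeta_{u,r_0}$, we get $\pi^{L^2}_u(\hat X\zeta_{u,r_0})=0$. The cutoff is only there to avoid the boundary layer where $u'-\bar u$ does not lie in $T_{\bar u}M$; it equals $1$ away from a thin neighbourhood of $\p\Omega$. To handle it, I would either choose $r_0$ small and absorb the error, or let the kernel elements vanish there. Either way this yields $\pi^{L^2}_u\hat X=0$ up to a controllable error. This step needs care.

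Now split $\hat X=\hat X_0+\hat X_1$ with $\hat X_1=\pi^{L^2}_u\hat X\in\widehat{\ker}_\tau L_u$. Then $\hat X_0$ is a tame solution of $L_u\hat X_0=0$ with boundary value $\hat\psi\in\Psi$ and $\pi^{L^2}_u\hat X_0=0$. By the choice of $\Psi$ (Lemma \ref{lem: equiv of dimension of coker}), this forces $\hat\psi=0$. Then $\hat X_0\in\widehat{\ker}_\tau L_u$ and is orthogonal to that space, so $\hat X_0=0$. Combined with $\hat X_1=0$, this gives $\hat X=0$, contradicting $\Vert\hat X\Vert_{L^2}+\Vert\hat\psi\Vert=1$. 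The constant depends only on $(u,\varphi,\Lambda)$ because the contradiction argument is centred at the fixed $(u,\varphi)$.
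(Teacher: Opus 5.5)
Your overall route matches the paper's: argue by contradiction, apply Lemma \ref{lem: induced Jacobi fields for pairs} to the normalized differences, and use the defining property of $\Psi$. Two of your steps are correct and more explicit than the paper: the direct upper bound $\Vert P_{\bar u,\bar\varphi}(u^2,\varphi^2)-P_{\bar u,\bar\varphi}(u^1,\varphi^1)\Vert_{L^2}\leqslant\Vert u^2-u^1\Vert_{L^2}$, and the operator-norm convergence $\pi^{L^2}_{\bar u_j}\to\pi^{L^2}_u$ via orthonormal bases on the top stratum and Lemma \ref{lem: non-concentration of Jacobi fields}.

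\textbf{The gap is at the cutoff.} You have misread $\zeta_{\bar u,r_0}=\zeta(\rho_{\bar u}/r_0)$.
\begin{itemize}
\item It vanishes on $B_{r_0}(\sing\bar u)$ and equals $1$ where $\rho_{\bar u}\geqslant 2r_0$. So it cuts out neighbourhoods of the singular points, not a boundary layer.
\item No tangency of $u'-\bar u$ is needed, since $\pi^{L^2}_{\bar u}$ is the orthogonal projection in $L^2(\Omega,\R^N)$.
\item Elements of $\widehat{\ker}_\tau L_u$ do not vanish near the singular points; their translation part $\p_v u$ even blows up like $\rho_u^{-1}$. So ``letting the kernel elements vanish there'' is not available.
\end{itemize}
The limit therefore only gives $\pi^{L^2}_u(\zeta_{u,r_0}\hat X)=0$, and your claim $\hat X_1=\pi^{L^2}_u\hat X=0$ is unjustified.

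``Choose $r_0$ small and absorb the error'' cannot be done after the fact either. The radius $r_0=r_0(u,\Lambda)$ enters the definition of $P_{\bar u,\bar\varphi}$. It must therefore be fixed, with a stated property, before the contradiction sequences are built. Your proposal never says what that property is.

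\textbf{The missing idea is the paper's first step.} Since $\widehat{\ker}_\tau L_u$ is finite dimensional, for $r_0$ small one has $\Vert(1-\zeta_{u,r_0})X\Vert_{L^2}\leqslant\frac12\Vert X\Vert_{L^2}$ for all $X$ in it. So one fixes $r_0$ such that $X\mapsto\pi^{L^2}_u(\zeta_{u,r_0}X)$ is within $\tfrac12$ of the identity, hence an isomorphism of $\widehat{\ker}_\tau L_u$. The ending is then short, and your splitting $\hat X=\hat X_0+\hat X_1$ is unnecessary:
\begin{itemize}
\item Lemma \ref{lem: equiv of dimension of coker} applies directly to the stationary tame Jacobi field $\hat X$, with no orthogonality condition, and gives $\hat\psi=0$.
\item Hence $\hat X\in\widehat{\ker}_\tau L_u$ with $\Vert\hat X\Vert_{L^2}=1$.
\item Then $\pi^{L^2}_u(\zeta_{u,r_0}\hat X)=0$ contradicts injectivity.
\end{itemize}
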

In the lemma, $\zeta\in C^{\infty}(\R,[0,1])$ is a function such that $\zeta=0$ on $(-\infty,1]$ and $\zeta=1$ on $[2,\infty)$. We then set $\zeta_{u,r_0}=\zeta(\rho_{u}/r_0)$.
\begin{proof}
    Let $\psi_1,\dots\psi_J$ be a basis of $\Psi$ and $$\tau_0=\frac{1}{2}\sup_{x\in\sing u}\gamma^-(h_x),$$
    Since $\widehat\ker_{\tau_0}$ is a finite dimensional set, we can choose $r_0=r_0(u,\Lambda)$ sufficiently small such that the linear map
    $$\widehat\ker_{\tau_0}\rightarrow\widehat\ker_{\tau_0},X\mapsto\pi^{L^2}_u(\zeta_{u,r_0}X),$$
    is an isomorphism. 
    
    We claim that for small enough $\delta_2$, and each $(\bar u,\bar \varphi)\in\mathcal{L}^{k,\alpha}_{\text{top}}(u,\varphi;\Lambda,\delta_2)$, the corresponding subspace $\pi_{\bar \varphi}\Psi\subset C^{k,\alpha}(\p\Omega,T_{\bar\varphi}M)$ also does not support Jacobi fields. Indeed, suppose this is not true, then we can find a sequence $(u_j,\varphi_j)\rightarrow(u,\varphi)$ in $L^2(\Omega)$ and $C^{k}(\p\Omega)$ respectively, and a sequence $X_j,\psi_j$ such that
    \begin{align*}
        L_{u_j}X_j=0\textup{ in }\Omega,X_j=\pi_{\varphi_j}\psi_j\text{ on }\p\Omega;\qquad\pi_{u_j}^{L^2}X_j=0,
        \psi_j\in\Psi,\Vert X_j\Vert_{L^2}+\Vert\psi_j\Vert_{C^{k,\alpha}}=1.
    \end{align*}
    Passing to a subsequence, by Lemma \ref{lem: non-concentration of Jacobi fields} we can assume $X_j\rightarrow X$ strongly in $L^2(\Omega)$ and weakly in $H^1(\reg u)$, and $\psi_j$ all belonging to a finite dimensional space $\Psi$ guarantees that $\psi_j\rightarrow\psi\in\Psi$ in $C^{k,\alpha}(\p\Omega)$. Hence, $X$ is a tame Jacobi field with boundary data $\psi$. Moreover $\pi_u^{L^2}X=0$ and $\Vert X\Vert_{L^2(\Omega)}+\Vert\psi\Vert_{C^{k,\alpha}(\p\Omega)}=1$. $\psi=0$ makes $X\in\widehat{\ker}_{\tau_0}L_u$, contradicting to the assumption that $\pi_u^{L^2}X=0$. But $\psi\ne0$ also contradicts to the definition of $\Psi$. Hence the claim is proved.

    We proceed with the proof of Lemma \ref{lem: Lipshitz of projection}. Suppose Lemma \ref{lem: Lipshitz of projection} is not true, we get three sequences $\lbrace u_j^i\rbrace_{j\in\N},i=0,1,2$, such that $u^i_j$ restrict to $\p\Omega$ equals $\varphi_j^i\in\exp_{\bar\varphi}\Psi$ and $(u_j^i,\varphi_j^i)\rightarrow(u,\varphi)$ but if we set $\zeta_j=\zeta_{u_j^0,r_0}$ and $d_j=\Vert u^1_j-u_j^2\Vert_{L^2(\Omega)}+\Vert\varphi_j^1-\varphi_j^2\Vert_{C^{k,\alpha}(\p\Omega)}$ we have
    \begin{equation}
        \begin{aligned}
            \text{either }\Vert\pi_{u_j^0}^{L^2}((u_j^2-u_j^1)\zeta_j)\Vert_{L^2}\leqslant d_j/j;\qquad
        \text{or }\Vert\pi_{u_j^0}^{L^2}((u_j^2-u_j^1)\zeta_j)\Vert_{L^2}\geqslant jd_j.
        \end{aligned}\label{eq: two possiblities of Lipshitz blow up}
    \end{equation}
    After passing to a subsequence, by Lemma \ref{lem: induced Jacobi fields for pairs}, $(u^2_j-u_j^1)/d_j$ converges to a tame Jacobi field $X$, whose boundary $\psi$ is the limit of $(\varphi_j^2-\varphi_j^1)/d_j$. As such, $\psi\in\Psi$. By the definition of $\Psi$, we must have $\psi=0$. This tells us that $X$ belongs to $\widehat{\ker}_{\tau_0}L_u$. The second possibility in \eqref{eq: two possiblities of Lipshitz blow up} clearly cannot hold for large $j$ by this convergence. The first possibility in \eqref{eq: two possiblities of Lipshitz blow up}, however, tells us $\pi_u^{L^2}(X\zeta_{u,r_0})=0$. By the choice of $r_0$, this contradicts to the fact that $X\ne0$.
\end{proof}
\begin{proof}[Proof of Theorem \ref{thm: local sard smale theorem}]
    Let us fix a $\delta\in(0,\delta_2]$ and a $\tau$ satisfying \eqref{eq: range of rate}.
    
    The openness of the set $\mathcal{B}^{k,\alpha}(u,\varphi;\Lambda,\delta)$ is an immediate corollary of compactness of Proposition \ref{prop: compactness of canonical neighborhood}. We only need the denseness. We will prove the theorem inductively on $I=\dim\widehat{\ker}_{\tau}L_u\geqslant0$. We henceforth assume the Theorem is already proved for maps with $\dim\widehat{\ker}_{\tau}L_u=I-1$. Note that when $I=0$, there is indeed no assumption made.
\begin{claim}
    For $\bar{\varphi}\in C^{k,\alpha}(\p\Omega,M)\setminus\mathcal{B}^{k,\alpha}(u,\varphi;\Lambda,\delta)=\Pi(\mathcal{L}^{k,\alpha}(u,\varphi;\Lambda,\delta))$, there exists a sequence of maps $\varphi_j\in \exp_{\bar\varphi}\Psi\setminus\Pi(\mathcal{L}^{k,\alpha}_{\text{top}}(u,\varphi;\Lambda,\delta))$ such that $\varphi_j\rightarrow\bar{\varphi}$ in $C^{k,\alpha}(\p\Omega)$ as $j\rightarrow\infty$.
\end{claim}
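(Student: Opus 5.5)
The plan is a dimension count inside the finite-dimensional slice $\exp_{\bar\varphi}\Psi$. Fix $\bar\varphi\in\Pi(\mathcal{L}^{k,\alpha}(u,\varphi;\Lambda,\delta))$ and consider
$$\mathcal{S}:=\mathcal{L}^{k,\alpha}_{\text{top}}(u,\varphi;\Lambda,\delta)\cap\Pi^{-1}(\exp_{\bar\varphi}\Psi).$$
It suffices to show that $\Pi(\mathcal{S})$ has empty interior in $\exp_{\bar\varphi}\Psi$ near $\bar\varphi$. Once this holds, every neighborhood of $\bar\varphi$ in the slice contains a point $\varphi_j\notin\Pi(\mathcal{L}^{k,\alpha}_{\text{top}})$. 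Letting the neighborhoods shrink produces $\varphi_j\to\bar\varphi$ in $C^{k,\alpha}$. This uses that $\psi\mapsto\exp_{\bar\varphi}\pi_{\bar\varphi}\psi$ is a local bi-Lipschitz homeomorphism from a neighborhood of $0$ in the $J$-dimensional space $\Psi$ (with any norm, e.g.\ $C^{k,\alpha}$) onto a neighborhood of $\bar\varphi$ in the slice. We have $J>I$ because $\widehat{\ind}_{\tau}L_u=I-J<0$.

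\textbf{Step 1: local parametrization.} For each $(\bar u',\bar\varphi')\in\mathcal{S}$, I will apply Lemma~\ref{lem: Lipshitz of projection} with base point $(\bar u',\bar\varphi')$. The map $P_{\bar u',\bar\varphi'}$ is bi-Lipschitz from $\mathcal{L}^{k,\alpha}_{\text{top}}\cap\Pi^{-1}(\exp_{\bar\varphi'}\Psi)$ into $\widehat{\ker}_\tau L_{\bar u'}\cong\R^I$. Here bi-Lipschitz is measured in $d=\Vert u^2-u^1\Vert_{L^2}+\Vert\varphi^2-\varphi^1\Vert_{C^{k,\alpha}}$. Since $\bar\varphi'$ lies in the slice through $\bar\varphi$, I will check that the slices $\exp_{\bar\varphi'}\Psi$ and $\exp_{\bar\varphi}\Psi$ agree locally. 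If that fails, I will rerun the contradiction argument of Lemma~\ref{lem: Lipshitz of projection} with the slice fixed at $\bar\varphi$ and only the base of the projection varying. That argument only uses Lemma~\ref{lem: induced Jacobi fields for pairs} together with the property of $\Psi$ that no nonzero element is the boundary value of a tame Jacobi field orthogonal to the kernel, so it goes through verbatim.

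\textbf{Step 2: Lipschitz image.} The boundary map $\Pi$ is $1$-Lipschitz with respect to $d$, because $\Vert\varphi^2-\varphi^1\Vert_{C^{k,\alpha}}\le d$. Hence $\Pi\circ P^{-1}$ is a Lipschitz map from a subset of $\R^I$ into the slice. It follows that $\Pi$ of each local piece of $\mathcal{S}$ has $\mathcal{H}^J$-measure zero in the $J$-dimensional slice, since $I<J$.

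\textbf{Step 3: finite cover.} Proposition~\ref{prop: compactness of canonical neighborhood} gives compactness of $\mathcal{L}$, and Lemma~\ref{lem: bound of dimension of the kernel} shows that $\mathcal{L}_{\text{top}}$ is closed in it. Therefore $\mathcal{S}$ is compact. The bi-Lipschitz neighborhoods from Step 1 have size uniform in terms of $(u,\varphi,\Lambda)$, so finitely many of them cover $\mathcal{S}$. Consequently $\Pi(\mathcal{S})$ is a finite union of $\mathcal{H}^J$-null compact sets and has empty interior in the slice, which is the required statement.

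\textbf{Main obstacle.} I expect the hardest step to be the uniformity in Step 1. The task is to ensure that the bi-Lipschitz constant and the domain of Lemma~\ref{lem: Lipshitz of projection} work for all base points in $\mathcal{S}$ simultaneously and for the slice through $\bar\varphi$, rather than only through $\bar\varphi'$. I would handle this by a contradiction argument. I would take sequences of base points and pairs in $\mathcal{S}$ with degenerating Lipschitz ratio and normalize their differences by $d_j$. Lemma~\ref{lem: induced Jacobi fields for pairs} then produces a nonzero tame stationary Jacobi field of $u$ with boundary value in $\Psi$ that is orthogonal to $\widehat{\ker}_{\tau}L_u$. By the choice of $\Psi$ (Lemma~\ref{lem: equiv of dimension of coker}), such a field cannot exist, and this is the contradiction.
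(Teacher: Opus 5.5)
Your argument is correct and is essentially the paper's dimension count: embed $\mathcal{L}^{k,\alpha}_{\text{top}}(u,\varphi;\Lambda,\delta)\cap\Pi^{-1}(\exp_{\bar\varphi}\Psi)$ bi-Lipschitzly into an $I$-dimensional kernel via Lemma \ref{lem: Lipshitz of projection}, push it forward by the Lipschitz map $\Pi$ into the $J$-dimensional slice, and use $I<J$. The slice mismatch, the rerun of the contradiction argument, and the finite cover in Step 3 are unnecessary: if $\bar\varphi\notin\Pi(\mathcal{L}^{k,\alpha}_{\text{top}}(u,\varphi;\Lambda,\delta))$ you can take $\varphi_j=\bar\varphi$, and otherwise you can choose $\bar u$ with $(\bar u,\bar\varphi)\in\mathcal{L}^{k,\alpha}_{\text{top}}(u,\varphi;\Lambda,\delta)$ and apply Lemma \ref{lem: Lipshitz of projection} once with this base point, whose slice is exactly $\exp_{\bar\varphi}\Psi$ and whose bi-Lipschitz bound already holds on the whole intersection with a constant depending only on $(u,\varphi,\Lambda)$.
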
 
We first finish the proof of Theorem \ref{thm: local sard smale theorem} assuming the claim above. We show each $\varphi_j$ can be approximated by maps in $\mathcal{B}^{k,\alpha}(u,\varphi;\Lambda,\delta)$. When $I=0$, this is direct from the claim as $ \mathcal{L}^{k,\alpha}_{\text{top}}(u,\varphi;\Lambda,\delta)=\mathcal{L}^{k,\alpha}(u,\varphi;\Lambda,\delta)$ in this case.

We then consider the case $I\geqslant1$. Fix a $j\geqslant1$ and assume $\varphi_j$ is not in $\mathcal{B}^{k,\alpha}(u,\varphi;\Lambda,\delta),$ which implies that $\Pi^{-1}(\varphi_j)\cap\mathcal{L}^{k,\alpha}(u,\varphi;\Lambda,\delta)\ne\emptyset$. Moreover, by Lemma \ref{lem: bound of dimension of the kernel}, for each $(u',\varphi_j)\in\mathcal{L}^{k,\alpha}(u,\varphi;\Lambda,\delta)$, we have that $\dim\widehat{\ker}_{\tau}L_{u'}\leqslant I-1$. Hence, by the induction assumption, there exists $\delta_{u',\varphi_j}>0$ which may also depend on $\Lambda$, such that $\mathcal{B}^{k,\alpha}(u',\varphi_j;\Lambda,\delta_{u',\varphi_j})$ is open and dense in $C^{k,\alpha}(\p\Omega,M)$. Since $\mathcal{L}^{k,\alpha}(u',\varphi_j;\Lambda,\delta_{u',\varphi_j})$ contains an (relative) open neighborhood of $(u',\varphi_j)$ in $\mathcal{L}^{k,\alpha}(u,\varphi;\Lambda,\delta)$, and $\mathcal{L}^{k,\alpha}(u,\varphi;\Lambda,\delta)\cap\Pi^{-1}(\varphi_j)$ is compact by Proposition \ref{prop: compactness of canonical neighborhood}, we see there exists $(u^i,\varphi_j)\in\Pi^{-1}(\varphi_j)\cap\mathcal{L}^{k,\alpha}(u,\varphi;\Lambda,\delta)$ such that the union of $\mathcal{L}^{k,\alpha}(u^{i},\varphi_j;\Lambda,\delta_{u^{i},\varphi_j}),1\leqslant i\leqslant i_0(j)$ covers $\Pi^{-1}(\varphi_j)\cap\mathcal{L}^{k,\alpha}(u,\varphi;\Lambda,\delta)$. 

The fact that $\mathcal{B}_j=\cap_{i=1}^{i_0}\mathcal{B}^{k,\alpha}(u^{i},\varphi_j;\Lambda,\delta_{u^{i},\varphi_j})$ being open and dense allows us to select a $\varphi_l'$ in $\mathcal{B}_j$ which converges to $\varphi_j$ in $C^{k,\alpha}(\p\Omega)$ as $l\rightarrow\infty$. Moreover, by compactness we can guarantee that if $l$ is large $$\mathcal{L}^{k,\alpha}(u,\varphi;\Lambda,\delta)\cap\Pi^{-1}(\varphi_l')\subset\bigcup_{i=1}^{i_0}\mathcal{L}^{k,\alpha}(u^{i},\varphi_j;\Lambda,\delta_{u^{i},\varphi_j}).$$
But since $\varphi_l'\in\mathcal{B}_j$, this is only possible if LHS is empty. Hence we indeed have $\varphi_l'\in\mathcal{B}^{k,\alpha}(u,\varphi;\Lambda,\delta)$ for large $l$, which concludes our proof.

Finally let us prove our claim.
\begin{proof}[Proof of Claim]
    Suppose without loss of generality that $(\bar u,\bar\varphi)\in \mathcal{L}^{k,\alpha}_{\text{top}}(u,\varphi;\Lambda,\delta)$, otherwise we can take $\varphi_j=\bar\varphi$. By Lemma \ref{lem: Lipshitz of projection}, $\mathcal{L}^{k,\alpha}_{\text{top}}(u,\varphi;\Lambda,\delta)\cap\Pi^{-1}(\exp_{\bar\varphi}\Psi)$ is bi-Lipschitz embedded in $\widehat \ker_{\tau}L_u$ and is compact. Denote by $\mathcal{Z}=P_{\bar u,\bar\varphi}(\mathcal{L}^{k,\alpha}_{\text{top}}(u,\varphi;\Lambda,\delta)\cap\Pi^{-1}(\exp_{\bar\varphi}\Psi))$ be its image in $\widehat\ker_{\tau}L_{\bar u}$. Consider the composite map
    $$\Pi:\mathcal{Z}\simeq\mathcal{L}^{k,\alpha}_{\text{top}}(u,\varphi;\Lambda,\delta)\cap\Pi^{-1}(\exp_{\bar\varphi}\Psi)\xrightarrow{\Pi}\exp_{\bar\varphi}\Psi,$$
    which is a Lipschitz map between compact sets.
    Since $\widehat{\ind}_{\tau}L_u<0$, which is equivalent to $I<J$, which implies $\Pi(\mathcal{Z})$ has dense complement in $\exp_{\bar\varphi}\Psi$ as the latter has dimension $J$. Hence there exists $\varphi_j\in \exp_{\bar\varphi}\Psi\setminus\Pi(\mathcal{L}^{k,\alpha}_{\text{top}}(u,\varphi;\Lambda,\delta))$ which converges to $\bar\varphi$. This proves the claim.
\end{proof}

\end{proof}

Although canonical neighborhoods are not genuine open sets in general, we are still able to use them to write down an "atlas" for the space of \textbf{HSI}. The following theorem is another ingredient needed to prove our generic regularity result.
\begin{thm}\label{thm: covering of space of HSI}
    Let $\kappa:\mathcal{T}^{k,\alpha}\rightarrow\R_+$ be a function that is not necessarily continuous. Then there exists $\lbrace(u_j,\varphi_j;\Lambda_j)\rbrace_{j\in\N}\subset\mathcal{T}^{k,\alpha}$ such that
    $$\Hau^{k,\alpha}(\Omega,M)=\bigcup_{j=1}^{\infty}\mathcal{L}^{k,\alpha}(u_j,\varphi_j;\Lambda_j,\kappa_j),\text{ where }\kappa_j=\kappa(u_j,\varphi_j;\Lambda_j).$$
\end{thm}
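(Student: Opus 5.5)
The plan is a separability (Lindelöf) argument. Stratify $\Hau^{k,\alpha}(\Omega,M)$ by a countable family of discrete invariants, and then show that each stratum is second countable in a topology for which every canonical neighborhood $\mathcal{L}^{k,\alpha}(u,\varphi;\Lambda,\kappa)$ contains a relatively open neighborhood of its center.

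First, note that every $(u,\varphi)\in\Hau^{k,\alpha}$ admits some $\Lambda\in\mathcal{R}^{k,\alpha}(u,\varphi)$ with $\Lambda\in\N$. Indeed, $\varphi\in C^{k,\alpha}$; $u$ is smooth near $\p\Omega$; and at each strongly isolated singularity Corollary \ref{Cor: uniform graph on tangent maps} gives $r_u\geqslant c\rho_u$. Since $\mathcal{R}^{k,\alpha}(u,\varphi)$ is upward closed, we may take $\Lambda$ integer. For $\Lambda,Q\in\N$, let
\[
\Hau_{\Lambda,Q}=\{(u,\varphi):\Lambda\in\mathcal{R}^{k,\alpha}(u,\varphi),\ \#\sing u=Q\}.
\]
By Lemma \ref{lem: discreteness of density}, the densities at singular points of maps in $\Hau_{\Lambda,Q}$ take only finitely many values. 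We therefore split further, by the unordered $Q$-tuple of densities, into finitely many pieces $\Hau_{\Lambda,Q,\Theta}$. The union of all these pieces is $\Hau^{k,\alpha}$, and there are countably many of them.

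Second, fix a piece and metrize it by $d((u,\varphi),(u',\varphi'))=\Vert u-u'\Vert_{L^2}+\Vert\varphi-\varphi'\Vert_{C^k}+d_H(\sing u,\sing u')$. This space is a subset of a separable metric space, namely $L^2\times C^k\times(\text{finite subsets of }\bar\Omega)$, and is therefore itself separable and Lindelöf. For each $(u,\varphi)$ in the piece, with $\Lambda$ the fixed integer, I claim there is an $\epsilon(u,\varphi)>0$ such that the $d$-ball of radius $\epsilon$ around $(u,\varphi)$ inside the piece lies in $\mathcal{L}^{k,\alpha}(u,\varphi;\Lambda,\kappa(u,\varphi,\Lambda))$. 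The conditions to check are as follows:
\begin{itemize}
\item The bounds $\Vert\varphi'\Vert_{C^{k,\alpha}}\leqslant\Lambda$ and $r_{u'}\geqslant\Lambda^{-1}\rho_{u'}$ hold by membership in the piece.
\item The closeness conditions $\Vert\varphi-\varphi'\Vert_{C^k}\leqslant\kappa$ and $\Vert u-u'\Vert_{L^2}\leqslant\kappa$ hold once $\epsilon\leqslant\kappa$.
\item The correspondence of singular sets comes from Hausdorff closeness with the same cardinality $Q$. If $\epsilon<\frac13\min_{i\neq j}|x_i-x_j|$, each $x_i$ pairs with a unique nearby $x_i'$.
\item The equal-density requirement $\Theta_{u'}(x_i')=\Theta_u(x_i)$ is the delicate point. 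The density tuple is fixed only as a multiset, so I need the pairing to respect it. For this I use the argument in Proposition \ref{prop: compactness of canonical neighborhood}: upper semicontinuity of density under the strong $H^1$ convergence that holds on canonical neighborhoods (ingredients (1)--(3) listed before that proposition), combined with finiteness of the density set with gap $\eta_\Lambda$. This shows that for $\epsilon$ small, $\Theta_{u'}(x_i')\leqslant\Theta_u(x_i)+\eta_\Lambda/3$. Since the multisets agree, summing over $i$ forces equality termwise.
\end{itemize}
Upgrading $L^2$ closeness to the $H^1$ closeness needed for semicontinuity proceeds by contradiction along a sequence, exactly as in that proof.

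Third, by Lindelöf, countably many such balls cover each piece. Their centers, together with the fixed $\Lambda$, give countably many triples $(u_j,\varphi_j,\Lambda_j)$ whose canonical neighborhoods with radius $\kappa_j$ cover the piece. Taking the union over the countably many pieces finishes the proof. Note that $\kappa$ need not be continuous, since it is evaluated only at the chosen centers.

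The main obstacle is the density matching in the second step. I expect to handle it by the contradiction argument from Proposition \ref{prop: compactness of canonical neighborhood}, applied to a sequence in the piece converging in $d$ to $(u,\varphi)$. If the multiset argument proves awkward, an alternative is to refine the stratification by ordering the singular points lexicographically and fixing the density sequence; small Hausdorff perturbations then preserve both the pairing and the labels.
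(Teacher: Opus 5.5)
Your proposal is correct and is essentially the argument the paper defers to in \cite{LiWangGenericRegularity} and \cite{CLWNon-persistence}. You stratify by $\Lambda$, the number of singular points and the density data (the ``combinatorial terminology'' the paper alludes to). You then use upper semicontinuity of density under the strong convergence from \eqref{eq: strong convergence}, together with the finiteness in Lemma \ref{lem: discreteness of density}, to show that each canonical neighborhood contains a relatively open subset of its stratum, and you conclude by separability and the Lindel\"of property.

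One citation needs fixing: to show that $\mathcal{R}^{k,\alpha}(u,\varphi)\neq\emptyset$, the bound $r_u\geqslant c\rho_u$ for a single \textbf{HSI} should come directly from uniqueness of the tangent map and $\epsilon$-regularity on annuli. Corollary \ref{Cor: uniform graph on tangent maps} cannot be used here, because it already assumes such a bound in its hypotheses.
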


The proof of Theorem \ref{thm: covering of space of HSI} follows almost identically the proofs in \cite{LiWangGenericRegularity} and \cite{CLWNon-persistence}, which depend mainly on the compactness and monotonicity formula. However, the proof requires the introduction of combinatorial terminology, which does not fit the main theme of this paper and thus is redundant here. So, we refer readers to \cite{LiWangGenericRegularity} and \cite{CLWNon-persistence} for the proof of Theorem \ref{thm: covering of space of HSI} and do not include it here.

\subsection{Proof of Theorem \ref{thm: genericity of Jacobi operator}}
Let us prove Theorem \ref{thm: genericity of Jacobi operator} using Theorem \ref{thm: count of index}, Theorem \ref{thm: local sard smale theorem} and Theorem \ref{thm: covering of space of HSI}. The proof will be first carried out for $C^{k,\alpha}$ boundary data, then propagate to $C^{\infty}$ boundary data using the following general fact about metric spaces.

\begin{lem}[{\cite[Lemma 4.30]{CLWNon-persistence}}]\label{lem: propagate generic to countable intersection}
    Let $\lbrace(S_j,d_j)\rbrace_{j=1}^{\infty}$ be a sequence of complete metric spaces, with associated inclusions $\iota_j:S_{j+1}\rightarrow S_{j}$ such that 
    $$d_{j}(\iota_j(x),\iota_j(y))\leqslant d_{j+1}(x,y),\text{ for all }x,y\in S_{j+1}.$$
    Identify all $S_j$ as a subset of $S_1$ and define
    $$S_{\infty}=\bigcap_{j=1}^{\infty}S_j,\text{ with metric }d_{\infty}(x,y)=\sum_{j=1}^{\infty}2^{-j}\frac{d_j(x,y)}{1+d_j(x,y)},\text{ for all }x,y\in S_{\infty}.$$
    Suppose $S_{\infty}$ is dense in $(S_j,d_j)$ for all $j\geqslant1$. In addition, assume $A\subset (S_1,d_1)$ is an open (resp. $G_{\delta}$) subset such that for all $j$, $A\cap S_j$ is dense in $(S_j,d_j)$. Then $A\cap S_{\infty}$ is an open (resp. $G_{\delta}$) dense subset in $S_{\infty}$.
\end{lem}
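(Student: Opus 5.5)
The plan is a Baire-category argument in the complete metric space $S_\infty$: first show that $A\cap S_\infty$ is dense in $S_\infty$, then show that openness (resp. the $G_\delta$ property) passes from $S_1$ to $S_\infty$.

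\textbf{Step 1: compare topologies.} Since $d_j\circ\iota_j\leqslant d_{j+1}$, iterating gives $d_i\leqslant d_j$ on $S_j$ for $i\leqslant j$. Hence convergence in $d_\infty$ is equivalent to convergence in every $d_j$. In particular, the inclusion $(S_\infty,d_\infty)\to(S_1,d_1)$ is continuous, since $d_1\leqslant d_1/(1+d_1)\cdot(1+d_1)$ is controlled once $d_\infty$ is small. So if $A$ is open in $S_1$, then $A\cap S_\infty$ is open in $S_\infty$. If $A=\bigcap_k A_k$ with each $A_k$ open, then $A\cap S_\infty=\bigcap_k(A_k\cap S_\infty)$ is $G_\delta$.

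\textbf{Step 2: completeness of $S_\infty$.} Take a $d_\infty$-Cauchy sequence. It is $d_j$-Cauchy for every $j$, so it converges in each $S_j$ to some $x_j$. These limits all coincide: the map $\iota_j$ is $1$-Lipschitz, so $\iota_j(x_{j+1})=x_j$. The common limit therefore lies in $S_\infty$, and the sequence converges to it in $d_\infty$.

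\textbf{Step 3: density in the open case.} Fix $x\in S_\infty$ and $\epsilon>0$. Choose $N$ with $2^{-N}<\epsilon/2$. It suffices to find $y\in A\cap S_\infty$ with $d_N(x,y)<\epsilon/2$, because then $d_j(x,y)\leqslant d_N(x,y)$ for all $j\leqslant N$, and the tail of the sum defining $d_\infty$ is at most $2^{-N}$.

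The key point, and the main obstacle, is that density of $A\cap S_N$ in $S_N$ does not directly give points of $S_\infty$. The fix uses openness. First pick $z\in A\cap S_N$ with $d_N(x,z)<\epsilon/4$. The set $A\cap S_N$ is open in $(S_N,d_N)$, because the inclusion $S_N\to S_1$ is $1$-Lipschitz. So there is a $d_N$-ball $U$ around $z$, of radius less than $\epsilon/4$, contained in $A\cap S_N$. By the assumed density of $S_\infty$ in $S_N$, there is $y\in S_\infty\cap U$. This $y$ lies in $A\cap S_\infty$ and satisfies $d_N(x,y)<\epsilon/2$, as required.

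\textbf{Step 4: the $G_\delta$ case.} Write $A=\bigcap_k A_k$ with each $A_k$ open in $S_1$. Each $A_k$ contains $A$, so $A_k\cap S_j$ is dense in $S_j$ for every $j$. Step 3 then shows each $A_k\cap S_\infty$ is open and dense in $S_\infty$. By Step 2, $S_\infty$ is complete, so the Baire category theorem makes $\bigcap_k (A_k\cap S_\infty)=A\cap S_\infty$ dense. Together with Step 1, this shows $A\cap S_\infty$ is a dense $G_\delta$.
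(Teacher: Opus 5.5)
Your proof is correct. The paper gives no argument of its own and simply defers to \cite[Lemma 4.30]{CLWNon-persistence}, and your route is the standard argument for this lemma: completeness of $(S_\infty,d_\infty)$, density in the open case by locating points of $S_\infty$ inside a $d_N$-ball contained in the open dense set $A\cap S_N$, and the $G_\delta$ case via Baire. The only cosmetic point is in Step~1, where the displayed inequality is an identity. Continuity of $(S_\infty,d_\infty)\hookrightarrow(S_1,d_1)$ actually comes from $d_\infty\geqslant\frac12\,\frac{d_1}{1+d_1}$.
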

See \cite{CLWNon-persistence} for a proof of this lemma.

\begin{proof}[Proof of Theorem \ref{thm: genericity of Jacobi operator}]
    We only prove the existence of such a generic subset in $C^{k,\alpha}(\p\Omega,M)$. The proof for smooth boundary maps follows by applying Lemma \ref{lem: propagate generic to countable intersection}. Let us focus on $C^{k,\alpha}(\p\Omega,M)$.

    Consider a map $(u,\varphi)\in\mathcal{H}^{k,\alpha}(\Omega,M)$ with $\widehat{\ind}_{\tau}L_u<0$. Theorem \ref{thm: count of index} implies there exists an $x_0\in\sing u$, the tangent map $h_{x_0}$ of $u$ at $x_0$ satisfies $I(h_{x_0})>0$. If another tangent map $h\in\mathcal{T}_n(M)$ satisfies $\Vert h_{x_0}|_{\p B_1}-h|_{\p B_1}\Vert_{C^2(\Sp^{n-1})}<d(h_{x_0})$ for a constant $d(h_{x_0})$, we also have $I(h)>0$. Hence, given $\Lambda\in\mathcal{R}^{k,\alpha}(u,\varphi)$ and recall compactness Proposition \ref{prop: compactness of canonical neighborhood}, we can select $\kappa(u,\varphi;\Lambda)\in(0,\delta_2]$ where $\delta_2=\delta_2(u,\varphi;\Lambda)$ is the constant given by Theorem \ref{thm: local sard smale theorem} with the following property. \\
    \emph{For every $(u',\varphi')\in \mathcal{L}^{k,\alpha}(u,\varphi;\Lambda,\kappa(u,\varphi,\Lambda))$ there exists $x_0'\in \sing u'$, such that the tangent map $h_{x_0'}$ of $u'$ at $x_0'$ satisfies
    $$\Vert h_{x_0}|_{\p B_1}-h_{x_0'}|_{\p B_1}\Vert_{C^2(\Sp^{n-1})}<d(h_{x_0})\implies I(h_{x_0'})>0.$$
    In particular, $\widehat{\ind}_{\tau}L_{u'}<0$.
    }
    When $\widehat{\ind}_{\tau}L_u$ already equals 0, we simply set $\kappa(u,\varphi;\Lambda)=\delta_0(u,\varphi;\Lambda)$. By Theorem \ref{thm: covering of space of HSI}, viewing $\kappa$ as a function from $\mathcal{T}^{k,\alpha}$ to $\R_+$, we can correspondingly select $(u_j,\varphi_j;\Lambda_j)$ and write 
    $$\mathcal{H}^{k,\alpha}(\Omega,M)=\bigcup_{j=1}^{\infty}\mathcal{L}^{k,\alpha}(u_j,\varphi_j;\Lambda_j,\kappa_j),\text{ where }\kappa_j=\kappa(u_j,\varphi_j;\Lambda_j).$$
    
    For every $j\geqslant1$ we denote by $\mathcal{L}_-^{k,\alpha}(u_j,\varphi_j;\Lambda_j,\kappa_j)$ pairs $(u',\varphi')\in\mathcal{L}^{k,\alpha}(u_j,\varphi_j;\Lambda_j,\kappa_j)$ with $\widehat{\ind}_{\tau}L_{u'}<0$. We now construct for each $j\geqslant1$ a covering of $\mathcal{L}_-^{k,\alpha}(u_j,\varphi_j;\Lambda_j,\kappa_j)$ by countably many canonical neighborhood of the form $\mathcal{L}^{k,\alpha}(u,\varphi;\Lambda,\kappa)$ where $\widehat{\ind}_{\tau}L_u<0$. By Proposition \ref{prop: compactness of canonical neighborhood} again, it suffices to show that for every $(u,\varphi)\in\mathcal{L}^{k,\alpha}(u_j,\varphi_j;\Lambda_j,\kappa_j)$, the set $\mathcal{L}^{k,\alpha}(u,\varphi;\Lambda_j,\kappa(u,\varphi;\Lambda_j))$ contains an open neighborhood of $(u,\varphi)$ in $\mathcal{L}^{k,\alpha}(u_j,\varphi_j;\Lambda_j,\kappa_j)$. This can be easily shown to be true using definition of canonical neighborhood.

    Therefore, by relabeling the union of previous covering of $\mathcal{L}_-^{k,\alpha}(u_j,\varphi_j;\Lambda_j,\kappa_j)$ for all $j$, we see 
    $$\lbrace(u,\varphi)\in\mathcal{H}^{k,\alpha}(\Omega,M):\widehat{\ind}_{\tau}L_u<0\rbrace=\bigcup_{j=1}^{\infty}\mathcal{L}_-^{k,\alpha}(u_j,\varphi_j;\Lambda_j,\kappa_j),$$
    can be covered by 
    $$\lbrace(u,\varphi)\in\mathcal{H}^{k,\alpha}(\Omega,M):\widehat{\ind}_{\tau}L_u<0\rbrace\subset\bigcup_{j'=1}^{\infty}\mathcal{L}^{k,\alpha}(u_{j'}',\varphi_{j'}';\Lambda_{j'}',\kappa_{j'}'),\text{ with }\widehat{\ind}_{\tau}L_{u'_{j'}}<0.$$
    By Theorem \ref{thm: local sard smale theorem} and the relationship of sets,
    \begin{align*}
        &\lbrace\varphi\in C^{k,\alpha}(\p\Omega,M):\widehat{\ind}_{\tau}L_u=0\text{ for all }(u,\varphi)\in\mathcal{H}^{k,\alpha}(\Omega,M)\rbrace\\
        =& C^{k,\alpha}(\p\Omega,M)\setminus\lbrace\varphi\in C^{k,\alpha}(\p\Omega,M):\widehat{\ind}_{\tau}L_u<0\text{ for some }(u,\varphi)\in\mathcal{H}^{k,\alpha}(\Omega,M)\rbrace\\\supset& C^{k,\alpha}(\p\Omega,M)\setminus\Pi\left(\cup_{j'=1}^{\infty}\mathcal{L}^{k,\alpha}(u_{j'}',\varphi_{j'}';\Lambda_{j'}',\kappa_{j'}')\right)=\cap_{j'=1}^{\infty}\mathcal{B}^{k,\alpha}(u_{j'}',\varphi_{j'}';\Lambda_{j'}',\kappa_{j'}'),
    \end{align*}
    We know that $\lbrace\varphi\in C^{k,\alpha}(\p\Omega,M):\widehat{\ind}_{\tau}L_u=0\text{ for all }(u,\varphi)\in\mathcal{H}^{k,\alpha}(\Omega,M)\rbrace$ is a residual set in the Baire category sense. In other words, it is a generic set in $C^{k,\alpha}(\p\Omega,M)$ as desired. 
\end{proof}

\section{Proof of main results}\label{s: proof of main results}
\begin{proof}[Proof of Theorems \ref{thm: generic regularity for sphere valued maps}-- \ref{thm: generic regularity of stationary harmonic maps}]
For Theorem \ref{thm: generic regularity of stable harmonic maps}, we note that the space of all stable stationary harmonic maps satisfies strong compactness assumption \eqref{eq: strong convergence} provided $M$ does not admit non-constant stable harmonic 2-sphere; see \cite{Hsu}. Moreover, the dimension reduction argument in \cite{SchoenUhlenbeckRegularity} together with the assumption no non-constant stable stationary harmonic regular 0-homogeneous map from $\R^k$ to $M$ implies that any $u:\Omega\rightarrow M$ with smooth boundary data is an \textbf{HSI}. Hence, Theorem \ref{thm: genericity of Jacobi operator} applies to get a generic subset $\Phi\subset C^{\infty}(\p\Omega,M)$, such that every stable stationary harmonic $u\in H^1(\Omega,M)$ with $u|_{\p\Omega}\in\Phi$ satisfies $\widehat{\ind}_{\tau}L_u=0$ for $\tau$ satisfying \eqref{eq: range of rate}. By the definition of effective index Definition \ref{defn: effective Morse index}, together with the counting Theorem \ref{thm: count of index}, a tangent map $h_{x_0}$ of $u$, if any, must have effective Morse index 0 which is equivalent to a link of Morse index $n$.

When we specify the target in Theorem \ref{thm: generic regularity for sphere valued maps} to be round sphere $\Sp^k$, Proposition \ref{prop: index of sphere targets} tells us if we are considering maps from $n$-dimensional domain to $\Sp^{n-1}$, $4\leqslant n\leqslant 7$, the only possible tangent map with a link of Morse index $n$ is the radial projection map composed with an orthogonal matrix; when we consider maps from a 7-dimensional domain to $\Sp^k$, $k\geqslant7$, we know that there is no tangent map that has a link of Morse index 7. Theorem \ref{thm: generic regularity for sphere valued maps} follows.

Theorem \ref{thm: generic regularity of stationary harmonic maps} is proved with a similar reasoning as above. For the first part of Theorem \ref{thm: generic regularity in dimension 3}, we note that energy minimizing maps always satisfy \eqref{eq: strong convergence}. Also, in dimension 3, the effective index of $h_{x_0}$ is 0 is equivalent to $h_{x_0}$ having a stable link. Hence, Theorem \ref{thm: generic regularity in dimension 3} is proved. Second part of Theorem \ref{thm: generic regularity in dimension 3} follows similarly.
\end{proof}

In order to prove Corollary \ref{cor: rigidity at infinity}, we need the following lemma, which says any energy minimizing harmonic map restricting to a generic sub-region is uniquely minimizing.

\begin{lem}[{F. J. Almgren and E. H. Lieb \cite[Theorem 4.1(2)]{AlmgrenLieb}}]\label{lem: generic uniqueness}
    Let $3\leqslant n\leqslant 7$ and $u\in H^1(B_2^n,\Sp^{n-1})$ be an energy minimizing harmonic map. For each $0<r<2$, if $\p B_r\cap \sing u=\emptyset$, $u|_{B_r}\in H^1(B_r,\Sp^{n-1})$ is the unique minimizer of the energy functional among maps $u'\in H^1(B_r,\Sp^{n-1})$ with boundary $u'|_{\p B_r}=u|_{\p B_r}$.
\end{lem}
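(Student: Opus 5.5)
Although the lemma is quoted from Almgren--Lieb, here is how I would prove it. The idea is to compare two minimizers with the same trace on $\p B_r$ through the interior regularity that holds for a minimizer restricted to a subdomain. Let $w\in H^1(B_r,\Sp^{n-1})$ be another minimizer with $w|_{\p B_r}=u|_{\p B_r}$. Define $\tilde u$ to be $w$ on $B_r$ and $u$ on $B_2\setminus B_r$. Then $\tilde u\in H^1(B_2,\Sp^{n-1})$, and $E(\tilde u;B_2)=E(u;B_2)$. For any compact perturbation in $B_2$ we can split the energy into the parts on $B_r$ and on $B_2\setminus B_r$, and each part is at least the corresponding part of $u$ by minimality of $u$ on $B_2$. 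Hence $\tilde u$ is also energy minimizing on $B_2$.

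Next I use regularity at $\p B_r$. Since $\sing u$ is closed and misses the compact set $\p B_r$, $u$ is smooth on a neighborhood $A_{r-\epsilon,r+\epsilon}$ of $\p B_r$. By Schoen--Uhlenbeck partial regularity, $\tilde u$ is smooth away from a closed set of codimension at least $3$. I expect points of $\p B_r$ to be regular for $\tilde u$, and argue by the $\epsilon$-regularity theorem. At $x\in\p B_r$, the scaled energy $\rho^{2-n}\int_{B_\rho(x)}\vert\nabla\tilde u\vert^2$ splits into an outside part, which is $O(\rho^2)$, and an inside part coming from $w$. The inside part is the delicate step. I would control it by the boundary regularity theory of Schoen--Uhlenbeck for minimizers with smooth boundary data: since $w$ minimizes on $B_r$ with smooth trace $u|_{\p B_r}$, $w$ is smooth up to $\p B_r$ in a neighborhood. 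So $\tilde u$ is Lipschitz near $\p B_r$ and therefore smooth there by $\epsilon$-regularity.

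With smoothness in hand, on the connected open set $A_{r-\epsilon',r+\epsilon}$ both $\tilde u$ and $u$ are smooth harmonic maps, and they coincide on the outer annulus $A_{r,r+\epsilon}$. By unique continuation for smooth harmonic maps (a weakly coupled elliptic system with smooth coefficients, so Aronszajn applies to $u-\tilde u$), they agree on $A_{r-\epsilon',r}$, i.e. $w=u$ there. It remains to propagate the equality into the interior of $B_r$, where both maps may have isolated singularities. Let $U$ be the connected component of $B_r\setminus(\sing u\cup\sing w)$ that contains the annulus. For $3\le n\le7$ the singular sets of minimizers into $\Sp^{n-1}$ are isolated points: they have codimension at least $3$, and the dimension reduction arguments cited in the paper give isolatedness. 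Removing finitely many points does not disconnect $B_r$ when $n\ge2$. Unique continuation on this connected regular set therefore gives $w=u$ a.e. on $B_r$, which proves uniqueness.

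The main obstacle is the regularity of the glued map across $\p B_r$. It needs boundary regularity of $w$, and that only holds if $w$ attains the smooth trace nicely; the Schoen--Uhlenbeck boundary regularity theorem supplies this. If that route fails, the alternative is Almgren--Lieb's own approach: show that $\tilde u$ and $u$ have matching tangent behavior, using the vanishing of the normal derivative jump implied by minimality of the glued map, and then apply unique continuation.
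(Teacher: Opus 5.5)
Your argument is correct. Its skeleton is the Almgren--Lieb argument that the paper invokes: glue $w$ into $u$, note that the glued map $\tilde u$ is again a minimizer on $B_2$, and continue the identity $\tilde u=u$ from outside $B_r$ across a connected regular set.

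The difference lies in the step you flag as the main obstacle. You make $\tilde u$ smooth across $\p B_r$ via the Schoen--Uhlenbeck boundary regularity theorem for $w$, and then run unique continuation in two stages. The cited argument bypasses this; by the paper's remark it uses only isolatedness of singularities and analyticity of the target. Since $\tilde u$ is itself a minimizer on $B_2$, interior partial regularity already makes $\sing\tilde u$ discrete in all of $B_2$, including any of its points on $\p B_r$. Hence $B_2\setminus(\sing u\cup\sing\tilde u)$ is connected. There $u$ and $\tilde u$ are analytic harmonic maps that coincide on the intersection with $B_2\setminus\overline{B_r}$. A single application of unique continuation then gives $\tilde u=u$ a.e.

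This route avoids the boundary regularity theorem, and it does not even use the hypothesis $\p B_r\cap\sing u=\emptyset$. Your use of Aronszajn's theorem in place of analyticity is a harmless variant that works equally well in either route.

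One small correction: minimality of $\tilde u$ does not come from comparing energies separately on $B_r$ and on $B_2\setminus B_r$, since a competitor need not agree with $u$ on $\p B_r$. It follows simply from $\tilde u|_{\p B_2}=u|_{\p B_2}$ and $E(\tilde u;B_2)=E(u;B_2)$.
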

\cite{AlmgrenLieb} studies only energy minimizing maps between 3-dimensional domain and $\Sp^2$, but the proof only uses the fact that any such energy minimizing map has only isolated singularities and $\Sp^2$ is real analytic. These two properties also hold in the setting of Lemma \ref{lem: generic uniqueness}. Hence, an identical proof of \cite[Theorem 4.1(2)]{AlmgrenLieb} proves Lemma \ref{lem: generic uniqueness}.

\begin{proof}[Proof of Corollary \ref{cor: rigidity at infinity}]
    Denote by $p(x)=x/\vert x\vert$ the radial projection map and by $\theta=\Theta_p(0)$ its density. For a map $u\in H^1_{\loc}(\R^n,\Sp^{n-1})$ that is energy minimizing and $u_{R_j}=u(R_j\cdot)\rightarrow p$ in $H^1_{\loc}(\R^n)$, we claim that $u$ must have a singularity $x_0$ such that $\Theta_u(x_0)\geqslant\theta$. Note that in the range $3\leqslant n\leqslant 7$, such $u$ can only have isolated singularities by \cite{li2026optimalregularitystableharmonic}.
    
    We first consider the case $4\leqslant n\leqslant 7$. Indeed, since $u_{R_j}$ converges strongly to $p$ in $H^1(B_2)$ as $j\rightarrow\infty$, the $\epsilon$-regularity implies that $u_{R_j}$ converges smoothly to $p$ in $C^{\infty}_{\loc}(B_2\setminus\lbrace0\rbrace)$. For $j$ large, we would have $\Vert u_{R_j}|_{\p B_1}-p|_{\p B_1}\Vert_{C^{2,\alpha}(\Sp^{n-1})}$ is less than any prescribed threshold. Fix such a large $j$. In particular, given that $p|_{\p B_1}:\Sp^{n-1}\rightarrow\Sp^{n-1}$ has topological degree 1, $u_{R_j}$ and any small perturbation of it must also have topological degree 1. As such, none of them admits a smooth extension to $B_1$. By Theorem \ref{thm: generic regularity for sphere valued maps}, we can find a sequence of $\varphi_k\in C^{\infty}(\p B_1,M)$ which converges smoothly to $u_{R_j}|_{\p B_1}$, and energy minimizing map $u'_k$ with $u'_k|_{\p B_1}=\varphi_k$ has only singularities of density $\theta$.
    
    The energy of $u_k'$ is uniformly bounded by $\sup_kC(n)\int_{\Sp^{n-1}}\vert\nabla\varphi_k\vert^2<\infty$. After passing to a subsequence, we must have $u'_k$ converges to $u_{R_j}$ as $k\rightarrow\infty$ as $u_{R_j}$ is the only minimizer of the energy functional with boundary map $u_{R_j}|_{\p B_1}$ by Lemma \ref{lem: generic uniqueness}. Pick $x_k'\in \sing u'_k\ne\emptyset$. By boundary regularity in \cite{LinGradientEstimate}, we must have $x_k'\rightarrow x'\in B_1$, and the upper semi-continuity of density yields
    $$\Theta_{u_{R_j}}(x')\geqslant\limsup_{k\rightarrow\infty}\Theta_{u'_k}(x_k')=\theta.$$
    
    Scaling back $x_0=R_jx'$ satisfies the claim. For $n=3$, H. Brezis, J. Coron and E. H. Lieb \cite{BrezisCoronLieb} show that the only tangent maps at a singularity of energy minimizing maps from $\R^3$ to $\Sp^2$ are the radial projection map composed with orthogonal matrices. The same argument as above shows $\sing u\ne\emptyset$. Hence, the existence of such $x_0$ is immediate.
    
    Hence, the monotonicity formula reads
    $$\theta\leqslant\Theta_{u}(x_0)\leqslant\Theta_u(x_0,R)\leqslant\lim_{j\rightarrow\infty}\Theta_u(0,R_j)=\theta.$$
    We must have $\Theta_u(x_0,R)=\theta$ for all $R>0$. The equality case of monotonicity formula gives that $u$ is 0-homogeneous with respect to $x_0$, 
    $$u(x)=h\left(\frac{x-x_0}{\vert x-x_0\vert}\right).$$
    In the above equation, if we take $j\rightarrow\infty$, $u_{R_j}\rightarrow h$ smoothly away from 0. Hence, $h=p$, and the Corollary is proved.
\end{proof}
\appendix

\section{Asymptotic rate of Jacobi fields}\label{a: asym rate}
In this section, we establish the a priori estimate of the growth rate of a Jacobi field when the underlying harmonic map is close to a tangent map.
\begin{lem}\label{lem: three circle lemma for Jacobi fields}
    For any $\sigma>0$, there exists $K(\sigma)>1$, with the following property. For any $\gamma\in(-1,0)$, given a regular 0-homogeneous map $h\in\mathcal{T}_{n}$ with $\dist(\gamma,\Gamma(h)\cup\lbrace-(n-2)/2\rbrace)\geqslant\sigma$, if $X\in L^2\cap C^2_{\text{loc}}(B_1\setminus B_{K^{-3}},T_hM)$ solves $L_hX=0$, then the growth rate
    $$J^{\gamma}_K(X;r)=\int_{A_{K^{-1}r,r}}\vert X(x)\vert^2\vert x\vert^{-n-2\gamma}\dif x,$$
    satisfies the inequality
    $$J^{\gamma}_{K}(X;K^{-2})-2J^{\gamma}_{K}(X;K^{-1})+J^{\gamma}_{K}(X;1)\geqslant0,$$
    where the equality holds if and only if $X=0$.
\end{lem}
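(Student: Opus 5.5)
Separation of variables reduces the three-circle inequality to a statement about single modes. On the annulus $B_1\setminus B_{K^{-3}}$ we expand $X(r\omega)=\sum_j (\eta_j^+(r)+\eta_j^-(r))X_j(\omega)$ using the eigensections $X_j$ of $L_u^{\Sp^{n-1}}$ as in \eqref{eq: expansion of Jacobi field}. No boundary condition is imposed on the annulus, so both families of solutions appear, and when $\lambda_j<-(n-2)^2/4$ the two solutions are complex conjugate and oscillatory. Because the $X_j$ are $L^2(\Sp^{n-1})$-orthonormal, in polar coordinates
\[
J^{\gamma}_K(X;r)=\sum_j\int_{K^{-1}r}^{r}|\eta_j^+(s)+\eta_j^-(s)|^2 s^{-1-2\gamma}\dif s .
\]
It is therefore enough to show that, for each $j$, the quantity $a_j(t):=\int_{K^{-1}e^{t}}^{e^{t}}|\eta_j(s)|^2s^{-1-2\gamma}\dif s$ satisfies $a_j(-2\log K)-2a_j(-\log K)+a_j(0)\geq0$, with equality only if $\eta_j\equiv 0$. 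The constant $K$ must not depend on $j$ or on $h$.

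Pass to $t=\log s$ and set $f(t)=e^{-\gamma t}\eta_j(e^t)$. The weight becomes $|f(t)|^2\dif t$, so $a_j(t)=\int_{t-\log K}^{t}|f|^2$. The function $f$ is a combination of exponentials $e^{(\gamma_j^\pm-\gamma)t}$. In the resonant case $\lambda_j=-(n-2)^2/4$ it is instead $(c_1+c_2t)e^{(-(n-2)/2-\gamma)t}$, and in the oscillatory case it is $e^{(-(n-2)/2-\gamma)t}$ times $\sin$ and $\cos$ of $\mu t$. The hypothesis $\dist(\gamma,\Gamma(h)\cup\{-(n-2)/2\})\geq\sigma$ says every real part $\operatorname{Re}\gamma_j^\pm-\gamma$, and also $-(n-2)/2-\gamma$, has absolute value at least $\sigma$.

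The second difference is then
\[
\int_{-\log K}^{0}\bigl(|f(t-\log K)|^2-2|f(t)|^2+|f(t+\log K)|^2\bigr)\dif t .
\]
Write $g(t)=|f(t)|^2$. In the real-exponent case $g=Ae^{2at}+Be^{2bt}+2Ce^{(a+b)t}$ with $a=\gamma_j^+-\gamma$ and $b=\gamma_j^--\gamma$. Here $g(t-L)-2g(t)+g(t+L)$ equals
\[
Ae^{2at}(2\cosh 2aL-2)+Be^{2bt}(2\cosh 2bL-2)+2Ce^{(a+b)t}(2\cosh(a+b)L-2),
\]
with $L=\log K$. The cross term is the main obstacle. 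The saving point is that $a+b=-(n-2)-2\gamma$, so $|a+b|=2|{-(n-2)/2}-\gamma|\ge2\sigma$; this is exactly why $-(n-2)/2$ is excluded.

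To control it, first write $\cosh x-1=2\sinh^2(x/2)$. Using $|C|\le\sqrt{AB}$ and AM–GM, the cross term is bounded by $\sqrt{AB}\,e^{(a+b)t}\cdot 4\sinh^2((a+b)L/2)$. Since $\sinh^2$ is convex, $\sinh^2((a+b)L/2)\le\tfrac12(\sinh^2(aL)+\sinh^2(bL))$, and also $\sinh^2(aL)\sinh^2(bL)\ge\sinh^4((a+b)L/2)$ up to a factor. This shows the sum is nonnegative, but only with no strict margin.

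To get strictness uniformly, integrate over $t\in[-L,0]$ rather than working pointwise. The cross term $\int e^{(a+b)t}$ is dominated by $\bigl(\int e^{2at}\int e^{2bt}\bigr)^{1/2}$ times $(1-\theta)$, where $\theta>0$ comes from the non-proportionality of $e^{at}$ and $e^{bt}$ on an interval of length $L$; this holds because $|a-b|\ge|a+b|\ge 2\sigma$ once signs are accounted for. We then choose $K$ so large that $e^{\sigma L}$ beats the constant losses.

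The oscillatory case works the same way. Now $g=e^{2ct}(\alpha+\beta\cos(2\mu t+\phi))$ with $|c|\ge\sigma$ and $|\beta|\le\alpha$, and the second difference of the oscillating piece is at most $|\beta|e^{2ct}|e^{2(c\pm i\mu)L}-2+e^{-2(c\pm i\mu)L}|$. Averaged over a window of length $L$, this is strictly less than $\alpha e^{2ct}(2\cosh 2cL-2)$ once $e^{2|c|L}$ is large; I expect $K$ large in terms of $\sigma$ alone to suffice. The resonant case is a limit of the real case.

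Summing over $j$ then gives the inequality, and equality forces every $\eta_j=0$, so $X=0$.
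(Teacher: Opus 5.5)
\paragraph{Verdict.} Your strategy matches the paper's, which follows Li--Wang and Carlotto--Li--Wang: expand in the eigensections of $L^{\Sp^{n-1}}_u$, reduce to a $2\times2$ quadratic form per mode, and make $K$ uniform in $j$ and $h$. Your treatment of modes whose shifted exponents $a=\gamma_j^+-\gamma$, $b=\gamma_j^--\gamma$ have opposite signs is essentially fine; there even the pointwise form is positive, with margin $\approx e^{-2\min(|a|,|b|)L}$.

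\paragraph{The gap: same-sign modes.} For $n\geqslant4$ these are exactly the modes with $\operatorname{Re}\gamma_j^+<\gamma$, which includes all resonant and all oscillatory modes. Your argument fails on them at four points.
\begin{enumerate}
\item \emph{The pointwise claim is false.} AM--GM would need $\sinh(|a|L)\sinh(|b|L)\geqslant\sinh^2(|a+b|L/2)$. Strict log-concavity of $\sinh$ on $(0,\infty)$ gives the reverse strict inequality when $a\neq b$. So $g(t-L)-2g(t)+g(t+L)$ is an indefinite form in $(c_1e^{at},c_2e^{bt})$, and it is also indefinite at $a=b$.
\item \emph{The integrated margin fails.} The inequality $|a-b|\geqslant|a+b|$ holds only when $ab<0$. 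In the same-sign case $|a-b|=\gamma_j^+-\gamma_j^-=2\sqrt{(n-2)^2/4+\lambda_j}$, which the hypothesis does not bound below. So your $\theta\to0$ and cannot beat fixed losses for any fixed $K$.
\item \emph{The resonant limit has no content.} This case cannot be avoided: for $n\geqslant4$ the translation modes ($\gamma_j^+=-1<\gamma$) are always same-sign. For $n=4$ they are exactly resonant, since $\lambda_j=3-n=-1=-(n-2)^2/4$, giving the mode $(c_1+c_2t)e^{(-1-\gamma)t}$. Saying ``the resonant case is a limit of the real case'' does nothing when your real-case constants degenerate as $a-b\to0$.
\item \emph{The oscillatory case is left open.} For a real mode one always has $|\beta|=\alpha$. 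The averaging gain is only $1-|c|/\sqrt{c^2+\mu^2}\approx\mu^2/(2c^2)$. That this gain beats the losses uniformly as $\mu\to0$ is exactly what ``I expect $K(\sigma)$ suffices'' leaves unproved.
\end{enumerate}

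\paragraph{How to close it.} You need an exact computation of each per-mode form.
\begin{itemize}
\item \emph{Setup.} In $t=\log r$, with $L=\log K$, the three annuli are $[-3L,-2L]$, $[-2L,-L]$, $[-L,0]$. Your window $t\in[-L,0]$ with shifts $\pm L$ leaves $B_1\setminus B_{K^{-3}}$; this is harmless for exponential sums, but the middle window should be $[-2L,-L]$. Put $w=\mathbf{1}_{[-3L,-2L]}-2\cdot\mathbf{1}_{[-2L,-L]}+\mathbf{1}_{[-L,0]}$ and $\Phi_L(\kappa)=\int w\,e^{\kappa t}\dif t=(1-e^{-\kappa L})^3/\kappa$, which is positive for real $\kappa\neq0$.
\item \emph{Real modes.} A real mode contributes $c_1^2\Phi_L(2a)+2c_1c_2\Phi_L(a+b)+c_2^2\Phi_L(2b)$. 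In the same-sign case positivity is exactly $\Phi_L(a+b)^2<\Phi_L(2a)\Phi_L(2b)$. In the resonant case, in the basis $e^{at},te^{at}$, it is $\Phi_L\Phi_L''>(\Phi_L')^2$ at $2a$.
\item \emph{Log-convexity.} Both conditions follow from $(\log\Phi_L)''(\kappa)=\kappa^{-2}-3L^2e^{-|\kappa|L}(1-e^{-|\kappa|L})^{-2}>0$ on $|\kappa|\geqslant2\sigma$. This holds once $\sigma L$ exceeds an absolute constant.
\item \emph{Complex modes $a,b=c\pm i\mu$.} The condition is $|\Phi_L(2c+2i\mu)|<\Phi_L(2c)$. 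Since $|1-e^{-(2c+2i\mu)L}|^2=(1-e^{-2cL})^2+4e^{-2cL}\sin^2(\mu L)$, it reads $\bigl(1+\sin^2(\mu L)/\sinh^2(cL)\bigr)^3<1+\mu^2/c^2$. The loss is $O(\min(1,\mu^2L^2)/\sinh^2(cL))$ and the gain is $\mu^2/c^2$. So it holds for every $\mu>0$ once $|c|L\geqslant\sigma L$ is large.
\end{itemize}
With these facts the mode-by-mode argument closes, with $K$ depending only on $\sigma$.
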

The proof of Lemma \ref{lem: three circle lemma for Jacobi fields} works exactly as the proof of \cite[Lemma A.1]{LiWangGenericRegularity} or \cite[Lemma B.1]{CLWNon-persistence}, taking into consideration the expansion of Jacobi field \eqref{eq: expansion of Jacobi field},\eqref{eq: coefficients in the expansion of Jacobi fields}. The estimate of this form can be traced back to \cite{SimonAsymptotics}.

As a corollary, we obtain the following statement for Jacobi operator of a general \textbf{HSI}, which is the perturbed version of Lemma \ref{lem: three circle lemma for Jacobi fields}. Note that a vector field $X$ satisfies the equation in Lemma \ref{lem: perturbed three circle lemma for Jacobi fields} provided $X$ is a Jacobi field of $u$ and $u$ is close to the 0-homogeneous map $h$ as in the conclusion of Corollary \ref{Cor: uniform graph on tangent maps}.

\begin{lem}\label{lem: perturbed three circle lemma for Jacobi fields}
    Given $\sigma,\Lambda>0$, there exists $\epsilon(\sigma,\Lambda)>0$ such that the following holds.

    For $\gamma\in[-\Lambda,\Lambda],h\in\mathcal{T}_n(\Lambda)$ with
    $$\dist(\gamma,\Gamma(h)\cup\lbrace-(n-2)/2\rbrace)\geqslant\sigma.$$
    Assume $0\ne X\in H^1(A_{K^{-3},1},\R^N)$ is a vector field satisfying $\Vert X-\pi_h X\Vert_{L^2(A_{K^{-3},1})}\leqslant\epsilon\Vert X\Vert_{L^2(A_{K^{-3},1})}$, where $\pi_hX(x)$ is the projection of $X(x)$ to the tangent space $T_{h(x)}M$. Assume also that $X$ is a weak solution to the following equation
    $$\Delta X+DA_h(X,\nabla h,\nabla h)+2A_h(\nabla X,\nabla h)=\vert x\vert ^{-1}(R_1(X)+R_2(\nabla X)),$$
    where 
    $$\Vert R_1(X)\Vert_{L^2(A_{K^{-3},1})}\leqslant\epsilon\Vert X\Vert_{L^{2}(A_{K^{-3},1})},$$
    and 
    $$\Vert R_2(\nabla X)\Vert_{L^2(A_{K^{-3},1})}\leqslant\epsilon\Vert \nabla X\Vert_{L^{2}(A_{K^{-3},1})}.$$
    Then
    $$J^{\gamma}_K(X;K^{-2})-2(1+\epsilon)J^{\gamma}_K(X;K^{-1})+J^{\gamma}_K(X;1)\geqslant0.$$
\end{lem}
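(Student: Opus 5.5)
We argue by contradiction and compactness, reducing to Lemma \ref{lem: three circle lemma for Jacobi fields}. Take $K=K(\sigma/2)$ from Lemma \ref{lem: three circle lemma for Jacobi fields}, applied with margin $\sigma/2$ so that it survives small perturbations of $h$ and $\gamma$. Suppose the conclusion fails for $\epsilon_j=1/j$. Then we have $\gamma_j\in[-\Lambda,\Lambda]$, $h_j\in\mathcal{T}_n(\Lambda)$ with $\dist(\gamma_j,\Gamma(h_j)\cup\{-(n-2)/2\})\geqslant\sigma$, and fields $X_j\neq0$ satisfying the hypotheses with $\epsilon_j$, such that
$$J^{\gamma_j}_K(X_j;K^{-2})-2(1+\epsilon_j)J^{\gamma_j}_K(X_j;K^{-1})+J^{\gamma_j}_K(X_j;1)<0.$$
The inequality is homogeneous in $X$, so we normalize $\Vert X_j\Vert_{L^2(A_{K^{-3},1})}=1$.

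First step: compactness. By compactness of $\mathcal{T}_n(\Lambda)$, after passing to a subsequence $h_j\to h$ smoothly on compact subsets of $\R^n\setminus\{0\}$, and $\gamma_j\to\gamma$. Since the asymptotic spectrum depends continuously on the link, $\dist(\gamma,\Gamma(h)\cup\{-(n-2)/2\})\geqslant\sigma$, which is at least $\sigma/2$. Away from the origin the coefficients are bounded, so an interior energy (Caccioppoli) estimate gives $\Vert\nabla X_j\Vert_{L^2}\leqslant C\Vert X_j\Vert_{L^2}$ on a slightly smaller annulus. To apply it we absorb $\epsilon_j|x|^{-1}\Vert\nabla X_j\Vert$ into the left-hand side. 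Hence $X_j\to X$ weakly in $H^1_{\loc}$ and strongly in $L^2_{\loc}$ of the open annulus.

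Second step: the main obstacle. We must show the convergence is strong in $L^2$ up to the boundary circles $|x|=K^{-3}$ and $|x|=1$, so that the $J$ functionals pass to the limit and $\Vert X\Vert_{L^2}=1$. The $H^1$ bound is only interior, so mass could concentrate at the edges. The cleanest fix is to run the whole argument on a slightly larger annulus, or equivalently to note that all three $J$'s only involve $A_{K^{-3},1}$ and to shrink: apply the three-circle lemma with $K$ replaced by a nearby $K'$. Alternatively, if the hypothesis $X\in H^1(A_{K^{-3},1})$ is accompanied by the uniform bound $\Vert\nabla X_j\Vert_{L^2}\lesssim\Vert X_j\Vert$ coming from the equation, Rellich's theorem on the whole annulus gives strong $L^2$ convergence directly. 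I would first try to obtain a global $H^1$ bound by testing the equation against $X_j$ times a cutoff and using the annular structure; failing that, I would use the enlarged-annulus reduction.

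Third step: the limit. The limit satisfies $X=\pi_hX$, so it is a section of $T_hM$. Passing to the limit in the weak form, where the error terms vanish because $|x|^{-1}$ is bounded on the annulus, gives $L_hX=0$. Passing to the limit in the failed inequality yields
$$J^{\gamma}_K(X;K^{-2})-2J^{\gamma}_K(X;K^{-1})+J^{\gamma}_K(X;1)\leqslant0$$
with $X\neq0$. This contradicts the strict inequality in Lemma \ref{lem: three circle lemma for Jacobi fields}, applied with margin $\sigma/2$. Finally, $\epsilon$ depends only on $(\sigma,\Lambda)$, since every quantity in the argument ranged over compact parameter sets.
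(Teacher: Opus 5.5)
Your overall strategy is the one the paper uses: argue by contradiction, use compactness of $\mathcal{T}_n(\Lambda)$, pass to a Jacobi field of the limit $h$, and contradict the strict inequality in Lemma~\ref{lem: three circle lemma for Jacobi fields}. You are also more careful than the paper in letting $\gamma_j\to\gamma$ and checking that the spectral gap survives the limit.

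However, your second step is a genuine gap. You correctly notice that interior estimates give only $L^2_{\loc}$ convergence on the open annulus, but none of the remedies you list works.
\begin{itemize}
\item Enlarging the annulus changes the hypotheses, since $X$ is only given on $A_{K^{-3},1}$.
\item Replacing $K$ by a nearby $K'$ does not connect to the failed inequality, which is stated at scale $K$ on annuli tiling the whole domain.
\item A global bound $\Vert\nabla X_j\Vert_{L^2(A_{K^{-3},1})}\lesssim\Vert X_j\Vert_{L^2(A_{K^{-3},1})}$ cannot follow from the hypotheses. The high-frequency Jacobi fields $r^{\gamma_k^+}X_k(\omega)$ of $h$ satisfy them with $R_1=R_2=0$, yet their gradient-to-$L^2$ ratio tends to infinity and their mass concentrates at $\vert x\vert=1$. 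Testing with cutoffs only ever gives interior bounds.
\end{itemize}
So, as written, neither $\Vert X\Vert_{L^2}=1$, nor $X\neq0$, nor the limiting inequality is justified.

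The missing observation is that you never need convergence up to the boundary circles: the contradiction hypothesis itself stops mass from escaping there. The weights $\vert x\vert^{-n-2\gamma_j}$ are uniformly comparable to $1$ on $A_{K^{-3},1}$. Hence
$$J^{\gamma_j}_K(X_j;K^{-2})+J^{\gamma_j}_K(X_j;1)<2(1+\epsilon_j)J^{\gamma_j}_K(X_j;K^{-1})$$
forces a fixed fraction of the $L^2$ mass into $A_{K^{-2},K^{-1}}$, whose closure lies inside the open annulus.

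This is why the paper normalizes $J^{\gamma}_K(X_j;K^{-1})=1$ rather than the full $L^2$ norm. With that normalization:
\begin{itemize}
\item the total mass is bounded by the failed inequality;
\item the middle term converges to $1$ under $L^2_{\loc}$ convergence, so $X\neq0$;
\item the two boundary-touching terms only need lower semicontinuity (Fatou, or exhaustion by compact subannuli). This goes in the right direction because they sit on the smaller side of the inequality.
\end{itemize}
This yields $J^{\gamma}_K(X;K^{-2})-2J^{\gamma}_K(X;K^{-1})+J^{\gamma}_K(X;1)\leqslant0$ with $X\neq0$, contradicting Lemma~\ref{lem: three circle lemma for Jacobi fields}.

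Separately, the Caccioppoli absorption in your first step requires reading the $R_2$ hypothesis pointwise, $\vert R_2(\nabla X)\vert\leqslant\epsilon\vert\nabla X\vert$, which is what holds in the application. A bound in terms of the global norm $\Vert\nabla X\Vert_{L^2(A_{K^{-3},1})}$ cannot be absorbed into an interior estimate. The paper's proof glosses over the same point.
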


\begin{proof}
    We prove by contradiction. Assume the statement is not true, there would exist $h_j\in\mathcal{T}_n(\Lambda),X_j\in H^1(A_{K^{-3},1},\R^N)$ which violate the claimed conclusion for $\epsilon=1/j$. In particular,
    $$J_K^{\gamma}(X_j;1)+J_K^{\gamma}(X_j;K^{-2})\leqslant2(1+1/j)J^{\gamma}_K(X_j;K^{-1}).$$
    After passing to a subsequence, $h_j\rightarrow h\in\mathcal{T}_n(\Lambda)$ as $j\rightarrow\infty$, locally smoothly away from $0$. The smooth convergence of $h_j|_{\p B_1}$ implies that $\gamma_k^{\pm}(h_j)\rightarrow\gamma_k^{\pm}(h)$ as $j\rightarrow\infty$. 
    On the other hand, renormalizing, we can assume $J^{\gamma}_K(X_j;K^{-1})=1$. By the classical elliptic PDE estimate, we can assume that $X_j$ converges strongly in $L^2_{\loc}(A_{K^{-3},1},\R^N)$ and weakly in $W^{1,2}_{\loc}(A_{K^{-3},1},\R^N)$ to a vector field $X$, which takes values in $T_{h(x)}M$ for a.e. $x$. By our assumption, $X$ is a Jacobi field for $h$. Moreover,
    $$J^{\gamma}_K(X;1)+J^{\gamma}_K(X;K^{-2})\leqslant2J^{\gamma}_K(X;K^{-1}).$$
    In particular, Lemma \ref{lem: three circle lemma for Jacobi fields} implies that $X=0$, which contradicts the fact that $J^{\gamma}_K(X;K^{-1})=1$. This proves the lemma. 
\end{proof}

Now, we are ready to estimate the growth rate of the $L^2$ norm of a Jacobi-type field of a general \textbf{HSI} that is close to a 0-homogeneous map.
\begin{lem}\label{lem: Jacobi operator growth estimate}
    Let $\sigma\in(0,1),\Lambda>0,K=K(\sigma/2)>0$ be given in Lemma \ref{lem: three circle lemma for Jacobi fields}. Then there exists $\epsilon_0=\epsilon_0(\sigma,\Lambda)$ such that the following holds.

    Let $\gamma\in[-\Lambda,\Lambda]$, $u\in W^{1,2}(B_4,M)$ be an \textbf{HSI} and $h\in\mathcal{T}_n(\Lambda)$ with $\sing u=\lbrace0\rbrace$ and 
    $$\sup_{x\in B_3}\sum_{i=0}^2\vert x\vert^i\vert\nabla^i(u-h)(x)\vert\leqslant\epsilon_0.$$
    Assume further that $$\dist(\gamma,\Gamma(h)\cup\lbrace-(n-2)/2\rbrace)\geqslant\sigma.$$
    Let $X\in W^{2,2}_{\loc}(\reg u,T_uM)$ be a section with asymptotic rate bound and $f=L_uX$ satisfies
    $$\mathcal{AR}_0(X)>\gamma; \qquad F=\sup_{l>0}\Vert \vert x\vert^{2-\gamma-n/2}f\Vert_{L^2(A_{K^{-l-1},K^{-l}})}<\infty.$$
    Then for each $l>0$,
    $$\Vert X\Vert_{L^2(A_{K^{-l-1},K^{-l}})}\leqslant C(\sigma,\Lambda)(F+\Vert X\Vert_{L^2(A_{K^{-1},1})})K^{-l(n/2+\gamma)}.$$
\end{lem}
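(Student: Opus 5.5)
The plan is to rescale to dyadic-type annuli $A_{K^{-l-1},K^{-l}}$, apply the perturbed three-circle inequality (Lemma \ref{lem: perturbed three circle lemma for Jacobi fields}) on consecutive triples of annuli, and run the standard discrete ODE argument for the sequence
$$a_l:=J^{\gamma}_K(X;K^{-l})=\int_{A_{K^{-l-1},K^{-l}}}\vert X\vert^2\vert x\vert^{-n-2\gamma}\dif x\approx K^{l(n+2\gamma)}\Vert X\Vert^2_{L^2(A_{K^{-l-1},K^{-l}})}.$$
The desired conclusion is equivalent to $a_l\leqslant C(F^2+a_0)$ for all $l$. The hypothesis $\mathcal{AR}_0(X)>\gamma$ gives $a_l\to0$ as $l\to\infty$; this is the mechanism that excludes the growing mode at the singular point.

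\textbf{Step 1 (homogeneous case, $f=0$).} Rescale $X_l(y)=X(K^{-l}y)$ on $A_{K^{-3},1}$ and $u_l(y)=u(K^{-l}y)$. Since $h$ is $0$-homogeneous, the hypothesis $\sup\vert x\vert^i\vert\nabla^i(u-h)\vert\leqslant\epsilon_0$ is scale invariant. Expanding $L_{u_l}X_l=0$ around $h$, one writes
$$L_hX_l=\vert y\vert^{-1}\bigl(R_1(X_l)+R_2(\nabla X_l)\bigr),$$
with coefficients of size $O(\epsilon_0)$. Because $\vert u-h\vert\leqslant\epsilon_0$, one also has $\Vert X_l-\pi_hX_l\Vert\leqslant C\epsilon_0\Vert X_l\Vert$. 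Interior estimates on a slightly larger annulus would be needed to bound $\nabla X_l$; alternatively the $R_2$ term can be absorbed by the form of the hypothesis in Lemma \ref{lem: perturbed three circle lemma for Jacobi fields}. Since $J^\gamma_K$ scales by $K^{-l(n+2\gamma)}\cdot K^{l(n+2\gamma)}$, it is exactly invariant, so Lemma \ref{lem: perturbed three circle lemma for Jacobi fields} gives, for $\epsilon_0$ small,
$$a_{l+1}-2(1+\epsilon)a_l+a_{l-1}\geqslant0 .$$
For $\epsilon$ small this log-convexity-type inequality implies that if $a_l\geqslant(1+\eta)a_{l-1}$ for some $l$, then $a_m$ grows geometrically for all $m\geqslant l$, contradicting $a_m\to0$. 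Hence the sequence is essentially nonincreasing, and $a_l\leqslant Ca_0$, up to comparing $a_0$ with $\Vert X\Vert^2_{L^2(A_{K^{-1},1})}$.

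\textbf{Step 2 (inhomogeneous case).} This is the main obstacle, since the three-circle lemma is stated only for the homogeneous equation. I would decompose on each triple of annuli: solve $L_uY_l=f$ on $A_{K^{-l-2},K^{-l+1}}$ with zero Dirichlet data. Invertibility here comes from scale invariance together with $\gamma$ avoiding the indicial set; alternatively, simply take the $L^2$-minimal solution on the rescaled fixed annulus, which has uniformly bounded inverse modulo the finite kernel. This gives
$$\Vert Y_l\Vert\lesssim K^{-l(n/2+\gamma)}F$$
on the rescaled annuli. Then apply Step 1's inequality to $X-Y_l$ on that triple. This yields
$$\sqrt{a_{l+1}}-2(1+\epsilon)\sqrt{a_l}+\sqrt{a_{l-1}}\gtrsim -CF,$$
or the analogous estimate with an additive error. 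I would then redo the dichotomy: as long as $a_l\gg F^2$, the error is negligible and growth would propagate. Since growth is excluded by $a_l\to0$, this gives $a_l\leqslant C(F^2+a_0)$.

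A cleaner alternative for Step 2, if the additive bookkeeping becomes messy, is to argue by contradiction with compactness in the style of Lemma \ref{lem: perturbed three circle lemma for Jacobi fields}. Normalize by the maximal $a_l$ (which exists since $a_l\to0$) and blow up at that scale. If the bound failed, the limit would be a nonzero Jacobi field of $h$ on $\R^n\setminus\{0\}$ with $\sup_s J^\gamma$ bounded both as $s\to0$ and as $s\to\infty$. By the expansion \eqref{eq: expansion of Jacobi field}, the gap $\dist(\gamma,\Gamma(h)\cup\{-(n-2)/2\})\geqslant\sigma$ forbids such a field, which is a contradiction. Uniformity in $h\in\mathcal{T}_n(\Lambda)$ follows from the compactness of $\mathcal{T}_n(\Lambda)$ and the continuity of $\gamma_j^\pm$ noted earlier.
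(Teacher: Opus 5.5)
Your Step 1 is the paper's mechanism. The perturbed three-circle inequality for $a_l=J^{\gamma}_K(X;K^{-l})$, together with $a_l\to0$ (which follows from $\mathcal{AR}_0(X)>\gamma$), rules out any increase of $a_l$.

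The first version of your Step 2 does not work as written.
\begin{itemize}
\item A zero-Dirichlet solution of $L_uY_l=f$ on a triple annulus need not exist. The condition $\dist(\gamma,\Gamma(h)\cup\{-(n-2)/2\})\geqslant\sigma$ has nothing to do with it: on a bounded annulus with Dirichlet data at both ends the weight plays no role. Solvability for every $f$ holds iff $0$ is not a Dirichlet eigenvalue there.
\item For a mode with $\lambda_j<-(n-2)^2/4$, the radial solution $r^{-(n-2)/2}\sin(\beta_j\log r+\phi)$ vanishes at both ends of $A_{K^{-3},1}$ as soon as $3\beta_j\log K\in\pi\mathbb{Z}$. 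Nothing in the hypotheses excludes this, since $K=K(\sigma/2)$ is fixed while $\beta_j$ depends on $h$.
\item Since $L_u$ is self-adjoint, its range is then the orthogonal complement of its kernel. So there is no ``$L^2$-minimal solution modulo the kernel'' for general $f$. Even when the problem is invertible, you get no inverse bound uniform in $h$ and $l$. The indicial condition on $\gamma$ gives invertibility on the whole cone in weighted spaces, not on bounded annuli.
\item Convexity of $a_l$ also does not give your inequality for $\sqrt{a_l}$. What survives the triangle inequality is only the thresholded statement: $a_{l+1}+a_{l-1}\geqslant 2(1+\epsilon/2)a_l$ whenever $a_l\geqslant C_0^2F^2$.
\end{itemize}

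In fact no particular solution is needed, and this is the observation you missed. Lemma \ref{lem: perturbed three circle lemma for Jacobi fields} only asks for a norm bound on $R_1$, not linearity, so the paper puts $\vert x\vert$ times the rescaled $f$ into $R_1$ directly. It then shows that $J_l=\max\{a_l,C_0^2F^2\}$ is non-increasing. Suppose $J_{l_0}<J_{l_0+1}$. Then $J_{l_0+1}=a_{l_0+1}>C_0^2F^2$, so the rescaled $f$ on $A_{K^{-3},1}$ is at most $CC_0^{-1}\Vert X_{l_0}\Vert_{L^2}$, and the perturbed lemma applies to $X_{l_0}$. This gives $a_{l_0+2}>a_{l_0+1}$. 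By induction $J_l$ strictly increases, contradicting $J_l\to C_0^2F^2$.

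Your compactness alternative is a genuinely different and essentially sound route. It bypasses the three-circle lemma and uses only a Liouville statement on the cone, which does follow from \eqref{eq: expansion of Jacobi field}: a nonzero mode can have $\sup_{m\in\mathbb{Z}}J^\gamma_K(\hat X;K^{-m})<\infty$ only if $\gamma\in\Gamma(h)\cup\{-(n-2)/2\}$. To complete it you need two additions.
\begin{itemize}
\item Take $\epsilon_0=1/j$ along the contradicting sequence, so that the rescaled maps actually converge to $h=\lim h_j$.
\item Treat separately the case where the maximizing index $l_j$ stays bounded. Then the limit lives on a punctured ball, not on $\R^n\setminus\{0\}$. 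It vanishes on the outer annulus because $a_0/a_{l_j}\to0$, hence vanishes identically by uniqueness for the radial ODEs (or unique continuation). This contradicts its unit $J$ at an interior scale.
\end{itemize}
Your route gives only the final inequality. The paper's route gives in addition a monotone quantity, and that argument is reused in Lemma \ref{Lem: hsi with same singularity}.
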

\begin{proof}
    We first show the following property. Set $X_l=X(K^{-l}\cdot)$. Then, there exists a large constant $C_0=C_0(\sigma,\Lambda)$ such that
    $$l\mapsto J_l:=\max\lbrace K^{2\gamma l}J_K^{\gamma}(X_l;1),C_0^2 F^2\rbrace\text{ is non-increasing for all } l\geqslant0.$$
    Note that $K^{2\gamma l}J^{\gamma}_K(X_l;1)=J^{\gamma}_K(X;K^{-l})$. Once this is proved, then $J_l\leqslant J_0$ gives the desired estimate. Suppose it is not true, namely for some $l_0\geqslant0$, there holds $J_{l_0}<J_{l_0+1}$. Since $J_{l_0}\geqslant C^2_0F^2$, we see that $J_{l_0+1}$ must equal $K^{2\gamma l}J^{\gamma}_K(X_l;1)$, which gives (note that $K$ is a constant depending on $\sigma$)
    $$F\leqslant C_0^{-1}K^{\gamma(l_0+1)}J^{\gamma}_K(X_{l_0+1};1)^{1/2}\leqslant C_0^{-1}C(\sigma,\Lambda)\Vert X_{l_0}\Vert_{L^2(A_{K^{-2},K^{-1})})}K^{\gamma l_0}.$$
    From this, we then calculate that $X_{l_0}$ satisfies the equation in Lemma \ref{lem: perturbed three circle lemma for Jacobi fields} provided $C_0$ is chosen large. We then apply Lemma \ref{lem: perturbed three circle lemma for Jacobi fields} to get 
    $$J^{\gamma}_K(X_{l_0};K^{-2})-J^{\gamma}_K(X_{l_0};K^{-1})\geqslant J^{\gamma}_K(X_{l_0};K^{-1})-J^{\gamma}_K(X_{l_0};1).$$
    Also,
    $$J^{\gamma}_K(X_{l_0};K^{-1})=K^{-2\gamma l_0}J_{l_0+1}>K^{-2\gamma l_0}J_{l_0}\geqslant J^{\gamma}_K(X_{l_0};1).$$
    Hence,
    $$J_{l_0+2}\geqslant K^{2\gamma (l_0+2)}J^{\gamma}_K(X_{l_0+2};1)=K^{2\gamma l_0}J^{\gamma}_K(X_{l_0};K^{-2})>K^{2\gamma l_0}J^{\gamma}_K(X_{l_0};K^{-1})=J_{l_0+1}.$$
    From there, we can start the induction, to get $J_{l}$ is strictly increasing for $l\geqslant l_0$. However, by the assumption $\mathcal{AR}_0(X)>\gamma$, we have that $\lim_{l\rightarrow\infty}J_{l}=C_0^2F^2$, this is a contradiction.
\end{proof}

\begin{lem}\label{lem: tame field upper bound}
    Let $\sigma,\Lambda,K,\epsilon_0,u,h$ be the same as Lemma \ref{lem: Jacobi operator growth estimate}. By choosing $\gamma=-\sigma$ and making $\epsilon_0$ a little smaller, we have that for each tame Jacobi field $X$ of $u$, there exists a vector $v$ such that
    $$\vert v\vert+\sup_{l>0}\Vert X-\p_vu\Vert_{L^2_{-\sigma}(A_{K^{-l-1},K^{-l}})}\leqslant C(\Lambda,\sigma)\Vert X\Vert_{L^2(A_{\epsilon_0,2})}.$$
\end{lem}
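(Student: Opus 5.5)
The argument splits the tame Jacobi field $X$ into a translation part and a remainder with decay control. Then Lemma \ref{lem: Jacobi operator growth estimate} is applied to the remainder at rate $\gamma=-\sigma$.

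Since $X$ is tame, $X=\p_{v}u+X_0$ near $0$ for some $v\in\R^n$ and $X_0\in L^{2;\tau}$ with $\tau>\gamma_-(h)$. By Corollary \ref{cor: asy rate of tame field} we may take $\mathcal{AR}_0(X_0)\geqslant 0>-\sigma$.

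Next we check the hypotheses of Lemma \ref{lem: Jacobi operator growth estimate} for $X_0$ with $\gamma=-\sigma$. Because $\p_vu$ is itself a Jacobi field of $u$ (differentiate the harmonic map equation in $x$), we have $L_uX_0=L_uX-L_u\p_vu=0$ on $B_3\setminus\{0\}$, so $F=0$. The spectral gap condition $\dist(-\sigma,\Gamma(h)\cup\{-(n-2)/2\})\geqslant\sigma$ is exactly how $\sigma$ is chosen in Lemma \ref{lem: non-concentration of Jacobi fields}. The lemma then gives
\[
\Vert X_0\Vert_{L^2(A_{K^{-l-1},K^{-l}})}\leqslant C(\sigma,\Lambda)\Vert X_0\Vert_{L^2(A_{K^{-1},1})}K^{-l(n/2-\sigma)},
\]
which is the claimed bound on $\sup_l\Vert X-\p_vu\Vert_{L^2_{-\sigma}(A_{K^{-l-1},K^{-l}})}$. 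It remains to bound $\Vert X_0\Vert_{L^2(A_{K^{-1},1})}\leqslant\Vert X\Vert_{L^2(A_{K^{-1},1})}+C|v|$, so everything reduces to the estimate $|v|\leqslant C\Vert X\Vert_{L^2(A_{\epsilon_0,2})}$.

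This bound on $|v|$ is the main obstacle. I would argue by contradiction and compactness. Suppose $u_j,h_j,X_j=\p_{v_j}u_j+X_{0,j}$ satisfy $\Vert X_j\Vert_{L^2(A_{\epsilon_0,2})}=1$ and $|v_j|\to\infty$, and normalize by $|v_j|$. The rescaled fields $\p_{e_j}u_j+X_{0,j}/|v_j|$ tend to $0$ in $L^2(A_{\epsilon_0,2})$.

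On the other hand, the decay estimate from the previous step, applied to the normalized $X_{0,j}/|v_j|$ (whose $L^2(A_{K^{-1},1})$ norm is bounded by $1/|v_j|+C$), gives uniform $L^2_{-\sigma}$ control on every dyadic annulus. After passing to subsequences, $h_j\to h\in\mathcal{T}_n(\Lambda)$, $u_j\to h$ smoothly on annuli (shrinking $\epsilon_0$ as needed), and $X_{0,j}/|v_j|\to Y$, where $Y$ is a Jacobi field of $h$ with $\mathcal{AR}_0(Y)\geqslant-\sigma$. In the limit $\p_eh+Y=0$ on $A_{\epsilon_0,2}$.

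By unique continuation, $\p_eh=-Y$ on all of $B_2\setminus\{0\}$. In the expansion \eqref{eq: expansion of Jacobi field}, $\p_eh$ is homogeneous of degree $-1$, while $Y$ has asymptotic rate $\geqslant-\sigma>-1$. Hence $\p_eh=0$, which contradicts Lemma \ref{lem: lower bound of derivative of tangent map} since $|e|=1$.

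The delicate points are the following. The uniform constants must survive the limit, and this uses the smooth closeness $\epsilon_0$ together with the compactness of $\mathcal{T}_n(\Lambda)$. One must also check that the $L^2_{-\sigma}$ bound passes to $Y$ and forbids a degree $-1$ component. Once $|v|$ is controlled, combining it with the first step gives the lemma.
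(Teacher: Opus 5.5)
Your proposal is correct in substance. Its first half matches the paper exactly: you write $X=\p_vu+X_0$ with $\mathcal{AR}_0(X_0)\geqslant 0$, note that $L_uX_0=0$ so $F=0$, and apply Lemma \ref{lem: Jacobi operator growth estimate} with $\gamma=-\sigma$.

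\textbf{Where the routes differ: the bound on $|v|$.}
The paper proves it directly, by absorption at one fixed small scale $s$.
\begin{itemize}
\item By Lemma \ref{lem: lower bound of derivative of tangent map}, homogeneity of $h$ and $\epsilon_0$-closeness, $|v|\lesssim s^{1-n/2}\Vert\p_vu\Vert_{L^2(A_{s,2s})}+\epsilon_0|v|$.
\item Split $\p_vu=X-(X-\p_vu)$. By the decay estimate, the remainder contributes only $Cs^{1-\sigma}(\Vert X\Vert_{L^2(A_{K^{-1},1})}+\Lambda|v|)$.
\item Choose $s$ with $C\Lambda s^{1-\sigma}=1/3$, then $\epsilon_0\leqslant\min\{(3C)^{-1},K^{-1},s\}$, so that $A_{s,2s}\subset A_{\epsilon_0,2}$. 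All $|v|$ terms on the right are then absorbed.
\end{itemize}
Your contradiction/compactness argument rests on the same scale separation: translations have rate $-1$, while the remainder has rate $\geqslant-\sigma>-1$. You use this only qualitatively, in the limit. The cost is a non-explicit constant and an appeal to unique continuation, to pass from $A_{\epsilon_0,2}$ to $B_2\setminus\{0\}$. The paper avoids unique continuation by putting the detecting annulus $A_{s,2s}$ inside $A_{\epsilon_0,2}$ from the start. What your route buys is that it never has to track how the constants interact; it only needs the uniform decay estimate and compactness of $\mathcal{T}_n(\Lambda)$.

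\textbf{One step needs repair.}
With $\epsilon_0$ fixed along your contradiction sequence, $u_j$ does not converge to $h$. It converges smoothly on compact subsets of $B_3\setminus\{0\}$ to some harmonic $u_\infty$ that is only $\epsilon_0$-close to $h$. So $Y$ is a Jacobi field of $u_\infty$, the limit identity is $\p_eu_\infty=-Y$, and the homogeneity/expansion argument for $h$ does not apply as written. Either of two fixes works.
\begin{itemize}
\item Take $\epsilon_0=1/j$ and $C=j$ along the contradiction sequence. This is allowed, since the lemma lets you shrink $\epsilon_0$, and the growth estimate's constants stay uniform. Then $u_j\to h$. Moreover $A_{1/j,2}$ exhausts $B_2\setminus\{0\}$, so $\p_eh=-Y$ holds there without unique continuation.
\item Keep $\epsilon_0$ fixed and conclude with $\Vert\p_eu_\infty\Vert_{L^2(A_{s,2s})}\geqslant(c-C\epsilon_0)s^{n/2-1}$ against $\Vert Y\Vert_{L^2(A_{s,2s})}\leqslant Cs^{n/2-\sigma}$ as $s\to0$. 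This is exactly the paper's estimate, run on the limit map.
\end{itemize}
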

\begin{proof}
    By definition of tame Jacobi field and Corollary \ref{cor: asy rate of tame field}, there exists a $v\in\R^n$ such that $\mathcal{AR}_0(X-\p_vu)\geqslant0$. Hence, we can apply Lemma \ref{lem: Jacobi operator growth estimate} to derive
    $$\Vert X-\p _vu\Vert_{L^2_{-\sigma}(A_{K^{-l-1},K^{-l}})}\leqslant C(\Lambda,\sigma)\Vert X-\p_v u\Vert_{L^2(A_{K^{-1},1})}.$$
    On the other hand, using Lemma \ref{lem: lower bound of derivative of tangent map}, and the smallness assumption of $u-h$, we have that for small $s>0$,
    \begin{align*}
        \vert v\vert&\leqslant C(\Lambda)(\Vert\p_vu\Vert_{L^2(A_{s,2s})}s^{1-\frac{n}{2}}+\epsilon_0\vert v\vert)
        \\&\leqslant Cs^{1-\frac{n}{2}}(\Vert\p_v u-X\Vert_{L^2(A_{s,2s})}+\Vert X\Vert_{L^2(A_{s,2s})})+C\epsilon_0\vert v\vert
        \\&\leqslant Cs^{1-\sigma}\Vert X-\p_vu\Vert_{L^2(A_{K^{-1},1})}+Cs^{1-\frac{n}{2}}\Vert X\Vert_{L^2(A_{s,2s})}+C\epsilon_0\vert v\vert
        \\&\leqslant Cs^{1-\sigma}(\Vert X\Vert_{L^2(A_{K^{-1},1})}+\Lambda \vert v\vert)+Cs^{1-\frac{n}{2}}\Vert X\Vert_{L^2(A_{s,2s})}+C\epsilon_0\vert v\vert
    \end{align*}
    Fix $s=s(\sigma,\Lambda)$ small so that $C\Lambda s^{1-\sigma}=1/3$, and $\epsilon_0=\min\lbrace (3C)^{-1},K^{-1},s\rbrace$, we see
    $$\vert v\vert\leqslant C\Vert X\Vert_{L^2(A_{\epsilon_0,1})}.$$
    This completes the proof.
\end{proof}

\section{Quantitative estimate for the distance between harmonic maps}\label{a: quantitaive estimates}
In this section, we first obtain the following statement on the uniqueness of tangent maps.
\begin{prop}\label{Prop: uniqueness of tangent map}
    Given $0<\epsilon<1$ and $\Lambda>0$, there exists $\delta=\delta(\epsilon,\Lambda)$ with the following property.

    Let $h\in\mathcal{T}_n(\Lambda),u\in W^{1,2}(B_4,M)$ be a stationary harmonic map and $r\in(0,1/16)$. Suppose
    \begin{enumerate}
        \item $\Theta_u(0,2)\leqslant\Theta_u(0,r)+\delta$;
        \item There exists $s\in(2r,1/2)$ such that $$\sup_{A_{s,2s}}\sum_{i=0}^2\vert x\vert^i\vert\nabla^i(u-h)(x)\vert\leqslant\delta.$$
    \end{enumerate}
    Then,
    $$\sup_{A_{2r,1}}\sum_{i=0}^2\vert x\vert^i\vert\nabla^i(u-h)(x)\vert\leqslant\epsilon.$$
\end{prop}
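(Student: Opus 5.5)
The plan is a contradiction/compactness argument combined with a Łojasiewicz–Simon type decay estimate on the link, in the spirit of L. Simon's uniqueness of tangent maps. We work in the cylinder coordinates $t=-\log|x|$, $\omega=x/|x|$, writing $u(x)=w(t,\omega)$. The scaled closeness $\sum_i |x|^i|\nabla^i(u-h)|$ on an annulus is then equivalent to $C^2$ closeness of $w(t,\cdot)$ to the link $\varphi=h|_{\Sp^{n-1}}$ on the corresponding $t$-interval.

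\textbf{Step 1 (continuation of smallness by compactness).} Fix a small $\epsilon_1\ll\epsilon$. We first show: for every $\eta>0$ there is $\delta$ such that hypothesis (1) forces $u$ to be $\eta$-close in $L^2$ to \emph{some} element of $\mathcal{T}_n(\Lambda')$ on each dyadic annulus $A_{\rho,2\rho}$ with $2r\leqslant\rho\leqslant1/2$. Here $\Lambda'$ depends on $\Lambda$. The argument is by contradiction. A sequence $u_j$ with pinching $\delta_j\to0$, rescaled to a bad annulus, converges strongly (by \eqref{eq: strong convergence}) to a map whose density ratio is constant on the limiting annulus. It is therefore $0$-homogeneous there.

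The only delicate point is that this homogeneous limit must be regular, with link bounded near $\mathcal{T}_n(\Lambda)$. We get this by propagating from the annulus $A_{s,2s}$ given in hypothesis (2). As long as $u$ stays $C^2$-close to a regular homogeneous map on consecutive annuli, $\epsilon$-regularity upgrades the $L^2$ closeness to $C^2$ closeness. We run the propagation inward from $s$ to $2r$ and outward from $s$ to $1$, one annulus at a time, each time using the previous annulus as the anchor.

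\textbf{Step 2 (no drift: Łojasiewicz–Simon).} While $w(t,\cdot)$ stays in a $C^2$ $\epsilon_1$-neighbourhood of $\varphi$ (equivalently, of the compact family $\mathcal{T}_n(\Lambda)$, where Lemma \ref{lem: discreteness of density} shows only finitely many energy levels occur), the harmonic map equation becomes a perturbation of the gradient flow $w_{tt}-(n-2)w_t=-\nabla\mathcal{E}(w)$ for the link energy $\mathcal{E}$. We apply the Łojasiewicz–Simon inequality \cite[Section 3.14]{SimonBook} on the analytic target, with constants uniform over $\mathcal{T}_n(\Lambda)$ by compactness. Following Simon's argument, this gives
$$\int_{t_1}^{t_2}\Vert w_t\Vert_{L^2(\Sp^{n-1})}\dif t\leqslant C\big(\Theta_u(0,e^{-t_2})-\Theta_u(0,e^{-t_1})\big)^{\alpha'}\leqslant C\delta^{\alpha'}$$
for some $\alpha'\in(0,1)$. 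The second inequality uses hypothesis (1) and monotonicity.

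Consequently $\Vert w(t,\cdot)-w(t_s,\cdot)\Vert_{L^2}\leqslant C\delta^{\alpha'}$ across the whole range, where $t_s=-\log s$ and $w(t_s)$ is $\delta$-close to $\varphi$ by (2). Interior elliptic estimates on unit cylinders then upgrade this to $C^2$ closeness of order $C\delta^{\alpha''}+C\delta<\epsilon_1$. A continuity (maximal interval) argument shows the region of closeness cannot end before $t=-\log(2r)$ or $t=0$. We conclude the estimate on $A_{2r,1}$, adjusting constants since $\epsilon_1<\epsilon$.

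\textbf{Main obstacle.} The hard part is making the Łojasiewicz–Simon estimate uniform in $h\in\mathcal{T}_n(\Lambda)$ and valid for the inhomogeneous, nonautonomous equation satisfied by $w$. The lower-order error terms come from the factor $(n-2)w_t$ and from the embedding. To control them, I would first try to work with the exact cylinder equation and Simon's original $\epsilon$-regularity framework, using Lemma \ref{lem: discreteness of density} to replace the varying reference map by a single critical value.
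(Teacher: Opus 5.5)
Your proposal is correct and follows essentially the paper's route: a maximal-annulus (continuity) argument in the cylinder variable $t=-\log|x|$, with drift ruled out by a \L ojasiewicz--Simon estimate of Simon's type (your displayed inequality should read $\Theta_u(0,e^{-t_1})-\Theta_u(0,e^{-t_2})$ for $t_1<t_2$). The paper argues by contradiction and passes to a limit $h_j\to h$, so a single reference link suffices and your uniformity concern never arises; it also imports the cylinder decay $|Y_j|_2^*\leqslant\delta_j^{\alpha}$ directly from \cite[Theorem 11.1]{EdelenDegeneration} instead of rederiving Simon's argument.
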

\begin{proof}
    We prove by contradiction. Assume the proposition is false, we would get a sequence of $h_j,u_j,1/j,s_j,r_j$ instead of $h,u,\delta,s,r$ in the assumption but for some $\epsilon>0$, there holds
    \begin{align}\label{eq: uniqueness of tangent maps, contradiction assumption}
        \sup_{x\in A_{2r_j,1}}\sum_{i=0}^2\vert x\vert^i\vert\nabla^i(u_j-h_j)(x)\vert\geqslant\epsilon.
    \end{align}
    Passing to a subsequence, assume $h_j\rightarrow h$ in $C^{\infty}_{\text{loc}}(B_4\setminus\lbrace0\rbrace)$. By the scaling invariance of both $h_j$ and $h$, as well as $h_j|_{\p B_1}\rightarrow h|_{\p B_1}$ smoothly, we have that $\sup_{x\ne0}\vert x\vert^i\vert\nabla^i(h_j-h)\vert\rightarrow0.$ Hence, by slightly enlarging $\delta_j$, we have that
    \begin{align}\label{eq: uniqueness of tangent maps, limit to tangent map}
        \sup_{x\in A_{s_j,2s_j}}\sum_{i=0}^2\vert x\vert^i\vert\nabla^i(u_j-h)(x)\vert\leqslant\delta_j\rightarrow0.
    \end{align}
    Let $\bar{s}_j^-\leqslant s_j\leqslant\bar{s}_j^+$ define the maximal annulus such that 
    $$\sup_{x\in A_{\bar s_j^-,\bar s_j^+}}\sum_{i=0}^2\vert x\vert^i\vert\nabla^i(u_j-h)(x)\vert\leqslant\epsilon'.$$
    where $\epsilon'=\epsilon'(h)<\epsilon$ is a positive number to be determined. We wish to show 
    \begin{align}\label{eq: uniqueness of tangent maps, refined assumption}
        \bar{s}_j^-\leqslant\frac{3}{2}r_j\text{ and }\bar{s}_j^+\geqslant\frac{3}{2},
    \end{align}
    which will lead to a contradiction and hence proves the proposition.

    To this end, let us consider $u_j'=u_j(s_j\cdot)$. Then $u_j'$ converges to $h$ by our assumption \eqref{eq: uniqueness of tangent maps, limit to tangent map}. To proceed with the estimates, consider $X_j=u_j'-h$ and $Y_j(t,\omega)=X_j(r\omega)$, where $r=\vert x\vert,\omega=x/r$ are polar coordinate on $\R^n$ and $t=-\log r$, we get 
    $$\vert Y_j\vert_2^*(t):=\sum_{k+l\leq2,k,l\geqslant0}\vert\p_t^k\nabla^l_{S^{n-1}}Y_j(t,\cdot)\vert_{C^0(S^{n-1})}\leqslant\delta_j,\textup{ for all }t\in[-\log2,0].$$
    Set also the rescaled radii $s_j^{\pm}=\bar s_j^{\pm}/s_j$ and $T_j^{\pm}=-\log s^{\pm}_j$. If $\epsilon'$ is small enough, then we may apply the estimate in \cite[Theorem 11.1]{EdelenDegeneration} to yield a constant $\alpha=\alpha(\Lambda)>0$ such that 
    $$\vert Y_j\vert_2^*(t)\leqslant\delta_j^{\alpha},\text{ for all }t\in(T_j^+,0).$$
    By similar consideration as above and the reasoning in \cite{CLWNon-persistence},
    the vector field $\check{Y}_j=Y_j(\ln 2-t,\omega)$ also satisfies
    $$\vert\check{Y}_j\vert_2^*(t)\leqslant \delta_j^{\alpha},\text{ for all }t\in(0,\ln 2-T_j^-).$$
    Since the condition of \cite[Theorem 11.1]{EdelenDegeneration} is fulfilled whenever $\bar s_j^-\geqslant r_j$ and $\bar s_j^+\leqslant2$, by the assumption (1) of Proposition \ref{Prop: uniqueness of tangent map} and the definition of $s_j^{\pm}$. But when $j$ is large, the control of $Y_j$ and $\check{Y}_j$ would contradict the maximality of $\bar s_j^{\pm}$ by the continuity of $u_j$ and $h$ away from 0. Hence the proposition is proved.
\end{proof}
\begin{cor}\label{Cor: uniform graph on tangent maps}
    Let $u_j\in W^{1,2}(B_4,M)$ be a sequence of \textbf{HSI} with $\sing u_j=\lbrace 0\rbrace$ and $r_{u_j}(x)\geqslant\Lambda^{-1}\vert x\vert$ in $B_3$. Moreover, suppose
    $$\Theta_{u_j}(0,2)-\Theta_{u_j}(0)\rightarrow0,\text{ as }j\rightarrow\infty.$$
    Then, after passing to a subsequence, we have that both $u_j$ and its tangent map $h_j$ at $0$ satisfy
    $$\sup_{x\in B_1\setminus\lbrace0\rbrace}\sum_{i=0}^2\vert x\vert^i\vert\nabla^i(u_j-h)(x)\vert\leqslant\epsilon_j,$$
    and 
    $$\sup_{x\in B_1\setminus\lbrace0\rbrace}\sum_{i=0}^2\vert x\vert^i\vert\nabla^i(h_j-h)(x)\vert\leqslant\epsilon_j,$$
    for an $h\in\mathcal{T}_n(\Lambda)$ and a sequence $\epsilon_j\rightarrow0$.
\end{cor}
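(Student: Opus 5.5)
The plan is to reduce Corollary \ref{Cor: uniform graph on tangent maps} to Proposition \ref{Prop: uniqueness of tangent map}, applied at every small scale $r$, with a fixed outer scale $s$ at which closeness to a limiting cone is known from compactness.

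First I would extract the limiting cone $h$. The regularity scale bound $r_{u_j}(x)\geqslant\Lambda^{-1}\vert x\vert$ gives uniform $C^{2}$ (indeed $C^\infty$ by elliptic bootstrap) bounds on $u_j$ on compact subsets of $B_3\setminus\{0\}$. After passing to a subsequence, $u_j\to h$ in $C^2_{\loc}(B_3\setminus\{0\})$, and by \eqref{eq: strong convergence} also strongly in $H^1_{\loc}$. The hypothesis $\Theta_{u_j}(0,2)-\Theta_{u_j}(0)\to0$ passes to the limit: the monotonicity formula gives $\int_{B_2}\vert x\vert^{2-n}\vert\partial_r u_j\vert^2\to0$, so $\partial_r h=0$. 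Thus $h$ is $0$-homogeneous, stationary and regular away from $0$, and it inherits $r_h\geqslant\Lambda^{-1}\vert x\vert$, so $h\in\mathcal{T}_n(\Lambda)$. (If $h$ were constant, $0$ would be a regular point of $u_j$ for large $j$ by $\epsilon$-regularity, contradicting $\sing u_j=\{0\}$.) For the fixed annulus $A_{1/4,1/2}$, the $C^2$ convergence then gives $\sup_{A_{1/4,1/2}}\sum_i\vert x\vert^i\vert\nabla^i(u_j-h)\vert\leqslant\delta_j\to0$.

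Next I would apply Proposition \ref{Prop: uniqueness of tangent map} with $s=1/4$ and arbitrary $r<1/16$. Condition (1) holds for large $j$, since $\Theta_{u_j}(0,2)-\Theta_{u_j}(0,r)\leqslant\Theta_{u_j}(0,2)-\Theta_{u_j}(0)\to0$, and condition (2) was established above. Given $\epsilon>0$, for all large $j$ and all $r\in(0,1/16)$ the proposition yields closeness to $h$ on $A_{2r,1}$. Letting $r\to0$ gives the bound on $B_1\setminus\{0\}$. A diagonal choice of $\epsilon_j\to0$ then produces the first estimate.

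For the second estimate I would blow up $u_j$ at $0$. Since the bound $\sum_i\vert x\vert^i\vert\nabla^i(u_j-h)\vert\leqslant\epsilon_j$ is scale invariant, it persists for $u_j(\lambda\cdot)$ on $B_{1/\lambda}\setminus\{0\}$. Letting $\lambda\to0$, the unique tangent map $h_j$ satisfies the same bound on all of $\R^n\setminus\{0\}$, and in particular on $B_1\setminus\{0\}$.

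The main obstacle is making sure Proposition \ref{Prop: uniqueness of tangent map} applies uniformly in $j$. This requires that the $\delta$ it provides depends only on $(\epsilon,\Lambda)$, and that the \L ojasiewicz constants used in its proof are uniform over the compact family $\mathcal{T}_n(\Lambda)$. I would take that uniformity as given from the statement of the proposition.
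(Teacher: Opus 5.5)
Your argument is correct. It differs from the paper's in where the seed closeness for Proposition~\ref{Prop: uniqueness of tangent map} comes from.

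The paper defines $h$ as a subsequential limit of the tangent maps $h_j$ in the compact class $\mathcal{T}_n(\Lambda)$. This gives $h\in\mathcal{T}_n(\Lambda)$ directly, and the second estimate follows from smooth convergence of the links and $0$-homogeneity. Then, for each fixed $j$, it uses $u_j(s\cdot)\to h_j$ as $s\to0$ to choose a $j$-dependent inner scale $s_j\to0$ with $\sup_{A_{s_j,2s_j}}\sum_i|x|^i|\nabla^i(u_j-h)|\leqslant 1/j$. It applies the proposition with that $s_j$ and any $r<s_j/2$, so closeness is propagated from a vanishing scale outward to scale $1$ and inward to $0$.

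You instead take $h$ to be the unit-scale limit of the $u_j$ themselves. This costs you some routine extra checks:
\begin{itemize}
\item $h$ is $0$-homogeneous, by the monotonicity identity and the pinching hypothesis;
\item $h$ is non-constant, by $\epsilon$-regularity;
\item $h$ lies in $\mathcal{T}_n(\Lambda)$.
\end{itemize}
In exchange, you seed the proposition at the fixed annulus $A_{1/4,1/2}$, where closeness is plain $C^2_{\loc}$ compactness, and you only need to propagate inward. You then obtain the estimate for $h_j$ as a consequence of the first one by blowing up the scale-invariant bound, with the same $\epsilon_j$.

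Both routes apply the proposition with a single fixed limit $h$ for all $j$. So the uniformity you flag is exactly what its statement $\delta=\delta(\epsilon,\Lambda)$ provides, and no uniformity over $\mathcal{T}_n(\Lambda)$ is needed beyond that.
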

\begin{proof}
    First the regularity scale assumption of $u_j$ implies that $h_j\in\mathcal{T}_n(\Lambda)$. By passing to a subsequence, we can assume that $h_j$ converges locally smoothly in $B_4\setminus\lbrace0\rbrace$ to $h\in\mathcal{T}_n(\Lambda)$. By the scaling invariance of $h_j$ and $h$, and the fact that $h_j|_{\p B_1}\rightarrow h|_{\p B_1}$ smoothly, it is direct that (passing to a further subsequence if necessary)
    $$\sup_{x\in B_1\setminus\lbrace0\rbrace}\sum_{i=0}^2\vert x\vert^i\vert\nabla^i(h_j-h)(x)\vert\leqslant1/j\rightarrow0.$$

    On the other hand, for each fixed $j$, by the uniqueness of tangent map of \textbf{HSI}, we have that $u_j(s_k\cdot)\rightarrow h_j$ as $k\rightarrow\infty$ smoothly away from $0$, for any sequence $s_k\rightarrow0$. Hence we can choose a sequence $s_j=s_{k_j}\rightarrow0$ as $j\rightarrow\infty$, such that $$\sup_{x\in A_{s_j,2s_j}}\sum_{i=0}^2\vert x\vert^i\vert\nabla^i(u_j-h)\vert\leqslant 1/j\rightarrow0$$
    As a result, for any $\epsilon>0$, for each large $j$, the conditions of Proposition \ref{Prop: uniqueness of tangent map} are fulfilled for any $r<s_j/2$, which gives
    $$\sup_{x\in B_1\setminus\lbrace0\rbrace}\sum_{i=0}^2\vert x\vert^i\vert\nabla^i(u_j-h)(x)\vert\leqslant\epsilon.$$
    By varying $\epsilon$ and passing to a further subsequence, we get the desired estimate.
\end{proof}

Once we establish the quantitative uniqueness of the tangent maps, we are able to obtain the finer estimates of the distance of two harmonic maps that are very close. 

\begin{lem}\label{Lem: hsi with same singularity}
    Let $\gamma\in(-1,0),\sigma\in(0,1)$ and $\Lambda>0$. There exist positive constants $\delta=\delta(\gamma,\sigma,\Lambda),K=K(\sigma)$ such that the following holds. Suppose $u^1,u^2$ are \textbf{HSI} on $B_3$ with
    \begin{enumerate}
        \item $\sing u^i\cap B_3=\lbrace0\rbrace$ and
        $$\Theta_{u^i}(0,2)-\Theta_{u^i}(0)\leqslant\delta.$$
        Moreover, the tangent map $h^i$ of $u^i$ at $0$ satisfies
        $$\dist(\gamma,\Gamma(h^i)\cup\lbrace-(n-2)/2\rbrace)\geqslant\sigma.$$
        \item The regularity scales of $u^i$ satisfy
        $$r_{u^i}(x)\geqslant\Lambda^{-1}\vert x\vert.$$
        \item $$\sup_{B_1\setminus B_{1/2}}\sum_{k=0}^2\vert x\vert^k\vert\nabla^k(u^1-u^2)(x)\vert\leqslant\delta.$$
    \end{enumerate}
    Then we have a global estimate of $u_1-u_2$:
    $$\sup_{B_1\setminus\lbrace0\rbrace}\sum_{k=0}^2\vert x\vert^k\vert\nabla^k(u^1-u^2)\vert\leqslant1,$$
    and
    $$\Vert u_1-u_2\Vert_{C^{0;\gamma}(B_{1/2})}\leqslant C(\gamma,\sigma,\Lambda)\Vert u_1-u_2\Vert_{L^{2}(A_{K^{-3},1})}.$$
\end{lem}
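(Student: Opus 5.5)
The plan has three steps: make both maps quantitatively close to one common regular 0-homogeneous model, write their difference as a near-solution of the Jacobi equation of that model, and then run the growth estimate of Lemma \ref{lem: Jacobi operator growth estimate} with the nonlinear error absorbed.

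\emph{Step 1 (common model).} Argue by contradiction and compactness, as in Corollary \ref{Cor: uniform graph on tangent maps}. For $\delta$ small, hypotheses (1) and (2) together with Corollary \ref{Cor: uniform graph on tangent maps} give an $h\in\mathcal{T}_n(\Lambda)$ such that both $u^1$ and $u^2$ satisfy
$$\sup_{B_1\setminus\{0\}}\sum_{k\le2}|x|^k|\nabla^k(u^i-h)|\le\epsilon$$
for a prescribed small $\epsilon$. Hypothesis (3) lets us take the same model $h$ for both maps: the tangent maps $h^1,h^2$ are each $\epsilon$-close on $A_{1/2,1}$ to $u^1,u^2$, and $u^1,u^2$ are $\delta$-close to each other there. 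By the triangle inequality we get $|u^1-u^2|\le 2\epsilon$ in the scale-invariant $C^2$ sense on all of $B_1\setminus\{0\}$, which gives the first conclusion once $\epsilon\le1/2$. Since the asymptotic spectrum depends continuously on the link in $\mathcal{T}_n(\Lambda)$, the gap condition $\dist(\gamma,\Gamma(h)\cup\{-(n-2)/2\})\ge\sigma/2$ holds after shrinking $\delta$.

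\emph{Step 2 (equation for the difference).} Set $X=u^2-u^1$. Subtracting the two harmonic map equations, exactly as in the proof of Lemma \ref{lem: induced Jacobi fields for pairs}, gives
$$\Delta X+DA_h(X,\nabla h,\nabla h)+2A_h(\nabla X,\nabla h)=|x|^{-1}\bigl(R_1(X)+R_2(\nabla X)\bigr),$$
where the coefficients of $R_1,R_2$ are bounded by $C\epsilon$ in the weighted sense. This uses $|x|\,|\nabla(u^i-h)|\le\epsilon$ and the quadratic bound $|X|\le C\epsilon$, so that $|A_{u^2}-A_{u^1}|\,|\nabla u|^2$ is at most $C\epsilon|X|/|x|^2$. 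In addition, $X-\pi_hX=O(|X|^2)+O(\epsilon|X|)$ because $u^1,u^2$ both lie on $M$ near $h$. On each dyadic annulus rescaled to $A_{K^{-3},1}$, the hypotheses of Lemma \ref{lem: perturbed three circle lemma for Jacobi fields} therefore hold with $\epsilon$ small.

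\emph{Step 3 (decay and conversion to $C^0$).} Since $u^1$ and $u^2$ have the same tangent map behaviour up to $\epsilon$ and the same singular point, $X\to0$ in the scale-invariant sense, so $\mathcal{AR}_0(X)\ge0>\gamma$. This is the main obstacle: we need strictly more than $\gamma$, and without an exact rate I expect to have to argue as in the proof of Lemma \ref{lem: Jacobi operator growth estimate}. Along dyadic scales the quantity $J^\gamma_K$ is eventually monotone by the three-annulus inequality, and an increasing $J^\gamma_K$ would force growth like $|x|^{\gamma'}$ with $\gamma'<\gamma<0$, contradicting $|X|\le2\epsilon$. With that in hand, the monotonicity argument of Lemma \ref{lem: Jacobi operator growth estimate} (with $F=0$, the error being absorbed into the perturbed three-annulus inequality) gives
$$\|X\|_{L^2(A_{K^{-l-1},K^{-l}})}\le C\|X\|_{L^2(A_{K^{-1},1})}K^{-l(n/2+\gamma)}.$$
Finally, interior elliptic estimates on each rescaled annulus, applied to the linear equation above, convert this $L^2$ bound into $\sup_{A_{K^{-l-1},K^{-l}}}|X|\le C K^{-l\gamma}\|X\|_{L^2(A_{K^{-3},1})}$. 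This is the stated $C^{0;\gamma}$ bound.
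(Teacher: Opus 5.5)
Your proposal is correct and follows the paper's proof essentially step by step. A contradiction/compactness argument produces a common homogeneous model (the paper takes the common limit $u^1_\infty=u^2_\infty$, which is $0$-homogeneous by (1) and common by (3)), and scale-invariant closeness to it gives the first conclusion. The difference $X=u^2-u^1$ is then run through the three-annulus argument of Lemma \ref{lem: Jacobi operator growth estimate} with $F=0$, followed by rescaled elliptic estimates.

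The step you flag as the main obstacle is not one. Since $X$ is bounded, $\int_{A_{r,2r}}|X|^2|x|^{-n-2\gamma'}\dif x\le Cr^{-2\gamma'}\to0$ for every $\gamma'<0$, so $\mathcal{AR}_0(X)\geq0>\gamma$ follows directly; this is exactly how the paper verifies the hypothesis of the growth lemma. You also do not need $X$ to decay at the origin, which in general fails when $h^1\neq h^2$.
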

\begin{proof}
    Let us prove by contradiction. If the lemma is not true, then there would exist a sequence $\gamma_j\in(-1,0)$ in place of $\gamma$ and $u_j^i,1/j$ in place of $u_i,\delta$ in the assumption of the lemma, but at least one of the conclusions of the lemma does not hold. By the regularity scale assumption (2), we have the uniform energy bound of the sequences $u_j^i,i=1,2$. Hence, after passing to a subsequence, we can assume $\gamma_j\rightarrow\gamma_{\infty},u_j^i\rightarrow u^i_{\infty}$ strongly in $W^{1,2}$ and smoothly away from 0. By assumption (3) above, we see that $u_{\infty}^1=u_{\infty}^2$. By assumption (1), $h_{\infty}:=u^1_{\infty}=u^2_{\infty}$ is a regular 0-homogeneous map.

    Given the convergence of $u_j^i$, by Corollary \ref{Cor: uniform graph on tangent maps}, for large $j$, we have that
    $$\sup_{B_1\setminus\lbrace0\rbrace}\sum_{k=0}^2\vert x\vert^k\vert\nabla^k(u_j^i-h_{\infty})\vert\leqslant\epsilon_j\rightarrow0.$$
    As such,
    $$\sup_{B_1\setminus\lbrace0\rbrace}\sum_{k=0}^2\vert x\vert^k\vert\nabla^k(u_j^1-u_j^2)\vert\leqslant\sup_{B_1\setminus\lbrace0\rbrace}\sum_{k=0}^2\vert x\vert^k\vert\nabla^k(u_j^1-h_{\infty})\vert+\sup_{B_1\setminus\lbrace0\rbrace}\sum_{k=0}^2\vert x\vert^k\vert\nabla^k(u_j^2-h_{\infty})\vert\leqslant2\epsilon_j,$$
    which tells us the first conclusion of the lemma must hold for sufficiently large $j$.

    In order to show another estimate of the lemma, we note that $u^2_j-u^1_j$ is a bounded map to $\R^N$. As a result,
    $$\mathcal{AR}_0(u_j^2-u_j^1)\geqslant0>\gamma.$$
    By the same argument of Lemma \ref{lem: Jacobi operator growth estimate}, we have that for the same constant $K(\sigma)$ as in Lemma \ref{lem: Jacobi operator growth estimate}, the estimate holds
    $$\Vert u_j^2-u_j^1\Vert_{L^2(A_{K^{-l-1},K^{-l}})}\leqslant C(\sigma,\Lambda)\Vert u_j^2-u_j^1\Vert_{L^2(A_{K^{-1}},1)}K^{-l(n/2+\gamma)}.$$
    Note that $X=u_j^2-u_j^1$ solves a PDE of the form $L_{u_j^1}X=a\nabla X+b$, where $\vert x\vert\vert a(x)\vert+\vert x\vert^2\vert b(x)\vert\leqslant C\delta$. The $C^0$ estimate follows by classical PDE estimates.
\end{proof}
In general, we can also estimate the distance of singularity of two different \textbf{HSI}.
\begin{lem}\label{lem: HSI with different singularity}
    Let $\gamma\in(-1,0),\sigma\in(0,1)$ and $\Lambda>0$. There exist positive constants $\delta=\delta(\gamma,\sigma,\Lambda),K=K(\sigma)$ such that the following holds. Suppose $u_1,u_2$ are \textbf{HSI} on $B_3$ with
    \begin{enumerate}
        \item $\sing u_i\cap B_3=\lbrace x_i\rbrace$ for $x_1=0$ and $\vert x_2\vert\leqslant\delta$ and
        $$\Theta_{u_i}(x_i,2)-\Theta_{u_i}(x_i)\leqslant\delta.$$
        Moreover, the tangent map $h_i$ of $u_i$ at $x_i$ satisfies
        $$\dist(\gamma,\Gamma(h_i)\cup\lbrace-(n-2)/2\rbrace)\geqslant\sigma.$$
        \item The regularity scales of $u_i$ satisfies
        $$r_{u_i}(x)\geqslant\Lambda^{-1}\vert x-x_i\vert.$$
        \item $$\sup_{B_1\setminus B_{1/2}}\sum_{k=0}^2\vert x\vert^k\vert\nabla^k(u_1-u_2)(x)\vert\leqslant\delta.$$
    \end{enumerate}
    Then $$\vert x_2\vert\leqslant C(\gamma,\sigma,\Lambda)\Vert u_1-u_2\Vert_{L^2(A_{\delta,2})}$$
\end{lem}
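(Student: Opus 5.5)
The plan is a contradiction argument modelled on Lemma \ref{Lem: hsi with same singularity}, followed by a linear analysis of the normalized difference. Suppose the lemma fails. Then there are sequences $u_1^j,u_2^j$ satisfying (1)--(3) with $\delta_j=1/j$ and $x_2^j\to0$, such that
$$\vert x_2^j\vert\geqslant j\,\Vert u_1^j-u_2^j\Vert_{L^2(A_{\delta_j,2})}.$$
In particular $x_2^j\neq0$. Write $e_j=\vert x_2^j\vert$ and $d_j=\Vert u_1^j-u_2^j\Vert_{L^2(A_{\delta_j,2})}$, so that $d_j/e_j\to0$.

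First I will fix the limit. By (2) and the argument of Lemma \ref{Lem: hsi with same singularity}, a subsequence of both $u_i^j$ converges to a common regular $0$-homogeneous map $h_\infty\in\mathcal T_n(\Lambda)$. Condition (3) forces the two limits to agree. Corollary \ref{Cor: uniform graph on tangent maps} then gives uniform scale-invariant closeness of $u_1^j$ to $h_\infty$ centred at $0$, and of $u_2^j$ to $h_\infty$ centred at $x_2^j$.

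Next I translate. Let $\tilde u_2^j=u_2^j(\cdot+x_2^j)$, which has its singularity at $0$, and decompose
$$u_2^j-u_1^j=\bigl(u_2^j-\tilde u_2^j\bigr)+\bigl(\tilde u_2^j-u_1^j\bigr).$$
By the integral formula used in Lemma \ref{lem: induced Jacobi fields for pairs}, on a fixed annulus $A_{s,1}$ the first term equals $-\p_{x_2^j}u_2^j+o(e_j)$. The maps $\tilde u_2^j$ and $u_1^j$ have the same singular point and, for large $j$, satisfy the hypotheses of Lemma \ref{Lem: hsi with same singularity}; condition (3) survives the translation up to $O(e_j)$. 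That lemma gives
$$\Vert\tilde u_2^j-u_1^j\Vert_{C^{0;\gamma}(B_{1/2})}\leqslant C\,\Vert\tilde u_2^j-u_1^j\Vert_{L^2(A_{K^{-3},1})}\leqslant C(d_j+e_j).$$

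I then normalize by $e_j$. Set $W_j=(\tilde u_2^j-u_1^j)/e_j$ and $v_j=x_2^j/e_j\to v$ with $\vert v\vert=1$. On $A_{s,1}$ we have $(u_2^j-u_1^j)/e_j\to0$ in $L^2$ because $d_j/e_j\to0$, hence
$$W_j=\p_{v_j}u_2^j+o(1)\quad\text{on }A_{s,1}.$$
So $W_j$ is bounded there, and by the estimate above it is bounded in $C^{0;\gamma}(B_{1/2})$. Subtracting the two harmonic map equations and passing to the limit, as in the proof of Lemma \ref{lem: induced Jacobi fields for pairs}, $W_j$ converges to a Jacobi field $W$ of $h_\infty$ on $B_{1/2}\setminus\{0\}$. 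This limit has two incompatible descriptions:
\begin{enumerate}
\item on $A_{s,1/2}$ it coincides with $\p_vh_\infty$, so by unique continuation it equals $\p_vh_\infty$, which is homogeneous of degree $-1$;
\item it satisfies $\vert W(x)\vert\leqslant C\vert x\vert^{\gamma}$ with $\gamma>-1$, inherited from the $C^{0;\gamma}$ bound.
\end{enumerate}
Together these force $\p_vh_\infty=0$. Lemma \ref{lem: lower bound of derivative of tangent map} says $\Vert\p_vh_\infty\Vert_{L^2(B_1)}\geqslant C^{-1}\vert v\vert=C^{-1}$, a contradiction.

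I expect the main obstacle to be uniformity in the translated comparison. Lemma \ref{Lem: hsi with same singularity} gives its bound in terms of $\Vert\cdot\Vert_{L^2}$ only at size $O(\delta)$, so I must check that the $C^{0;\gamma}$ constant is uniform and that the bound really is $O(e_j+d_j)$ rather than $O(1)$. This needs the translated pair to satisfy its hypotheses with closeness $\lesssim e_j+d_j$ on $A_{1/2,1}$, which follows from the smoothness of $u_2^j$ on that annulus. A second delicate point is showing that the $o(e_j)$ remainder in the first term is small on $B_{1/2}$ after normalization. I would avoid needing this by applying the limit identification only on a fixed annulus and extending by unique continuation, as in item (1).
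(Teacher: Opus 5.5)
Your argument is correct. It uses the same two ingredients as the paper, Lemma \ref{Lem: hsi with same singularity} and Lemma \ref{lem: lower bound of derivative of tangent map}, but organizes them differently.

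The paper gives a direct estimate at one fixed small scale $r$:
\begin{itemize}
\item It proves $\Vert u_2-\tilde u_2\Vert_{L^2(A_{r,2r})}\geqslant c\vert x_2\vert r^{n/2-1}$. This comes from a Taylor expansion of $h_2$ with quadratic remainder, closeness of $u_2$ to $h_2$, and the derivative lower bound.
\item It uses the dyadic growth estimate of the same-singularity lemma to bound $\Vert u_1-\tilde u_2\Vert_{L^2(A_{r,2r})}$ by $Cr^{n/2+\gamma}$ times its size on $A_{r,1}$.
\item It absorbs the resulting term $Cr^{n/2+\gamma}\vert x_2\vert$ into $\vert x_2\vert r^{n/2-1}$, which works because $\gamma>-1$.
\end{itemize}

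You argue by contradiction instead, normalize by $\vert x_2^j\vert$, and let the two growth rates collide in the limit. The normalized same-center difference is $O(\vert x\vert^\gamma)$ by the $C^{0;\gamma}$ conclusion. On every fixed annulus it converges to the $(-1)$-homogeneous field $\partial_v h_\infty$, which is nonzero. So the mechanism is identical: translations have rate $-1$, same-center differences have rate at least $\gamma>-1$. What differs is quantitative absorption versus compactness.

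Your route gives no explicit constant. In exchange it avoids the Taylor bookkeeping, and it separates scales automatically: $\vert x_2^j\vert\to0$ while the comparison annulus stays fixed. The direct argument has to arrange $\vert x_2\vert\ll r$ by hand, by taking the hypothesis threshold well below the comparison scale.

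Your worry about uniformity is unfounded. Lemma \ref{Lem: hsi with same singularity} already gives a constant depending only on $(\gamma,\sigma,\Lambda)$ once the translated pair meets its hypotheses, and it does for large $j$.

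You can also drop the Jacobi-field limit and unique continuation altogether:
\begin{itemize}
\item The identification $W_j\to\partial_vh_\infty$ in $L^2(A_{s,1})$ holds for every fixed $s>0$.
\item The bound $\vert W_j(x)\vert\leqslant C\vert x\vert^\gamma$ is uniform in $j$ and passes to that limit.
\item Hence $\vert\partial_vh_\infty(x)\vert\leqslant C\vert x\vert^\gamma$ on $B_{1/2}\setminus\{0\}$, which contradicts $(-1)$-homogeneity as $\vert x\vert\to0$.
\end{itemize}
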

\begin{proof}
    First note that $u_1$ and $\tilde u_2=u_2(\cdot+x_2)$ satisfy the hypotheses of Lemma \ref{Lem: hsi with same singularity}. Fix $r>0$ to be chosen later. We first claim the following.
    \begin{claim}
        If $\delta$ and $\vert x_2\vert/r$ are sufficiently small, then
        $$\Vert u_2-\tilde u_2\Vert_{L^2(A_{r,2r})}\geqslant c\vert x_2\vert r^{n/2-1}.$$ 
    \end{claim}
    \begin{proof}[Proof of Claim]
        First note that by the scaling invariance of $h_2$, we have that 
        $$\Vert h_2-h_2(\cdot-x_2)-\nabla_{x_2}h_2\Vert_{L^2(A_{r,2r})}\leqslant C(n,\Lambda)\vert x_2\vert^2 r^{n/2-2}.$$
        On the other hand, by choosing $\delta\leqslant\delta(\epsilon)$, there holds
        $$\Vert u_2-\tilde u_2-h_2+h_2(\cdot-x_2)\Vert_{L^2(A_{r,2r})}\leqslant C\vert x_2\vert r^{n/2}\sup_{A_{r/2,3r}}\vert\nabla (u_2-h_2)\vert\leqslant C\epsilon\vert x_2\vert r^{n/2-1}.$$
        Combining above two inequalities, as well as Lemma \ref{lem: lower bound of derivative of tangent map}, we have that 
        $$\Vert u_2-\tilde u_2\Vert_{L^2(A_{r,2r})}\geqslant(c-C\vert x_2\vert/r-C\epsilon)\vert x_2\vert r^{n/2-1}.$$
        Choosing $\epsilon$ small first, hence fixing $\delta=\delta(\epsilon)$ as given by Proposition \ref{Prop: uniqueness of tangent map}, then choosing $\vert x_2\vert/r$ small gives the claim.
    \end{proof}
    On the other hand, we have another side of inequality
    $$\Vert u_2-\tilde u_2\Vert_{L^2(A_{r,1})}\leqslant C\vert x_2\vert\Vert\nabla u_2\Vert_{L^2(A_{r/2,2}(x_2))}\leqslant C(n,\Lambda)\vert x_2\vert.$$
    Hence, we can apply Lemma \ref{Lem: hsi with same singularity} to deduce that, if $r=r(\sigma,\Lambda,\gamma)$ is small enough, 
    \begin{align*}
        \vert x_2\vert r^{n/2-1}&\leqslant C\Vert u_2-\tilde u_2\Vert_{L^2(A_{r,2r})}\\&\leqslant C(\Vert u_1-\tilde u_2\Vert_{L^2(A_{r,2r})}+\Vert u_1-u_2\Vert_{L^2(A_{r,2r})})\\&\leqslant C(\Vert u_1-u_2\Vert_{L^2(A_{r,1})}+r^{n/2+\gamma}\Vert u_1-\tilde u_2\Vert_{L^2(A_{r,1})})\\&\leqslant C(1+r^{n/2+\gamma})\Vert u_1-u_2\Vert_{L^2(A_{r,1})}+Cr^{n/2+\gamma}\vert x_2\vert.
    \end{align*}
    Fixing $r=\delta\leqslant\delta(\sigma,\Lambda,\gamma)$ according to above claim, we can absorb the $\vert x_2\vert$ on the RHS to the LHS by the fact that $\gamma\in(-1,0)$ and get the desired result.
\end{proof}

\bibliography{reference}

@book {LinWangbook,
    AUTHOR = {Lin, Fanghua and Wang, Changyou},
     TITLE = {The analysis of harmonic maps and their heat flows},
 PUBLISHER = {World Scientific Publishing Co. Pte. Ltd., Hackensack, NJ},
      YEAR = {2008},
     PAGES = {xii+267},
      ISBN = {978-981-277-952-6; 981-277-952-3},
   MRCLASS = {58E20 (53C44 58J35)},
  MRNUMBER = {2431658},
       DOI = {10.1142/9789812779533},
       URL = {https://doi-org.proxy.library.cornell.edu/10.1142/9789812779533},
}

@article {SimonAsymptotics,
    AUTHOR = {Simon, Leon},
     TITLE = {Asymptotics for a class of nonlinear evolution equations, with
              applications to geometric problems},
   JOURNAL = {Ann. of Math. (2)},
  FJOURNAL = {Annals of Mathematics. Second Series},
    VOLUME = {118},
      YEAR = {1983},
    NUMBER = {3},
     PAGES = {525--571},
      ISSN = {0003-486X,1939-8980},
   MRCLASS = {58G11 (35B40 49F99 58E20)},
  MRNUMBER = {727703},
MRREVIEWER = {Helmut\ Kaul},
       DOI = {10.2307/2006981},
       URL = {https://doi-org.proxy.library.cornell.edu/10.2307/2006981},
}

@article {Xin,
    AUTHOR = {Xin, Y. L.},
     TITLE = {Some results on stable harmonic maps},
   JOURNAL = {Duke Math. J.},
  FJOURNAL = {Duke Mathematical Journal},
    VOLUME = {47},
      YEAR = {1980},
    NUMBER = {3},
     PAGES = {609--613},
      ISSN = {0012-7094,1547-7398},
   MRCLASS = {58E20},
  MRNUMBER = {587168},
MRREVIEWER = {Samuel\ I.\ Goldberg},
       URL = {http://projecteuclid.org.proxy.library.cornell.edu/euclid.dmj/1077314183},
}

@article {ElSoufi,
    AUTHOR = {El Soufi, Ahmad},
     TITLE = {Indice de {M}orse des applications harmoniques de la sph\`ere},
   JOURNAL = {Compositio Math.},
  FJOURNAL = {Compositio Mathematica},
    VOLUME = {95},
      YEAR = {1995},
    NUMBER = {3},
     PAGES = {343--362},
      ISSN = {0010-437X,1570-5846},
   MRCLASS = {58E20},
  MRNUMBER = {1318092},
       URL = {http://www.numdam.org.proxy.library.cornell.edu/item?id=CM_1995__95_3_343_0},
}

@article {SchoenUhlenbeck,
    AUTHOR = {Schoen, Richard and Uhlenbeck, Karen},
     TITLE = {Regularity of minimizing harmonic maps into the sphere},
   JOURNAL = {Invent. Math.},
  FJOURNAL = {Inventiones Mathematicae},
    VOLUME = {78},
      YEAR = {1984},
    NUMBER = {1},
     PAGES = {89--100},
      ISSN = {0020-9910,1432-1297},
   MRCLASS = {58E20},
  MRNUMBER = {762354},
MRREVIEWER = {Luc\ Lemaire},
       DOI = {10.1007/BF01388715},
       URL = {https://doi-org.proxy.library.cornell.edu/10.1007/BF01388715},
}

@article {CaffarelliHardSimon,
    AUTHOR = {Caffarelli, Luis and Hardt, Robert and Simon, Leon},
     TITLE = {Minimal surfaces with isolated singularities},
   JOURNAL = {Manuscripta Math.},
  FJOURNAL = {Manuscripta Mathematica},
    VOLUME = {48},
      YEAR = {1984},
    NUMBER = {1-3},
     PAGES = {1--18},
      ISSN = {0025-2611,1432-1785},
   MRCLASS = {53A10 (49F10)},
  MRNUMBER = {753722},
       DOI = {10.1007/BF01168999},
       URL = {https://doi-org.proxy.library.cornell.edu/10.1007/BF01168999},
}

@article {HardMou,
    AUTHOR = {Hardt, Robert and Mou, Libin},
     TITLE = {Harmonic maps with fixed singular sets},
   JOURNAL = {J. Geom. Anal.},
  FJOURNAL = {The Journal of Geometric Analysis},
    VOLUME = {2},
      YEAR = {1992},
    NUMBER = {5},
     PAGES = {445--488},
      ISSN = {1050-6926,1559-002X},
   MRCLASS = {58E20},
  MRNUMBER = {1184709},
MRREVIEWER = {Martin\ Fuchs},
       DOI = {10.1007/BF02921301},
       URL = {https://doi-org.proxy.library.cornell.edu/10.1007/BF02921301},
}

@article {LockhartMcOwen,
    AUTHOR = {Lockhart, Robert B. and McOwen, Robert C.},
     TITLE = {Elliptic differential operators on noncompact manifolds},
   JOURNAL = {Ann. Scuola Norm. Sup. Pisa Cl. Sci. (4)},
  FJOURNAL = {Annali della Scuola Normale Superiore di Pisa. Classe di
              Scienze. Serie IV},
    VOLUME = {12},
      YEAR = {1985},
    NUMBER = {3},
     PAGES = {409--447},
      ISSN = {0391-173X,2036-2145},
   MRCLASS = {58G15 (47A53 47F05 58G10)},
  MRNUMBER = {837256},
MRREVIEWER = {William\ Margulies},
       URL = {http://www.numdam.org.proxy.library.cornell.edu/item?id=ASNSP_1985_4_12_3_409_0},
}

@book {SimonBook,
    AUTHOR = {Simon, Leon},
     TITLE = {Theorems on regularity and singularity of energy minimizing
              maps},
    SERIES = {Lectures in Mathematics ETH Z\"urich},
      NOTE = {Based on lecture notes by Norbert Hungerb\"uhler},
 PUBLISHER = {Birkh\"auser Verlag, Basel},
      YEAR = {1996},
     PAGES = {viii+152},
      ISBN = {3-7643-5397-X},
   MRCLASS = {58E20 (35J60 49N60 58G03)},
  MRNUMBER = {1399562},
MRREVIEWER = {Nathan\ Smale},
       DOI = {10.1007/978-3-0348-9193-6},
       URL = {https://doi-org.proxy.library.cornell.edu/10.1007/978-3-0348-9193-6},
}

@article {Hsu,
    AUTHOR = {Hsu, Deliang},
     TITLE = {An approach to the regularity for stable-stationary harmonic
              maps},
   JOURNAL = {Proc. Amer. Math. Soc.},
  FJOURNAL = {Proceedings of the American Mathematical Society},
    VOLUME = {133},
      YEAR = {2005},
    NUMBER = {9},
     PAGES = {2805--2812},
      ISSN = {0002-9939,1088-6826},
   MRCLASS = {58E20},
  MRNUMBER = {2146230},
MRREVIEWER = {Fr\'ed\'eric\ Robert},
       DOI = {10.1090/S0002-9939-05-07818-4},
       URL = {https://doi-org.proxy.library.cornell.edu/10.1090/S0002-9939-05-07818-4},
}

@article {EellsLemaire,
    AUTHOR = {Eells, J. and Lemaire, L.},
     TITLE = {Another report on harmonic maps},
   JOURNAL = {Bull. London Math. Soc.},
  FJOURNAL = {The Bulletin of the London Mathematical Society},
    VOLUME = {20},
      YEAR = {1988},
    NUMBER = {5},
     PAGES = {385--524},
      ISSN = {0024-6093,1469-2120},
   MRCLASS = {58E20},
  MRNUMBER = {956352},
MRREVIEWER = {James\ F.\ Glazebrook},
       DOI = {10.1112/blms/20.5.385},
       URL = {https://doi-org.proxy.library.cornell.edu/10.1112/blms/20.5.385},
}

@article {EdelenDegeneration,
    AUTHOR = {Edelen, Nick},
     TITLE = {Degeneration of 7-dimensional minimal hypersurfaces which are
              stable or have a bounded index},
   JOURNAL = {Arch. Ration. Mech. Anal.},
  FJOURNAL = {Archive for Rational Mechanics and Analysis},
    VOLUME = {248},
      YEAR = {2024},
    NUMBER = {4},
     PAGES = {Paper No. 65, 73},
      ISSN = {0003-9527,1432-0673},
   MRCLASS = {58E12 (49Q05 53A10)},
  MRNUMBER = {4768490},
MRREVIEWER = {Panayotis\ Vyridis},
       DOI = {10.1007/s00205-024-02003-w},
       URL = {https://doi-org.proxy.library.cornell.edu/10.1007/s00205-024-02003-w},
}

@article{CLWNon-persistence,
  title={Non-persistence of strongly isolated singularities, and geometric applications},
  author={Carlotto, Alessandro and Li, Yangyang and Wang, Zhihan},
  journal={arXiv preprint arXiv:2411.12677},
  year={2024}
}

@article {LiWangGenericRegularity,
    AUTHOR = {Li, Yangyang and Wang, Zhihan},
     TITLE = {Minimal hypersurfaces for generic metrics in dimension 8},
   JOURNAL = {Invent. Math.},
  FJOURNAL = {Inventiones Mathematicae},
    VOLUME = {240},
      YEAR = {2025},
    NUMBER = {3},
     PAGES = {1193--1303},
      ISSN = {0020-9910,1432-1297},
   MRCLASS = {53A10 (49Q05 58E12)},
  MRNUMBER = {4902162},
MRREVIEWER = {Neilha\ M.\ Pinheiro},
       DOI = {10.1007/s00222-025-01333-0},
       URL = {https://doi-org.proxy.library.cornell.edu/10.1007/s00222-025-01333-0},
}

@misc{WangDeformationI,
      title={Deformations of Singular Minimal Hypersurfaces I, Isolated Singularities}, 
      author={Wang, Zhihan},
      year={2020},
      eprint={2011.00548},
      archivePrefix={arXiv},
      primaryClass={math.DG},
      url={https://arxiv.org/abs/2011.00548}, 
}

@article {NakajimaIndex,
    AUTHOR = {Nakajima, T\^oru},
     TITLE = {A remark on instability of harmonic maps between spheres},
   JOURNAL = {Pacific J. Math.},
  FJOURNAL = {Pacific Journal of Mathematics},
    VOLUME = {240},
      YEAR = {2009},
    NUMBER = {2},
     PAGES = {363--369},
      ISSN = {0030-8730,1945-5844},
   MRCLASS = {58E20 (35J50 53C43)},
  MRNUMBER = {2485470},
MRREVIEWER = {John\ C.\ Wood},
       DOI = {10.2140/pjm.2009.240.363},
       URL = {https://doi-org.proxy.library.cornell.edu/10.2140/pjm.2009.240.363},
}

@article{li2026optimalregularitystableharmonic,
  title={Optimal regularity of stable harmonic maps to spheres},
  author={Li, Xuanyu},
  journal={arXiv preprint arXiv:2608.20272},
  year={2026}
}

@article {BethuelApproximation,
    AUTHOR = {Bethuel, Fabrice},
     TITLE = {The approximation problem for {S}obolev maps between two
              manifolds},
   JOURNAL = {Acta Math.},
  FJOURNAL = {Acta Mathematica},
    VOLUME = {167},
      YEAR = {1991},
    NUMBER = {3-4},
     PAGES = {153--206},
      ISSN = {0001-5962,1871-2509},
   MRCLASS = {58D15},
  MRNUMBER = {1120602},
MRREVIEWER = {Martin\ Fuchs},
       DOI = {10.1007/BF02392449},
       URL = {https://doi-org.proxy.library.cornell.edu/10.1007/BF02392449},
}

@article {SacksUhlenbeck,
    AUTHOR = {Sacks, J. and Uhlenbeck, K.},
     TITLE = {The existence of minimal immersions of {$2$}-spheres},
   JOURNAL = {Ann. of Math. (2)},
  FJOURNAL = {Annals of Mathematics. Second Series},
    VOLUME = {113},
      YEAR = {1981},
    NUMBER = {1},
     PAGES = {1--24},
      ISSN = {0003-486X},
   MRCLASS = {58E12 (53C42 58E20)},
  MRNUMBER = {604040},
MRREVIEWER = {John\ C.\ Wood},
       DOI = {10.2307/1971131},
       URL = {https://doi-org.proxy.library.cornell.edu/10.2307/1971131},
}

@article {DingLiLi,
    AUTHOR = {Ding, Weiyue and Li, Jiayu and Li, Wei},
     TITLE = {Nonstationary weak limit of a stationary harmonic map
              sequence},
   JOURNAL = {Comm. Pure Appl. Math.},
  FJOURNAL = {Communications on Pure and Applied Mathematics},
    VOLUME = {56},
      YEAR = {2003},
    NUMBER = {2},
     PAGES = {270--277},
      ISSN = {0010-3640,1097-0312},
   MRCLASS = {58E20},
  MRNUMBER = {1934622},
MRREVIEWER = {Andreas\ Gastel},
       DOI = {10.1002/cpa.10058},
       URL = {https://doi-org.proxy.library.cornell.edu/10.1002/cpa.10058},
}

@book {EvansPDE,
    AUTHOR = {Evans, Lawrence C.},
     TITLE = {Partial differential equations},
    SERIES = {Graduate Studies in Mathematics},
    VOLUME = {19},
   EDITION = {Second},
 PUBLISHER = {American Mathematical Society, Providence, RI},
      YEAR = {2010},
     PAGES = {xxii+749},
      ISBN = {978-0-8218-4974-3},
   MRCLASS = {35-01},
  MRNUMBER = {2597943},
MRREVIEWER = {Diego\ M.\ Maldonado},
       DOI = {10.1090/gsm/019},
       URL = {https://doi-org.proxy.library.cornell.edu/10.1090/gsm/019},
}

@book {LinHanBook,
    AUTHOR = {Han, Qing and Lin, Fanghua},
     TITLE = {Elliptic partial differential equations},
    SERIES = {Courant Lecture Notes in Mathematics},
    VOLUME = {1},
   EDITION = {Second},
 PUBLISHER = {Courant Institute of Mathematical Sciences, New York; American
              Mathematical Society, Providence, RI},
      YEAR = {2011},
     PAGES = {x+147},
      ISBN = {978-0-8218-5313-9},
   MRCLASS = {35Jxx (35-01 35B50)},
  MRNUMBER = {2777537},
}

@book {TheoSalamon,
    AUTHOR = {B\"uhler, Theo and Salamon, Dietmar A.},
     TITLE = {Functional analysis},
    SERIES = {Graduate Studies in Mathematics},
    VOLUME = {191},
 PUBLISHER = {American Mathematical Society, Providence, RI},
      YEAR = {2018},
     PAGES = {xiv+466},
      ISBN = {978-1-4704-4190-6},
   MRCLASS = {46-01},
  MRNUMBER = {3823238},
MRREVIEWER = {Richard\ Becker},
       DOI = {10.1090/gsm/191},
       URL = {https://doi-org.proxy.library.cornell.edu/10.1090/gsm/191},
}

@article {JagerKaulMinimizing,
    AUTHOR = {J\"ager, Willi and Kaul, Helmut},
     TITLE = {Rotationally symmetric harmonic maps from a ball into a sphere
              and the regularity problem for weak solutions of elliptic
              systems},
   JOURNAL = {J. Reine Angew. Math.},
  FJOURNAL = {Journal f\"ur die Reine und Angewandte Mathematik. [Crelle's
              Journal]},
    VOLUME = {343},
      YEAR = {1983},
     PAGES = {146--161},
      ISSN = {0075-4102,1435-5345},
   MRCLASS = {58E20 (35J55)},
  MRNUMBER = {705882},
MRREVIEWER = {John\ C.\ Wood},
       DOI = {10.1515/crll.1983.343.146},
       URL = {https://doi-org.proxy.library.cornell.edu/10.1515/crll.1983.343.146},
}

@article {LinMinimizing,
    AUTHOR = {Lin, Fang-Hua},
     TITLE = {A remark on the map {$x/|x|$}},
   JOURNAL = {C. R. Acad. Sci. Paris S\'er. I Math.},
  FJOURNAL = {Comptes Rendus des S\'eances de l'Acad\'emie des Sciences.
              S\'erie I. Math\'ematique},
    VOLUME = {305},
      YEAR = {1987},
    NUMBER = {12},
     PAGES = {529--531},
      ISSN = {0249-6291},
   MRCLASS = {58E20 (53C42 81E99)},
  MRNUMBER = {916327},
MRREVIEWER = {John\ C.\ Wood},
}

@article {CoronGulliver,
    AUTHOR = {Coron, Jean-Michel and Gulliver, Robert},
     TITLE = {Minimizing {$p$}-harmonic maps into spheres},
   JOURNAL = {J. Reine Angew. Math.},
  FJOURNAL = {Journal f\"ur die Reine und Angewandte Mathematik. [Crelle's
              Journal]},
    VOLUME = {401},
      YEAR = {1989},
     PAGES = {82--100},
      ISSN = {0075-4102,1435-5345},
   MRCLASS = {58E20},
  MRNUMBER = {1018054},
MRREVIEWER = {Helmut\ Kaul},
       DOI = {10.1515/crll.1989.401.82},
       URL = {https://doi-org.proxy.library.cornell.edu/10.1515/crll.1989.401.82},
}

@article {BrezisCoronLieb,
    AUTHOR = {Brezis, Ha\"im and Coron, Jean-Michel and Lieb, Elliott H.},
     TITLE = {Harmonic maps with defects},
   JOURNAL = {Comm. Math. Phys.},
  FJOURNAL = {Communications in Mathematical Physics},
    VOLUME = {107},
      YEAR = {1986},
    NUMBER = {4},
     PAGES = {649--705},
      ISSN = {0010-3616,1432-0916},
   MRCLASS = {58E20 (81E13 82A05)},
  MRNUMBER = {868739},
MRREVIEWER = {John\ C.\ Wood},
       URL = {http://projecteuclid.org.proxy.library.cornell.edu/euclid.cmp/1104116234},
}

@article {HardtLinStability,
    AUTHOR = {Hardt, Robert and Lin, Fang-Hau},
     TITLE = {Stability of singularities of minimizing harmonic maps},
   JOURNAL = {J. Differential Geom.},
  FJOURNAL = {Journal of Differential Geometry},
    VOLUME = {29},
      YEAR = {1989},
    NUMBER = {1},
     PAGES = {113--123},
      ISSN = {0022-040X,1945-743X},
   MRCLASS = {58E20},
  MRNUMBER = {978080},
MRREVIEWER = {Thomas\ Bartsch},
       URL = {http://projecteuclid.org.proxy.library.cornell.edu/euclid.jdg/1214442637},
}

@article {McIntoshSimon,
    AUTHOR = {McIntosh, Robert and Simon, Leon},
     TITLE = {Perturbing away singularities of harmonic maps},
   JOURNAL = {Manuscripta Math.},
  FJOURNAL = {Manuscripta Mathematica},
    VOLUME = {67},
      YEAR = {1990},
    NUMBER = {2},
     PAGES = {113--124},
      ISSN = {0025-2611,1432-1785},
   MRCLASS = {58E20},
  MRNUMBER = {1042233},
MRREVIEWER = {Martin\ Fuchs},
       DOI = {10.1007/BF02568425},
       URL = {https://doi-org.proxy.library.cornell.edu/10.1007/BF02568425},
}

@article {Wangp-harmonicmaps,
    AUTHOR = {Wang, Changyou},
     TITLE = {Minimality and perturbation of singularities for certain
              {$p$}-harmonic maps},
   JOURNAL = {Indiana Univ. Math. J.},
  FJOURNAL = {Indiana University Mathematics Journal},
    VOLUME = {47},
      YEAR = {1998},
    NUMBER = {2},
     PAGES = {725--740},
      ISSN = {0022-2518,1943-5258},
   MRCLASS = {58E20},
  MRNUMBER = {1647881},
MRREVIEWER = {Martin\ Fuchs},
       DOI = {10.1512/iumj.1998.47.1434},
       URL = {https://doi-org.proxy.library.cornell.edu/10.1512/iumj.1998.47.1434},
}

@article {SimonSolomon,
    AUTHOR = {Simon, Leon and Solomon, Bruce},
     TITLE = {Minimal hypersurfaces asymptotic to quadratic cones in {${\bf
              R}^{n+1}$}},
   JOURNAL = {Invent. Math.},
  FJOURNAL = {Inventiones Mathematicae},
    VOLUME = {86},
      YEAR = {1986},
    NUMBER = {3},
     PAGES = {535--551},
      ISSN = {0020-9910,1432-1297},
   MRCLASS = {49F10 (53A10)},
  MRNUMBER = {860681},
MRREVIEWER = {Harold\ Parks},
       DOI = {10.1007/BF01389267},
       URL = {https://doi-org.proxy.library.cornell.edu/10.1007/BF01389267},
}

@article {EdelenSpolaorQuadraticcone,
    AUTHOR = {Edelen, Nick and Spolaor, Luca},
     TITLE = {Regularity of minimal surfaces near quadratic cones},
   JOURNAL = {Ann. of Math. (2)},
  FJOURNAL = {Annals of Mathematics. Second Series},
    VOLUME = {198},
      YEAR = {2023},
    NUMBER = {3},
     PAGES = {1013--1046},
      ISSN = {0003-486X,1939-8980},
   MRCLASS = {49Q05 (35J93 49Q20 53A10)},
  MRNUMBER = {4660135},
MRREVIEWER = {Annalisa\ Cesaroni},
       DOI = {10.4007/annals.2023.198.3.2},
       URL = {https://doi-org.proxy.library.cornell.edu/10.4007/annals.2023.198.3.2},
}

@article {MazetSimonsCones,
    AUTHOR = {Mazet, Laurent},
     TITLE = {Minimal hypersurfaces asymptotic to {S}imons cones},
   JOURNAL = {J. Inst. Math. Jussieu},
  FJOURNAL = {Journal of the Institute of Mathematics of Jussieu. JIMJ.
              Journal de l'Institut de Math\'ematiques de Jussieu},
    VOLUME = {16},
      YEAR = {2017},
    NUMBER = {1},
     PAGES = {39--58},
      ISSN = {1474-7480,1475-3030},
   MRCLASS = {53A10},
  MRNUMBER = {3591961},
MRREVIEWER = {Jianquan\ Ge},
       DOI = {10.1017/S1474748015000110},
       URL = {https://doi-org.proxy.library.cornell.edu/10.1017/S1474748015000110},
}

@article{EngelsteinRestrepoZhao,
  title={On the asymptotic properties of solutions to one-phase free boundary problems},
  author={Engelstein, Max and Restrepo, Daniel and Zhao, Zihui},
  journal={arXiv preprint arXiv:2511.08393},
  year={2025}
}

@article {MillotPisante,
    AUTHOR = {Millot, Vincent and Pisante, Adriano},
     TITLE = {Symmetry of local minimizers for the three-dimensional
              {G}inzburg-{L}andau functional},
   JOURNAL = {J. Eur. Math. Soc. (JEMS)},
  FJOURNAL = {Journal of the European Mathematical Society (JEMS)},
    VOLUME = {12},
      YEAR = {2010},
    NUMBER = {5},
     PAGES = {1069--1096},
      ISSN = {1435-9855,1435-9863},
   MRCLASS = {58E20 (31C45 35Q56 49K10 58J70)},
  MRNUMBER = {2677610},
MRREVIEWER = {Andreas\ Gastel},
       DOI = {10.4171/JEMS/223},
       URL = {https://doi-org.proxy.library.cornell.edu/10.4171/JEMS/223},
}

@article {HardtLinRemark,
    AUTHOR = {Hardt, Robert and Lin, Fang-Hua},
    TITLE = {A remark on {$H^1$} mappings},
   JOURNAL = {Manuscripta Math.},
  FJOURNAL = {Manuscripta Mathematica},
    VOLUME = {56},
      YEAR = {1986},
    NUMBER = {1},
     PAGES = {1--10},
      ISSN = {0025-2611,1432-1785},
   MRCLASS = {58E20},
  MRNUMBER = {846982},
MRREVIEWER = {J.\ Eells},
       DOI = {10.1007/BF01171029},
       URL = {https://doi-org.proxy.library.cornell.edu/10.1007/BF01171029},
}

@article {Mazowieckaetc,
    AUTHOR = {Mazowiecka, Katarzyna and Mi\'skiewicz, Micha\l{} and
              Schikorra, Armin},
     TITLE = {On the size of the singular set of minimizing harmonic maps},
   JOURNAL = {Mem. Amer. Math. Soc.},
  FJOURNAL = {Memoirs of the American Mathematical Society},
    VOLUME = {302},
      YEAR = {2024},
    NUMBER = {1519},
     PAGES = {v+82},
      ISSN = {0065-9266,1947-6221},
      ISBN = {978-1-4704-7162-0; 978-1-4704-7970-1},
   MRCLASS = {58E20 (35B30 35J50 35J57)},
  MRNUMBER = {4835768},
MRREVIEWER = {Christopher\ Steven\ Goodrich},
       DOI = {10.1090/memo/1519},
       URL = {https://doi-org.proxy.library.cornell.edu/10.1090/memo/1519},
}

@article {LinWangSphere,
    AUTHOR = {Lin, Fang Hua and Wang, Chang You},
     TITLE = {Stable stationary harmonic maps to spheres},
   JOURNAL = {Acta Math. Sin. (Engl. Ser.)},
  FJOURNAL = {Acta Mathematica Sinica (English Series)},
    VOLUME = {22},
      YEAR = {2006},
    NUMBER = {2},
     PAGES = {319--330},
      ISSN = {1439-8516,1439-7617},
   MRCLASS = {58E20 (35J20 35J60)},
  MRNUMBER = {2214353},
MRREVIEWER = {Futoshi\ Takahashi},
       DOI = {10.1007/s10114-005-0673-7},
       URL = {https://doi-org.proxy.library.cornell.edu/10.1007/s10114-005-0673-7},
}

@article {HsuLiregularity,
    AUTHOR = {Hsu, De Liang and Li, Jia Yu},
     TITLE = {On the regularity for stationary harmonic maps},
   JOURNAL = {Acta Math. Sin. (Engl. Ser.)},
  FJOURNAL = {Acta Mathematica Sinica (English Series)},
    VOLUME = {24},
      YEAR = {2008},
    NUMBER = {2},
     PAGES = {223--226},
      ISSN = {1439-8516,1439-7617},
   MRCLASS = {58E20},
  MRNUMBER = {2383350},
MRREVIEWER = {Andreas\ Gastel},
       DOI = {10.1007/s10114-007-1007-8},
       URL = {https://doi-org.proxy.library.cornell.edu/10.1007/s10114-007-1007-8},
}

@article {Federer,
    AUTHOR = {Federer, Herbert},
     TITLE = {The singular sets of area minimizing rectifiable currents with
              codimension one and of area minimizing flat chains modulo two
              with arbitrary codimension},
   JOURNAL = {Bull. Amer. Math. Soc.},
  FJOURNAL = {Bulletin of the American Mathematical Society},
    VOLUME = {76},
      YEAR = {1970},
     PAGES = {767--771},
      ISSN = {0002-9904},
   MRCLASS = {28.80 (26.00)},
  MRNUMBER = {260981},
MRREVIEWER = {J.\ E.\ Brothers},
       DOI = {10.1090/S0002-9904-1970-12542-3},
       URL = {https://doi-org.proxy.library.cornell.edu/10.1090/S0002-9904-1970-12542-3},
}

@article {SchoenUhlenbeckRegularity,
    AUTHOR = {Schoen, Richard and Uhlenbeck, Karen},
     TITLE = {A regularity theory for harmonic maps},
   JOURNAL = {J. Differential Geometry},
  FJOURNAL = {Journal of Differential Geometry},
    VOLUME = {17},
      YEAR = {1982},
    NUMBER = {2},
     PAGES = {307--335},
      ISSN = {0022-040X,1945-743X},
   MRCLASS = {58E20 (35J20)},
  MRNUMBER = {664498},
MRREVIEWER = {J.\ Eells},
       URL = {http://projecteuclid.org.proxy.library.cornell.edu/euclid.jdg/1214436923},
}

@article {EellsSampson,
    AUTHOR = {Eells, Jr., James and Sampson, J. H.},
     TITLE = {Harmonic mappings of {R}iemannian manifolds},
   JOURNAL = {Amer. J. Math.},
  FJOURNAL = {American Journal of Mathematics},
    VOLUME = {86},
      YEAR = {1964},
     PAGES = {109--160},
      ISSN = {0002-9327,1080-6377},
   MRCLASS = {53.72 (57.50)},
  MRNUMBER = {164306},
MRREVIEWER = {J.\ A.\ Wolf},
       DOI = {10.2307/2373037},
       URL = {https://doi-org.proxy.library.cornell.edu/10.2307/2373037},
}

@article {HildebrandtKaulWidman,
    AUTHOR = {Hildebrandt, St\'efan and Kaul, Helmut and Widman, Kjell-Ove},
     TITLE = {An existence theorem for harmonic mappings of {R}iemannian
              manifolds},
   JOURNAL = {Acta Math.},
  FJOURNAL = {Acta Mathematica},
    VOLUME = {138},
      YEAR = {1977},
    NUMBER = {1-2},
     PAGES = {1--16},
      ISSN = {0001-5962,1871-2509},
   MRCLASS = {58E15},
  MRNUMBER = {433502},
MRREVIEWER = {Jean-Claude\ Mitteau},
       DOI = {10.1007/BF02392311},
       URL = {https://doi-org.proxy.library.cornell.edu/10.1007/BF02392311},
}

@article {MikhailStern,
    AUTHOR = {Karpukhin, Mikhail and Stern, Daniel},
     TITLE = {Existence of harmonic maps and eigenvalue optimization in
              higher dimensions},
   JOURNAL = {Invent. Math.},
  FJOURNAL = {Inventiones Mathematicae},
    VOLUME = {236},
      YEAR = {2024},
    NUMBER = {2},
     PAGES = {713--778},
      ISSN = {0020-9910,1432-1297},
   MRCLASS = {53C43},
  MRNUMBER = {4728241},
MRREVIEWER = {Hezi\ Lin},
       DOI = {10.1007/s00222-024-01247-3},
       URL = {https://doi-org.proxy.library.cornell.edu/10.1007/s00222-024-01247-3},
}

@book {Morrey,
    AUTHOR = {Morrey, Jr., Charles B.},
     TITLE = {Multiple integrals in the calculus of variations},
    SERIES = {Die Grundlehren der mathematischen Wissenschaften},
    VOLUME = {Band 130},
 PUBLISHER = {Springer-Verlag New York, Inc., New York},
      YEAR = {1966},
     PAGES = {ix+506},
   MRCLASS = {49.00 (00.00)},
  MRNUMBER = {202511},
MRREVIEWER = {M.\ Schechter},
}

@incollection {SchoenAnalyticAspects,
    AUTHOR = {Schoen, Richard M.},
     TITLE = {Analytic aspects of the harmonic map problem},
 BOOKTITLE = {Seminar on nonlinear partial differential equations
              ({B}erkeley, {C}alif., 1983)},
    SERIES = {Math. Sci. Res. Inst. Publ.},
    VOLUME = {2},
     PAGES = {321--358},
 PUBLISHER = {Springer, New York},
      YEAR = {1984},
      ISBN = {0-387-96079-1},
   MRCLASS = {58E20},
  MRNUMBER = {765241},
MRREVIEWER = {Helmut\ Kaul},
       DOI = {10.1007/978-1-4612-1110-5\_17},
       URL = {https://doi-org.proxy.library.cornell.edu/10.1007/978-1-4612-1110-5_17},
}

@article {Helein,
    AUTHOR = {H\'elein, Fr\'ed\'eric},
     TITLE = {R\'egularit\'e{} des applications faiblement harmoniques entre
              une surface et une vari\'et\'e{} riemannienne},
   JOURNAL = {C. R. Acad. Sci. Paris S\'er. I Math.},
  FJOURNAL = {Comptes Rendus de l'Acad\'emie des Sciences. S\'erie I.
              Math\'ematique},
    VOLUME = {312},
      YEAR = {1991},
    NUMBER = {8},
     PAGES = {591--596},
      ISSN = {0764-4442},
   MRCLASS = {58E20},
  MRNUMBER = {1101039},
MRREVIEWER = {John\ C.\ Wood},
}

@article {EvansSphere,
    AUTHOR = {Evans, Lawrence C.},
     TITLE = {Partial regularity for stationary harmonic maps into spheres},
   JOURNAL = {Arch. Rational Mech. Anal.},
  FJOURNAL = {Archive for Rational Mechanics and Analysis},
    VOLUME = {116},
      YEAR = {1991},
    NUMBER = {2},
     PAGES = {101--113},
      ISSN = {0003-9527},
   MRCLASS = {58E20},
  MRNUMBER = {1143435},
       DOI = {10.1007/BF00375587},
       URL = {https://doi-org.proxy.library.cornell.edu/10.1007/BF00375587},
}

@article {Bethuel,
    AUTHOR = {Bethuel, Fabrice},
     TITLE = {On the singular set of stationary harmonic maps},
   JOURNAL = {Manuscripta Math.},
  FJOURNAL = {Manuscripta Mathematica},
    VOLUME = {78},
      YEAR = {1993},
    NUMBER = {4},
     PAGES = {417--443},
      ISSN = {0025-2611,1432-1785},
   MRCLASS = {58E20},
  MRNUMBER = {1208652},
MRREVIEWER = {Caio\ J. C. Negreiros},
       DOI = {10.1007/BF02599324},
       URL = {https://doi-org.proxy.library.cornell.edu/10.1007/BF02599324},
}

@article {RiviereStruwe,
    AUTHOR = {Rivi\`ere, Tristan and Struwe, Michael},
     TITLE = {Partial regularity for harmonic maps and related problems},
   JOURNAL = {Comm. Pure Appl. Math.},
  FJOURNAL = {Communications on Pure and Applied Mathematics},
    VOLUME = {61},
      YEAR = {2008},
    NUMBER = {4},
     PAGES = {451--463},
      ISSN = {0010-3640,1097-0312},
   MRCLASS = {58E20 (35B65 35J60)},
  MRNUMBER = {2383929},
MRREVIEWER = {Andreas\ Gastel},
       DOI = {10.1002/cpa.20205},
       URL = {https://doi-org.proxy.library.cornell.edu/10.1002/cpa.20205},
}

@article {ParkerBubbleTree,
    AUTHOR = {Parker, Thomas H.},
     TITLE = {Bubble tree convergence for harmonic maps},
   JOURNAL = {J. Differential Geom.},
  FJOURNAL = {Journal of Differential Geometry},
    VOLUME = {44},
      YEAR = {1996},
    NUMBER = {3},
     PAGES = {595--633},
      ISSN = {0022-040X,1945-743X},
   MRCLASS = {58E20},
  MRNUMBER = {1431008},
MRREVIEWER = {Daniel\ Pollack},
       URL = {http://projecteuclid.org.proxy.library.cornell.edu/euclid.jdg/1214459224},
}

@inproceedings {Jost,
    AUTHOR = {Jost, J\"urgen},
     TITLE = {Two-dimensional geometric variational problems},
 BOOKTITLE = {Proceedings of the {I}nternational {C}ongress of
              {M}athematicians, {V}ol. 1, 2 ({B}erkeley, {C}alif., 1986)},
     PAGES = {1094--1100},
 PUBLISHER = {Amer. Math. Soc., Providence, RI},
      YEAR = {1987},
      ISBN = {0-8218-0110-4},
   MRCLASS = {58E12 (49F10 53A10 58E20)},
  MRNUMBER = {934312},
MRREVIEWER = {John\ C.\ Wood},
       DOI = {10.1073/pnas.84.21.7453},
       URL = {https://doi-org.proxy.library.cornell.edu/10.1073/pnas.84.21.7453},
}

@article {LinRiviere,
    AUTHOR = {Lin, Fang-Hua and Rivi\`ere, Tristan},
     TITLE = {Energy quantization for harmonic maps},
   JOURNAL = {Duke Math. J.},
  FJOURNAL = {Duke Mathematical Journal},
    VOLUME = {111},
      YEAR = {2002},
    NUMBER = {1},
     PAGES = {177--193},
      ISSN = {0012-7094,1547-7398},
   MRCLASS = {58E20},
  MRNUMBER = {1876445},
MRREVIEWER = {Ernst\ C.\ Kuwert},
       DOI = {10.1215/S0012-7094-02-11116-8},
       URL = {https://doi-org.proxy.library.cornell.edu/10.1215/S0012-7094-02-11116-8},
}

@article {LinGradientEstimate,
    AUTHOR = {Lin, Fang-Hua},
     TITLE = {Gradient estimates and blow-up analysis for stationary
              harmonic maps},
   JOURNAL = {Ann. of Math. (2)},
  FJOURNAL = {Annals of Mathematics. Second Series},
    VOLUME = {149},
      YEAR = {1999},
    NUMBER = {3},
     PAGES = {785--829},
      ISSN = {0003-486X,1939-8980},
   MRCLASS = {58E20 (49Q20)},
  MRNUMBER = {1709303},
MRREVIEWER = {Harold\ Parks},
       DOI = {10.2307/121073},
       URL = {https://doi-org.proxy.library.cornell.edu/10.2307/121073},
}

@misc{naber2024energyidentitystationaryharmonic,
      title={Energy Identity for Stationary Harmonic Maps}, 
      author={Naber, Aaron and Valtorta, Daniele},
      year={2024},
      eprint={2401.02242},
      archivePrefix={arXiv},
      primaryClass={math.AP},
      url={https://arxiv.org/abs/2401.02242}, 
      howpublished = {arXiv 2401.02242},
}

@misc{Krantz,
      title={Partial Regularity of Stable Stationary Harmonic Maps into Certain Lie Groups}, 
      author={Krantz, Jacob},
      year={2026},
      eprint={2605.03809},
      archivePrefix={arXiv},
      primaryClass={math.DG},
      url={https://arxiv.org/abs/2605.03809}, 
}

@article{li2025energy,
  title={Energy identity for Ginzburg-Landau approximation of harmonic maps},
  author={Li, Xuanyu},
  journal={arXiv preprint arXiv:2503.23675},
  year={2025}
}

@article {HardtSimon,
    AUTHOR = {Hardt, Robert and Simon, Leon},
     TITLE = {Area minimizing hypersurfaces with isolated singularities},
   JOURNAL = {J. Reine Angew. Math.},
  FJOURNAL = {Journal f\"ur die Reine und Angewandte Mathematik. [Crelle's
              Journal]},
    VOLUME = {362},
      YEAR = {1985},
     PAGES = {102--129},
      ISSN = {0075-4102,1435-5345},
   MRCLASS = {49F10 (58E12)},
  MRNUMBER = {809969},
MRREVIEWER = {A.\ Averna},
       DOI = {10.1515/crll.1985.362.102},
       URL = {https://doi-org.proxy.library.cornell.edu/10.1515/crll.1985.362.102},
}

@article {DingExpander,
    AUTHOR = {Ding, Qi},
     TITLE = {Minimal cones and self-expanding solutions for mean curvature
              flows},
   JOURNAL = {Math. Ann.},
  FJOURNAL = {Mathematische Annalen},
    VOLUME = {376},
      YEAR = {2020},
    NUMBER = {1-2},
     PAGES = {359--405},
      ISSN = {0025-5831,1432-1807},
   MRCLASS = {53C42 (53E10)},
  MRNUMBER = {4055164},
MRREVIEWER = {Xu\ Cheng},
       DOI = {10.1007/s00208-019-01941-1},
       URL = {https://doi-org.proxy.library.cornell.edu/10.1007/s00208-019-01941-1},
}

@article {WangSmoothing,
    AUTHOR = {Wang, Zhihan},
     TITLE = {Mean convex smoothing of mean convex cones},
   JOURNAL = {Geom. Funct. Anal.},
  FJOURNAL = {Geometric and Functional Analysis},
    VOLUME = {34},
      YEAR = {2024},
    NUMBER = {1},
     PAGES = {263--301},
      ISSN = {1016-443X,1420-8970},
   MRCLASS = {53C42 (49Q15 49Q20 58E12)},
  MRNUMBER = {4706448},
MRREVIEWER = {Annalisa\ Cesaroni},
       DOI = {10.1007/s00039-024-00666-x},
       URL = {https://doi-org.proxy.library.cornell.edu/10.1007/s00039-024-00666-x},
}

@article {ChodoshLiokumovichSpolaor,
    AUTHOR = {Chodosh, Otis and Liokumovich, Yevgeny and Spolaor, Luca},
     TITLE = {Singular behavior and generic regularity of min-max minimal
              hypersurfaces},
   JOURNAL = {Ars Inven. Anal.},
  FJOURNAL = {Ars Inveniendi Analytica},
      YEAR = {2022},
     PAGES = {Paper No. 2, 27},
      ISSN = {2769-8505},
   MRCLASS = {49Q05 (53C42)},
  MRNUMBER = {4462477},
MRREVIEWER = {Alexis\ Michelat},
       DOI = {10.15781/j4aj-kd66},
       URL = {https://doi-org.proxy.library.cornell.edu/10.15781/j4aj-kd66},
}

@article {SmaleGenericHomological,
    AUTHOR = {Smale, Nathan},
     TITLE = {Generic regularity of homologically area minimizing
              hypersurfaces in eight-dimensional manifolds},
   JOURNAL = {Comm. Anal. Geom.},
  FJOURNAL = {Communications in Analysis and Geometry},
    VOLUME = {1},
      YEAR = {1993},
    NUMBER = {2},
     PAGES = {217--228},
      ISSN = {1019-8385,1944-9992},
   MRCLASS = {49Q05 (58E15)},
  MRNUMBER = {1243523},
MRREVIEWER = {Martin\ Fuchs},
       DOI = {10.4310/CAG.1993.v1.n2.a2},
       URL = {https://doi-org.proxy.library.cornell.edu/10.4310/CAG.1993.v1.n2.a2},
}

@article {ChodoshMantoulidisSchulze,
    AUTHOR = {Chodosh, Otis and Mantoulidis, Christos and Schulze, Felix},
     TITLE = {Generic regularity for minimizing hypersurfaces in dimensions
              9 and 10},
   JOURNAL = {Publ. Math. Inst. Hautes \'Etudes Sci.},
  FJOURNAL = {Publications Math\'ematiques. Institut de Hautes \'Etudes
              Scientifiques},
    VOLUME = {143},
      YEAR = {2026},
     PAGES = {143--188},
      ISSN = {0073-8301,1618-1913},
   MRCLASS = {53A10 (49Q05 49Q20 58A25)},
  MRNUMBER = {5082540},
}

@article{chodosh2025generic,
  title={Generic regularity for minimizing hypersurfaces in dimension 11},
  author={Chodosh, Otis and Mantoulidis, Christos and Schulze, Felix and Wang, Zhihan},
  journal={arXiv preprint arXiv:2506.12852},
  year={2025}
}

@article {ChodoshMantoulidisSchulzeImprovedBound,
    AUTHOR = {Chodosh, Otis and Mantoulidis, Christos and Schulze, Felix},
     TITLE = {Improved generic regularity of codimension-1 minimizing
              integral currents},
   JOURNAL = {Ars Inven. Anal.},
  FJOURNAL = {Ars Inveniendi Analytica},
      YEAR = {2024},
     PAGES = {Paper No. 3, 16},
      ISSN = {2769-8505},
   MRCLASS = {53A10 (49Q05 49Q20 58A25)},
  MRNUMBER = {4753940},
MRREVIEWER = {Hongbin\ Cui},
       DOI = {10.15781/z70n-ed29},
       URL = {https://doi-org.proxy.library.cornell.edu/10.15781/z70n-ed29},
}

@article {LiMinkowskiBound,
    AUTHOR = {Li, Xuanyu},
     TITLE = {Minkowski content estimates for generic area minimizing
              hypersurfaces},
   JOURNAL = {Calc. Var. Partial Differential Equations},
  FJOURNAL = {Calculus of Variations and Partial Differential Equations},
    VOLUME = {63},
      YEAR = {2024},
    NUMBER = {7},
     PAGES = {Paper No. 176, 15},
      ISSN = {0944-2669,1432-0835},
   MRCLASS = {53C42 (28A78)},
  MRNUMBER = {4772593},
MRREVIEWER = {Annalisa\ Cesaroni},
       DOI = {10.1007/s00526-024-02791-9},
       URL = {https://doi-org.proxy.library.cornell.edu/10.1007/s00526-024-02791-9},
}

@article {FigalliRosotonSerra,
    AUTHOR = {Figalli, Alessio and Ros-Oton, Xavier and Serra, Joaquim},
     TITLE = {Generic regularity of free boundaries for the obstacle
              problem},
   JOURNAL = {Publ. Math. Inst. Hautes \'Etudes Sci.},
  FJOURNAL = {Publications Math\'ematiques. Institut de Hautes \'Etudes
              Scientifiques},
    VOLUME = {132},
      YEAR = {2020},
     PAGES = {181--292},
      ISSN = {0073-8301,1618-1913},
   MRCLASS = {35R35},
  MRNUMBER = {4179834},
       DOI = {10.1007/s10240-020-00119-9},
       URL = {https://doi-org.proxy.library.cornell.edu/10.1007/s10240-020-00119-9},
}

@article {XavierTorres,
    AUTHOR = {Fern\'andez-Real, Xavier and Torres-Latorre, Clara},
     TITLE = {Generic regularity of free boundaries for the thin obstacle
              problem},
   JOURNAL = {Adv. Math.},
  FJOURNAL = {Advances in Mathematics},
    VOLUME = {433},
      YEAR = {2023},
     PAGES = {Paper No. 109323, 29},
      ISSN = {0001-8708,1090-2082},
   MRCLASS = {35R35},
  MRNUMBER = {4649881},
       DOI = {10.1016/j.aim.2023.109323},
       URL = {https://doi-org.proxy.library.cornell.edu/10.1016/j.aim.2023.109323},
}

@article{fernandez2023generic,
  title={Generic properties in free boundary problems},
  author={Fern{\'a}ndez-Real, Xavier and Yu, Hui},
  journal={arXiv preprint arXiv:2308.13209},
  year={2023}
}

@article{fernandez2025linearized,
  title={Linearized equation and generic regularity in the Alt-Caffarelli problem},
  author={Fern{\'a}ndez-Real, Xavier and Yu, Hui},
  journal={arXiv preprint arXiv:2510.18330},
  year={2025}
}

@article {WhiteBumpyMetric,
    AUTHOR = {White, Brian},
     TITLE = {The space of minimal submanifolds for varying {R}iemannian
              metrics},
   JOURNAL = {Indiana Univ. Math. J.},
  FJOURNAL = {Indiana University Mathematics Journal},
    VOLUME = {40},
      YEAR = {1991},
    NUMBER = {1},
     PAGES = {161--200},
      ISSN = {0022-2518,1943-5258},
   MRCLASS = {58D10 (53C42)},
  MRNUMBER = {1101226},
MRREVIEWER = {Jo\~ao\ Lucas Marques Barbosa},
       DOI = {10.1512/iumj.1991.40.40008},
       URL = {https://doi-org.proxy.library.cornell.edu/10.1512/iumj.1991.40.40008},
}

@article{gutwein2026deformations,
  title={Deformations of harmonic maps with conical singularities},
  author={Gutwein, Dominik and Langlais, Thibault},
  journal={arXiv preprint arXiv:2609.20588},
  year={2026}
}

@article {AlmgrenLieb,
    AUTHOR = {Almgren, Jr., Frederick J. and Lieb, Elliott H.},
     TITLE = {Singularities of energy minimizing maps from the ball to the
              sphere: examples, counterexamples, and bounds},
   JOURNAL = {Ann. of Math. (2)},
  FJOURNAL = {Annals of Mathematics. Second Series},
    VOLUME = {128},
      YEAR = {1988},
    NUMBER = {3},
     PAGES = {483--530},
      ISSN = {0003-486X,1939-8980},
   MRCLASS = {58E20},
  MRNUMBER = {970609},
MRREVIEWER = {John\ C.\ Wood},
       DOI = {10.2307/1971434},
       URL = {https://doi-org.proxy.library.cornell.edu/10.2307/1971434},
}

@article{JiangLiYu,
  title={Prescribed singular sets for stationary harmonic maps},
  author={Jiang, Wenshuai and Li, Chang and Yu, Wenyou},
  journal={arXiv preprint arXiv:2609.21443},
  year={2026}
}
\bibliographystyle{plain}
\end{document}